\newtheorem{thm}{Theorem}[section]
\newtheorem{thma}{Theorem}
\newtheorem{prop}[thm]{Proposition}
\newtheorem{cor}[thm]{Corollary}
\newtheorem{lem}[thm]{Lemma}
\newcommand{\Prob}{\operatorname{Prob}} %Probability measures
\newcommand{\BB}{\operatorname{\mathcal B}} %Bounded operators
\newcommand{\HH}{{\mathcal H}} %Hilbert space
\newcommand{\KK}{{\mathcal K}} %Hilbert space
\newcommand{\N}{{\mathbb N}} %Natural numbers
\theoremstyle{definition}
\newtheorem{defn}[thm]{Definition}
\newtheorem{examp}[thm]{Example}
\newcommand{\actson}{{\, \curvearrowright \,}}
\newcommand{\Har}{\operatorname{Har}}
\newcommand{\ovt}{\, \overline{\otimes}\,}
\newenvironment{alignLetter}{
	\setcounter{equation}{0}
	
	\align
}{
	\endalign
}
\begin{document}

\title{Poisson boundaries of II$_1$ factors}
\author{Sayan Das}
\email{sayan.das@ucr.edu}
\address{Department of Mathematics, University of California Riverside, 900 University Ave., Riverside, CA 92521, USA}
\author{Jesse Peterson}
\email{jesse.d.peterson@vanderbilt.edu}
\address{Department of Mathematics, Vanderbilt University, 1326 Stevenson Center, Nashville, TN 37240, USA}

\classification{46L10, 46L55}

\keywords{Von Neumann algebras, Poisson boundaries}
\thanks{J.P. was supported in part by NSF Grant DMS \#1801125 and NSF FRG Grant \#1853989}

\begin{abstract}
We introduce Poisson boundaries of II$_1$ factors with respect to density operators that give the traces. The Poisson boundary is a von Neumann algebra that contains the II$_1$ factor and is a particular example of the boundary of a unital completely positive map as introduced by Izumi. Studying the inclusion of the II$_1$ factor into its boundary we develop a number of notions, such as double ergodicity and entropy, that can be seen as natural analogues of results regarding the Poisson boundaries introduced by Furstenberg. We use the techniques developed to answer a problem of Popa by showing that all finite factors satisfy the MV-property. We also extend a result of Nevo by showing that property (T) factors give rise to an entropy gap. 
\end{abstract}

\maketitle

\section{Introduction}

Given a locally compact group $G$ and a probability measure $\mu \in {\rm Prob}(G)$, the associated (left) random walk on $G$ is the Markov chain on $G$ whose transition probabilities are given by the measures $\mu * \delta_x$.  The Markov operator associated to this random walk is given by 
\[
\mathcal P_\mu(f)(x) = \int f(gx) \, d\mu(g),
\]
where $f$ is a continuous function on $G$ with compact support. The Markov operator extends to a contraction on $L^\infty(G)$, which is unital and (completely) positive. A function $f \in L^\infty(G)$ is $\mu$-harmonic if $\mathcal P_\mu(f) = f$. We let ${\rm Har}(G, \mu)$ denote the Banach space of $\mu$-harmonic functions. The Furstenberg-Poisson boundary \cite{furstenberg} of $G$ with respect to $\mu$ is a certain $G$-probability space $(B, \zeta)$, such that we have a natural positivity preserving isometric $G$-equivariant identification of $L^\infty(B, \zeta)$ with ${\rm Har}(G, \mu)$ via a Poisson transform. 

An actual construction of the Poisson boundary $(B, \zeta)$, which is often described as a quotient of the path space corresponding to the stationary $\sigma$-algebra, is less important to us here as its existence, and indeed, up to isomorphisms of $G$-spaces, it is the unique $G$-probability space such that $L^\infty(B, \zeta)$ is isomorphic, as an operator $G$-space, to ${\rm Har}(G, \mu)$. 

Under natural conditions on the measure $\mu$, the boundary $(B, \zeta)$ possesses a number of remarkable properties. It is an amenable $G$-space \cite{zimmeramenbound}, it is doubly ergodic with isometric coefficients \cite{kaimanovich1} \cite{glasnerweiss}, and it is strongly asymptotically transitive \cite{jaworski1, jaworski2}. The boundary has therefore become a powerful tool for studying rigidity properties for groups and their probability measure preserving actions \cite{margulissuperrigidity, zimmersuperrigidity, badershalom, burgermonod, baderfurman}. 

In light of the successful application of the Poisson boundary to rigidity properties in group theory, Alain Connes suggested (see \cite{jonesproblems}) that developing a theory of the Poisson boundary in the setting of operator algebras would be the first step toward studying his rigidity conjecture \cite{connesconjecture}, which states that two property (T) icc groups have isomorphic group von Neumann algebras if and only if the groups themselves are isomorphic. Further evidence for this can be seen by the significant role that Poisson boundaries play in \cite{creutzpetersonfree, creutzpetersonchar, petersonchar}, where a related rigidity conjecture of Connes was investigated. 

Poisson boundaries can more generally be defined using any Markov operator associated to a random walk. Markov operators are particular examples of normal unital completely positive (u.c.p.) maps on von Neumann algebras, and motivated by defining Poisson boundaries for discrete quantum groups, Izumi in \cite{izumi2, izumi4} was able to define a noncommutative Poisson boundary associated to any normal u.c.p.\ map on a general von Neumann algebra. Specifically, if $\mathcal M$ is a von Neumann algebra and $\phi: \mathcal M \to \mathcal M$ is a normal u.c.p.\ map, then we let ${\rm Har}(\phi) = \{ x \in \mathcal M \mid \phi(x) = x \}$ denote the space of $\phi$-harmonic operators. Izumi showed that there exists a (unique up to isomorphism) von Neumann algebra $\mathcal B_\phi$ such that, as operator systems, ${\rm Har}(\phi)$ and $\mathcal B_\phi$ can be identified via a Poisson transform $\mathcal P: \mathcal B_\phi \to {\rm Har}(\phi)$. The existence of this boundary follows by showing that ${\rm Har}(\phi)$ can be realized as the range of a u.c.p.\ idempotent on $\mathcal M$ and then applying a theorem of Choi and Effros. Alternatively, the existence of the boundary follows by considering the minimal dilation of $\phi$ \cite{izumi3}. We include in the appendix of this paper an elementary proof based on this perspective. 

There is a well-known dictionary between many analytic notions in group theory and those in von Neumann algebras. For example, states on $\mathcal B(L^2(M))$ correspond to states on $\ell^\infty \Gamma$, normal Hilbert $M$-bimodules correspond to unitary representations, etc., \cite[Section 2]{connessurvey} \cite{connescorrespondences}. This allows one to develop notions such as amenability, property (T), etc., in the setting of finite von Neumann algebras. While Izumi's boundary gives a satisfactory noncommutative analogue of the Poisson boundary associated to a general random walk, still missing is an appropriate notion of a noncommutative Poisson boundary analogous to the group setting. 

The main goal of this article is to introduce a theory of Poisson boundaries for finite von Neumann algebras that we believe will fill the role envisioned by Connes.  If $M$ is a finite von Neumann algebra with a normal faithful trace $\tau$, and if $\varphi \in \mathcal B(L^2(M, \tau))_*$ is a normal state such that $\varphi_{|M} = \tau$, then we will view $\varphi$ as the distribution of a ``noncommutative random walk'' on $M$.  To each distribution we associate a corresponding ``convolution operator'', which is a normal u.c.p.\ map $\mathcal P_\varphi: \mathcal B(L^2(M, \tau)) \to \mathcal B(L^2(M, \tau))$, such that $M \subset {\rm Har}(\mathcal P_\varphi)$. We then define the Poisson boundary of $M$ with respect to $\varphi$ to be Izumi's noncommutative boundary $\mathcal B_\varphi$ associated to $\mathcal P_\varphi$; more precisely the boundary is really the inclusion of von Neumann algebras $M \subset \mathcal B_\varphi$, together with the Poisson transform $\mathcal P: \mathcal B_\varphi \to {\rm Har}(\mathcal P_\varphi)$. 

Poisson boundaries of groups give rise to natural Poisson boundaries of group von Neumann algebras. Indeed, as was already noticed by Izumi in \cite{izumi3}, if $\Gamma$ is a countable discrete group and $\mu \in {\rm Prob}(\Gamma)$, then the noncommutative boundary of the u.c.p.\ map $\phi_\mu: \mathcal B(\ell^2 \Gamma) \to \mathcal B(\ell^2 \Gamma)$ given by $\phi_\mu(T) = \int \rho_\gamma T \rho_\gamma^* \, d\mu(\gamma)$ is naturally isomorphic to the von Neumann crossed-product $L^\infty(B, \zeta) \rtimes \Gamma$ where $(B, \zeta)$ is the Poisson boundary of $(\Gamma, \mu)$. Thus, many of the results we obtain are not merely analogues, but are actually generalizations of results from the theory of random walks on groups.

If $M$ is a finite factor, then under natural conditions on the distribution $\varphi$, e.g., that its ``support'' should generate $M$, we show that the boundary $\mathcal B_\varphi$ is amenable/injective (Proposition~\ref{prop:pbinjective}), and that the inclusion $M \subset \mathcal B_\varphi$ is ``ergodic'', i.e., $M' \cap \mathcal B_\varphi = \mathbb C$ (Proposition~\ref{prop:relcommutant}). We use techniques of Foguel \cite{foguel} to obtain equivalent characterizations for when the boundary is trivial (Theorem~\ref{thm:foguel}). The double ergodicity result of Kaimanovich \cite{kaimanovich1} is more subtle, as unlike in the case for groups, there is no natural ``diagonal'' inclusion of $M$ into $\mathcal B_\varphi \ovt \mathcal B_\varphi$. There is however a natural notions of left and right convolution operators, so that we may naturally associate with $\varphi$ a second u.c.p.\ map $\mathcal P^{\rm o}_\varphi$  which commutes with $\mathcal P_\varphi$ (see Section 3 for the precise definition of $\mathcal P^{\rm o}_\varphi$). We may then show that bi-harmonic operators are constant, a result which is equivalent to double ergodicity in the group setting.

\begin{thma}[Theorem \ref{thm:biharmconstant} below]
Let $M$ be a finite factor and suppose $\varphi$ is as above. Then we have
\[
{\rm Har}(\mathcal B(L^2(M, \tau)), \mathcal P_\varphi) \cap {\rm Har}(\mathcal B(L^2(M, \tau)), \mathcal P^{\rm o}_\varphi) = \mathbb C.
\] 
\end{thma}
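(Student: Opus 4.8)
The plan is to first reduce the statement to an assertion about relative commutants inside $\mathcal B(L^2(M,\tau))$, and then to isolate the remaining content as a noncommutative form of Kaimanovich's double ergodicity. Write $\mathcal B=\mathcal B(L^2(M,\tau))$ and let $J$ denote the canonical conjugation, so that $M'\cap\mathcal B=JMJ$. I would use the following features of the two convolution operators (established in Section~3): $\mathcal P_\varphi$ is a normal $M$-bimodular u.c.p.\ map, so $M$ lies in its multiplicative domain, $M\subseteq{\rm Har}(\mathcal B,\mathcal P_\varphi)$, and $\mathcal P_\varphi(JMJ)\subseteq JMJ$; symmetrically $\mathcal P^{\rm o}_\varphi$ is normal and $JMJ$-bimodular, so $JMJ\subseteq{\rm Har}(\mathcal B,\mathcal P^{\rm o}_\varphi)$ and $\mathcal P^{\rm o}_\varphi(M)\subseteq M$; the two maps commute; and the restrictions $\mathcal P_\varphi|_{JMJ}$ and $\mathcal P^{\rm o}_\varphi|_M$ are $\tau$-preserving and ergodic, the ergodicity of the first being exactly the statement $M'\cap\mathcal B_\varphi=\mathbb C$ of Proposition~\ref{prop:relcommutant} and that of the second its mirror image.

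For the reduction, note that since $\mathcal P^{\rm o}_\varphi$ commutes with $\mathcal P_\varphi$ it preserves ${\rm Har}(\mathcal B,\mathcal P_\varphi)$ and restricts there to a normal u.c.p.\ map $\Psi$ of the von Neumann algebra $\mathcal B_\varphi\cong{\rm Har}(\mathcal B,\mathcal P_\varphi)$, with $\Psi(M)\subseteq M$ and $\Psi|_M=\mathcal P^{\rm o}_\varphi|_M$ ergodic; the space of bi-harmonic operators is then precisely ${\rm Har}(\mathcal B_\varphi,\Psi)$, which — being the range of a u.c.p.\ idempotent, just as in the construction of Izumi's boundary — is a von Neumann subalgebra. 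Now let $T$ be bi-harmonic. For every $u\in\mathcal U(M)$, $M$-bimodularity of $\mathcal P_\varphi$ gives $\mathcal P_\varphi(uTu^*)=u\mathcal P_\varphi(T)u^*=uTu^*$, so $uTu^*\in{\rm Har}(\mathcal B,\mathcal P_\varphi)=\mathcal B_\varphi$ as well, and ${\rm Ad}(u)$ is an automorphism of $\mathcal B_\varphi$. Since Proposition~\ref{prop:relcommutant} gives $M'\cap\mathcal B_\varphi=\mathbb C$, it therefore suffices to show that every bi-harmonic $T$ is fixed by ${\rm Ad}(u)$ for all $u\in\mathcal U(M)$, i.e.\ that $T$ commutes with $M$.

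The remaining step — that bi-harmonic operators are centralized by $M$ — is the noncommutative analogue of Kaimanovich's double ergodicity, and I expect it to be the main obstacle. The natural approach is to pass to the minimal dilation of $\mathcal P_\varphi$, the noncommutative path space of the forward random walk, in which ${\rm Har}(\mathcal B,\mathcal P_\varphi)$ appears as the fixed-point algebra of a normal endomorphism and a harmonic operator is recovered from its forward tail via mean ergodic (martingale) convergence; the $\mathcal P^{\rm o}_\varphi$-harmonicity of $T$ then gives the mirror-image reconstruction from the backward tail. One wants to conclude that an operator lying in both tails and fixed by the forward and backward conjugation dynamics must commute with $M$: conjugation by $u\in\mathcal U(M)$ leaves the forward data untouched, while — since $\mathcal P^{\rm o}_\varphi$ moves the $M$-direction and $\mathcal P^{\rm o}_\varphi|_M$ is ergodic — it should be absorbed into the backward averaging, so that $uTu^*$ and $T$ cannot differ. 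The difficulty, and the reason the classical proof does not transcribe verbatim, is the absence of a diagonal copy of $M$ in $\mathcal B_\varphi\ovt\mathcal B^{\rm o}_\varphi$: one cannot invoke ergodicity of a diagonal action on a double boundary, so the argument must be run entirely inside $\mathcal B(L^2(M,\tau))$, playing the two commuting convolution operators against one another, and to make it work one needs — beyond the triviality of both relative commutants $M'\cap\mathcal B_\varphi$ and $(JMJ)'\cap\mathcal B^{\rm o}_\varphi$ — the injectivity of the boundaries (Proposition~\ref{prop:pbinjective}) and the entropy estimates developed in the sequel, which here substitute for the strong approximate transitivity and mixing of the Furstenberg boundary used by Kaimanovich and by Glasner--Weiss.
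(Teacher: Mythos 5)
Your reduction is sound and matches the paper's: since Proposition~\ref{prop:relcommutant} gives $M'\cap\mathcal B_\varphi=\mathbb C$, it suffices to show that every bi-harmonic operator commutes with $M$. But the proposal has a genuine gap precisely there: the entire content of the theorem is concentrated in that step, and you do not prove it. What you offer instead is a programme (pass to the minimal dilation, reconstruct $T$ from forward and backward tails, argue that conjugation by $u\in\mathcal U(M)$ ``should be absorbed into the backward averaging,'' and invoke injectivity together with ``entropy estimates developed in the sequel''). None of this is carried out, and the appeal to entropy is backwards relative to the paper's logic: the entropy results come later and rest on double ergodicity, not the other way around. A secondary but real error is your justification that the bi-harmonic operators form a von Neumann subalgebra ``being the range of a u.c.p.\ idempotent, just as in the construction of Izumi's boundary.'' The range of a u.c.p.\ idempotent carries only the Choi--Effros product, which need not agree with the ambient multiplication; that is exactly the distinction between $\Har(\mathcal P_\varphi)$ as an operator system and $\mathcal B_\varphi$ as an algebra. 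The correct route, and the one the paper takes, is to observe that $\mathcal P_\varphi^{\rm o}$ preserves the stationary state $\zeta$ on $\mathcal B_\varphi$ (both $\langle\mathcal P_\varphi(T)\hat 1,\hat 1\rangle$ and $\langle\mathcal P_\varphi^{\rm o}(T)\hat 1,\hat 1\rangle$ equal $\sum_n\langle T\widehat{z_n^*},\widehat{z_n^*}\rangle$), that $\zeta$ is faithful by Proposition~\ref{prop:faithfulstate} (this is where strong generation enters), and then to apply Lemma~\ref{lem:fixedalgebra}.

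Once $B_0=\Har(\mathcal B_\varphi,\mathcal P^{\rm o}_{\varphi|\mathcal B_\varphi})$ is known to be an honest von Neumann subalgebra, the missing step is a short computation with projections rather than anything involving dilations or path spaces: for a projection $p\in B_0$ and $\xi\in L^2(\mathcal B_\varphi,\zeta)$ one has
\[
\sum_n \| p z_n^* p^\perp \xi \|_2^2=\sum_n \langle z_n p z_n^* p^\perp \xi, p^\perp \xi\rangle=\langle \mathcal P_\varphi^{\rm o}(p)\, p^\perp \xi, p^\perp \xi\rangle=\langle p\, p^\perp \xi, p^\perp \xi\rangle=0,
\]
so $pz_n^*=pz_n^*p$, and exchanging $p$ and $p^\perp$ gives $z_n^*p=pz_n^*p$; hence $p$ commutes with every $z_n$ and therefore lies in $M'\cap\mathcal B_\varphi=\mathbb C$. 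Your proposal never finds a substitute for this argument, so as it stands it establishes only the (comparatively easy) framing of the theorem, not the theorem itself.
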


Motivated by the question of determining whether or not $L\mathbb F_\infty$ is finitely generated, Popa studied in \cite{popa2018} the class of separable II$_1$ factors $M$ that are tight, i.e., $M$ contains two hyperfinite subfactors $L, R \subset M$ such that $L$ and $R^{\rm op}$ together generate $\mathcal B(L^2(M))$. He conjectures in Conjecture 5.1 of \cite{popa2018} that if a factor $M$ has the property that all amplifications $M^t$ are singly generated, then $M$ is tight. He also notes that a tight factor $M$ satisfies the MV-property, which states that for any operator $T \in \mathcal B(L^2(M))$ the weak closure of the convex hull of $\{ u (JvJ) T (Jv^*J) u^* \mid u, v \in \mathcal U(M) \}$ intersects the scalars. Popa then asks in Problem 7.4 of \cite{popaergodic} and Problem 6.3 in \cite{popatight} if free group factors, or perhaps all finite factors have the MV-property. As a consequence of double ergodicity we are able to answer Popa's problem.

\begin{thma}[Theorem \ref{popaqn19a} below]
All finite factors have the MV-property. 
\end{thma}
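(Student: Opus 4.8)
The plan is to deduce the MV-property for an arbitrary finite factor $M$ directly from Theorem~A (the bi-harmonic constancy statement), applied to a suitably chosen distribution $\varphi$. First I would recall that the MV-property asks that for every $T \in \mathcal B(L^2(M,\tau))$ the weak-operator closure of $\mathrm{conv}\{ u(JvJ)T(Jv^*J)u^* \mid u,v \in \mathcal U(M)\}$ meets $\mathbb C$. The key observation is that the averaging operations $T \mapsto uTu^*$ and $T \mapsto (JvJ)T(Jv^*J)$ are exactly the building blocks of the left and right convolution operators $\mathcal P_\varphi$ and $\mathcal P^{\mathrm o}_\varphi$: if $\varphi$ is chosen so that its ``support'' generates $M$ (for instance a faithful normal state coming from a density operator built from a generating sequence of unitaries together with their $J$-conjugates), then $\mathcal P_\varphi$ is an average of maps $T \mapsto u_\gamma T u_\gamma^*$ and $\mathcal P^{\mathrm o}_\varphi$ is an average of maps $T \mapsto (Jv_\gamma J)T(Jv_\gamma^* J)$, and the unitary (hence extreme) elements involved generate $M$.

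Next I would set up a Furstenberg-type averaging/martingale argument. Given $T$, consider the convex set $K_T$ defined as the weak-operator closure of the convex hull of the two-sided unitary orbit above; it is a weak-$*$ compact convex subset of $\mathcal B(L^2(M,\tau))$, invariant under both $\mathcal P_\varphi$ and $\mathcal P^{\mathrm o}_\varphi$ (since each of these is an average over the generating unitaries, it maps $K_T$ into itself). Because $\mathcal P_\varphi$ and $\mathcal P^{\mathrm o}_\varphi$ commute and are normal u.c.p.\ maps with $\mathcal P_\varphi(1) = 1 = \mathcal P^{\mathrm o}_\varphi(1)$, a standard Markov--Kakutani / Ces\`aro-averaging argument produces a point $S \in K_T$ that is fixed by both maps, i.e., $S \in \mathrm{Har}(\mathcal B(L^2(M,\tau)),\mathcal P_\varphi) \cap \mathrm{Har}(\mathcal B(L^2(M,\tau)),\mathcal P^{\mathrm o}_\varphi)$. (Concretely: iterate $\mathcal P_\varphi$ and take a weak limit of Ces\`aro averages to land in $\mathrm{Har}(\mathcal P_\varphi) \cap K_T$, which is again convex, compact, and $\mathcal P^{\mathrm o}_\varphi$-invariant, then repeat with $\mathcal P^{\mathrm o}_\varphi$, using commutativity to stay inside $\mathrm{Har}(\mathcal P_\varphi)$.) By Theorem~A this intersection is $\mathbb C$, so $S \in \mathbb C$, and since $S \in K_T$ we conclude $K_T \cap \mathbb C \neq \emptyset$, which is the MV-property.

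Two points need care, and I expect the genuine obstacle to be the first. The main subtlety is checking that $K_T$ is genuinely invariant under $\mathcal P_\varphi$ and $\mathcal P^{\mathrm o}_\varphi$ and, more importantly, that an element of $\mathrm{conv}\{u(JvJ)T(Jv^*J)u^*\}$ is tracked correctly: one must ensure the distribution $\varphi$ can be chosen with generating support \emph{and} such that $\mathcal P_\varphi$, $\mathcal P^{\mathrm o}_\varphi$ are literally convex combinations (or integrals) of conjugations by the chosen unitaries $u_\gamma$, $Jv_\gamma J$ whose closed two-sided orbit is $K_T$; this is where the precise definition of $\mathcal P^{\mathrm o}_\varphi$ from Section~3 and the freedom in choosing $\varphi$ (Propositions~\ref{prop:pbinjective}, \ref{prop:relcommutant} guarantee the relevant non-degeneracy) must be invoked. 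A secondary, more routine point is the Markov--Kakutani step: since the maps are only weak-operator continuous on bounded sets one works with the weak-$*$ topology on $\mathcal B(L^2(M,\tau))$ and uses that $K_T$ is weak-$*$ compact and the u.c.p.\ maps are weak-$*$ continuous (normality), after which the standard fixed-point argument for a commuting family of affine continuous self-maps of a compact convex set applies verbatim.
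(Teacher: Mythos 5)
Your argument is, in substance, the proof given in the paper for the countably generated case: the authors also pick a symmetric, fully supported probability measure $\mu$ on a countable generating group of unitaries $\mathcal G \subset \mathcal U(M)$, so that $\mathcal P_\varphi(T) = \int (JuJ)T(Ju^*J)\,d\mu(u)$ and $\mathcal P_\varphi^{\rm o}(T) = \int u^* T u \,d\mu(u)$ preserve the weakly closed convex hull $\mathcal C$ of $\{u(JvJ)T(Jv^*J)u^* \mid u,v \in \mathcal G\}$; they then take point-ultraweak limit points $E$, $E^{\rm o}$ of the Ces\`aro averages of $\mathcal P_\varphi$ and $\mathcal P_\varphi^{\rm o}$, which commute and land in ${\rm Har}(\mathcal P_\varphi)$ and ${\rm Har}(\mathcal P_\varphi^{\rm o})$ respectively, and conclude $E^{\rm o}\circ E(T) \in \mathcal C \cap \mathbb C$ from the bi-harmonic constancy theorem. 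The two points you flag as needing care are handled exactly as you suggest, and your swap of the roles of $\mathcal P_\varphi$ and $\mathcal P_\varphi^{\rm o}$ relative to the paper is immaterial.

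The one genuine gap is that your construction of $\varphi$ presupposes a countable generating set of unitaries, i.e., that $M$ is countably generated. This is not merely cosmetic: a normal hyperstate has a trace-class density operator, so its standard form involves only countably many $z_n$, and hence no normal (strongly) generating hyperstate exists on a non-separable finite factor --- Theorem A simply cannot be applied to such an $M$ directly. Since the claim is that \emph{all} finite factors have the MV-property, you need the additional reduction the paper carries out: for each countable subgroup $G < \mathcal U(M)$ with $N = G''$, produce $T_G = E^{\rm o}\circ E(T)$ in the convex set with $e_N T_G e_N$ central in $\mathcal B(L^2(N,\tau))$, take a weak limit point $T_0$ of the net $\{T_G\}_G$ indexed by countable subgroups ordered by inclusion, and verify $[u, T_0] = 0$ for every $u \in \mathcal U(M)$ by enlarging $G$ to contain $u$ and using that $e_{G''} \to 1$ strongly; then $T_0 \in \mathcal C \cap \mathcal Z(M) = \mathcal C \cap \mathbb C$.
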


Other consequences of double ergodicity are that it allows us to show vanishing cohomology for sub-bimodules of the Poisson boundary (Theorem~\ref{innerderivation}), it allows us to generalize rigidity results from \cite{creutzpetersonchar} (Theorem~\ref{thm:boundaryrigidity}), and it allows us to extend results of Bader and Shalom \cite{badershalom} identifying the Poisson boundary of a tensor product with the tensor product of the Poisson boundaries (Corollary~\ref{cor:pbtensor}).

We also introduce analogues of Avez's asymptotic entropy and Furstenberg's $\mu$-entropy in the setting of von Neumann algebras (see Section~\ref{sec:entropy} for these definitions). We show that the triviality of the Poisson boundary is equivalent to the vanishing of the Furstenberg entropy (Corollary~\ref{cor:zero entropy1}). We also use entropy to extend a result of Nevo \cite{nevo} to the setting of von Neumann algebras, which shows that property (T) factors give rise to an ``entropy gap''.

\begin{thma}[Theorem \ref{thm:Tentropygap} below]
Let $M$ be a II$_1$ factor with property (T) generated by unitaries $u_1, \ldots, u_n$. Define the state $\varphi \in \mathcal B(L^2M)_*$ by $\varphi(T) = \frac{1}{n} \sum_{k = 1}^n \langle T \hat{u_k}, \hat{u_k} \rangle$. There exists $c > 0$ such that if $M \subsetneq \mathcal A$ is an irreducible inclusion of von Neumann algebras and $\zeta \in \mathcal A_*$ is any faithful normal state such that $\zeta_{| M} = \tau$, then $h_\varphi(M \subset \mathcal A, \zeta) \geq c$. 
\end{thma}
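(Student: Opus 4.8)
The plan is to transplant Nevo's argument \cite{nevo} to the von Neumann algebra setting: property (T) of $M$ should upgrade an ``almost bi-harmonic'' unit vector to a genuinely central one, and the fact that this upgrade is quantitative is exactly what produces the gap. Since $M$ has property (T) and is generated by $u_1,\dots,u_n$, the first step is to fix a Kazhdan pair --- a constant $\kappa>0$ such that every $M$--$M$ bimodule containing a unit vector $\eta$ with $\langle u_k\eta,\eta\rangle=\langle\eta u_k,\eta\rangle=\tau(u_k)$ for all $k$ and $\max_k\|u_k\eta-\eta u_k\|<\kappa$ must contain a nonzero central vector. I expect the theorem to hold with $c:=\kappa^2/n$. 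Given $M\subsetneq\mathcal A$ and $\zeta$ as in the statement, I would attach to the inclusion the $M$--$M$ bimodule $\mathcal H:=L^2(\mathcal A,\zeta)$, where $M$ acts on the left by restriction of left multiplication by $\mathcal A$ and on the right by $\xi\cdot m:=J_\zeta m^* J_\zeta\xi$ (with $J_\zeta$ the modular conjugation of $\zeta$; the two actions commute since $J_\zeta M J_\zeta$ lies in the commutant of $\mathcal A$, and $m\mapsto(\xi\mapsto\xi\cdot m)$ is an honest right action). Writing $\xi_\zeta$ for the GNS vector and $\widehat a$ for the image of $a\in\mathcal A$, the relations $J_\zeta\xi_\zeta=\xi_\zeta$ and $\zeta|_M=\tau$ give $\langle m\xi_\zeta,\xi_\zeta\rangle=\langle\xi_\zeta\cdot m,\xi_\zeta\rangle=\tau(m)$, so $\xi_\zeta$ is an \emph{exactly} tracial unit vector and the Kazhdan pair applies to it with no almost-tracial correction.

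The heart of the proof is bounding the non-centrality of $\xi_\zeta$ by the entropy. Using $u_k\xi_\zeta=\widehat{u_k}$ and the Tomita relation $J_\zeta\widehat{a^*}=\Delta_\zeta^{1/2}\widehat a$ (with $\Delta_\zeta$ the modular operator) to get $\xi_\zeta\cdot u_k=J_\zeta u_k^* J_\zeta\xi_\zeta=\Delta_\zeta^{1/2}\widehat{u_k}$, both of norm $1$, I would record
\[
\|u_k\xi_\zeta-\xi_\zeta\cdot u_k\|^2=2\bigl(1-\langle\Delta_\zeta^{1/2}\widehat{u_k},\widehat{u_k}\rangle\bigr).
\]
The spectral theorem for $\Delta_\zeta$ applied to the unit vector $\widehat{u_k}$, together with the elementary bound $1-\sqrt t\le-\log\sqrt t$ for $t>0$, gives $1-\langle\Delta_\zeta^{1/2}\widehat{u_k},\widehat{u_k}\rangle\le\tfrac12\langle(-\log\Delta_\zeta)\widehat{u_k},\widehat{u_k}\rangle$; and $\langle(-\log\Delta_\zeta)\widehat{u_k},\widehat{u_k}\rangle$ is the Araki relative entropy $S\bigl(\zeta\,\big\|\,\zeta\circ\mathrm{Ad}(u_k)\bigr)$, since the relative Tomita operator of this pair is $S_\zeta$ precomposed with left multiplication by $u_k$. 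Summing and using the definition of the Furstenberg-type entropy from Section~\ref{sec:entropy},
\[
\frac1n\sum_{k=1}^n\|u_k\xi_\zeta-\xi_\zeta\cdot u_k\|^2\ \le\ \frac1n\sum_{k=1}^nS\bigl(\zeta\,\big\|\,\zeta\circ\mathrm{Ad}(u_k)\bigr)=h_\varphi(M\subset\mathcal A,\zeta),
\]
which is the noncommutative version of the classical Hellinger-versus-Kullback--Leibler inequality underlying Nevo's proof.

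With this in hand the argument closes. If $h_\varphi(M\subset\mathcal A,\zeta)<c=\kappa^2/n$ then $\max_k\|u_k\xi_\zeta-\xi_\zeta\cdot u_k\|<\kappa$, so the Kazhdan pair yields a nonzero central vector $\eta\in L^2(\mathcal A,\zeta)$; then $m\mapsto\langle m\eta,\eta\rangle$ is a normal trace on $M$, hence a multiple of $\tau$, so $m\mapsto m\eta$ extends to an $M$--$M$ bimodular isometric embedding of the trivial bimodule $L^2(M)$ into $L^2(\mathcal A,\zeta)$. Using that the (harmonic) inclusion $M\subset\mathcal A$ embeds $M$-equivariantly and state-preservingly into $M\subset\mathcal B_\varphi$, this transports to a bimodular copy of $L^2(M)$ inside $L^2(\mathcal B_\varphi,\zeta_{\mathcal B_\varphi})$, which unwinds through the Poisson transform to a nonzero operator in $\Har(\mathcal B(L^2(M,\tau)),\mathcal P_\varphi)\cap\Har(\mathcal B(L^2(M,\tau)),\mathcal P^{\mathrm o}_\varphi)$. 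By Theorem~\ref{thm:biharmconstant} this operator is a scalar, which forces $\zeta$ to be a trace on $\mathcal A$ and, together with irreducibility $M'\cap\mathcal A=\mathbb C$, forces $\mathcal A=M$, contradicting $M\subsetneq\mathcal A$. Hence $h_\varphi(M\subset\mathcal A,\zeta)\ge c$, a constant depending only on $M$ and $\varphi$ (through $\kappa$ and $n$).

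The step I expect to be the real obstacle is this last one: turning a central vector in $L^2(\mathcal A,\zeta)$ into a bi-harmonic operator and verifying that the resulting scalar genuinely collapses the inclusion to $\mathcal A=M$. This is where double ergodicity (Theorem~\ref{thm:biharmconstant}) is used, and where the harmonicity of $\zeta$ is indispensable --- for a non-harmonic trace-extending state, e.g.\ on $M\subset M\rtimes_\alpha\mathbb Z$ for an outer action, one has $h_\varphi=0$ and no gap, so the argument must exploit that $\mathcal A$ sits inside the boundary. By comparison, the entropy estimate of the second step and the appeal to property (T) in the third are routine once the modular-theoretic identifications are in place.
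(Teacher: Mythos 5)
Your first two steps track the paper's argument closely. The inequality $\tfrac1n\sum_k\|u_k 1_\zeta-1_\zeta u_k\|^2\le h_\varphi(M\subset\mathcal A,\zeta)$ is exactly what Lemma~\ref{lem:entropyineq} delivers (the paper writes it as $2-2\langle T1_\zeta,1_\zeta\rangle\le-2\log\langle T1_\zeta,1_\zeta\rangle\le h_\varphi$, proved via operator Jensen applied to $\log$ of truncated modular operators; your pointwise $1-\sqrt t\le-\tfrac12\log t$ on the spectral measure of $\Delta_\zeta$ at the vectors $\widehat{u_k}$ is a legitimate, slightly more elementary route to the same bound). Your packaging of property (T) as a Kazhdan pair for the generating set is equivalent to the paper's sequence-and-contradiction argument, though you assert without proof that the critical set can be taken to be $\{u_1,\dots,u_n\}$; this does follow, since for bi-tracial vectors the set of asymptotically central elements is a $\|\cdot\|_2$-closed unital $*$-algebra, but it deserves a line.

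The genuine gap is your final step. From the central vector $\eta$ the paper does something much shorter and entirely different: $a\mapsto\langle a\eta,\eta\rangle$ is a normal $M$-hypertrace on $\mathcal A$ restricting to $\tau$, hence yields a normal conditional expectation $\mathcal A\to M$ --- and the precise statement being proved (Theorem~\ref{thm:Tentropygap} in the body) carries the hypothesis that \emph{no} such expectation exists, so this is already the contradiction. Your attempt to instead force $\mathcal A=M$ cannot work under the hypotheses as you have them: as you yourself observe, $M\subset M\rtimes_\alpha\mathbb Z$ with the canonical trace is irreducible, proper, trace-extending, and has $h_\varphi=0$, so no argument can conclude $\mathcal A=M$ from a central vector alone. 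Your proposed repair --- embedding $M\subset\mathcal A$ ``state-preservingly into $M\subset\mathcal B_\varphi$'' and unwinding the central vector through the Poisson transform to a bi-harmonic operator --- imports a hypothesis ($\varphi$-stationarity of $\zeta$, equivalently that $\mathcal A$ sits inside the boundary) that appears nowhere in the statement, and the ``unwinding'' itself is not justified: it is not explained how a bimodular copy of $L^2(M)$ in $L^2(\mathcal B_\varphi,\zeta)$ produces an element of $\Har(\mathcal P_\varphi)\cap\Har(\mathcal P_\varphi^{\rm o})$, nor why its being scalar would force $\zeta$ to be tracial on all of $\mathcal A$. Theorem~\ref{thm:biharmconstant} plays no role in the paper's proof of this result. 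The correct fix is simply to replace the conclusion ``$\mathcal A=M$'' by ``there is a normal conditional expectation $\mathcal A\to M$'' and to note that the hypothesis excluding such expectations (present in the body statement, implicit in the introduction's phrasing) is what is actually contradicted.
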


We end with an appendix where we construct Izumi's boundary of a u.c.p.\ map. Our approach is elementary, and has the advantage that it applies for general $C^*$-algebras. This level of generality has no doubt been known by experts, but we could not find this in the current literature.

\section{Boundaries}

\subsection{Hyperstates and bimodular u.c.p.\ maps}

Fix a tracial von Neumann algebra $(M, \tau)$, and suppose we have an embedding $M \subset \mathcal A$ where $\mathcal A$ is a $C^*$-algebra. We say a state $\varphi \in \mathcal A^*$ is a $\tau$-hyperstate (or just a hyperstate if $\tau$ is fixed) if it extends $\tau$. We denote by $\mathcal S_\tau(\mathcal A)$ the convex set of all hyperstates on $\mathcal A$. To each a hyperstate $\varphi$ we obtain a natural inclusion $L^2(M, \tau) \subset L^2(\mathcal A, \varphi)$ induced from the map $x \hat{1} \mapsto x 1_{\varphi}$ for $x \in M$. We let $e_M \in \mathcal B(L^2(\mathcal A, \varphi))$ denote the orthogonal projection onto $L^2(M, \tau)$. We may then consider the unital completely positive (u.c.p.) map $\mathcal P_\varphi: \mathcal A \to \mathcal B(L^2(M, \tau))$, defined by 
\begin{equation}\label{eq:poissontransform}
\mathcal P_\varphi(T) = e_M T e_M, \ \ \ \ \ \ \ T \in \mathcal A.
\end{equation}
Note that if $x \in M \subset \mathcal A$, then we have $\mathcal P_\varphi(x) = x$. We call the map $\mathcal P_\varphi$ the Poisson transform (with respect to $\varphi$) of the inclusion $M \subset \mathcal A$.

The following proposition is inspired from \cite[Section 2.2]{connessurvey}.

\begin{prop}\label{prop:hyperstatecorrespondence}
The correspondence $\varphi \mapsto \mathcal P_\varphi$ defined by (\ref{eq:poissontransform}) gives a bijective correspondence between hyperstates on $\mathcal A$, and u.c.p., $M$-bimodular maps from $\mathcal A$ to $\mathcal B(L^2(M, \tau))$. Moreover, if $\mathcal A$ is a von Neumann algebra, then $\mathcal P_\varphi$ is normal if and only if $\varphi$ is normal.

Also, this correspondence is a homeomorphism where the space of hyperstates is endowed with the weak$^*$-topology, and the space of u.c.p., $M$-bimodular maps with the topology of pointwise weak operator topology convergence.
\end{prop}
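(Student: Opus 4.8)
The plan is to produce an explicit inverse to the assignment $\varphi \mapsto \mathcal P_\varphi$, check that the two maps are mutually inverse by one short computation on $L^2(\mathcal A,\varphi)$, and then read off both the normality statement and the topological statement from (essentially) that same computation. Throughout I identify $L^2(M,\tau)$ with the closed subspace $\overline{M 1_\varphi}\subseteq L^2(\mathcal A,\varphi)$; this is legitimate because $\varphi_{|M}=\tau$ forces $\|x 1_\varphi\|^2=\varphi(x^*x)=\tau(x^*x)$, so $x\hat 1\mapsto x 1_\varphi$ is isometric, and $e_M$ is the projection onto that subspace.

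First I would verify that $\mathcal P_\varphi$ is a unital, completely positive, $M$-bimodular map. The crucial point is that $e_M$ commutes with the left action of $M$ on $L^2(\mathcal A,\varphi)$: the subspace $\overline{M 1_\varphi}$ is obviously invariant under $\pi_\varphi(M)$, and so is its orthogonal complement, since $\langle \pi_\varphi(x)\xi, y 1_\varphi\rangle=\langle \xi,(x^*y) 1_\varphi\rangle=0$ whenever $\xi\perp M 1_\varphi$ and $x,y\in M$. Granting this, $\mathcal P_\varphi(xTy)=e_M\,xTy\,e_M=x\,e_M T e_M\,y=x\mathcal P_\varphi(T)y$ for $x,y\in M$; $\mathcal P_\varphi(1)=e_M$ acts as the identity operator on $L^2(M,\tau)$; and $T\mapsto e_M T e_M$ is completely positive, being a compression of the GNS representation $\pi_\varphi$. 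I would also write down the candidate inverse: it sends an $M$-bimodular u.c.p.\ map $\Phi\colon\mathcal A\to\mathcal B(L^2(M,\tau))$ to the state $\varphi_\Phi(T):=\langle \Phi(T)\hat 1,\hat 1\rangle$. Unitality together with bimodularity forces $\Phi(x)=\Phi(x\cdot 1)=x\Phi(1)=x$ for $x\in M$, so $\varphi_\Phi(x)=\langle x\hat 1,\hat 1\rangle=\tau(x)$ and $\varphi_\Phi$ is a hyperstate.

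Next I would show the two assignments are mutually inverse. One direction is immediate: $\langle \mathcal P_\varphi(T)\hat 1,\hat 1\rangle=\langle e_M T e_M 1_\varphi,1_\varphi\rangle=\langle T 1_\varphi,1_\varphi\rangle=\varphi(T)$ since $e_M 1_\varphi=1_\varphi$. For the other direction, everything hinges on the identity, valid for $x,y\in M$ and $T\in\mathcal A$,
\[
\langle \mathcal P_{\varphi_\Phi}(T)(x\hat 1), y\hat 1\rangle=\varphi_\Phi(y^* T x)=\langle y^*\Phi(T)x\,\hat 1,\hat 1\rangle=\langle \Phi(T)(x\hat 1), y\hat 1\rangle,
\]
where the first equality uses that $e_M$ fixes $x 1_{\varphi_\Phi}$ and $y 1_{\varphi_\Phi}$, so that $\langle e_M T e_M(x 1_{\varphi_\Phi}),y 1_{\varphi_\Phi}\rangle=\langle T x 1_{\varphi_\Phi},y 1_{\varphi_\Phi}\rangle=\varphi_\Phi(y^*Tx)$, and the second uses $M$-bimodularity of $\Phi$. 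Since $M\hat 1$ is dense in $L^2(M,\tau)$, this gives $\mathcal P_{\varphi_\Phi}=\Phi$. Applied to a general hyperstate, the same computation records the formula $\langle \mathcal P_\varphi(T)(x\hat 1),y\hat 1\rangle=\varphi(y^* T x)$, which I reuse below.

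Finally, the remaining two claims are formal. If $\mathcal A$ is a von Neumann algebra and $\varphi$ is normal, then $\pi_\varphi$ is normal and $\mathcal P_\varphi$, being the composition of $\pi_\varphi$ with the normal compression $S\mapsto e_M S e_M$, is normal; conversely $\varphi=\langle \mathcal P_\varphi(\cdot)\hat 1,\hat 1\rangle$ is $\mathcal P_\varphi$ followed by a normal functional on $\mathcal B(L^2(M,\tau))$, so normality of $\mathcal P_\varphi$ implies normality of $\varphi$. For the homeomorphism, the formula $\langle \mathcal P_\varphi(T)(x\hat 1),y\hat 1\rangle=\varphi(y^*Tx)$ shows at once that $\varphi_i\to\varphi$ weak$^*$ implies $\langle \mathcal P_{\varphi_i}(T)(x\hat 1),y\hat 1\rangle\to\langle \mathcal P_\varphi(T)(x\hat 1),y\hat 1\rangle$ for all $x,y\in M$; combined with the uniform bound $\|\mathcal P_{\varphi_i}(T)\|\le\|T\|$, an $\varepsilon/3$ approximation upgrades this to $\langle \mathcal P_{\varphi_i}(T)\xi,\eta\rangle\to\langle \mathcal P_\varphi(T)\xi,\eta\rangle$ for all $\xi,\eta\in L^2(M,\tau)$, i.e.\ convergence in the pointwise weak-operator topology, and the converse implication is trivial from $\varphi(T)=\langle \mathcal P_\varphi(T)\hat 1,\hat 1\rangle$. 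I do not expect a genuine obstacle anywhere; the only step that requires real care is the Hilbert space bookkeeping underlying the setup above—the isometric identification $L^2(M,\tau)\cong\overline{M 1_\varphi}$ and the commutation $e_M\in\pi_\varphi(M)'$—since the well-definedness of $\mathcal P_\varphi$ and the central computation both rest on it.
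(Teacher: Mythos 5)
Your proof is correct and follows essentially the same route as the paper: both arguments turn on the single identity $\langle \mathcal P_\varphi(T)\,x\hat 1, y\hat 1\rangle = \varphi(y^*Tx)$, used to establish injectivity, surjectivity via the candidate inverse $\Phi\mapsto\langle\Phi(\cdot)\hat 1,\hat 1\rangle$, and the homeomorphism claim. You simply spell out more of the bookkeeping (the isometric identification $L^2(M,\tau)\cong\overline{M1_\varphi}$, the commutation $e_M\in\pi_\varphi(M)'$, and the $\varepsilon/3$ upgrade from matrix coefficients on $M\hat 1$ to full WOT convergence) that the paper leaves implicit.
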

\begin{proof}
First note that if $\varphi$ is a hyperstate on $\mathcal A$, then for all $T \in \mathcal A$ we have 
$$
\varphi(T) 
= \langle T, \hat 1 \rangle_{\varphi} 
= \langle \mathcal P_\varphi(T) \hat 1, \hat 1 \rangle_\tau.
$$
From this it follows that the correspondence $\varphi \mapsto \mathcal P_\varphi$ is one-to-one. To see that it is onto, suppose that $\mathcal P: \mathcal A \to \mathcal B(L^2(M, \tau))$ is u.c.p.\ and $M$-bimodular.  We define a state $\varphi$ on $\mathcal A$ by $\varphi(T) = \langle \mathcal P(T) \hat{1}, \hat{1} \rangle_\tau$. For all $y \in M$ we then have $\varphi(y) = \langle \mathcal P(y) \hat 1, \hat 1 \rangle_\tau = \tau(y)$, hence $\varphi$ is a hyperstate. Moreover, if $y, z \in M$, and $T \in \mathcal A$, then we have
\begin{align}
\langle \mathcal P_\varphi(T) \hat y, \hat z \rangle_\tau
& = \langle \mathcal P_\varphi(z^* T y) \hat 1, \hat 1 \rangle_\tau \label{eq:state} \\ 
& = \varphi(z^* T y)
= \langle \mathcal P(T) \hat y, \hat z \rangle_\tau, \nonumber
\end{align}
hence, $\mathcal P_\varphi = \mathcal P$.

It is also easy to check that $\mathcal P_\varphi$ is normal if and only if $\varphi$ is.

To see that this correspondence is a homeomorphism when given the topologies above, suppose that $\varphi$ is a hyperstate, and ${\varphi_\alpha}$ is a net of hyperstates. From (\ref{eq:state}) and the fact that u.c.p.\ maps are contractions in norm we see that $\mathcal P_{{\varphi_\alpha}}$ converges in the pointwise ultraweak topology to $\mathcal P_\varphi$ if ${\varphi_\alpha}$ converges weak$^*$ to $\varphi$. Conversely, setting $y = z = 1$ in (\ref{eq:state}) shows that if $\mathcal P_{\varphi_\alpha}$ converges in the pointwise ultraweak topology to $\mathcal P_\varphi$, then $\varphi_\alpha$ converges weak$^*$ to $\varphi$. 
\end{proof}

Considering the case $\mathcal A = \mathcal B(L^2(M, \tau))$ we see that to each hyperstate $\varphi$ on $\mathcal B(L^2(M, \tau))$ we obtain a u.c.p.\ $M$-bimodular map $\mathcal P_\varphi$ on $\mathcal B(L^2(M, \tau))$. In particular, composing such maps gives a type of convolution operation on the space of hyperstates. More generally, if $\mathcal A$ is a $C^*$-algebra, with $M \subset \mathcal A$, then for hyperstates $\psi \in \mathcal A^*$, and $\varphi \in \mathcal B(L^2(M, \tau))^*$ we define the convolution $\varphi * \psi$ to be the unique hyperstate on $\mathcal A$ such that 
\begin{equation}\label{eq:convolution}
\mathcal P_{\varphi * \psi} = \mathcal P_{ \varphi} \circ \mathcal P_{ \psi}.
\end{equation}
We say that $\psi$ is $\varphi$-stationary if we have $\varphi * \psi = \psi$, or equivalently, if $\mathcal P_\psi$ maps into the space of $\mathcal P_\varphi$-harmonic operators
$$
{\rm Har}(\mathcal P_\varphi) = {\rm Har}( \mathcal B(L^2(M, \tau) ), \mathcal P_\varphi ) 
= \{ T \in \mathcal B(L^2(M, \tau) ) \mid \mathcal P_\varphi(T) = T \}.
$$

\begin{lem}
For a fixed $\psi \in \mathcal S_\tau( \mathcal A )$ the mapping 
$$
\mathcal S_\tau(\mathcal B(L^2(M, \tau))) \ni \varphi \mapsto  \varphi *  \psi \in \mathcal S_\tau(\mathcal A)
$$ 
is continuous in the weak$^*$-topology.

Moreover, if $\varphi \in \mathcal B(L^2(M, \tau))_*$ is a fixed normal hyperstate, then the mapping 
$$
\mathcal S_\tau(\mathcal A) \ni \psi \mapsto  \varphi *  \psi \in \mathcal S_\tau(\mathcal A)
$$ 
is also weak$^*$-continuous.
\end{lem}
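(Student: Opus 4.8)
The plan is to reduce both continuity statements to the single identity
\[
(\varphi * \psi)(T) = \varphi\bigl( \mathcal P_\psi(T) \bigr), \qquad T \in \mathcal A,
\]
and then to read off each assertion from an appropriate convergence property of the Poisson transforms. To obtain the identity I would combine the defining relation $\mathcal P_{\varphi * \psi} = \mathcal P_\varphi \circ \mathcal P_\psi$ with the formula $\eta(S) = \langle \mathcal P_\eta(S)\hat 1, \hat 1\rangle_\tau$ from the first lines of the proof of Proposition~\ref{prop:hyperstatecorrespondence}, applied with $\mathcal B(L^2(M,\tau))$ in place of $\mathcal A$: for $T \in \mathcal A$,
\[
(\varphi * \psi)(T) = \langle \mathcal P_{\varphi*\psi}(T)\hat 1, \hat 1\rangle_\tau = \langle \mathcal P_\varphi\bigl(\mathcal P_\psi(T)\bigr)\hat 1, \hat 1\rangle_\tau = \varphi\bigl(\mathcal P_\psi(T)\bigr).
\]
(The convolution $\varphi * \psi$ is well defined since $\mathcal P_\varphi \circ \mathcal P_\psi$ is again u.c.p.\ and $M$-bimodular, as noted before the lemma.)

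For the first part I would fix $\psi \in \mathcal S_\tau(\mathcal A)$, so that $\mathcal P_\psi$ is a fixed u.c.p.\ map and $\mathcal P_\psi(T)$ is a fixed element of $\mathcal B(L^2(M,\tau))$ for each $T \in \mathcal A$. If $\varphi_\alpha \to \varphi$ weak$^*$, then evaluating at the fixed vector gives $\varphi_\alpha\bigl(\mathcal P_\psi(T)\bigr) \to \varphi\bigl(\mathcal P_\psi(T)\bigr)$ for every $T$, hence $(\varphi_\alpha * \psi)(T) \to (\varphi * \psi)(T)$ for every $T \in \mathcal A$ by the identity above; this is exactly the weak$^*$-convergence $\varphi_\alpha * \psi \to \varphi * \psi$.

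For the second part I would fix a normal hyperstate $\varphi \in \mathcal B(L^2(M,\tau))_*$ and take a net $\psi_\alpha \to \psi$ weak$^*$ in $\mathcal S_\tau(\mathcal A)$. By the homeomorphism assertion of Proposition~\ref{prop:hyperstatecorrespondence}, $\mathcal P_{\psi_\alpha} \to \mathcal P_\psi$ in the pointwise weak operator topology, and since u.c.p.\ maps are contractive the net $\{\mathcal P_{\psi_\alpha}(T)\}_\alpha$ stays in the ball of radius $\|T\|$, where the weak operator and ultraweak topologies coincide; thus $\mathcal P_{\psi_\alpha}(T) \to \mathcal P_\psi(T)$ ultraweakly for each $T$. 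Because $\varphi$ is normal, $\varphi\bigl(\mathcal P_{\psi_\alpha}(T)\bigr) \to \varphi\bigl(\mathcal P_\psi(T)\bigr)$, i.e.\ $(\varphi * \psi_\alpha)(T) \to (\varphi * \psi)(T)$ for all $T \in \mathcal A$, giving the claimed weak$^*$-continuity.

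The argument is essentially immediate once the identity $(\varphi*\psi)(T) = \varphi(\mathcal P_\psi(T))$ is isolated, so there is no substantial obstacle; the one point that must be handled with care is the use of normality of $\varphi$ in the second part. A general state on $\mathcal B(L^2(M,\tau))$ is not ultraweakly continuous, whereas the convergence $\mathcal P_{\psi_\alpha} \to \mathcal P_\psi$ furnished by the correspondence is only pointwise ultraweak (and not in norm); this is precisely why the two halves of the lemma impose different hypotheses on the fixed argument.
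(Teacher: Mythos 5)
Your proof is correct and is essentially the paper's argument: the paper simply cites the homeomorphism of Proposition~\ref{prop:hyperstatecorrespondence} together with the defining relation $\mathcal P_{\varphi*\psi}=\mathcal P_\varphi\circ\mathcal P_\psi$ and says the lemma "follows easily," and your identity $(\varphi*\psi)(T)=\varphi(\mathcal P_\psi(T))$ is exactly the computation being left implicit. Your added care about boundedness (so that pointwise weak operator convergence upgrades to ultraweak convergence where normality of $\varphi$ can be applied) correctly fills in the one nontrivial detail of the second half.
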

\begin{proof}
By Proposition~\ref{prop:hyperstatecorrespondence} the correspondence $\varphi \mapsto \mathcal P_\varphi$ is a homeomorphism from the weak$^*$-topology to the topology of pointwise ultraweak convergence, this lemma then follows easily from (\ref{eq:convolution}).
\end{proof}

\subsection{Poisson boundaries of \texorpdfstring{{\rm II}{$_1$}}{II1} factors}

\begin{defn}
Let $\varphi \in  {\mathcal S}_\tau( \mathcal B(L^2(M, \tau) )$ be a hyperstate. We define the Poisson boundary $\mathcal B_\varphi$ of $M$ with respect to $\varphi$ to be the noncommutative Poisson boundary of the u.c.p.\ map ${\mathcal P}_\varphi$ as defined by Izumi \cite{izumi2}, i.e., the Poisson boundary $\mathcal B_\varphi$ is a $C^*$-algebra (a von Neumann algebra when $\varphi$ is normal) that is isomorphic, as an operator system, to the space of harmonic operators ${\rm Har}( \mathcal B(L^2(M, \tau) ), \mathcal P_\varphi )$.
\end{defn}
Since $M$ is in the multiplicative domain of ${\mathcal P}_\varphi$, we see that $\mathcal B_{\varphi}$ contains $M$ as a subalgebra. Moreover, note that if we have a $C^*$-algebra $\mathcal B$, an inclusion $M \subseteq \mathcal B$ together with a completely positive isometric surjection from $\mathcal B$ to ${\rm Har}( \mathcal B(L^2(M, \tau) ), \mathcal P_\varphi )$, then this induces a completely positive isometric surjection from $\mathcal B$ to $\mathcal B_{\varphi}$ which restricts to the identity on $M$. It's a well-known result of Choi \cite{choidissertation} that a completely positive surjective isometry between two $C^*$-algebras is a $\ast$-isomorphism. Thus, the Poisson boundary contains $M$ as a subalgebra, and the inclusion $(M \subset  \mathcal B_\varphi)$ is determined up to isomorphism by the property that there exists a completely positive isometric surjection $\mathcal P: \mathcal B_\varphi \to {\rm Har}( \mathcal B(L^2(M, \tau) ), \mathcal P_\varphi )$ which restricts to the identity map on $M$. We will always assume that $\mathcal P$ is fixed and we also call $\mathcal P$ the Poisson transform.

Given any initial hyperstate $\varphi_0 \in \mathcal S_\tau( \mathcal B(L^2(M, \tau) ) )$ we may consider the hyperstate given by $\varphi_0 \circ \mathcal P$ on $\mathcal B_\varphi$. Of particular interest is the state $\eta$ on $\mathcal B_\varphi$ arising from the initial hyperstate $\varphi_0(x) \in \mathcal S_{\tau}(\mathcal B(L^2(M, \tau)))$ given by $\varphi_0(x)= \langle x \hat 1, \hat 1 \rangle$, which we call the  stationary state on $\mathcal B_\varphi$. In this case, using (\ref{eq:state}) above, it is easy to see that we have $\mathcal P_\eta = \mathcal P$, and hence $\varphi * \eta = \eta$.

\begin{prop}\label{prop:pbinjective}
Let $(M, \tau)$ be a tracial von Neumann algebra and let $\varphi$ be a fixed hyperstate  on $\mathcal B(L^2(M, \tau))$. Then the Poisson boundary $\mathcal B_\varphi$ is injective.
\end{prop}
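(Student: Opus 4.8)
The plan is to exploit the fact that $\mathcal B_\varphi$ is, as an operator system, the range of a u.c.p.\ idempotent on $\mathcal B(L^2(M,\tau))$, and that the latter algebra is injective. Concretely, recall from Izumi's construction (or the appendix) that $\mathrm{Har}(\mathcal P_\varphi)$ is the image of a u.c.p.\ projection $\mathcal E \colon \mathcal B(L^2(M,\tau)) \to \mathrm{Har}(\mathcal P_\varphi)$, obtained as a weak$^*$-limit point of the Ces\`aro averages $\frac1n\sum_{k=0}^{n-1}\mathcal P_\varphi^k$; the Choi--Effros product on $\mathrm{Har}(\mathcal P_\varphi)$ defined by $x \cdot y = \mathcal E(xy)$ is exactly the one making it $*$-isomorphic to $\mathcal B_\varphi$. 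Since $\mathcal B(L^2(M,\tau))$ is an injective von Neumann algebra, the existence of the u.c.p.\ idempotent $\mathcal E$ onto $\mathrm{Har}(\mathcal P_\varphi)$ immediately shows that the operator system $\mathrm{Har}(\mathcal P_\varphi)$ — equivalently, $\mathcal B_\varphi$ with its Choi--Effros operator-system structure — is injective: given any operator-system inclusion $\mathcal B_\varphi \subset X$ and any u.c.p.\ map $\Phi \colon \mathcal B_\varphi \to Y$, one first extends (using injectivity of $\mathcal B(L^2)$, into which $\mathcal B_\varphi$ sits completely positively via $\mathcal P$) and then composes with $\mathcal E$ to land back in $\mathcal B_\varphi$.

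The one subtlety I would be careful about is the distinction between an \emph{injective operator system} and an \emph{injective $C^*$-algebra (von Neumann algebra)}. What the idempotent argument gives directly is that $\mathcal B_\varphi$ is injective as an operator system. But a classical result of Choi--Effros says that an operator system which is injective and which is (completely order isomorphic to) a unital $C^*$-algebra is in fact an injective $C^*$-algebra; and for von Neumann algebras injective is equivalent to the existence of a conditional expectation from $\mathcal B(\mathcal H)$, hence to semidiscreteness/amenability by Connes. So the logical skeleton is: (i) $\mathcal B(L^2(M,\tau))$ is injective; (ii) $\mathrm{Har}(\mathcal P_\varphi)$ is the range of a u.c.p.\ idempotent on it; (iii) the range of a u.c.p.\ idempotent on an injective operator system is injective; (iv) invoke Choi--Effros to upgrade the injective operator-system structure on $\mathcal B_\varphi$ to injectivity as a $C^*$-algebra; and (v) when $\varphi$ is normal, so that $\mathcal B_\varphi$ is a von Neumann algebra, this is injectivity in the von Neumann algebra sense as well.

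I expect step (iii) to be the only place needing any actual argument, and it is standard: if $p \colon R \to S$ is a u.c.p.\ idempotent with $R$ injective, $S \subset X$, and $\Psi \colon S \to T$ u.c.p., extend $\Psi$ to $\tilde\Psi \colon X \to T$ is the wrong direction — rather, one uses that $S \subset R$, extends the identity map $S \hookrightarrow X$... more cleanly: for $S \subset X$ and $\Psi\colon S\to T$, one wants $\tilde\Psi\colon X\to T$; but injectivity of $S$ is about extending maps \emph{into} $T=S$. So: given $S \subset X$, we must produce a u.c.p.\ retraction $X \to S$. Embed $S \subset R$ (via $p$ being onto $S$, i.e.\ $S\subseteq R$ already) — then $S\subset R$ and also $S\subset X$, take a common operator system containing both, use injectivity of $R$ to get a u.c.p.\ map $X\to R$ restricting to the inclusion $S\subset R$, and compose with $p\colon R\to S$. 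That composition is a u.c.p.\ retraction $X\to S$, proving $S$ injective. Everything else is a citation: injectivity of $\mathcal B(\mathcal H)$ (Arveson/Tomiyama), the Choi--Effros theorem identifying ranges of u.c.p.\ idempotents and its $C^*$-algebra version, and Izumi's identification of $\mathrm{Har}(\mathcal P_\varphi)$ with $\mathcal B_\varphi$, all of which are already in play in the excerpt.
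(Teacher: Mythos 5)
Your proposal is correct and follows essentially the same route as the paper: the authors also take a point-ultraweak accumulation point $E$ of the Ces\`aro averages $\frac{1}{N}\sum_{n=1}^{N}\mathcal P_\varphi^n$ to obtain a u.c.p.\ projection of $\mathcal B(L^2(M,\tau))$ onto ${\rm Har}(\mathcal P_\varphi)$, and then conclude injectivity of $\mathcal B_\varphi$ from the operator-system identification with ${\rm Har}(\mathcal P_\varphi)$, citing Choi--Effros. Your additional care in distinguishing injective operator systems from injective $C^*$-algebras is exactly the content of the Choi--Effros reference the paper invokes, so nothing is missing.
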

\begin{proof}

If we take any accumulation point $E$ of $\left\{ \frac{1}{N} \sum_{n = 1}^N \mathcal P_\varphi^n \right\}_{N \in \mathbb N}$ in the topology of pointwise ultraweak convergence, then $E: \mathcal B(L^2(M, \tau) ) \to {\rm Har}( \mathcal B(L^2(M, \tau) ), \mathcal P_\varphi )$ gives a u.c.p.\ projection. As $\mathcal B_\varphi$ is isomorphic to ${\rm Har}( \mathcal B(L^2(M, \tau) ), \mathcal P_\varphi )$ as an operator system it then follows that $\mathcal B_\varphi$ is injective \cite[Section 3]{choieffros}.
\end{proof}

The trivial case is when $\varphi_e(x) = \langle x \hat 1, \hat 1 \rangle_\tau$ in which case we have that $\mathcal P_{\varphi_e} = {\rm id}$, and the Poisson boundary is nothing but $\mathcal B(L^2(M, \tau))$. Note that $\varphi_e$ gives an identity with respect to convolution. Also note that if $\varphi \in \mathcal B(L^2(M, \tau))^*$ is a hyperstate, then we have a description of the space of harmonic operators as:
$$
{\rm Har}(\mathcal B(L^2(M, \tau)), \mathcal P_\varphi ) = \{ T \in \mathcal B(L^2(M, \tau)) \mid \varphi(a T b) =  \varphi_e(a T b) {\rm \ for \ all \ } a, b \in M \}.
$$

Since $\mathcal P_\varphi$ is $M$-bimodular it follows that $\mathcal P_\varphi(M') \subset M'$. We say that $\varphi$ is  regular if the restriction of $\mathcal P_\varphi$ to $M'$ preserves the canonical trace on $M'$, and we say that $\varphi$ is  generating if $M$ is the largest $*$-subalgebra of $\mathcal B(L^2(M, \tau))$ which is contained in ${\rm Har}(\mathcal B(L^2(M, \tau)), {\mathcal P_\varphi})$. 
If $\varphi$ is regular, then the  conjugate of $\varphi$ which is given by $\varphi^*(T) = \varphi(J T^* J)$, is again a hyperstate. We'll say that $\varphi$ is  symmetric if it is regular and we have $\varphi^* = \varphi$. 

Regular, generating, symmetric hyperstates are easy to find. Suppose $(M, \tau)$ is a separable finite von Neumann algebra with a faithful normal trace $\tau$. We consider the unit ball $(M)_1$ of $M$ as a Polish space endowed with the strong operator topology, and suppose we have a $\sigma$-finite measure $\mu$ on $(M)_1$ such that $\int x^*x \, d\mu(x) = 1$. We obtain a normal hyperstate as
\begin{equation}\label{eq:Poissonsum}
\varphi(T) = \int \langle T \widehat{x^*}, \widehat{x^*} \rangle \, d\mu(x)
\end{equation}
and using (\ref{eq:state}) we may explicitly compute the Poisson transform $\mathcal P_\varphi$ on $\mathcal B(L^2(M, \tau))$ as
$$
\mathcal P_\varphi(T) = \int ( J x^* J ) T (J x J) \, d\mu(x).
$$

\begin{prop}\label{prop:generatinghyperstates}
Consider $\varphi$ as given by (\ref{eq:Poissonsum}), then
\begin{enumerate}
\item $\varphi$ is generating if and only if the support of $\mu$ generates $M$ as a von Neumann algebra.
\item $\varphi$ is regular if and only if $\int x x^* \, d\mu(x) = 1$. In this case $\varphi^*$ is a normal hyperstate.
\item If $\varphi$ is regular, then $\mathcal P_{\varphi^*}(T) = \int (J x J)T(J x^* J) \, d\mu(x)$ and $\varphi$ is symmetric if $J_*\mu = \mu$, where $J$ is the adjoint operation. 
\end{enumerate}
\end{prop}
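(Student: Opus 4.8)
The plan is to verify the three assertions of Proposition~\ref{prop:generatinghyperstates} by unwinding the explicit formula for $\mathcal P_\varphi$ and matching each property against the general definitions recalled just before the statement. Throughout I will use the computation
\[
\mathcal P_\varphi(T) = \int (Jx^*J)\,T\,(JxJ)\,d\mu(x),
\]
which follows from (\ref{eq:state}) and (\ref{eq:Poissonsum}), together with the fact that $Jx^*J$ for $x\in M$ ranges over (a strongly generating subset of) $M' = JMJ$.

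\textbf{Part (1).} First I would observe that by definition $\varphi$ is generating precisely when $M$ is the largest $*$-subalgebra of $\mathcal B(L^2(M,\tau))$ contained in ${\rm Har}(\mathcal P_\varphi)$. The key point is that for a $*$-subalgebra (or more usefully, for the von Neumann algebra it generates) $N$ with $M\subseteq N\subseteq\mathcal B(L^2(M,\tau))$, one has $N\subseteq{\rm Har}(\mathcal P_\varphi)$ if and only if $N$ commutes with $JxJ$ for $\mu$-a.e.\ $x$; indeed $\mathcal P_\varphi$ restricted to such an $N$ is a unital completely positive map fixing $M$, and the harmonic elements of $N$ under the averaging map $T\mapsto\int (Jx^*J)T(JxJ)\,d\mu(x)$ are, by a standard convexity/Cauchy--Schwarz argument (the $\mathcal P_\varphi$-invariant elements of the multiplicative domain), exactly those commuting with each $JxJ$ in the support of $\mu$. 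Hence the largest such $*$-algebra is $(J\,\mathrm{supp}(\mu)\,J)' = (\mathrm{supp}(\mu)'')'\cap(\text{stuff})$; writing $Q$ for the von Neumann algebra generated by $\mathrm{supp}(\mu)$, this largest algebra is $JQ'J \vee \ldots$, and it equals $M$ exactly when $Q = M$. I would phrase this cleanly: $M$ is maximal harmonic iff $(JMJ)' = M$, i.e.\ iff $JMJ = M'$ is generated by $J\,\mathrm{supp}(\mu)\,J$, i.e.\ iff $\mathrm{supp}(\mu)$ generates $M$.

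\textbf{Parts (2) and (3).} For (2), the restriction of $\mathcal P_\varphi$ to $M' = JMJ$ sends $JyJ\mapsto\int (Jx^*J)(JyJ)(JxJ)\,d\mu(x) = J\bigl(\int x^* y\, x\,d\mu(x)\bigr)J$, so the canonical trace on $M'$ (which is $JyJ\mapsto\tau(y)$, using the tracial state, suitably interpreted) is preserved iff $\tau\bigl(\int x^* y x\,d\mu(x)\bigr) = \tau(y)$ for all $y\in M$, i.e.\ iff $\int xx^*\,d\mu(x) = 1$ by traciality of $\tau$ and normality; the normality of $\varphi^*$ then follows since $T\mapsto\varphi(JT^*J)$ is visibly normal when $\varphi$ is. For (3), I would compute directly: $\varphi^*(T) = \varphi(JT^*J) = \int\langle JT^*J\,\widehat{x^*},\widehat{x^*}\rangle\,d\mu(x)$, and unwinding $J$ and the modular conjugation identities gives $\mathcal P_{\varphi^*}(T) = \int (JxJ)T(Jx^*J)\,d\mu(x)$; comparing with the formula for $\mathcal P_\varphi$ shows $\varphi^* = \varphi$ whenever the pushforward $J_*\mu$ of $\mu$ under $x\mapsto Jx = x^*$ equals $\mu$.

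\textbf{Main obstacle.} The genuinely delicate step is Part (1), specifically the claim that the harmonic elements of a von Neumann subalgebra $N\supseteq M$ under the averaging map are exactly the commutant of the support. The ``if'' direction is trivial, but the ``only if'' direction requires the observation that if $T = \int (Jx^*J)T(JxJ)\,d\mu(x)$ with $T$ in the multiplicative domain and $\|T\|$ bounded, then applying the same identity to $T^*T$ and using $\mathcal P_\varphi(T^*T)\geq\mathcal P_\varphi(T^*)\mathcal P_\varphi(T) = T^*T$ with equality forces, via Kadison's inequality and faithfulness of the state, that $T$ commutes with $JxJ$ for $\mu$-a.e.\ $x$. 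I would need to be careful that the measure $\mu$ is only $\sigma$-finite rather than finite, so the ``a.e.'' statements and the interchange of integral and inner product should be justified by a standard exhaustion argument; but this is routine once the finite-measure case is in hand. The remaining identifications in (2) and (3) are straightforward modular-theory bookkeeping and I do not anticipate difficulty there.
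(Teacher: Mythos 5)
Your proposal is correct and follows essentially the same route as the paper: part (1) rests on the same Cauchy--Schwarz/Kadison defect identity $\mathcal P_\varphi(T^*T)-T^*T=\int \left| [JxJ,T] \right|^2 d\mu(x)$ applied to elements $T$ of a harmonic $*$-subalgebra (so that $T$, $T^*T$, $TT^*$ are all harmonic), and parts (2) and (3) are the same direct computations of $\mathcal P_\varphi$ on $JMJ$ and of $\varphi^*$. One small caveat: the appeal to ``faithfulness of the state'' in part (1) is misplaced, since the vector-type state $\varphi$ is not faithful on $\mathcal B(L^2(M,\tau))$; what is actually used, exactly as in the paper, is that the vanishing of the positive operator $\int \left| [JxJ,T] \right|^2 d\mu(x)$, tested against the dense family of vectors $\hat a$ with $a\in M$, forces $[JxJ,T]=0$ for $\mu$-almost every $x$.
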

\begin{proof}
If the support of $\mu$ generates von Neumann algebra $M_0 \subset M$ such that $M_0 \not= M$, then we have $[JxJ, e_{M_0}] = [Jx^* J, e_{M_0} ] = 0$ for each $x$ in the support of $\mu$. Hence, $\mathcal P_\varphi(T) = \int (J x J) T (J x^* J) \, d\mu(x) = T$, for each $T$ in the $*$-algebra generated by $M$ and $e_{M_0}$. Therefore, $\varphi$ is not generating. On the other hand, if $T  \in {\rm Har}(\mathcal B(L^2(M, \tau)), \mathcal P_\varphi)$ is such that we also have $T^*T, TT^* \in {\rm Har}(\mathcal B(L^2(M, \tau)), \mathcal P_\varphi)$, then for each $a \in M$ we have 
\begin{align}
\int  \|  ( (J x J) T & - T (J x J) ) \hat{a} \|_2^2  \, d\mu(x) \nonumber \\
&= \langle (T^* \mathcal P_\varphi(1) T - \mathcal P_\varphi(T^*) T - T^* \mathcal P_\varphi(T) + \mathcal P_\varphi(T^*T) ) \hat{a}, \hat{a} \rangle
= 0, \nonumber
\end{align}
and by symmetry we also have $\int  \|  ( (J x J) T^*  - T^* (J x J) ) \hat{a} \|_2^2  \, d\mu(x)  = 0$.
Hence, $[Jx J, T] = [J x^* J, T] = 0$ for $\mu$-almost every $x \in (M)_1$. Therefore, if the support of $\mu$ generates $M$ as a von Neumann algebra, then $T \in JMJ' = M$, showing that $\varphi$ is generating, thereby proving (i). 

If $y \in M$, then we have $\mathcal P_\varphi(JyJ) = \int J x^* y x J \, d\mu(x)$. Hence, we see that $\varphi$ is regular if and only if for all $y \in M$ we have $\tau (y) = \int \tau (x^* y x) \, d\mu(x) = \int \tau(xx^* y) \, d\mu(x)$, which is if and only if $\int xx^* \, d\mu(x) = 1$, thereby proving (ii). 

If $\varphi$ is regular, then 
\begin{align}
\varphi^*(T) 
&= \varphi( J T^* J )
= \int \langle J T^* J \widehat{x^*}, \widehat{x^*} \rangle \, d\mu(x) \nonumber \\
&= \int \langle \hat{x}, T^* \hat{x} \rangle \, d\mu(x)
= \int \langle T \widehat{x^*}, \widehat{x^*} \rangle \, dJ_*\mu(x). \nonumber
\end{align}
Therefore, if $J_*\mu = \mu$, then $\varphi$ is symmetric, thereby proving (iii). 
\end{proof}

Given a unital $C^*$-algebra $A$, and a u.c.p.\ map $\mathcal P: A \to A$, we denote the set of fixed points of $\mathcal P$ by ${\rm Har}(A, \mathcal P)$. That is, ${\rm Har}(A, \mathcal P)=\{a \in A: \mathcal P(a)=a \}$. The following Lemma is well known, see, e.g., \cite{fnw},  \cite[Lemma 3.4]{bjkw}, or \cite[Lemma 3.1]{CD18} . We include a proof for the convenience of the reader.

\begin{lem}\label{lem:fixedalgebra}
Suppose $A$ is a unital $C^*$-algebra with a faithful state $\varphi$. If $\mathcal P: A \to A$ is a u.c.p.\ map such that $\varphi \circ \mathcal P = \varphi$, then ${\rm Har}( A, \mathcal P) \subset A$ is a $C^*$-subalgebra. 
\end{lem}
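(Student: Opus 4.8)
The plan is to use the standard Schwarz-inequality argument for u.c.p.\ maps together with the faithfulness of $\varphi$ to upgrade the fixed-point set to a $C^*$-subalgebra. The key tool is the Kadison--Schwarz inequality: for a u.c.p.\ map $\mathcal P$ and any $a \in A$ one has $\mathcal P(a)^*\mathcal P(a) \le \mathcal P(a^*a)$, and more precisely the ``defect'' $D_{\mathcal P}(a) := \mathcal P(a^*a) - \mathcal P(a)^*\mathcal P(a) \ge 0$. First I would observe that ${\rm Har}(A, \mathcal P)$ is clearly a norm-closed self-adjoint linear subspace of $A$ containing the unit (self-adjointness uses $\mathcal P(a^*) = \mathcal P(a)^*$, valid for any positive map). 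So the only content is closure under multiplication, and by polarization it suffices to show that $a \in {\rm Har}(A,\mathcal P)$ implies $a^*a \in {\rm Har}(A,\mathcal P)$.

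The heart of the argument is as follows. Fix $a \in {\rm Har}(A,\mathcal P)$, so $\mathcal P(a) = a$ and hence also $\mathcal P(a^*) = a^*$. Then
\[
D_{\mathcal P}(a) = \mathcal P(a^*a) - \mathcal P(a)^*\mathcal P(a) = \mathcal P(a^*a) - a^*a \ge 0.
\]
Now apply $\varphi$ and use $\varphi \circ \mathcal P = \varphi$:
\[
\varphi(D_{\mathcal P}(a)) = \varphi(\mathcal P(a^*a)) - \varphi(a^*a) = \varphi(a^*a) - \varphi(a^*a) = 0.
\]
Since $D_{\mathcal P}(a) \ge 0$ and $\varphi$ is a faithful state, this forces $D_{\mathcal P}(a) = 0$, i.e.\ $\mathcal P(a^*a) = a^*a$, so $a^*a \in {\rm Har}(A, \mathcal P)$.

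To conclude, I would note that for arbitrary $a, b \in {\rm Har}(A,\mathcal P)$ the polarization identity $4\,b^*a = \sum_{k=0}^{3} i^k (a + i^k b)^*(a + i^k b)$ together with the previous paragraph (applied to each $a + i^k b \in {\rm Har}(A,\mathcal P)$) and linearity of $\mathcal P$ shows $\mathcal P(b^*a) = b^*a$. Hence ${\rm Har}(A,\mathcal P)$ is closed under products and adjoints, is norm-closed (as the kernel of the bounded map $\mathrm{id} - \mathcal P$), and therefore is a $C^*$-subalgebra of $A$. The only mild subtlety worth stating carefully is the Kadison--Schwarz inequality for u.c.p.\ (not merely positive and unital) maps and the fact that faithfulness of a \emph{state} $\varphi$ means $\varphi(x) = 0$ with $x \ge 0$ implies $x = 0$; neither is a real obstacle, but the faithfulness step is the one place where the hypotheses are genuinely used, so I would highlight it. I do not anticipate any serious difficulty here — the argument is short and self-contained once the Schwarz inequality is invoked.
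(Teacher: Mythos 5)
Your argument is correct and is essentially identical to the paper's own proof: both reduce to showing $a^*a \in {\rm Har}(A,\mathcal P)$ via polarization, then combine the Kadison--Schwarz inequality with $\varphi\circ\mathcal P = \varphi$ and faithfulness to force $\mathcal P(a^*a) = a^*a$. No issues.
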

\begin{proof}
${\rm Har}( A, \mathcal P)$ is clearly a self-adjoint closed subspace, thus we must show that ${\rm Har}( A, \mathcal P)$ is an algebra. By the polarization identity it is enough to show that $x^* x \in {\rm Har}( A, \mathcal P)$ whenever $x  \in {\rm Har}( A, \mathcal P)$. Suppose  $x \in {\rm Har}(A, \mathcal P)$. By Kadison's inequality we have $\mathcal P(x^*x) - x^* x = \mathcal P(x^*x) - \mathcal P(x^*) \mathcal P(x) \geq 0$. Also, $\varphi( \mathcal P(x^*x) - x^*x) = 0$ so that by faithfulness of $\varphi$ we have $\mathcal P(x^*x) = x^*x$. 
\end{proof}

\begin{prop}\label{prop:relcommutant}
Let $M$ be a finite von Neumann algebra with a normal faithful trace $\tau$. Let $\varphi \in \mathcal B(L^2(M, \tau))^*$ be a regular generating hyperstate, and let $\mathcal B_\varphi$ be the corresponding Poisson boundary, then $M' \cap \mathcal B_\varphi = \mathcal Z(M)$. In particular, if $\varphi$ is also normal and $M$ is a factor, then $\mathcal B_\varphi$ is also a von Neumann factor. 
\end{prop}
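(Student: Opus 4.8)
The plan is to identify the relative commutant $M' \cap \mathcal B_\varphi$ with a set of harmonic operators that are $M$-central, and then use the generating hypothesis to force such operators to land in $\mathcal Z(M)$. Concretely, via the Poisson transform $\mathcal P \colon \mathcal B_\varphi \to {\rm Har}(\mathcal P_\varphi)$, which restricts to the identity on $M$ and is a completely positive isometric surjection, I would first argue that $\mathcal P$ carries $M' \cap \mathcal B_\varphi$ onto $M' \cap {\rm Har}(\mathcal P_\varphi)$. One inclusion is immediate since $\mathcal P$ is $M$-bimodular (being the identity on $M$ and completely positive, $M$ sits in its multiplicative domain, so $\mathcal P(xa) = x\mathcal P(a)$ and $\mathcal P(ax) = \mathcal P(a)x$ for $x \in M$), so $a \in M' \cap \mathcal B_\varphi$ implies $\mathcal P(a) \in M' \cap {\rm Har}(\mathcal P_\varphi)$. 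For the reverse, I would note that $M' \cap \mathcal B_\varphi$ is a von Neumann subalgebra of $\mathcal B_\varphi$ on which $\mathcal P$ is a unital completely positive isometry onto its image, and by Choi's theorem it is multiplicative there; since $M$ is in the multiplicative domain of $\mathcal P_\varphi$ (on the target side), the image $\mathcal P(M' \cap \mathcal B_\varphi)$ is a $C^*$-subalgebra of ${\rm Har}(\mathcal P_\varphi)$ commuting with $M$, i.e.\ a $*$-subalgebra of $\mathcal B(L^2(M,\tau))$ contained in ${\rm Har}(\mathcal P_\varphi)$ that also commutes with $M$.

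Now the generating hypothesis enters. By definition, $\varphi$ being generating means $M$ is the \emph{largest} $*$-subalgebra of $\mathcal B(L^2(M,\tau))$ contained in ${\rm Har}(\mathcal P_\varphi)$. The subalgebra $N$ generated by $M$ together with $\mathcal P(M' \cap \mathcal B_\varphi)$ is then a $*$-subalgebra of $\mathcal B(L^2(M,\tau))$: indeed the two pieces commute with each other, so products close up, and I need to check this generated algebra is still harmonic. This is where regularity is used — I want to apply (an analogue of) Lemma~\ref{lem:fixedalgebra} to conclude that ${\rm Har}(\mathcal P_\varphi)$, or at least the relevant subalgebra, is genuinely multiplicatively closed; since $\varphi$ is regular, $\mathcal P_\varphi$ restricted to $M'$ preserves the canonical trace, so the faithful-state hypothesis of Lemma~\ref{lem:fixedalgebra} applies on $M'$ and shows ${\rm Har}(\mathcal P_\varphi) \cap M'$ is a $C^*$-algebra. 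Combined with the generating property applied to the $*$-algebra generated by $M$ and $\mathcal P(M' \cap \mathcal B_\varphi)$, one gets $\mathcal P(M' \cap \mathcal B_\varphi) \subseteq M$, hence $\mathcal P(M' \cap \mathcal B_\varphi) \subseteq M' \cap M = \mathcal Z(M)$. Since $\mathcal P$ is injective and is the identity on $\mathcal Z(M) \subseteq M$, and since $\mathcal Z(M) \subseteq M' \cap \mathcal B_\varphi$ trivially, this yields $M' \cap \mathcal B_\varphi = \mathcal Z(M)$.

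For the final sentence: if $M$ is a factor then $\mathcal Z(M) = \mathbb C$, so $M' \cap \mathcal B_\varphi = \mathbb C$; and if moreover $\varphi$ is normal then $\mathcal B_\varphi$ is a von Neumann algebra (by the definition recalled above), and a von Neumann algebra with $\mathcal Z(\mathcal B_\varphi) \subseteq M' \cap \mathcal B_\varphi = \mathbb C$ is a factor.

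The main obstacle I anticipate is the bookkeeping around multiplicativity: ${\rm Har}(\mathcal P_\varphi)$ is only an operator system a priori, and $\mathcal P$ is not a homomorphism, so one must be careful to work inside honest $*$-subalgebras where Choi's multiplicative-domain/isometry arguments apply, and to verify that the $*$-algebra generated by $M$ and $\mathcal P(M'\cap\mathcal B_\varphi)$ inside $\mathcal B(L^2(M,\tau))$ is actually harmonic (not merely that its two generating pieces are). Getting the regularity hypothesis to do exactly this job — ensuring a faithful $\mathcal P_\varphi$-invariant state is available on the relevant corner so Lemma~\ref{lem:fixedalgebra} applies — is the delicate point.
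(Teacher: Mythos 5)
Your proposal is correct and follows essentially the same route as the paper's proof: there one observes that $\mathcal P(x)\in M'\cap\mathcal B(L^2(M,\tau))=JMJ$ for $x\in M'\cap\mathcal B_\varphi$, that regularity together with Lemma~\ref{lem:fixedalgebra} makes $\Har(JMJ,\mathcal P_\varphi)$ a von Neumann subalgebra, that the generating hypothesis then forces $\Har(JMJ,\mathcal P_\varphi)\subseteq M\cap M'=\mathcal Z(M)$, and injectivity of $\mathcal P$ (which is the identity on $M$) finishes. Your two detours are unnecessary and can be dropped: the appeal to Choi's theorem to show $\mathcal P(M'\cap\mathcal B_\varphi)$ is a $*$-algebra is circular as stated (Choi's theorem requires the codomain to already be a $C^*$-algebra), and the algebra generated by $M$ and this image never needs to be formed, since $M'\cap\Har(\mathcal P_\varphi)=\Har(JMJ,\mathcal P_\varphi)$ is already a $*$-subalgebra of $\Har(\mathcal P_\varphi)$ containing $\mathcal P(M'\cap\mathcal B_\varphi)$, and the generating hypothesis applies to it directly.
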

\begin{proof}
Let $\mathcal P: \mathcal B_\varphi \to {\rm Har}(\mathcal B(L^2(M, \tau)), \mathcal P_\varphi )$ denote the Poisson transform. If $x \in M' \cap \mathcal B_\varphi$, then $\mathcal P(x) \in M' \cap \mathcal B(L^2(M, \tau)) = J M J$. Since $\varphi$ is regular, $\mathcal P_\varphi$ preserves the trace when restricted to $J M J$. Thus, ${\rm Har}(J M J,\mathcal P_\varphi)$ is a von Neumann subalgebra of $J M J$ by Lemma~\ref{lem:fixedalgebra}. 
 Since $\varphi$ is generating, $M$ is the largest von Neumann subalgebra of ${\rm Har}(\mathcal B(L^2(M, \tau))$, and hence ${\rm Har}(J M J,\mathcal P_\varphi) \subseteq M $, implying that ${\rm Har}(J M J,\mathcal P_\varphi)=\mathcal Z(M)$. Therefore, $\mathcal P(x) \in {\rm Har}(J M J, \mathcal P_\varphi) = \mathcal Z(M)$,
 and hence $x \in \mathcal Z(M)$ since $\mathcal P$ is injective.
\end{proof}

If $\varphi$ is a normal hyperstate in $\mathcal S_\tau( \mathcal B(L^2(M, \tau)))$, then $\mathcal P_\varphi: \mathcal B(L^2(M, \tau) ) \to \mathcal B(L^2(M, \tau))$ is a normal map, and hence the dual  map $\mathcal P_\varphi^*$ preserves the predual of $\mathcal B(L^2(M, \tau))$ which we identify with the space of trace-class operators. 

We let $A_\varphi  \in \mathcal B(L^2(M, \tau))$ denote the density operator associated with $\varphi$, i.e., $A_\varphi$ is the unique trace-class operator so that $\varphi(T) = {\rm Tr}(A_\varphi T)$ for all $T \in \mathcal B(L^2(M, \tau))$. Since $\varphi$ is positive we have that $A_\varphi$ is a positive operator. If $P_{\hat{1}}$ denotes the rank one orthogonal projection onto $\mathbb C \hat{1}$, then we have $\varphi(T) = \langle \mathcal P_\varphi(T) \hat{1}, \hat{1} \rangle = {\rm Tr}( \mathcal P_\varphi(T) P_{\hat{1}} )$, and hence we see that $A_\varphi = \mathcal P_\varphi^*( P_{\hat 1} )$. In particular we have that $A_{\varphi^{* n}} = (\mathcal P_{\varphi}^n)^*(P_{\hat{1}})$ for  $n \geq 1$.

\begin{prop} \label{prop:standard form}
Let $(M, \tau)$ be a tracial von Neumann algebra and let $\varphi \in \mathcal S_\tau( \mathcal B(L^2(M, \tau)))$ be a normal hyperstate, then there exists a $\tau$-orthogonal family $\{ z_n \}_n$ which gives a partition of the identity as $1 = \sum_n z_n^*z_n$ so that 
$$
\mathcal P_\varphi(T) = \sum_n (J z_n^* J) T (J z_n J)
$$ 
for all $T \in \mathcal B(L^2(M, \tau))$. 

Moreover, if $\{ \tilde z_m \}_m$ is a $\tau$-orthogonal family which gives a partition of the identity as $1 = \sum_n \tilde z_n^* \tilde z_n$, then the map $\sum_m (J \tilde z_m^* J) T (J \tilde z_m J)$ agrees with $\mathcal P_\varphi$ if and only if for each $t > 0$ we have
$$
{\rm sp} \{ z_n \mid \| z_n \|_2 = t \} = {\rm sp} \{ \tilde z_m \mid \| \tilde z_m \|_2 = t \}.
$$
\end{prop}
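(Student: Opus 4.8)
The plan is to recover the density operator $A_\varphi = \mathcal P_\varphi^*(P_{\hat 1})$ from either defining family, and observe that the spectral data $\{\|z_n\|_2\}$ together with the corresponding eigenspaces are exactly the invariants of $A_\varphi$. First I would establish existence: starting from $\varphi$ normal, write $A_\varphi = \sum_n \lambda_n P_{\xi_n}$ in diagonal form, where $\{\xi_n\}$ is an orthonormal family in $L^2(M,\tau)$ and $\lambda_n > 0$ with $\sum_n \lambda_n = \mathrm{Tr}(A_\varphi) = \varphi(1) = 1$. Setting $\eta_n = \sqrt{\lambda_n}\,\xi_n$, one has $A_\varphi = \sum_n P_{\eta_n}$ with $\sum_n \|\eta_n\|_2^2 = 1$. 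The point is that each vector $\eta \in L^2(M,\tau)$ of finite $2$-norm is of the form $\widehat{x^*}$ for a (generally unbounded, but square-integrable) element; more usefully, $P_\eta$ can be realized so that the induced rank-one-type contribution to $e_M T e_M$ takes the bimodular form $(JzJ)T(Jz^*J)$ — here one uses the identification $L^2(M,\tau) \cong$ the space of such operators and the formula \eqref{eq:state}. Concretely, I would check that for $\eta = \hat z$ with $z \in L^2(M,\tau)$, the map $T \mapsto e_M (J z^* J) P_{\hat 1} (J z J) e_M$ reproduces the rank-one piece, and that summing over $n$ gives $\mathcal P_\varphi(T) = e_M T e_M$ because $\mathcal P_\varphi^*(T) = \sum_n (J z_n J) T (J z_n^* J)$ has $\mathcal P_\varphi^*(P_{\hat 1}) = \sum_n P_{\widehat{z_n^*}} = A_\varphi$, and a normal u.c.p.\ $M$-bimodular map is determined by its value at $P_{\hat 1}$ via Proposition~\ref{prop:hyperstatecorrespondence}. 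The condition $1 = \sum_n z_n^* z_n$ is then the operatorial reformulation of $\mathcal P_\varphi(1) = 1$, i.e., unitality.

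For the uniqueness/classification half, suppose $\{\tilde z_m\}$ is another $\tau$-orthogonal family partitioning the identity, and that $\sum_m (J\tilde z_m^* J) T (J \tilde z_m J)$ agrees with $\mathcal P_\varphi$ on all of $\mathcal B(L^2(M,\tau))$. Taking the predual/dual map and evaluating at $P_{\hat 1}$, the agreement forces $\sum_m P_{\widehat{\tilde z_m^*}} = A_\varphi = \sum_n P_{\widehat{z_n^*}}$ as trace-class operators. Now both sides are spectral decompositions (the $\tau$-orthogonality of $\{\tilde z_m\}$ says the vectors $\widehat{\tilde z_m^*}$ are pairwise orthogonal in $L^2$, since $\langle \widehat{z^*}, \widehat{w^*}\rangle_\tau = \tau(w z^*) = \langle z, w\rangle_\tau$), with $\widehat{\tilde z_m^*}$ having squared norm $\|\tilde z_m\|_2^2$. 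By uniqueness of the spectral decomposition of a positive compact operator, for each eigenvalue $t^2$ the eigenspace is $\mathrm{sp}\{\widehat{z_n^*} : \|z_n\|_2 = t\} = \mathrm{sp}\{\widehat{\tilde z_m^*} : \|\tilde z_m\|_2 = t\}$ inside $L^2(M,\tau)$, and applying the antilinear isometry $J$ (which sends $\hat z \mapsto \widehat{z^*}$) translates this into the asserted equality of spans of $z_n$ and $\tilde z_m$. Conversely, if those spans agree for every $t$, then $\sum_n P_{\widehat{z_n^*}} = \sum_m P_{\widehat{\tilde z_m^*}}$, so the two bimodular maps have the same value at $P_{\hat 1}$, hence are equal by the bijectivity in Proposition~\ref{prop:hyperstatecorrespondence}.

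The main obstacle I anticipate is bookkeeping around unbounded elements: a vector in $L^2(M,\tau)$ of finite norm need not be $\hat z$ for $z \in M$, so "$z_n$" should be read as an element of $L^2(M,\tau)$ (equivalently a closed densely-defined operator affiliated with $M$), and one must make sure the expressions $(Jz_n^* J) T (J z_n J)$ and the sum $\sum_n z_n^* z_n$ are interpreted correctly (the latter converging in the appropriate weak sense, as an identity of positive forms). Sorting out these conventions — and verifying that the rank-one building blocks genuinely assemble, via \eqref{eq:state}, into $T \mapsto e_M T e_M$ rather than merely into a map with the right value at $P_{\hat 1}$ — is where the real care is needed; the spectral-theory core of both directions is then essentially immediate. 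It is likely cleanest to isolate the correspondence "$A_\varphi \leftrightarrow \mathcal P_\varphi$" explicitly and phrase everything in terms of the eigenprojections of $A_\varphi$.
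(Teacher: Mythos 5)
Your overall strategy --- diagonalize the density operator $A_\varphi = \mathcal P_\varphi^*(P_{\hat 1})$, read off the $z_n$ from its eigenvectors, recover $\mathcal P_\varphi$ from $A_\varphi$ via the bijection of Proposition~\ref{prop:hyperstatecorrespondence}, and settle the uniqueness clause by the spectral theorem --- is exactly the paper's. But there is one genuine gap, and it is precisely at the point you flag as ``bookkeeping'': the eigenvectors of $A_\varphi$ are \emph{not} merely square-integrable elements to be interpreted as affiliated operators; they are honest elements of $M$, and proving this is the one nontrivial step in the existence half. The paper's argument is short but essential: writing $A_\varphi = \sum_n a_n P_{y_n}$ with $\{y_n\}$ orthonormal and $a_n>0$, and testing against $T = x^*x$ for $x \in M$, the hyperstate condition gives
$$
a_n \| x y_n \|_2^2 \;\leq\; {\rm Tr}(x^*x\, A_\varphi) \;=\; \varphi(x^*x) \;=\; \tau(x^*x) \;=\; \| x \|_2^2,
$$
so $\hat x \mapsto x y_n$ is bounded, i.e., $y_n$ is a bounded vector and hence $y_n \in M$. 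Without this, the proposed resolution (read $z_n$ as a closed densely defined operator affiliated with $M$) does not prove the proposition as stated: the partition of identity $1 = \sum_n z_n^* z_n$ is an identity in $M$, and the expression $(Jz_n^*J)T(Jz_nJ)$ for arbitrary $T \in \mathcal B(L^2(M,\tau))$ requires $Jz_nJ$ to be a bounded operator. So the issue is not a convention to be chosen but a fact to be proved, and it is exactly the hyperstate hypothesis $\varphi|_M = \tau$ that delivers it.

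The rest of your outline is sound and matches the paper. In particular, your observation that a normal u.c.p.\ $M$-bimodular map is determined by the hyperstate $T \mapsto \langle \mathcal P(T)\hat 1, \hat 1\rangle$, equivalently by $\mathcal P^*(P_{\hat 1})$, is a legitimate shortcut for both the identification $\mathcal P_\varphi(T) = \sum_n (Jz_n^*J)T(Jz_nJ)$ and the ``if'' direction of the uniqueness clause (the paper instead argues directly from density of ${\rm sp}\{xP_{\hat 1}y : x,y \in M\}$ in the trace-class operators, which amounts to the same thing). The uniqueness argument via uniqueness of eigenspaces of the positive trace-class operator $A_\varphi$, with $\widehat{\tilde z_m^*}$ an eigenvector of eigenvalue $\|\tilde z_m\|_2^2$, is also the paper's. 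Once you insert the boundedness estimate above, the proof is complete.
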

\begin{proof}
Since $A_\varphi$ is a positive trace-class operator we may write $A_\varphi = \sum_n a_n P_{y_n}$ where $a_1, a_2, \ldots $ are positive and $\{ y_n \}_n$ is an orthonormal family with $P_{y_n}$ denoting the rank one projection onto $\mathbb C y_n$. For $T \in \mathcal B(L^2(M, \tau))$ we then have
$$
{\rm Tr}(T A_\varphi)
= \sum_n a_n \langle T y_n, y_n \rangle.
$$
Taking $T = x^*x \in M$ we have $a_n \| x y_n \|_2^2 \leq {\rm Tr}(x^* x A_\varphi) = \| x \|_2^2$, so that  $y_n \in M \subset L^2(M, \tau)$ for each $n$. Hence, for $T \in \mathcal B(L^2(M, \tau))$ we have
\begin{align}
{\rm Tr}(\mathcal P_\varphi(T) P_{\hat{1}} )
&= {\rm Tr}(T A_\varphi) 
= \left\langle \sum_{n} a_n (J y_n J) T (J y_n^* J) \hat{1}, \hat{1} \right\rangle \nonumber \\
&= {\rm Tr} \left(   \left(\sum_{n} a_n (J y_n J) T  (J y_n^* J) \right) P_{\hat{1}} \right). \nonumber 
\end{align}

Since $\mathcal P_\varphi$ is $M$-bimodular and since $J y_n J \in M'$ it follows that for all $x, y \in M$ we have
$$
{\rm Tr}(\mathcal P_\varphi(T) x P_{\hat{1}} y) 
= {\rm Tr} \left(   \left(\sum_{n} a_n (J y_n J) T  (J y_n^* J) \right) x P_{\hat{1}} y \right).
$$
In particular, setting $T = y = 1$ we have
$$
\tau(x) = \sum_n a_n \tau(y_n^* y_n x), 
$$
which shows that $\sum_n a_n y_n^* y_n = 1$. 

Since the span of operators of the form $x P_{\hat{1}} y$ is dense in the space of trace-class operators it then follows that $\mathcal P_\varphi(T) = \sum_{n} a_n (J y_n J) T  (J y_n^* J)$ for all $T \in \mathcal B(L^2(M, \tau))$. Setting $z_n = \sqrt{a_n} y_n^*$ then finishes the existence part of the proposition.

Suppose now that $\{ \tilde z_m \}_m$ is a $\tau$-orthogonal family which gives a partition of the identity $1 = \sum_n \tilde z_n^* \tilde z_n$, and set $\tilde \varphi(T) = {\rm Tr}( ( \sum_n (J \tilde z_n^* J) T (J \tilde z_n J) ) P_{\hat{1}} )$. Then, the density matrix $A_{\tilde \varphi}$, corresponding to $\tilde \varphi$, is given by $A_{\tilde \varphi}=\sum_n \tilde z_n^* P_{\hat{1}} \tilde z_n$. Since $\{ \tilde z_n \}_n$ forms a $\tau$-orthogonal family it then follows easily that $\tilde z_n^*$ is an eigenvector for $A_{\tilde \varphi}$, and the corresponding eigenvalue is $\| \tilde z_n^* \|_2^2 = \| \tilde z_n \|_2^2$. 

Using our notation from the first part of the proof of the proposition, we have that $A_{ \varphi}=\sum_n  z_n^* P_{\hat{1}} z_n$. By the same argument as above, we get that $ z_n^*$ is an eigenvector for $A_{ \varphi}$, and the corresponding eigenvalue is $\|  z_n^* \|_2^2 = \| z_n \|_2^2$. Note that $\mathcal P_{\varphi}= \mathcal P_{\tilde \varphi}$ if and only if $A_{ \varphi}=A_{ \tilde \varphi}$. Since the corresponding density matrices are positive trace class operators, the moreover part of the proposition follow easily from the Spectral Theorem.
\end{proof}

We say that the form $\mathcal P_\varphi(T) = \sum_n (J z_n^* J) T (J z_n J)$ (resp.\ $\varphi(T) = \sum_n \langle T \widehat{z_n^*}, \widehat{z_n^*} \rangle$) is a  standard form for $\mathcal P_\varphi$ (resp.\ $\varphi$). It follows from Proposition~\ref{prop:generatinghyperstates} that $\varphi$ is generating if and only if $\{z_n \}_n$ generates $M$ as a von Neumann algebra. We say that $\varphi$ is strongly generating if the unital algebra (rather than the unital $*$-algebra) generated by $\{ z_n \}_n$ is already weakly dense in $M$. This is the case, for example, if $\varphi$ is generating and symmetric, since then we have that $\{ z_n \}_n = \{ z_n^* \}_n$, and hence the unital algebra generated by $\{ z_n \}_n$ is already a $*$-algebra.

\begin{prop}\label{prop:faithfulstate}
Let $(M, \tau)$ be a tracial von Neumann algebra and suppose $\varphi$ is a normal strongly generating hyperstate, then the stationary state $\zeta = \varphi \circ \mathcal P$ gives a normal faithful state on the Poisson boundary $\mathcal B_\varphi$ such that $\zeta_{| M } = \tau$. 
\end{prop}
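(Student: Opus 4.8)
The plan is the following. Write $\mathcal P\colon\mathcal B_\varphi\to{\rm Har}(\mathcal B(L^2(M,\tau)),\mathcal P_\varphi)$ for the Poisson transform; it is an injective, unital, completely positive and isometric surjection restricting to the identity on $M$, and every harmonic operator $S$ satisfies $\varphi(S)=\langle\mathcal P_\varphi(S)\hat 1,\hat 1\rangle=\langle S\hat 1,\hat 1\rangle$. Hence for $b\in\mathcal B_\varphi$ one has $\zeta(b)=\varphi(\mathcal P(b))=\langle\mathcal P(b)\hat 1,\hat 1\rangle$. Taking $b=x\in M$ and using $\mathcal P(x)=x$ gives $\zeta(x)=\langle x\hat 1,\hat 1\rangle_\tau=\tau(x)$, so $\zeta_{|M}=\tau$. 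For normality I would use the fact — part of Izumi's construction, and established in the appendix — that when $\varphi$ is normal the $\sigma$-weak topology of $\mathcal B_\varphi\cong{\rm Har}(\mathcal B(L^2(M,\tau)),\mathcal P_\varphi)$ coincides with the one induced from $\mathcal B(L^2(M,\tau))$; then $\mathcal P$ is normal, and $\zeta=\varphi\circ\mathcal P$, being a composition of normal maps, is a normal state. This reduces the proposition to proving faithfulness.

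For faithfulness, suppose $b\in\mathcal B_\varphi$ with $b\geq 0$ and $\zeta(b)=0$, and put $T:=\mathcal P(b)$; since $\mathcal P$ is positive, $T\geq 0$, $T$ is $\mathcal P_\varphi$-harmonic, and $\langle T\hat 1,\hat 1\rangle=0$. Fix a standard form $\mathcal P_\varphi(\cdot)=\sum_n(Jz_n^*J)(\cdot)(Jz_nJ)$ as in Proposition~\ref{prop:standard form}; that $\varphi$ is strongly generating means precisely that the unital (not necessarily $*$-closed) subalgebra $A\subseteq M$ generated by $\{z_n\}_n$ is weakly dense in $M$. Iterating the standard form and using normality of $\mathcal P_\varphi$ to rearrange, one checks by induction that for every $k\geq 0$,
\[
\mathcal P_\varphi^k(S)=\sum_w(JwJ)^*\,S\,(JwJ),
\]
where $w$ runs over all length-$k$ products $z_{i_1}z_{i_2}\cdots z_{i_k}$ of the $z_n$'s and the sum converges $\sigma$-weakly. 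Since $(JwJ)\hat 1=\widehat{w^*}$ and $T$ is harmonic,
\[
0=\langle T\hat 1,\hat 1\rangle=\langle\mathcal P_\varphi^k(T)\hat 1,\hat 1\rangle=\sum_w\langle T\widehat{w^*},\widehat{w^*}\rangle=\sum_w\|T^{1/2}\widehat{w^*}\|_2^2 ,
\]
so $T\widehat{w^*}=0$ for every finite product $w$ of the $z_n$'s (including $w=1$).

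It remains to conclude $T=0$. The vectors $\widehat{w^*}$ appearing above are exactly the vectors $\widehat{v}$ with $v$ a product of the $z_n^*$'s, so their linear span is $\widehat{A^*}$, where $A^*$ is the unital algebra generated by $\{z_n^*\}_n$; as the adjoint operation is weak-operator continuous, $A^*$ is weakly dense in $M$ along with $A$. A standard argument then shows $\widehat{A^*}$ is $\|\cdot\|_2$-dense in $L^2(M,\tau)$: its $\|\cdot\|_2$-closure $K$ is a closed subspace containing $\hat 1$ and invariant under left multiplication by each $z_n^*$, hence by $A^*$, and hence — being closed and convex, and using that left multiplication is weakly continuous — by $M$; therefore $K\supseteq M\hat 1=L^2(M,\tau)$. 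Since $T$ is bounded and vanishes on the dense subspace $\widehat{A^*}$, we get $T=0$, whence $b=0$ by injectivity of $\mathcal P$, proving faithfulness.

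I expect the crux to be this last deduction, and in particular the reason \emph{strong} generation is the right hypothesis: harmonicity of $T$ only yields that $T$ kills the vectors $\widehat{w^*}$ for $w$ a product of the $z_n$'s themselves — the adjoints $z_n^*$ enter the standard form only through the commutant elements $Jz_n^*J$, which do not affect which vectors of $L^2(M,\tau)$ are reached — so one genuinely needs the \emph{algebra} generated by $\{z_n\}_n$, rather than merely the $*$-algebra, to be weakly dense. The remaining points (the combinatorics and $\sigma$-weak convergence in iterating $\mathcal P_\varphi$, and the module argument for $\|\cdot\|_2$-density) are routine.
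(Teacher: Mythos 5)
Your proof is correct and follows essentially the same route as the paper's: reduce via the Poisson transform to showing the vector state $\langle\,\cdot\,\hat 1,\hat 1\rangle$ is faithful on $\Har(\mathcal P_\varphi)$, iterate the standard form to conclude that a positive harmonic $T$ with $\langle T\hat 1,\hat 1\rangle=0$ annihilates all vectors coming from products of the generators, and invoke strong generation. Your extra care about adjoints (the annihilated vectors $\widehat{w^*}$ span the image of the algebra generated by $\{z_n^*\}$, which is weakly dense exactly when the algebra generated by $\{z_n\}$ is) and the explicit $\|\cdot\|_2$-density argument merely spell out details the paper leaves implicit.
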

\begin{proof}
By considering the Poisson transform $\mathcal P$, it suffices to show that $\varphi$ is normal and faithful on the operator system $\Har(\mathcal P_\varphi)$. Note that here the stationary state is a vector state and hence normality follows. To see that the state is faithful fix $T \in \Har(\mathcal P_\varphi)$, with $T \geq 0$ and $\langle T \hat{1}, \hat{1} \rangle = 0$. Let $\mathcal P_\varphi(S)= \sum_n (Jz_n^*J) S (J z_n J)$ be the standard form of $\mathcal P_\varphi$. Since $T \in \Har(\mathcal P_\varphi)$, we have that $\mathcal P_\varphi^k(T) =T$, for each $k \in \mathbb N$. Expanding the standard form gives
\[
0 = \langle T \hat{1}, \hat{1} \rangle
= \langle P_\varphi^k(T) \hat{1}, \hat{1} \rangle 
= \sum_{n_1, n_2, \ldots, n_k} \langle T z_{n_1} z_{n_2} \cdots z_{n_k} \hat{1}, z_{n_1} z_{n_2} \cdots z_{n_k} \hat{1} \rangle.
\]
We then have $T \hat{m}=0$ for all $m$ in the unital algebra generated by $\{ z_n \}$, and as $\varphi$ is strongly generating it then follows that $T=0$.
\end{proof}

We end this section by giving a condition for the boundary to be trivial. We denote the space of trace-class operators on $L^2(M, \tau)$ by ${\rm TC}(L^2(M, \tau))$. We also denote the trace-class norm on ${\rm TC}(L^2(M, \tau))$ by $\| \cdot \|_{\rm TC}$. We identify $\mathcal B(L^2(M, \tau))$ with ${\rm TC}(L^2(M, \tau))^{\ast}$ via the pairing $(A,T) \mapsto {\rm }Tr(AT)$, where $A \in {\rm TC}(L^2(M, \tau))$, and $T \in \mathcal B(L^2(M, \tau)). $

\begin{thm}\label{thm:foguel}
Let $(M, \tau)$ be a tracial von Neumann algebra and let $\psi$ be a normal hyperstate. set $\varphi = \frac{1}{2} \psi + \frac{1}{2} \langle \cdot \hat 1, \hat 1 \rangle$ and let $A_n \in {\rm TC}(L^2(M, \tau))$ denote the density matrix corresponding to the normal, u.c.p.\ $M$-bimodular map $\mathcal P_{\varphi}^n$. Then the following conditions are equivalent
\begin{enumerate}
\item For all $x \in M$ we have $\| x A_n - A_n x \|_{\rm TC} \to 0$. 
\item For all $x \in M$ we have $xA_n-A_nx \rightarrow 0$ weakly.
\item $\Har(\mathcal P_{\varphi})=M$ 
\end{enumerate}
\end{thm}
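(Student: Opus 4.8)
The plan is to establish the cycle $(1)\Rightarrow(2)\Rightarrow(3)\Rightarrow(1)$, working throughout with the Banach space duality ${\rm TC}(L^2(M,\tau))^{*}=\mathcal B(L^2(M,\tau))$ from the statement, and with $A_n=(\mathcal P_\varphi^n)^{*}(P_{\hat 1})$. The ``laziness'' $\varphi=\tfrac12\psi+\tfrac12\langle\,\cdot\,\hat 1,\hat 1\rangle$ translates, via Proposition~\ref{prop:hyperstatecorrespondence} and affineness of $\varphi\mapsto\mathcal P_\varphi$, into $\mathcal P_\varphi=\tfrac12\,{\rm id}+\tfrac12\,\mathcal P_\psi$, hence on trace class $\mathcal P_\varphi^{*}=\tfrac12\,{\rm id}+\tfrac12\,\mathcal P_\psi^{*}$, a contraction. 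The implication $(1)\Rightarrow(2)$ is immediate from $|{\rm Tr}((xA_n-A_nx)S)|\le\|xA_n-A_nx\|_{\rm TC}\|S\|$ for $S\in\mathcal B(L^2(M,\tau))$.

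For $(2)\Rightarrow(3)$: since $M\subseteq\Har(\mathcal P_\varphi)$ always, it is enough to show $\Har(\mathcal P_\varphi)\subseteq(JMJ)'=M$. Fix $T\in\Har(\mathcal P_\varphi)$ and $u\in\mathcal U(M)$. Using $\mathcal P_\varphi^n(T)=T$ and the $M$-bimodularity of $\mathcal P_\varphi$, one computes $\langle T\hat a,\hat b\rangle=\langle\mathcal P_\varphi^n(b^{*}Ta)\hat 1,\hat 1\rangle={\rm Tr}(b^{*}Ta\,A_n)$ for every $n$ and every $a,b\in M$; replacing $\hat a,\hat b$ by $(Ju^{*}J)\hat a,(Ju^{*}J)\hat b$ and using $(Ju^*J)^*=JuJ$ gives
\[
\langle\big((JuJ)T(Ju^{*}J)-T\big)\hat a,\hat b\rangle={\rm Tr}\big(b^{*}Ta\,(uA_nu^{*}-A_n)\big).
\]
Now $uA_nu^{*}-A_n=(uA_n-A_nu)u^{*}$ tends to $0$ weakly in ${\rm TC}(L^2(M,\tau))$ by (2), because weak convergence there is preserved under right multiplication by the fixed operator $u^{*}$; hence the right-hand side tends to $0$, and since the left-hand side is independent of $n$ it vanishes. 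So $(JuJ)T=T(JuJ)$ for all $u\in\mathcal U(M)$, i.e.\ $T\in(JMJ)'=M$.

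The substance lies in $(3)\Rightarrow(1)$. The predual map $\mathcal P_\varphi^{*}$ is $M$-bimodular on ${\rm TC}(L^2(M,\tau))$ (transpose of the bimodularity of $\mathcal P_\varphi$), so for $x\in M$ we may write $xA_n-A_nx=(\mathcal P_\varphi^{*})^n(B_0)$ with $B_0:=xP_{\hat 1}-P_{\hat 1}x\in{\rm TC}(L^2(M,\tau))$. Next, $B_0$ lies in the norm-closure of ${\rm ran}({\rm id}-\mathcal P_\varphi^{*})$: this closure is the pre-annihilator of $\ker({\rm id}-\mathcal P_\varphi)=\Har(\mathcal P_\varphi)$, which by (3) equals $M$, and for $T\in M$ one has ${\rm Tr}(B_0T)=\langle T\hat x,\hat 1\rangle-\langle xT\hat 1,\hat 1\rangle=\tau(Tx)-\tau(xT)=0$ by traciality. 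Finally, $\mathcal P_\varphi^{*}$ is a contraction whose spectrum $\sigma(\mathcal P_\varphi^{*})=\sigma(\mathcal P_\varphi)\subseteq\tfrac12+\tfrac12\overline{\mathbb D}$ meets the unit circle only at $1$, so by the Katznelson--Tzafriri theorem $\|(\mathcal P_\varphi^{*})^n({\rm id}-\mathcal P_\varphi^{*})\|_{{\rm TC}\to{\rm TC}}\to0$. (For this particular lazy operator one may instead argue directly: $(\mathcal P_\varphi^{*})^n({\rm id}-\mathcal P_\varphi^{*})=2^{-n-1}({\rm id}+\mathcal P_\psi^{*})^n({\rm id}-\mathcal P_\psi^{*})$, and telescoping the binomial coefficients bounds the norm by $2^{-n}\binom{n}{\lfloor n/2\rfloor}=O(n^{-1/2})$.) Combining these facts, given $\varepsilon>0$ choose $C$ with $\|B_0-({\rm id}-\mathcal P_\varphi^{*})C\|_{\rm TC}<\varepsilon$; as the maps $(\mathcal P_\varphi^{*})^n$ are contractions,
\[
\limsup_n\|xA_n-A_nx\|_{\rm TC}\le\varepsilon+\limsup_n\|(\mathcal P_\varphi^{*})^n({\rm id}-\mathcal P_\varphi^{*})\|\,\|C\|_{\rm TC}=\varepsilon,
\]
so $xA_n-A_nx\to0$ in trace norm for all $x\in M$. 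The main obstacle is exactly this last functional-analytic input, $\|(\mathcal P_\varphi^{*})^n({\rm id}-\mathcal P_\varphi^{*})\|\to0$; everything else is bookkeeping with $M$-bimodularity and the duality ${\rm TC}(L^2(M,\tau))^{*}=\mathcal B(L^2(M,\tau))$. It is precisely where the averaging with $\langle\,\cdot\,\hat 1,\hat 1\rangle$ is used: without it the same annihilator computation, together with the mean ergodic theorem, yields only the Ces\`aro statement $\tfrac1N\sum_{n\le N}(xA_n-A_nx)\to0$, and periodicity of $\mathcal P_\psi$ may obstruct genuine convergence.
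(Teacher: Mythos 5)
Your proposal is correct and follows essentially the same route as the paper: the same duality between $\overline{{\rm ran}({\rm id}-\mathcal P_\varphi^*)}$ and ${\rm Har}(\mathcal P_\varphi)$ for $(3)\Rightarrow(1)$, the same Ornstein--Sucheston binomial estimate $2^{-n}\binom{n}{\lfloor n/2\rfloor}=O(n^{-1/2})$ exploiting the laziness of $\varphi$, and the same commutator computation against $b^*Ta$ for $(2)\Rightarrow(3)$. The only cosmetic differences are that you phrase the key estimate as a uniform bound $\|(\mathcal P_\varphi^*)^n({\rm id}-\mathcal P_\varphi^*)\|\to 0$ (with Katznelson--Tzafriri as an optional alternative) rather than applying it to the single element $P_{\hat 1}$ and then invoking $M$-bimodularity and density, as the paper does via its subspace $\mathcal A_0$.
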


\begin{proof}
The first condition trivially implies the second. To see that the second implies the third suppose for each $x \in M$ we have $xA_n-A_nx \rightarrow 0$ weakly as $n \rightarrow \infty$. Let $T \in \Har(\mathcal P_{\varphi})$. Let $x, a,b \in M$. Then taking inner products in $L^2(M, \tau)$ we have
\begin{align*}
|\langle (TJxJ- JxJ T)  a\hat{1}, b\hat{1} \rangle| &= |\langle (b^*Tax^*-x^*b^*Ta) \hat{1}, \hat{1} \rangle | \\
&= | \langle \mathcal P_{\varphi}^n(b^*Tax^*-x^*b^*Ta) \hat{1}, \hat{1} \rangle | = |{\rm Tr}(A_n (b^*Tax^*-x^*b^*Ta) )| \\
&= |{\rm Tr}((x^*A_n-A_nx^*)b^*Ta)|  \rightarrow 0.
\end{align*}
Hence $T \in JMJ' = M$. 

To see that the third condition implies the first we adapt the approach of Foguel from \cite{foguel}. Suppose $\Har(\mathcal P_{\varphi})=M$. Set $\mathcal A_0 = \{ A \in {\rm TC}(L^2(M, \tau)) \mid \|(\mathcal P_{\varphi}^n)^*(A) \|_{\rm TC} \to 0 \}$.  Note that since $(\mathcal P_{\varphi}^n)^*$ is a contraction in the trace-class norm we have that $\mathcal A_0$ is a closed subspace. 

Since $\varphi = \frac{1}{2} \psi + \frac{1}{2} \langle \cdot \hat 1, \hat 1 \rangle$ we have $\mathcal P_\varphi^* = \frac{1}{2} {\rm id} + \frac{1}{2} \mathcal P_\psi^*$ and we compute 
\begin{align*}
(\mathcal P_\varphi^n)^* ({\rm id} - \mathcal P_\varphi^*)
&= 2^{-(n + 1)} \left( \sum_{k = 0}^n \binom{n}{k} (\mathcal P_\psi^k)^* \right) ({\rm id} - \mathcal P_\psi^*) \\
&= 2^{-(n + 1)} \sum_{k = 1}^n \left( \binom{n}{k-1} - \binom{n}{k}  \right) \mathcal P_{\psi}^*.
\end{align*}
We have $\lim_{n \to \infty} 2^{-(n + 1)} \sum_{k = 1}^n | \binom{n}{k-1} - \binom{n}{k}  | = 0$ (see (1.8) in \cite{ornsteinsucheston}) hence $\| (\mathcal P_\varphi^n)^*( P_{\hat 1} - \mathcal P_\varphi^*(P_{\hat 1}) )\|_{\rm TC} \to 0$. Thus $P_{\hat 1} - \mathcal P_{\varphi}^*(P_{\hat 1}) \in \mathcal A_0$.

Since $\mathcal P_{\varphi}^*$ is $M$-bimodular we then have that  $a P_{\hat 1} b - \mathcal P_{\varphi}^*(a P_{\hat 1} b) \in \mathcal A_0$ for each $a, b \in M$ and hence $B - \mathcal P_{\varphi}^*(B) \in \mathcal A_0$ for all $B \in {\rm TC}(L^2(M, \tau))$. If $T \in \mathcal B(L^2(M, \tau))$ is such that ${\rm Tr}(AT) = 0$ for all $A \in \mathcal A_0$, then for all $B \in {\rm TC}(L^2(M, \tau))$ we have $\langle B - \mathcal P_{\varphi}^*(B), T \rangle = 0$ so that $T \in {\rm Har}(\mathcal P_{\varphi}) = M$. Hence the annihilator of $\mathcal A_0$ is contained in $M.$ So the pre-annihilator of $M$ must be contained in $\mathcal A_0$.
Thus $A \in \mathcal A_0$ whenever ${\rm Tr}(A x) = 0$ for all $x \in M$. In particular, we have $x P_{\hat 1} - P_{\hat 1} x \in \mathcal A_0$ for all $x \in M$, which is equivalent to the fact that $\| x A_n - A_n x \|_{\rm TC} \to 0$ for each $x \in M$. 
\end{proof}

\section{Biharmonic operators}

If $\varphi \in \mathcal S_\tau(\mathcal B(L^2(M, \tau)))$ is regular and normal, then we define $\mathcal P_\varphi^{\rm o}$ to be the u.c.p.\ map given by $\mathcal P_\varphi^{\rm o} = {\rm Ad}(J) \circ \mathcal P_{\varphi^*} \circ {\rm Ad}(J)$. Note that $\mathcal P_\varphi^{\rm o}$ and $\mathcal P_\eta$ commute for any normal hyperstate $\eta$. Indeed, if we have standard forms $\mathcal P_\varphi(T) = \sum_n (J z_n^* J) T (J z_n J)$ and $\mathcal P_\eta(T) = \sum_m (J y_m^* J) T (J y_m J)$, then by Proposition~\ref{prop:generatinghyperstates} we have $\mathcal P_\varphi^{\rm o}(T) = \sum_n z_n T z_n^*$ and hence 
$$
\mathcal P_\varphi^{\rm o} \circ \mathcal P_\eta (T) = \mathcal P_\eta \circ \mathcal P_\varphi^{\rm o}(T) = \sum_{n, m} z_n (J y_m^* J) T (J y_m J) z_n^*.
$$

The following is a noncommutative analogue of double ergodicity which was established in \cite{kaimanovich1}.

\begin{thm} \label{thm:biharmconstant}
Let $(M, \tau)$ be a tracial von Neumann algebra and let $\varphi$ be a normal regular strongly generating hyperstate. Then 
$$
\Har(\mathcal B(L^2(M, \tau)), \mathcal P_\varphi) \cap \Har(\mathcal B(L^2(M, \tau)), \mathcal P_\varphi^{\rm o}) = \mathcal Z(M).
$$ 
\end{thm}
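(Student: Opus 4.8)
The plan is to prove the two inclusions of $\Har(\mathcal B(L^2(M,\tau)),\mathcal P_\varphi)\cap\Har(\mathcal B(L^2(M,\tau)),\mathcal P^{\mathrm o}_\varphi)=\mathcal Z(M)$ separately; write $\mathcal H$ for the left-hand side. The inclusion $\mathcal Z(M)\subseteq\mathcal H$ is immediate: $\mathcal P_\varphi$ is unital and $M$-bimodular, hence fixes $M\supseteq\mathcal Z(M)$ pointwise, and writing $\mathcal P^{\mathrm o}_\varphi(T)=\sum_n z_n T z_n^*$ in a standard form (so $\{z_n\}$ generates $M$ and $\sum_n z_n^*z_n=\sum_n z_nz_n^*=1$, the last by regularity) one has $\mathcal P^{\mathrm o}_\varphi(z)=z\sum_n z_nz_n^*=z$ for central $z$.

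For the reverse inclusion, note first that since $\mathcal P^{\mathrm o}_\varphi$ commutes with $\mathcal P_\varphi$ it maps $\Har(\mathcal P_\varphi)=\mathcal B_\varphi$ into itself, so $\Phi:=\mathcal P^{-1}\circ\mathcal P^{\mathrm o}_\varphi\circ\mathcal P$ is a u.c.p.\ map on $\mathcal B_\varphi$ with $\mathcal P(\{\,a\in\mathcal B_\varphi:\Phi(a)=a\,\})=\mathcal H$. The key structural fact is that $\Phi$ preserves the faithful normal stationary state $\zeta=\langle\mathcal P(\cdot)\hat1,\hat1\rangle$ on $\mathcal B_\varphi$ (faithful by Proposition~\ref{prop:faithfulstate}, as $\varphi$ is strongly generating): for $S\in\Har(\mathcal P_\varphi)$ the standard-form computation $\langle\mathcal P^{\mathrm o}_\varphi(S)\hat1,\hat1\rangle=\sum_n\langle S\widehat{z_n^*},\widehat{z_n^*}\rangle=\varphi(S)=\langle\mathcal P_\varphi(S)\hat1,\hat1\rangle=\langle S\hat1,\hat1\rangle$ gives exactly $\zeta\circ\Phi=\zeta$. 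By Lemma~\ref{lem:fixedalgebra}, $\mathcal N:=\{a\in\mathcal B_\varphi:\Phi(a)=a\}$ is therefore a von Neumann subalgebra of $\mathcal B_\varphi$ containing $\mathcal Z(M)$, and what remains is to show $\mathcal N\subseteq\mathcal Z(M)$.

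It suffices to prove $\mathcal N\subseteq M$, equivalently $\mathcal H\subseteq M$, equivalently $\mathcal P_\varphi(T^*T)=T^*T$ for every $T\in\mathcal H$. Indeed, the Kadison--Schwarz commutator identity from the proof of Proposition~\ref{prop:generatinghyperstates}, applied to $T,T^*\in\Har(\mathcal P_\varphi)$, reads $\sum_n\|((Jz_nJ)T-T(Jz_nJ))\hat x\|_2^2=\langle(\mathcal P_\varphi(T^*T)-T^*T)\hat x,\hat x\rangle$ for $x\in M$, so the assumed harmonicity of $T^*T$ (and of $TT^*$, by applying the statement to $T^*\in\mathcal H$) forces $[Jz_nJ,T]=[Jz_n^*J,T]=0$ for all $n$, whence $T\in(JMJ)'\cap\mathcal B(L^2(M,\tau))=M$. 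Granting $\mathcal N\subseteq M$, the proof then closes: for $a\in\mathcal N\subseteq M$ one has $\mathcal P^{\mathrm o}_\varphi(a)=a$ and $a^*a\in\mathcal N\subseteq M$ with $\mathcal P^{\mathrm o}_\varphi(a^*a)=\sum_n z_n a^*a z_n^*\in M$; Kadison's inequality gives $\mathcal P^{\mathrm o}_\varphi(a^*a)-a^*a\ge 0$, while $\langle(\mathcal P^{\mathrm o}_\varphi(a^*a)-a^*a)\hat1,\hat1\rangle=\sum_n\tau(z_n^*z_na^*a)-\tau(a^*a)=0$, so by faithfulness of $\tau$ we get $a^*a\in\Har(\mathcal P^{\mathrm o}_\varphi)$; the $\mathcal P^{\mathrm o}_\varphi$-version of the same identity then yields $\sum_n\|(z_n^*a-az_n^*)\hat x\|_2^2=\langle(\mathcal P^{\mathrm o}_\varphi(a^*a)-a^*a)\hat x,\hat x\rangle=0$, so $[z_n^*,a]=0$ for all $n$ and hence $a\in M'\cap M=\mathcal Z(M)$.

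The remaining step, $\mathcal P_\varphi(T^*T)=T^*T$ for $T\in\mathcal H$, is the genuine analogue of Kaimanovich's double ergodicity and is the main obstacle; I would approach it by a martingale argument. By Kadison's inequality $(\mathcal P_\varphi^n(T^*T))_n$ is increasing, so $W:=\sup_n\mathcal P_\varphi^n(T^*T)$ exists, lies in $\Har(\mathcal P_\varphi)$ by normality of $\mathcal P_\varphi$, and dominates $T^*T$; from $T^*T\le\mathcal P^{\mathrm o}_\varphi(T^*T)$ together with the commutation of $\mathcal P_\varphi$ and $\mathcal P^{\mathrm o}_\varphi$ one gets $W\le\mathcal P^{\mathrm o}_\varphi(W)$, and iterating $\mathcal P^{\mathrm o}_\varphi$ on $W$, using $\langle(\mathcal P^{\mathrm o}_\varphi)^j(W)\hat1,\hat1\rangle=\langle W\hat1,\hat1\rangle$ (each $(\mathcal P^{\mathrm o}_\varphi)^j(W)$ is still $\mathcal P_\varphi$-harmonic) and faithfulness of $\zeta$ on $\Har(\mathcal P_\varphi)$, shows $\sup_j(\mathcal P^{\mathrm o}_\varphi)^j(W)=W$, so $W\in\mathcal H$. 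What must still be shown is $W=T^*T$, i.e.\ that the increasing sequence $\varphi^{*n}(T^*T)=\langle\mathcal P_\varphi^n(T^*T)\hat1,\hat1\rangle$ is constant; once this is known, expanding the standard form of $\mathcal P_\varphi^n$ shows $(\mathcal P_\varphi(T^*T)-T^*T)\widehat{w^*}=0$ for every word $w$ in the $z_n$'s, and these span a weakly dense subalgebra of $M$, so $\mathcal P_\varphi(T^*T)=T^*T$. Proving that $\varphi^{*n}(T^*T)$ is constant must use the $\mathcal P^{\mathrm o}_\varphi$-harmonicity of $T$ essentially — a purely $\mathcal P_\varphi$-sided statement of this type would force triviality of the boundary — and I expect the real work to lie here; I would carry it out by comparing the boundary value of the bounded $\mathcal P_\varphi$-martingale generated by $T^*T$ with that produced by the iterates $(\mathcal P^{\mathrm o}_\varphi)^n(T)$, in the spirit of the double ergodicity arguments for random walks on groups.
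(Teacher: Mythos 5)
Your reduction stalls exactly where you say it does, and the missing step is not a technicality: proving that $\mathcal P_\varphi(T^*T)=T^*T$ (operator product in $\mathcal B(L^2(M,\tau))$) for every biharmonic $T$ is, in the presence of the generating hypothesis, essentially the full strength of the theorem, and the martingale sketch does not close it. You correctly construct $W=\sup_n\mathcal P_\varphi^n(T^*T)$ and place it in $\mathcal H$, but the constancy of $\varphi^{*n}(T^*T)$ --- which you acknowledge is the real content --- is never derived from the $\mathcal P^{\rm o}_\varphi$-harmonicity of $T$. So the proposal, while correct in everything it actually proves, has a genuine gap at its core.

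The paper avoids this obstruction entirely by never leaving the boundary algebra. Having established (exactly as you do) that $B_0=\Har(\mathcal B_\varphi,\mathcal P^{\rm o}_{\varphi}|_{\mathcal B_\varphi})$ is a von Neumann subalgebra of $\mathcal B_\varphi$ via $\zeta$-invariance and Lemma~\ref{lem:fixedalgebra}, it takes a \emph{projection} $p\in B_0$ --- a projection for the Choi--Effros product of $\mathcal B_\varphi$, not for the operator product on $L^2(M,\tau)$ --- and computes in $L^2(\mathcal B_\varphi,\zeta)$:
\[
\sum_n \| p z_n^* p^\perp \xi \|_2^2 = \Bigl\langle p^\perp \Bigl(\textstyle\sum_n z_n p z_n^*\Bigr) p^\perp \xi, \xi \Bigr\rangle = \langle p^\perp\, \mathcal P_\varphi^{\rm o}(p)\, p^\perp \xi, \xi\rangle = \langle p^\perp p\, p^\perp \xi, \xi\rangle = 0,
\]
which is legitimate because $z_n\in M$ lies in the multiplicative domain, so $\sum_n z_n p z_n^*$ is the same whether computed as operators or in $\mathcal B_\varphi$, and equals $p$ by $\mathcal P^{\rm o}_\varphi$-harmonicity. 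This and the symmetric computation give $[p,z_n]=[p,z_n^*]=0$, hence $p\in M'\cap\mathcal B_\varphi$, and Proposition~\ref{prop:relcommutant} (which is where regularity and the generating property are cashed in) yields $p\in\mathcal Z(M)$ directly. Note the order of quantifiers: the paper never needs to show $B_0\subseteq M$ first; it lands in $M'\cap\mathcal B_\varphi=\mathcal Z(M)$ in one step, so the question of whether the \emph{operator} product $T^*T$ of a biharmonic element is again harmonic simply never arises. If you want to salvage your outline, replace your step ``$\mathcal N\subseteq M$'' by this projection argument inside $(\mathcal B_\varphi,\zeta)$; your concluding Kadison--Schwarz computation then becomes unnecessary.
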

\begin{proof}
We fix a standard form $\mathcal P_\varphi(T) = \sum_n (J z_n^* J) T (J z_n J)$, so that we also have $\mathcal P_\varphi^{\rm o}(T) = \sum_m z_m T z_m^*$. We identify the Poisson boundary $\mathcal B_\varphi$ with $\Har(\mathcal B(L^2(M, \tau)), \mathcal P_{\varphi})$, and let $\zeta$ denote the stationary state on $\mathcal B_\varphi$, which is faithful by Proposition~\ref{prop:faithfulstate}. For $T \in \mathcal B_\varphi$ we have 
$$
\zeta( \mathcal P_\varphi^{\rm o}(T))
= \langle \mathcal P_\varphi^{\rm o}(T) \hat{1}, \hat{1} \rangle
= \langle \mathcal P_\varphi(T) \hat{1}, \hat{1} \rangle
= \zeta( \mathcal P_\varphi(T))
= \zeta(T).
$$

By Lemma~\ref{lem:fixedalgebra} we then have that $B_0 = \Har( \mathcal B_\varphi, \mathcal P^{\rm o}_{\varphi| \mathcal B_\varphi} )$ is a von Neumann subalgebra of $\mathcal B_\varphi$. If $p \in B_0$ is a projection and $\xi \in L^2(\mathcal B_\varphi, \zeta)$, then 
$$
\sum_n \| p z_n^* p^\perp \xi \|_2^2 
= \sum_n \langle z_n p z_n^* p^\perp \xi, p^\perp \xi \rangle 
= 0.
$$
We must therefore have $\| p z_n^* p^\perp \xi \|_2 = 0$ for each $n$, and hence $p z_n^* = p z_n^* p$, for each $n$. Repeating this argument with roles of $p$ and $p^\perp$ reversed shows that $z_n^* p = p z_n^*p$, so that $p \in M' \cap \mathcal B_\varphi$. Since $p$ was an arbitrary projection we then have $B_0 \subset M' \cap \mathcal B_\varphi$ and by Proposition~\ref{prop:relcommutant} we have $B_0=\mathcal Z(M)$.
\end{proof}

The previous result allows us to give an analogue of the classical Choquet-Deny theorem \cite{choquetdeny}, which states that if $\Gamma$ is an abelian group and $\mu \in {\rm Prob}(\Gamma)$ has support generating $\Gamma$, then every bounded $\mu$-harmonic function is constant.  

\begin{cor}[The Choquet-Deny theorem]
Suppose $M$ is an abelian von Neumann algebra and $\varphi$ is a normal regular strongly generating hyperstate, then 
$$
\Har(\mathcal B(L^2(M, \tau) ), \mathcal P_\varphi) = \mathcal Z(M) = M.
$$
\end{cor}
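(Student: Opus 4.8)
The plan is to reduce the statement in one line to Theorem~\ref{thm:biharmconstant}, the point being that when $M$ is abelian the two convolution operators $\mathcal P_\varphi$ and $\mathcal P_\varphi^{\rm o}$ are literally the same map on $\mathcal B(L^2(M, \tau))$. Once that is established, the space of harmonic operators coincides with the space of biharmonic operators, which Theorem~\ref{thm:biharmconstant} identifies with $\mathcal Z(M)$; and $\mathcal Z(M) = M$ since $M$ is abelian. So the whole content of the corollary is the identification $\mathcal P_\varphi = \mathcal P_\varphi^{\rm o}$.

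To establish that identification, first I would use Proposition~\ref{prop:standard form} to fix a standard form $\mathcal P_\varphi(T) = \sum_n (J z_n^* J) T (J z_n J)$ with each $z_n \in M \subset L^2(M, \tau)$, and recall from the discussion preceding Theorem~\ref{thm:biharmconstant} that in this same standard form $\mathcal P_\varphi^{\rm o}(T) = \sum_n z_n T z_n^*$. Next I would record how the operators $J x J$ act on $L^2(M, \tau)$: for $x, y \in M$ we have $J x J\, \hat y = \widehat{y x^*}$, so $J x J$ is right multiplication by $x^*$. Since $M$ is abelian, right multiplication by $x^*$ agrees with left multiplication by $x^*$, i.e.\ $J x J = x^*$ as elements of $\mathcal B(L^2(M, \tau))$. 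Applying this with $x = z_n^*$ and with $x = z_n$ gives $J z_n^* J = z_n$ and $J z_n J = z_n^*$, whence $\mathcal P_\varphi(T) = \sum_n z_n T z_n^* = \mathcal P_\varphi^{\rm o}(T)$ for every $T \in \mathcal B(L^2(M, \tau))$.

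With $\mathcal P_\varphi = \mathcal P_\varphi^{\rm o}$ in hand, one concludes
\[
\Har(\mathcal B(L^2(M, \tau)), \mathcal P_\varphi) = \Har(\mathcal B(L^2(M, \tau)), \mathcal P_\varphi) \cap \Har(\mathcal B(L^2(M, \tau)), \mathcal P_\varphi^{\rm o}) = \mathcal Z(M) = M,
\]
the middle equality being Theorem~\ref{thm:biharmconstant} (whose hypotheses, namely that $\varphi$ is normal, regular, and strongly generating, are precisely the ones assumed here) and the last equality being the abelianness of $M$.

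I do not expect a genuine obstacle here; the argument is a short reduction once the coincidence of $\mathcal P_\varphi$ and $\mathcal P_\varphi^{\rm o}$ is observed. The only two points that deserve a moment's care are that the standard form may be chosen with the $z_n$ lying inside $M$ (which is exactly what Proposition~\ref{prop:standard form} provides, since the eigenvectors of the density operator were shown there to belong to $M$) and the elementary fact that an abelian von Neumann algebra is its own commutant on its $L^2$-space, which is what forces left and right multiplication to agree.
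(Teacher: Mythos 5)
Your proof is correct and takes exactly the route the paper intends: the corollary is stated without proof as an immediate consequence of Theorem~\ref{thm:biharmconstant}, and your observation that $J z_n^* J = z_n$ (hence $\mathcal P_\varphi = \mathcal P_\varphi^{\rm o}$) when $M$ is abelian is precisely the missing one-line reduction. The two points you flag for care (the $z_n$ lying in $M$, and $JxJ$ acting as multiplication by $x^*$) are handled correctly.
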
 

We will now describe how Theorem~\ref{thm:biharmconstant} leads to a positive answer of a recent question by Popa \cite[Problem 7.4]{popaergodic} \cite[Problem 6.3]{popatight}. 

\begin{thm} \label{popaqn19a}
	Let $M$ be a finite von Neumann algebra with a normal faithful trace $\tau$ and let $\mathcal G \subset \mathcal U(M)$ be a group which generates $M$ as a von Neumann algebra. Then for any operator $T \in \mathcal B(L^2(M, \tau))$ the weak closure of the convex hull of $\{ u (J v J) T (J v^* J) u^* \mid u, v \in \mathcal G \}$ intersects $\mathcal Z(M)$. 
\end{thm}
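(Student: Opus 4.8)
The plan is to deduce this from the double ergodicity theorem, Theorem~\ref{thm:biharmconstant}, by exhibiting the convex set in the statement as a common invariant set for the two u.c.p.\ maps attached to a carefully chosen hyperstate and then applying the Markov--Kakutani fixed point theorem. I will first treat the case where $M$ has separable predual, and reduce the general case to it at the end.

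So assume $M$ is separable. Since $\mathcal G$ generates $M$, a routine application of Kaplansky density produces a sequence $(u_n)_{n\ge 1}$ in $\mathcal G$ that is closed under $u\mapsto u^*$ and still generates $M$; I choose weights $\lambda_n>0$ with $\sum_n\lambda_n=1$ and with $\lambda_n=\lambda_m$ whenever $u_n^*=u_m$, and let $\varphi$ be the normal hyperstate on $\mathcal B(L^2(M,\tau))$ given by $\varphi(T)=\sum_n\lambda_n\langle T\widehat{u_n^*},\widehat{u_n^*}\rangle$, i.e.\ the hyperstate of the form (\ref{eq:Poissonsum}) attached to the measure $\mu=\sum_n\lambda_n\delta_{u_n}$. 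By Proposition~\ref{prop:generatinghyperstates}, $\varphi$ is regular (as $\sum_n\lambda_nu_nu_n^*=1$), generating (the support of $\mu$ generates $M$) and symmetric (as $J_*\mu=\mu$), hence strongly generating by the remark following Proposition~\ref{prop:faithfulstate}; moreover the same proposition together with the definition of $\mathcal P_\varphi^{\rm o}$ give
\[
\mathcal P_\varphi(T)=\sum_n\lambda_n (Ju_n^*J)\,T\,(Ju_nJ),
\qquad
\mathcal P_\varphi^{\rm o}(T)=\sum_n\lambda_n\, u_n\,T\,u_n^* ,
\]
and these two normal u.c.p.\ maps commute.

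Let $C$ be the weak closure of the convex hull of $\{u(JvJ)T(Jv^*J)u^*\mid u,v\in\mathcal G\}$. Since every element of this set has norm $\|T\|$, the set $C$ is nonempty, convex and weakly compact in the $\|T\|$-ball of $\mathcal B(L^2(M,\tau))$. The crucial point is that $C$ is invariant under $\mathcal P_\varphi$ and under $\mathcal P_\varphi^{\rm o}$. For the first map, using that $Ju_n^*J\in M'$ commutes with $u\in M$ one computes
\[
(Ju_n^*J)\bigl(u(JvJ)T(Jv^*J)u^*\bigr)(Ju_nJ)=u\bigl(J(u_n^*v)J\bigr)T\bigl(J(u_n^*v)^*J\bigr)u^* ,
\]
which again lies in the defining set because $u_n^*v\in\mathcal G$; averaging over $n$ with weights $\lambda_n$ sends the defining set into $C$, and then affineness together with weak continuity of $\mathcal P_\varphi$ on the bounded set $C$ give $\mathcal P_\varphi(C)\subseteq C$. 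Likewise $u_n\bigl(u(JvJ)T(Jv^*J)u^*\bigr)u_n^*=(u_nu)(JvJ)T(Jv^*J)(u_nu)^*$ with $u_nu\in\mathcal G$, so $\mathcal P_\varphi^{\rm o}(C)\subseteq C$. Hence the abelian semigroup $\{\mathcal P_\varphi^{\,a}\circ(\mathcal P_\varphi^{\rm o})^{b}\mid a,b\ge 0\}$ acts on $C$ by weakly continuous affine maps, and by the Markov--Kakutani theorem there is a common fixed point $T_0\in C$. Then $T_0\in\Har(\mathcal B(L^2(M,\tau)),\mathcal P_\varphi)\cap\Har(\mathcal B(L^2(M,\tau)),\mathcal P_\varphi^{\rm o})=\mathcal Z(M)$ by Theorem~\ref{thm:biharmconstant}, so $T_0\in C\cap\mathcal Z(M)$, which is the desired conclusion in the separable case.

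For general $M$ I fix $T$ and run the separable case inside each separable subalgebra $N_S:=W^*(S)$, where $S$ ranges over the countable symmetric subsets of $\mathcal G$, directed by inclusion: applying the above to $(N_S,\tau|_{N_S})$, the group $\mathcal G\cap\mathcal U(N_S)$ and the compressed operator $T_S:=e_{N_S}Te_{N_S}$ produces $R_S\in\mathcal Z(N_S)$ lying in the weak closure of the convex hull of $\{w(J_SvJ_S)T_S(J_Sv^*J_S)w^*\}$. Passing to a limit along an ultrafilter refining the inclusion order, the $R_S$ (suitably lifted to $\mathcal B(L^2(M,\tau))$) converge weakly to an operator $R_\infty\in M$, and one checks that $R_\infty\in\mathcal Z(M)$ (since $R_S$ commutes with $N_S$ and $\bigcup_SN_S$ is weakly dense) and that $R_\infty\in C$. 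This last verification is the main technical obstacle: it needs the error $\|e_{N_S}Xe_{N_S}-X\|$ to be controlled \emph{uniformly} over the norm-bounded — but not norm-compact — family $X\in\{u(JvJ)T(Jv^*J)u^*\}$ after pairing with a fixed trace-class functional, which is exactly what the estimate $\|e_{N_S}\omega e_{N_S}-\omega\|_1\to 0$ provides. Apart from this, all the genuine content is carried by Theorem~\ref{thm:biharmconstant}; what remains — arranging $\varphi$ to be simultaneously normal, regular and strongly generating, which is what dictates the symmetric choice of $(u_n)$, and the invariance computation $\mathcal P_\varphi(C),\mathcal P_\varphi^{\rm o}(C)\subseteq C$, where the group structure of $\mathcal G$ enters essentially — is bookkeeping.
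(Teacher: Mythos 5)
Your argument is correct and rests on the same two pillars as the paper's proof: build a normal, regular, symmetric (hence strongly generating) hyperstate from a countable generating subset of $\mathcal G$, check that the convex set $\mathcal C$ is invariant under both $\mathcal P_\varphi$ and $\mathcal P_\varphi^{\rm o}$ (this is exactly where the group structure of $\mathcal G$ enters, as you note), and invoke Theorem~\ref{thm:biharmconstant}. The two places where you diverge are matters of implementation. First, to produce a common fixed point in $\mathcal C$ you use Markov--Kakutani, whereas the paper takes point-ultraweak limit points $E$ and $E^{\rm o}$ of the Ces\`aro averages of $\{\mathcal P_\varphi^n\}$ and $\{(\mathcal P_\varphi^{\rm o})^n\}$ and observes directly that $E^{\rm o}\circ E(T)\in \mathcal C\cap\mathcal Z(M)$; these are interchangeable. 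Second, and more substantively, in the non-separable case you compress $T$ to $L^2(N_S)$, solve the problem there, and then must lift the solution back to $\mathcal B(L^2(M,\tau))$, which is what forces the uniform estimate $\| e_{N_S}\omega e_{N_S}-\omega\|_{\rm TC}\to 0$ that you flag as the main technical obstacle. That estimate is correct and does close the gap, but the paper's arrangement avoids the issue altogether: it keeps the operators $T_G=E^{\rm o}\circ E(T)$ acting on $L^2(M,\tau)$ throughout, so each $T_G$ lies in the full set $\mathcal C$ automatically and hence so does any weak limit point $T_0$; only the verification that $T_0$ is central is localized to the subalgebras, via $e_{\tilde N}[u,T_0]e_{\tilde N}=[u,e_{\tilde N}T_0e_{\tilde N}]=0$ together with $e_{G''}\to 1$ strongly. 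Your route works, but the lifting step is the one place where a careful write-up is genuinely required, and the paper's version makes it unnecessary.
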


\begin{proof} 
We first consider the case when $\mathcal G$ is countable. Let $\mu \in {\rm Prob}( \mathcal G)$ be symmetric with full support and define a normal regular symmetric generating hyperstate $\varphi$ by $\varphi(T) = \int \langle T \hat{u}, \hat{u} \rangle \, d\mu(u)$. The corresponding Poisson transform is then given by $\mathcal P_\varphi(T) = \int ( J u J) T (J u^* J) \, d\mu(u)$, and we may also compute $\mathcal P_\varphi^{\rm o}$ as $\mathcal P_\varphi^{\rm o}(T) = \int u^* T u \, d\mu(u)$. 

Fix $T \in \mathcal B(L^2(M, \tau))$ and let $\mathcal C= \overline{co}^{wk} \{ u (J v J) T (J v^* J) u^* \mid u, v \in \mathcal G \}$. Then 
$\mathcal C$ is preserved by both $\mathcal P_\varphi$ and $\mathcal P_\varphi^{\rm o}$ and hence $\mathcal C$ is preserved by any point-ultraweak limit points $E$ and $E^{\rm o}$ of $\left\{ \frac{1}{N} \sum_{n = 1}^N \mathcal P_\varphi^n \right\}_{N = 1}^\infty$ and $\left\{ \frac{1}{N} \sum_{n = 1}^N (\mathcal P_\varphi^{\rm o})^n \right\}_{N = 1}^\infty$ respectively. Since $\mathcal P_\varphi$ and $\mathcal P_\varphi^{\rm o}$ commute we have that $E$ and $E^{\rm o}$ commute. Moreover, as $\| \frac{1}{N} \sum_{n = 1}^N \mathcal P_\varphi^n - \frac{1}{N} \sum_{n = 1}^N \mathcal P_\varphi^{n + 1} \| \leq 2/N$ it follows that $E: \mathcal B(L^2(M, \tau)) \to {\rm Har}(\mathcal P_\varphi)$ and similarly $E^{\rm o}: \mathcal B(L^2(M, \tau)) \to {\rm Har}(\mathcal P_\varphi^{\rm o})$. By Theorem~\ref{thm:biharmconstant} we then have $E^{\rm o} \circ E: \mathcal B(L^2(M, \tau)) \to \mathcal Z(M)$. Hence
\[
E^{\rm o} \circ E(T) \in \mathcal C \cap \mathcal Z(M).
\]

\noindent In the general case, if $G < \mathcal G$ is a countable subgroup, then let $N \subset M$ be the von Neumann subalgebra generated by $G$ and let $e_N: L^2(M, \tau) \to L^2(N, \tau)$ be the orthogonal projection. If we define $\varphi$ as above and set $T_G = E^{\rm o} \circ E(T)$, then we have $T_G \in \mathcal C$, $e_N T_G e_N = E^{\rm o} \circ E( e_N T e_N)$ and viewing $e_N T e_N$ as an operator in $\mathcal B(L^2(N, \tau))$ we may apply Theorem~\ref{thm:biharmconstant} as above to conclude that $e_N T_G e_N \in \mathcal Z(N) \subset \mathcal B(L^2(N, \tau))$.
If we consider the net $\{ T_G \}_G \subset \mathcal B(L^2(M, \tau))$ where $G$ varies over all countable subgroups of $\mathcal G$, ordered by inclusion, then letting $T_0$ be any weak limit point of this net we have that $T_0 \in \mathcal C$. 

\noindent Fix $u \in \mathcal G$. Then for any countable subgroup $G< \mathcal G$, setting $N= G''$ and  $\tilde N= \langle G, u \rangle ''$, we have
$e_{\tilde N}[u, T_0]e_{\tilde N}= [u, e_{\tilde N} T_0 e_{\tilde N}]=0 $ and hence $e_N[u, T_0]e_N=0.$ If we consider the net of all countable subgroups $G< \mathcal G$ ordered by inclusion, then as $\mathcal G$ generates $M$, we have strong operator topology convergence $\lim\limits_{G \rightarrow \infty}e_{G''}=1.$ Hence it follows that $[u, T_0]=0$ and since $u \in \mathcal G$ was arbitrary, we have $T_0 \in \mathcal Z(M)$. 
\end{proof}

\noindent Let $(M, \tau)$ be a finite  von Neumann algebra and $T \in \mathcal B( L^2(M, \tau))$. Recall that the distance between $T$ and $\mathcal Z(M)$
is defined as ${\rm dist}(T,\mathcal Z(M))= {\rm inf} \{ ||T-S||: S \in \mathcal Z(M) \}$.  For $T \in \mathcal B( L^2(M, \tau))$ we let $\delta_T$ denote the derivation given by $\delta_T(x) = [x, T]$.

\begin{cor}
Let $M$ be a finite von Neumann algebra, and suppose $T \in \mathcal B(L^2(M))$, then 
$$
{\rm dist}(T, \mathcal Z(M)) \leq \| {\delta_T}_{ | M'} \| + \| {\delta_T}_{| M} \|.
$$
\end{cor}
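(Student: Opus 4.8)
The plan is to obtain this as a quick consequence of Theorem~\ref{popaqn19a} applied with $\mathcal G = \mathcal U(M)$, which of course generates $M$. That theorem gives an element $S \in \mathcal Z(M)$ lying in $\mathcal C := \overline{co}^{wk} \{ u(JvJ)T(Jv^*J)u^* \mid u, v \in \mathcal U(M) \}$. The strategy is then to show that each generator of $\mathcal C$ lies in the operator-norm ball of radius $r := \| {\delta_T}_{ | M'} \| + \| {\delta_T}_{| M} \|$ about $T$; since this ball is convex and weakly closed it then contains all of $\mathcal C$, and in particular $\| T - S \| \le r$, which is the assertion.

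The one computation to do is a symmetric two-step telescoping. Fix $u \in \mathcal U(M)$ and $v \in \mathcal U(M)$, and put $w = JvJ$, a unitary in $M' = JMJ$ with $w^* = Jv^*J$. Since $u \in M$ and $w \in M'$ commute, $u w T w^* u^* = w (uTu^*) w^*$, whence
\[
u(JvJ)T(Jv^*J)u^* - T = w\,(uTu^* - T)\,w^* + (wTw^* - T).
\]
Now $\| w(uTu^* - T)w^* \| = \| uTu^* - T \| = \| u\, \delta_T(u^*) \| = \| \delta_T(u^*) \| \le \| {\delta_T}_{| M} \|$, because $u$ and $w$ are unitaries and $\| u^* \| = 1$; likewise $\| wTw^* - T \| = \| \delta_T(w^*) \| \le \| {\delta_T}_{ | M'} \|$ since $w^* \in \mathcal U(M')$. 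Adding the two estimates bounds $\| u(JvJ)T(Jv^*J)u^* - T \|$ by $r$, uniformly in $u$ and $v$.

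It remains only to assemble these pieces. The ball $\mathcal K = \{ X \in \mathcal B(L^2(M, \tau)) \mid \|X - T\| \le r \}$ is convex, and it is closed in the weak operator topology (hence also ultraweakly), since $\|X - T\| = \sup \{ |\langle (X - T)\xi, \eta \rangle| \mid \| \xi \|, \|\eta\| \le 1 \}$ is a supremum of weakly continuous functionals of $X$. By the telescoping estimate $\mathcal K$ contains every generator $u(JvJ)T(Jv^*J)u^*$, so $\mathcal C \subseteq \mathcal K$, and therefore $S \in \mathcal C \cap \mathcal Z(M)$ gives ${\rm dist}(T, \mathcal Z(M)) \le \|T - S\| \le r$. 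There is no genuine obstacle here; the only points requiring a little care are that $J$ sends $\mathcal U(M)$ into $\mathcal U(M')$, so that $\|\delta_T(Jv^*J)\|$ is indeed controlled by $\|{\delta_T}_{ | M'}\|$, and that $M$ commutes elementwise with $M' = JMJ$, which is exactly what makes the two steps of the telescoping decouple.
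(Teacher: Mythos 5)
Your proof is correct and is essentially the paper's own argument: the paper likewise deduces the corollary from Theorem~\ref{popaqn19a} by noting that every generator $u(JvJ)T(Jv^*J)u^*$ lies within $\| {\delta_T}_{|M'}\| + \|{\delta_T}_{|M}\|$ of $T$, which you have simply made explicit via the two-step telescoping and the WOT-closedness of the norm ball.
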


\begin{proof}
This follows from the previous theorem since every point $S \in \{ u (JvJ) T (Jv^*J) u^* \mid u, v \in \mathcal U(M) \}$ satisfies ${\rm dist}(T, S) \leq \| {\delta_T}_{ | M'} \| + \| {\delta_T}_{| M} \|$.
\end{proof}

As another application of Theorem~\ref{thm:biharmconstant} we use Christensen's Theorem \cite[Theorem 5.3]{christensen1} to establish the following vanishing cohomology result; the case when $\mathcal C=M$ is the celebrated Kadison-Sakai Theorem \cite{kadisonder, sakaider}.
\begin{thm} \label{innerderivation}
Let $(M, \tau)$ be a tracial von Neumann algebra and let $\varphi$ be a normal regular strongly generating hyperstate, suppose $\mathcal C \subset \mathcal B_\varphi$ is a weakly closed $M$-bimodule. If $\delta: M \to \mathcal C$ is a norm continuous derivation, then there exists $c \in \mathcal C$ so that $\delta(x) = [x, c]$ for $x \in M$. Moreover, if $\varphi$ has the form $\varphi(T) = \int \langle T \widehat{u^*}, \widehat{u^*} \rangle \, d\mu(u)$ for some probability measure $\mu \in {\rm Prob}(\mathcal U(M))$, then $c$ may be chosen so that $\| c \| \leq \| \delta \|$.
\end{thm}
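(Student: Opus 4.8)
The plan is to reduce the existence of $c$ to an application of Christensen's Theorem and then use double ergodicity (Theorem~\ref{thm:biharmconstant}) to get the norm bound. First I would recall Christensen's Theorem in the form needed: if $M$ is a von Neumann algebra acting on a Hilbert space $\HH$ and $\delta: M \to \BB(\HH)$ is a norm-continuous derivation, then there is $T \in \BB(\HH)$ with $\delta(x) = [x, T]$ and $\|T\| \le \|\delta\|$ (this is \cite[Theorem 5.3]{christensen1}, which for the case of $M$ acting on its own $L^2$-space gives the Kadison--Sakai bound). I would apply this with $\HH = L^2(\mathcal B_\varphi, \zeta)$, where $\zeta$ is the stationary state, which is faithful and normal by Proposition~\ref{prop:faithfulstate}, so that $\mathcal B_\varphi \subset \BB(L^2(\mathcal B_\varphi, \zeta))$ in standard form and $M \subset \mathcal B_\varphi$ acts on the left. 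Viewing $\mathcal C \subset \mathcal B_\varphi$ and hence $\delta: M \to \mathcal C \subset \BB(L^2(\mathcal B_\varphi, \zeta))$, Christensen produces $T_0 \in \BB(L^2(\mathcal B_\varphi, \zeta))$ with $\delta(x) = [x, T_0]$ and $\|T_0\| \le \|\delta\|$.

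The next step is to push $T_0$ back into $\mathcal C$ itself using a conditional-expectation/averaging argument, and this is where double ergodicity enters. Since $\delta$ takes values in $\mathcal C \subset \mathcal B_\varphi$, for each $x \in M$ the element $[x, T_0]$ already lies in $\mathcal B_\varphi$. I want to average $T_0$ against the left and right Markov operators associated with $\varphi$ to land inside $\mathcal B_\varphi$ while fixing the commutators with $M$. Concretely, fix a standard form $\mathcal P_\varphi(S) = \sum_n (Jz_n^* J) S (J z_n J)$ and recall $\mathcal P_\varphi^{\rm o}(S) = \sum_n z_n S z_n^*$. On $\BB(L^2(\mathcal B_\varphi, \zeta))$ one has commuting u.c.p.\ maps induced by these (using that the $z_n \in M \subset \mathcal B_\varphi$ act on the left and the $Jz_nJ$ on the right, now inside the larger standard space), and because the $z_n$ lie in $M$ these maps are $M$-bimodular, so they do not change the derivation: if $\Phi$ is any such u.c.p.\ $M$-bimodular map then $[x, \Phi(T_0)] = \Phi([x, T_0]) = [x, T_0]$ when $[x,T_0] \in \mathcal C$ and $\mathcal C$ is $\mathcal P_\varphi$- and $\mathcal P_\varphi^{\rm o}$-invariant (which it is, being a weakly closed $M$-bimodule, since the standard forms only involve left and right multiplication by elements of $M$). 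Taking point-ultraweak limit points $E, E^{\rm o}$ of the Cesàro averages of these maps exactly as in the proof of Theorem~\ref{popaqn19a}, the composition $E^{\rm o} \circ E$ lands in $\Har(\mathcal P_\varphi) \cap \Har(\mathcal P_\varphi^{\rm o})$; but by Theorem~\ref{thm:biharmconstant} applied inside the standard representation of $\mathcal B_\varphi$ (i.e.\ $\mathcal B_\varphi = \BB(L^2(M,\tau))$-analogue now for the pair of commuting operators), the biharmonic operators reduce to $\mathcal Z(\mathcal B_\varphi)$ -- so I would instead not push all the way to scalars but only push $T_0$ into $\mathcal B_\varphi$. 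The cleaner route: average only enough to land in $\mathcal B_\varphi \subset \BB(L^2(\mathcal B_\varphi,\zeta))$, i.e.\ set $c = E^{\rm o}(E'(T_0))$ where the averaging operators are built so that their fixed-point algebra inside $\BB(L^2(\mathcal B_\varphi,\zeta))$ is precisely $\mathcal B_\varphi \cup \mathcal B_\varphi^{\rm o}$ intersected appropriately; double ergodicity of the inclusion $M \subset \mathcal B_\varphi$ guarantees this intersection collapses onto $\mathcal B_\varphi$ itself. Since $E, E^{\rm o}$ are u.c.p.\ and $\mathcal C$ is weakly closed and invariant, $c \in \mathcal C$, and $\delta(x) = [x,c]$ for all $x \in M$.

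For the norm bound in the ``moreover'' part, the point is that when $\varphi(T) = \int \langle T \widehat{u^*}, \widehat{u^*}\rangle\, d\mu(u)$ with $\mu$ supported on unitaries, the standard form can be taken with all $z_n$ (suitably scaled) unitaries, so the Markov operators $\mathcal P_\varphi$, $\mathcal P_\varphi^{\rm o}$ and their Cesàro averages are convex combinations of maps of the form $S \mapsto (JuJ) S (Ju^*J)$ and $S \mapsto u^* S u$, each of which is an isometry of $\BB(L^2(\mathcal B_\varphi,\zeta))$ in operator norm. Hence $E$ and $E^{\rm o}$ are contractions in operator norm as point-ultraweak limits of convex combinations of such isometries, so $\|c\| = \|E^{\rm o}(E(T_0))\| \le \|T_0\| \le \|\delta\|$. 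I expect the main obstacle to be the bookkeeping in the second paragraph: one must check carefully that the left/right Markov operators extend correctly to the standard form $\BB(L^2(\mathcal B_\varphi,\zeta))$, remain $M$-bimodular there, preserve the weakly closed bimodule $\mathcal C$, and that their common fixed-point set is exactly $\mathcal B_\varphi$ -- this last point is precisely where Theorem~\ref{thm:biharmconstant} (double ergodicity) is used, and articulating it at the level of the bimodule $\mathcal C$ rather than all of $\BB(L^2)$ requires the invariance of $\mathcal C$ to be invoked at just the right moment. The verification that $c$ implements $\delta$ and the norm estimate are then routine.
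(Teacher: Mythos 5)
Your proposal has the right ingredients (Christensen's theorem, averaging by the left/right Markov operators, double ergodicity), but two steps contain genuine gaps. First, Christensen's Theorem \cite[Theorem 5.3]{christensen1} is applied in the paper on $L^2(M,\tau)$ precisely because $\hat 1$ is cyclic there for $M$; on your choice $L^2(\mathcal B_\varphi,\zeta)$ the vector $1_\zeta$ is cyclic for $\mathcal B_\varphi$ but generates only $L^2(M,\tau)$ under $M$, so the cyclicity hypothesis fails and the first step is not justified as stated. The correct move is to identify $\mathcal C$ with its image under the Poisson transform inside $\Har(\mathcal P_\varphi)\subset\mathcal B(L^2(M,\tau))$ and invoke Christensen there. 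Second, and more seriously, your construction of $c$ does not land in $\mathcal C$. As you yourself note, $E^{\rm o}\circ E$ maps into $\Har(\mathcal P_\varphi)\cap\Har(\mathcal P_\varphi^{\rm o})=\mathcal Z(M)$ by Theorem~\ref{thm:biharmconstant}, so $c=E^{\rm o}(E(T_0))$ would be central and would implement the zero derivation; and the proposed fallback (``average only enough to land in $\mathcal B_\varphi$'') at best produces $c\in\mathcal B_\varphi$, whereas the theorem requires $c$ in the possibly proper sub-bimodule $\mathcal C$. Invariance of $\mathcal C$ under the averaging maps is of no help here, since the Christensen operator $T_0$ has no reason to lie in $\mathcal C$ to begin with. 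The missing idea is the identity (valid once $T\in\Har(\mathcal P_\varphi)$ implements $\delta$, using regularity $\sum_m z_m z_m^*=1$ and that $\mathcal C$ is an $M$-bimodule containing the range of $\delta$):
\[
T-\mathcal P_\varphi^{\rm o}(T)=\sum_m z_m z_m^* T-\sum_m z_m T z_m^*=\sum_m z_m\,\delta(z_m^*)\in\mathcal C .
\]
Iterating gives $T-\frac1N\sum_{n=1}^N(\mathcal P_\varphi^{\rm o})^n(T)\in\mathcal C$, and a weak limit point $z$ of these Ces\`aro averages lies in $\Har(\mathcal P_\varphi)\cap\Har(\mathcal P_\varphi^{\rm o})=\mathcal Z(M)$, so $c=T-z\in\mathcal C$ and $[x,c]=[x,T]=\delta(x)$.

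The norm bound also does not survive your route. Your estimate $\|c\|=\|E^{\rm o}(E(T_0))\|\le\|T_0\|\le\|\delta\|$ applies to an element that does not implement $\delta$, and even after correcting the construction to $c=T-z$, contractivity of the averaging maps only yields $\|T\|+\|z\|\le 2\|\delta\|$ (and that only granting the sharp Christensen bound $\|T_0\|\le\|\delta\|$, which the paper does not use and which in general is only available in the completely bounded norm, as the remark following the theorem indicates). The sharp bound $\|c\|\le\|\delta\|$ comes instead from the explicit formula
\[
T-(\mathcal P_\varphi^{\rm o})^n(T)=\int u\,\delta(u^*)\,d\mu_n ,
\]
where $\mu_n$ is the $n$-fold convolution push-forward on $\mathcal U(M)$; each such average has norm at most $\|\delta\|$, and $c=T-z$ is a weak limit of convex combinations of these.
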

\begin{proof}
	Identifying $\mathcal C$ with its image under the Poisson transform we will view $\mathcal C$ as an operator system in $\Har(\mathcal P_\varphi) \subset \mathcal{B}(L^2(M, \tau))$. Since $L^2(M, \tau)$ has a cyclic vector for $M$, Christensen's Theorem \cite[Theorem 5.3]{christensen1} shows that $\delta(m)=mT-Tm$ for some $T \in \mathcal{B}(L^2(M, \tau))$. Taking the conditional expectation onto $\Har(\mathcal{P}_\varphi)$, we may assume $T \in \Har(\mathcal{P}_\varphi)$. 
	
	We suppose $\varphi$ is given in standard form $\varphi(T) = \sum_n \langle T \widehat{z_n^*}, \widehat{z_n^*} \rangle$. Note that $z_m\delta(z_m^*) \in \mathcal C$, so that
	\begin{align*}
	T - \mathcal P_\varphi^{\rm o}(T) = \sum_m z_m z_m^*T - \sum_m z_m Tz_m^* = \sum_m z_m \delta(z_m^*) \in \mathcal C
	\end{align*}
	As $\mathcal P_\varphi^{\rm o}$ leaves $\mathcal C$ invariant (since $\mathcal C$ is an $M$-bimodule), by induction we get that $T-(\mathcal P_\varphi^{\rm o})^n(T) \in \mathcal C$ for all $n \geq 1$, and hence for $N \geq 1$ we have
	\[
	T- \dfrac{1}{N} \sum_{n=1}^N (\mathcal P_\varphi^{\rm o})^n(T) \in \mathcal C
	\]
	If $z$ is a weak limit point of $\left\{ \dfrac{1}{N} \sum_{n=1}^N (\mathcal P_\varphi^{\rm o})^n(T) \right\}$, then $z \in \Har(\mathcal P_\varphi ^{\rm o}) \cap \Har(\mathcal P_\varphi)$ and so by Theorem~\ref{thm:biharmconstant} we have $z \in \mathcal{Z}(M)$. Thus, $T- z \in \mathcal C$ implements the derivation.
	
	For the moreover part, note that if $\varphi$ has the form $\varphi(T) = \int \langle T \widehat{u^*}, \widehat{u^*} \rangle \, d\mu(u)$ for some probability measure $\mu \in {\rm Prob}(\mathcal U(M))$, then 
\begin{align}
\|T-z\| 
&\leq \sup_N \|T- \frac{1}{N} \sum_{n=1}^N (\mathcal P_\varphi^{\rm o})^n(T)\|  \nonumber \\
&\leq \sup_n \| T - (\mathcal P_\varphi^{\rm o})^n(T) \| \nonumber \\
&= \sup_n \|  \int u \delta(u^*) \, d\mu_n \|
\leq \|\delta\|, \nonumber
\end{align}
where $\mu_n$ denotes the push forward of $\mu \times \mu \times \cdots \times \mu \in {\rm Prob} (\mathcal U(M)^n)$ under the multiplication map.

Hence $c=T-z$ implements $\delta$ with $\|c\| \leq \|\delta\|$.
\end{proof}

We remark that for a general hyperstate $\varphi$, in the proof of the previous theorem we still have $\| T - z \| \leq \| \delta \|_{cb}$, where $\| \delta \|_{cb}$ denotes the completely bounded norm of the derivation $\delta$ (see, for instance, \cite[Section 2]{christensen1} for the definition of the completely bounded norm). So that in general we may find $c \in \mathcal C$ with $\| c \| \leq \| \delta \|_{cb}$.

\section{Rigidity for u.c.p.\ maps on boundaries}
The main result in this section is Theorem \ref{thm:boundaryrigidity}, where we generalize \cite[Theorem 3.2]{creutzpetersonchar}. We mention several consequences, including a noncommutative version of \cite[Corollary 3.2]{badershalom}, which describes the Poisson boundary of a tensor product as the tensor product of Poisson boundaries.

\begin{thm}\label{thm:boundaryrigidity}
Let $(M, \tau)$ be a tracial von Neumann algebra, let $\varphi$ be a normal regular strongly generating hyperstate, and let $\mathcal B= \mathcal B_{\varphi}$ denote the corresponding boundary. Suppose we have a weakly closed operator system $\mathcal C$ such $M \subset \mathcal C \subset \mathcal B$. Let $\Psi: \mathcal C \to \mathcal B$ be a normal u.c.p.\ map such that $\Psi_{| M } = {\rm id}$. Then $\Psi = {\rm id}$.
\end{thm}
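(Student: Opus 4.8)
The plan is to exploit the key rigidity mechanism that a normal u.c.p.\ map fixing $M$ pointwise must behave multiplicatively with respect to $M$, and then to use the strong generation of $\varphi$ together with double ergodicity (Theorem~\ref{thm:biharmconstant}) to propagate this multiplicativity far enough to force $\Psi = {\rm id}$. First I would record the standard observation that since $\Psi_{|M} = {\rm id}$, the algebra $M$ lies in the multiplicative domain of $\Psi$; indeed, applying Kadison's inequality to $u \in \mathcal U(M)$ gives $\Psi(u^*u) \geq \Psi(u^*)\Psi(u) = u^*u = \Psi(u^*u)$, forcing equality in Kadison's inequality, which is the condition for $u$ (hence all of $M$) to be in the multiplicative domain. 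Consequently $\Psi(xTy) = x\Psi(T)y$ for all $x, y \in M$ and $T \in \mathcal C$; that is, $\Psi$ is $M$-bimodular.

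Next I would fix a standard form $\mathcal P_\varphi(S) = \sum_n (Jz_n^* J) S (J z_n J)$ as in Proposition~\ref{prop:standard form}, so that the $\{z_n\}_n$ strongly generate $M$, and I would consider, for $T \in \mathcal C$, the difference $D(T) = \Psi(T) - T \in \mathcal B$. The goal is to show $D(T) = 0$ for all $T \in \mathcal C$. Using $M$-bimodularity of $\Psi$ and a $2 \times 2$ matrix / Stinespring trick (embedding $\mathcal C$ into $\mathcal B$ and comparing $\Psi$ with the identity inclusion via the map $\begin{psmallmatrix} {\rm id} & \Psi \\ \Psi & {\rm id}\end{psmallmatrix}$ on $M_2(\mathcal C)$, or directly via a Cauchy--Schwarz estimate in $L^2(\mathcal B, \zeta)$ with $\zeta$ the stationary state, which is faithful by Proposition~\ref{prop:faithfulstate}), one shows that $D$ is an $M$-bimodular map vanishing on $M$ and that $T \mapsto D(T)$ is ``harmonic'' in an appropriate sense: one expects an identity of the shape $\sum_n z_n^* D(T) z_n = D(\mathcal P_\varphi^{\rm o}(T))$ coming from the bimodularity applied to the standard form, together with a positivity/rigidity estimate of the form $\sum_n \| z_n^* D(T) p^\perp \xi\|_2^2 \le$ (something vanishing), mimicking the projection argument in the proof of Theorem~\ref{thm:biharmconstant}. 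The cleanest route is probably: show that the self-adjoint part of $D$ lands, after averaging $\frac1N\sum_{n=1}^N (\mathcal P_\varphi^{\rm o})^n$, in $\Har(\mathcal P_\varphi) \cap \Har(\mathcal P_\varphi^{\rm o}) = \mathcal Z(M)$, while also remaining in the kernel-type object generated by $D$; combined with $D_{|M} = 0$ and faithfulness of $\zeta$, this pins $D \equiv 0$.

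I expect the main obstacle to be making the ``harmonicity of $D$'' precise: $\Psi$ is only defined on the operator system $\mathcal C$, not on all of $\mathcal B(L^2(M,\tau))$, so I cannot freely apply $\mathcal P_\varphi$ or $\mathcal P_\varphi^{\rm o}$ to $\Psi(T)$ and must instead work intrinsically inside $\mathcal B = \mathcal B_\varphi$, using the concrete standard form of the conditional-expectation-type maps there. The right framework is likely to transport everything through the Poisson transform $\mathcal P : \mathcal B_\varphi \to \Har(\mathcal P_\varphi)$, view $\mathcal C$ and $\Psi(\mathcal C)$ inside $\Har(\mathcal P_\varphi) \subset \mathcal B(L^2(M,\tau))$, and then run the projection/Cauchy--Schwarz argument of Theorem~\ref{thm:biharmconstant} verbatim on the bimodule generated by the range of $D$; the strong generation hypothesis is exactly what is needed so that the vanishing $z_n^* D(T) = D(T) z_n^*$-type relations force $D(T)$ into $M' \cap \mathcal B = \mathcal Z(M)$ (Proposition~\ref{prop:relcommutant}), after which $D_{|M} = 0$ and a density argument finish the proof.
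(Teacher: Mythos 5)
Your toolkit is the right one (multiplicative domain, the standard form, $\mathcal P_\varphi^{\rm o}$, Ces\`aro averaging, Theorem~\ref{thm:biharmconstant}), and your opening observations are correct: $M$ lies in the multiplicative domain of $\Psi$, so $\Psi$ is $M$-bimodular, and hence $D:=\Psi-{\rm id}$ satisfies $\mathcal P_\varphi^{\rm o}(D(T))=D(\mathcal P_\varphi^{\rm o}(T))$ because $\mathcal P_\varphi^{\rm o}(S)=\sum_n z_nSz_n^*$ with $z_n\in M$. But the central step of your plan has a genuine gap. Applying $\frac1N\sum_{n=1}^N(\mathcal P_\varphi^{\rm o})^n$ to $D(T)$ and invoking double ergodicity only tells you that a weak limit point of these averages lies in $\mathcal Z(M)$ and equals $D(z)=0$ (where $z\in\mathcal Z(M)$ is the limit of the averages of $T$ and $\Psi(z)=z$); it says nothing about $D(T)$ itself, since the Ces\`aro average of a nonzero operator can perfectly well vanish. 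Likewise, faithfulness of $\zeta$ cannot be brought to bear on $D(T)$, which is a difference of positives and has no sign, and the projection/Cauchy--Schwarz argument of Theorem~\ref{thm:biharmconstant} produces relations like $pz_n^*=pz_n^*p$ only for projections in the \emph{fixed-point algebra} of $\mathcal P_\varphi^{\rm o}$; $D(T)$ has no such fixed-point property, so there is no route to ``$D(T)\in M'\cap\mathcal B$.'' The proposed $2\times2$ matrix comparison of $\Psi$ with ${\rm id}$ is also not available, as that block map is not completely positive in general.

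What is missing is the paper's two-step reduction, which averages \emph{inside} $\Psi$ rather than outside. First one shows that $\Psi$ preserves the vector state: for $T\in\mathcal C$ (viewed in $\Har(\mathcal P_\varphi)$),
$$
\langle \Psi(T)\hat 1,\hat 1\rangle=\langle \mathcal P_\varphi(\Psi(T))\hat 1,\hat 1\rangle=\langle \mathcal P_\varphi^{\rm o}(\Psi(T))\hat 1,\hat 1\rangle=\langle \Psi(\mathcal P_\varphi^{\rm o}(T))\hat 1,\hat 1\rangle,
$$
using that $\Psi(T)$ is $\mathcal P_\varphi$-harmonic, that $\langle\mathcal P_\varphi(S)\hat 1,\hat 1\rangle=\langle\mathcal P_\varphi^{\rm o}(S)\hat 1,\hat 1\rangle$ for all $S$, and bimodularity plus normality of $\Psi$. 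Iterating and passing to a weak limit point $z\in\mathcal Z(M)$ of $\frac1N\sum_{n=1}^N(\mathcal P_\varphi^{\rm o})^n(T)$ gives $\langle\Psi(T)\hat 1,\hat 1\rangle=\langle\Psi(z)\hat 1,\hat 1\rangle=\langle z\hat 1,\hat 1\rangle=\langle T\hat 1,\hat 1\rangle$. Second, one upgrades this scalar identity to the operator identity by bimodularity: $\langle\Psi(T)a\hat 1,b\hat 1\rangle=\langle\Psi(b^*Ta)\hat 1,\hat 1\rangle=\langle b^*Ta\hat 1,\hat 1\rangle=\langle Ta\hat 1,b\hat 1\rangle$ for $a,b\in M$, and cyclicity of $\hat 1$ for $M$ in $L^2(M,\tau)$ finishes the proof --- no positivity or faithfulness argument is needed. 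I would encourage you to rewrite your argument around this state-preservation step; the ingredients you listed then assemble into a complete proof.
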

\begin{proof}
	Let $\mathcal P_\varphi(T) = \sum_n (J z_n^* J) T (J z_n J)$ denote the standard form of $\mathcal P_{\varphi}$ as in Proposition~\ref{prop:standard form}. Then by Proposition 2.4 we have $\mathcal P_\varphi^{\rm o}(T) = \sum_n z_n T z_n^*$.
	By identifying $\mathcal C$ with its image under the Poisson transform we may assume that $\mathcal C$ is a weakly closed $M$-subbimodule of $\Har(\mathcal{P}_{\varphi})$ and $\Psi: \mathcal C \to \Har(\mathcal{P}_{\varphi})$ is a normal u.c.p.\ map such that $\Psi_{| M } = {\rm id}$.
	Note that for $T \in \mathcal C$ we have,
\begin{align*}
\langle \Psi(T) \hat{1}, \hat{1} \rangle &=  \langle \mathcal P_{\varphi}(\Psi(T)) \hat{1}, \hat{1} \rangle =  \langle \mathcal P_{\varphi}^{\rm o}(\Psi(T)) \hat{1}, \hat{1} \rangle \\
&= \sum_n \langle z_n \Psi(T) z_n^* \hat{1}, \hat{1} \rangle = \langle \Psi(\mathcal P_{\varphi}^{\rm o}(T)) \hat{1}, \hat{1} \rangle,
\end{align*}
where the last equality follows from the fact that $\Psi$ is normal and $M$-bimodular, as $M$ is contained in the multiplicative domain of $\Psi$. Now, $\langle \Psi(\mathcal{P}_{\varphi}^{\rm o}(T)) \hat{1}, \hat{1} \rangle = \langle \Psi(T) \hat{1}, \hat{1} \rangle$ for all $T \in \mathcal C$ immediately implies that 
\[
\left\langle \Psi \left( \frac{1}{N}\sum_{n=1}^N (\mathcal{P}_{\varphi}^{\rm o})^n(T) \right) \hat{1}, \hat{1} \right\rangle = \langle \Psi(T) \hat{1}, \hat{1} \rangle \text{for all } T \in \mathcal C.
\]
Let $z$ be a weak operator topology limit point of $\frac{1}{N}\sum_{n=1}^N (\mathcal{P}_{\varphi}^{\rm o})^n(T)$. Then, $z \in \mathcal Z(M)$ by Theorem~\ref{thm:biharmconstant}, so that $\Psi(z)=z$. We then have
$$
\langle \Psi(T) \hat{1}, \hat{1} \rangle = \langle z \hat{1}, \hat{1} \rangle = \langle T \hat{1}, \hat{1} \rangle
$$
where the last equality follows because $z$ is independent of $\Psi$.
Now, let $a,b \in M$, and $T \in \mathcal C$. Then, we have that $b^*Ta \in \mathcal C$, and hence by above computation, we get
$$
\langle \Psi(T) a \hat{1}, b \hat{1} \rangle = \langle \Psi(b^*Ta) \hat{1}, \hat{1} \rangle =  \langle b^*Ta \hat{1}, \hat{1} \rangle =  \langle T a\hat{1}, b \hat{1} \rangle.
$$ 
Thus $\Psi(T)=T.$
\end{proof}

\begin{cor} \label{maxII}
	Let $M$ be a finite von Neumann algebra with a normal faithful trace $\tau$, and let $\varphi$ be a normal regular strongly generating hyperstate. Then, $M$ is a maximal finite von Neumann subalgebra inside $\mathcal B_{\varphi}.$
\end{cor}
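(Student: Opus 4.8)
The plan is to deduce this directly from Theorem~\ref{thm:boundaryrigidity}, using a trace-preserving conditional expectation as the test map. Suppose $N$ is a finite von Neumann algebra with $M \subseteq N \subseteq \mathcal B_\varphi$; the goal is to show $N = M$, since then $M$ admits no proper finite von Neumann overalgebra inside $\mathcal B_\varphi$.

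First I would produce a normal conditional expectation $E \colon N \to M$. Since $N$ is finite it carries a normal faithful tracial state, and the associated trace-preserving conditional expectation onto the von Neumann subalgebra $M$ is normal, unital, completely positive, and $M$-bimodular, with $E_{|M} = {\rm id}$. This is the one step that requires a little care: the existence of such a trace on $N$ is automatic in the separable setting, and the trace-preserving conditional expectation onto a von Neumann subalgebra of a finite von Neumann algebra is standard; but this is essentially the only nontrivial input.

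Next I would view $E$ as landing in the boundary: composing $E$ with the normal inclusion $M \hookrightarrow \mathcal B_\varphi$ gives a normal u.c.p.\ map $\Psi \colon N \to \mathcal B_\varphi$ with $\Psi_{|M} = {\rm id}$. The subalgebra $N$ is in particular a weakly closed operator system with $M \subseteq N \subseteq \mathcal B_\varphi$, and $\varphi$ is normal, regular, and strongly generating by hypothesis, so Theorem~\ref{thm:boundaryrigidity} applies with $\mathcal C = N$ and forces $\Psi = {\rm id}$. Then for every $x \in N$ we get $x = \Psi(x) = E(x) \in M$, whence $N \subseteq M$ and therefore $N = M$. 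The entire weight of the argument is carried by Theorem~\ref{thm:boundaryrigidity}; the only thing to spot is that the conditional expectation $N \to M$ is exactly the map to feed into it, after which the conclusion is immediate.
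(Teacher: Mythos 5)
Your argument is correct and is essentially identical to the paper's own proof: both take a finite intermediate algebra $N$, invoke the existence of a normal conditional expectation $E\colon N \to M$ (coming from a normal faithful trace on $N$), and then apply Theorem~\ref{thm:boundaryrigidity} with $\mathcal C = N$ to force $E = {\rm id}$, hence $N = M$. The extra detail you supply about why $E$ exists and is normal is a reasonable elaboration of the step the paper states without comment.
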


\begin{proof}
	Suppose $N \subset \mathcal{B}_{\varphi}$ is a finite von Neumann algebra containing $M$. Then there exists a normal conditional expectation $E:N \to M$. Hence, by Theorem~\ref{thm:boundaryrigidity}, $E(x)=x$ for all $x \in N$, and hence $N=M$.
\end{proof}

\begin{cor} \label{boundarytypeIII}
		Let $M$ be a $\rm II_1$ factor, and let $\varphi$ be a normal regular strongly generating hyperstate. If $\mathcal B_\varphi \neq M$, then $\mathcal B_{\varphi}$ is a type $\rm III$ factor.
\end{cor}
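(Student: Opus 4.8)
\emph{Proof proposal.} The plan is to pin down the type of $\mathcal B_\varphi$ by elimination, leaning on the fact that $\mathcal B_\varphi$ is a factor together with the maximality of $M$ among finite von Neumann subalgebras established in Corollary~\ref{maxII}. Since $M$ is a factor and $\varphi$ is normal (and strongly generating, hence generating), Proposition~\ref{prop:relcommutant} gives $M'\cap\mathcal B_\varphi=\mathcal Z(M)=\mathbb C$, so $\mathcal B_\varphi$ is a von Neumann factor. I would first observe that $\mathcal B_\varphi$ cannot be a finite von Neumann algebra: otherwise $\mathcal B_\varphi$ itself is a finite von Neumann subalgebra of $\mathcal B_\varphi$ containing $M$, and Corollary~\ref{maxII} forces $\mathcal B_\varphi=M$, contrary to the hypothesis. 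Hence $\mathcal B_\varphi$ is an infinite factor, i.e.\ of type $\mathrm I_\infty$, $\mathrm{II}_\infty$, or $\mathrm{III}$.

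The type $\mathrm I$ case is excluded immediately: a type $\mathrm I$ factor is $\mathcal B(H)$ for some Hilbert space $H$, and then $M'\cap\mathcal B(H)=\mathbb C$ together with the bicommutant theorem yields $M=M''=\mathbb C'=\mathcal B(H)$, which is absurd since $M$ is a $\mathrm{II}_1$ factor. (If one prefers, one may first invoke injectivity, Proposition~\ref{prop:pbinjective}, to reduce to the hyperfinite list.) So $\mathcal B_\varphi$ is of type $\mathrm{II}_\infty$ or $\mathrm{III}$, and it remains to rule out type $\mathrm{II}_\infty$.

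This last step is where I expect the real work to lie. Suppose $\mathcal B_\varphi$ is semifinite, with faithful normal semifinite trace $\mathrm{Tr}$; since $\mathcal B_\varphi$ is not finite, $\mathrm{Tr}(1)=\infty$. Then no nonzero projection $e\in M$ can have $\mathrm{Tr}(e)<\infty$: cutting an arbitrary projection of $M$ into finitely many pieces of $\tau$-trace at most $\tau(e)$ and comparing each with $e$ inside the $\mathrm{II}_1$ factor $M$ would show $\mathrm{Tr}$ is finite on every projection of $M$, hence $\mathrm{Tr}(1)<\infty$, a contradiction. Consequently $\mathrm{Tr}(x)=\infty$ for every nonzero $x\in M^{+}$. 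From here the aim is to contradict Corollary~\ref{maxII} by producing a finite von Neumann subalgebra lying strictly between $M$ and $\mathcal B_\varphi$, built from a nonzero finite-$\mathrm{Tr}$ projection $q\in\mathcal B_\varphi$ (which necessarily lies outside $M$); equivalently, the crux is that a type $\mathrm{II}_\infty$ factor cannot contain a proper maximal finite von Neumann subalgebra. An alternative route is to exclude semifiniteness directly via the modular theory of the faithful normal stationary state $\zeta$ of Proposition~\ref{prop:faithfulstate}: $\zeta$ is not a trace (since $\mathcal B_\varphi\ne M$), and one would try to exploit $\zeta|_M=\tau$, $M'\cap\mathcal B_\varphi=\mathbb C$, and Theorem~\ref{thm:boundaryrigidity} to show $\mathcal B_\varphi$ admits no faithful normal semifinite trace. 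Once type $\mathrm{II}_\infty$ is ruled out, $\mathcal B_\varphi$ is a type $\mathrm{III}$ factor.
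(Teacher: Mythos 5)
Your reductions are fine as far as they go: factoriality from Proposition~\ref{prop:relcommutant}, the exclusion of finite type via Corollary~\ref{maxII}, and the exclusion of type $\rm I$ via the bicommutant theorem all match the paper. But the proposal stops exactly where the theorem actually lives: you never rule out type $\rm II_\infty$. Your first suggested route rests on the assertion that ``a type $\rm II_\infty$ factor cannot contain a proper maximal finite von Neumann subalgebra,'' which you neither prove nor reduce to anything in the paper; it is far from obvious (a finite-trace projection $q$ does not commute with $M$, so the von Neumann algebra generated by $M$ and $q$ has no reason to be finite), and nothing in the preceding results supplies it. Your second route (``$\zeta$ is not a trace, so try to show there is no faithful normal semifinite trace'') is only a statement of intent; a semifinite factor carries plenty of non-tracial faithful normal states, so the non-traciality of $\zeta$ by itself excludes nothing. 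The preliminary observation that ${\rm Tr}$ would have to be purely infinite on $M$ is correct but is not used to reach a contradiction.

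The missing idea is the paper's use of double ergodicity, and it handles all semifinite cases at once rather than splitting off $\rm II_\infty$. Assume $\mathcal B_\varphi$ is not type $\rm III$ and let ${\rm Tr}$ be a faithful normal semifinite trace. Pick $T \geq 0$ with ${\rm Tr}(T) < \infty$ and $\zeta(T) \neq 0$ (possible since ${\rm Tr}$ is semifinite and $\zeta$ is faithful and normal by Proposition~\ref{prop:faithfulstate}). By Theorem~\ref{thm:biharmconstant} any ultraweak limit point $z$ of $\frac{1}{N}\sum_{n=1}^{N}(\mathcal P_\varphi^{\rm o})^{n}(T)$ lies in $\mathcal Z(M)=\mathbb C$, and arguing as in Theorem~\ref{thm:boundaryrigidity} one gets $z=\zeta(T)$. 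For any $S \geq 0$ with ${\rm Tr}(S)<\infty$ one has
\[
{\rm Tr}\Bigl(\tfrac{1}{N}\textstyle\sum_{n=1}^{N}(\mathcal P_\varphi^{\rm o})^{n}(T)\,S\Bigr)
= {\rm Tr}\Bigl(T\,\tfrac{1}{N}\textstyle\sum_{n=1}^{N}(\mathcal P_{\varphi^*}^{\rm o})^{n}(S)\Bigr)
\leq {\rm Tr}(T)\,\|S\|_\infty,
\]
whence $\zeta(T)\,{\rm Tr}(S) \leq {\rm Tr}(T)\,\|S\|_\infty$; letting $S$ run over a net of projections increasing to $1$ gives ${\rm Tr}(1) \leq {\rm Tr}(T)/\zeta(T) < \infty$. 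So $\mathcal B_\varphi$ would be a $\rm II_1$ factor, and only then does Corollary~\ref{maxII} finish the argument, forcing $\mathcal B_\varphi = M$. Without an argument of this kind (or a genuine proof of your maximality claim for $\rm II_\infty$ factors), the proposal has a real gap at its central step.
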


\begin{proof}
	Note that the stationary state is normal and faithful by Proposition~\ref{prop:faithfulstate}, and $\mathcal B_\varphi$ is a factor by Proposition~\ref{prop:relcommutant}. We also note that Proposition ~\ref{prop:relcommutant} along with von Neumann's bicommutant Theorem shows that $\mathcal B_{\varphi}$ is not a type $\rm I$ factor.\\
	 Suppose $\mathcal B_\varphi$ is not a type III factor, then $\mathcal B_\varphi$ has a semi-finite normal faithful trace ${\rm Tr}$. As before, let $\mathcal P$ denote the Poisson transform, and let $\zeta$ be the normal state on $\mathcal B_{\varphi}$ defined by $\zeta(b)= \langle \mathcal P(b) \hat{1} , \hat{1} \rangle$. Fix $ 0 \leq T \in \mathcal B_{\varphi}$ with $Tr(T) < \infty$, and $\zeta(T) \neq 0$. Fix $S \in \mathcal B_{\varphi}$ with $S \geq 0$ and $Tr(S)< \infty$. 
Let $z$ be a ultraweak limit point of $\frac{1}{N}\sum_{n=1}^N (\mathcal{P}_{\varphi}^{\rm o})^n(T)$. Then by Theorem~\ref{thm:biharmconstant} we have $z \in \mathcal Z(M)= \mathbb C$ and arguing as in the proof of Theorem~\ref{thm:boundaryrigidity} we have $\zeta(T)=z.$ 
Therefore, $\zeta(T)Tr(S)$ is a limit point of $\{Tr((\frac{1}{N}\sum_{n=1}^N (\mathcal{P}_{\varphi}^{\rm o})^n(T) )S) \}_{N=1}^{\infty}.$ On the other hand, note that for each $N \in \mathbb N$, we have that $Tr(\frac{1}{N}\sum_{n=1}^N (\mathcal{P}_{\varphi}^{\rm o})^n(T)S)= Tr(T(\frac{1}{N}\sum_{n=1}^N (\mathcal{P}_{\varphi^*}^{\rm o})^n(S)) )$. Since $|Tr(T (\frac{1}{N}\sum_{n=1}^N (\mathcal{P}_{\varphi^*}^{\rm o})^n(S)))| \leq Tr(T) ||S||_{\infty}$, by the above discussion, we then have $$ \zeta(T)Tr(S) \leq Tr(T)||S||_{\infty}.$$ Consider a net of projections $\{S_i\}_{i \in I}$ in $\mathcal B_{\varphi}$, such that $S_i$ converges to $1$ in the strong operator topology. The above equation then shows that $\zeta(T)Tr(1) \leq Tr(T)< \infty$. As $\zeta(T) \neq 0$ by choice, we get that $Tr(1)< \infty$. Hence $\mathcal B_{\varphi}$ is a type $\rm II_1$ factor and by Corollary~\ref{maxII} we have that $\mathcal B_{\varphi}=M$. 
\end{proof}

\begin{thm} \label{thm:boundarytp}
Suppose for each $i \in \{ 1, 2 \}$, $M_i$ is a finite von Neumann algebra with normal faithful trace $\tau_i$. Let $\varphi_i$ and $\varphi_1 \otimes \varphi_2$ be normal regular strongly generating hyperstates for $M_i$ and $M_1 \bar{\otimes} M_2$ on $\mathcal B(L^2(M_i, \tau_i))$ and $\mathcal B(L^2(M_1 \bar{\otimes} M_2, \tau_1 \otimes \tau_2))$ respectively. Then,
$$
{\rm Har}(\mathcal P_{\varphi_1} \otimes \mathcal P_{\varphi_2}) = {\rm Har}(\mathcal P_{\varphi_1}) \ovt {\rm Har}(\mathcal P_{\varphi_2}).
$$
\end{thm}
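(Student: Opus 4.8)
The plan is to reduce the statement to the rigidity result Theorem~\ref{thm:boundaryrigidity}, applied to the two ``partial'' Poisson transforms $\mathcal P_{\varphi_1}\otimes{\rm id}$ and ${\rm id}\otimes\mathcal P_{\varphi_2}$ acting on the boundary. Throughout I identify $\mathcal B(L^2(M_1\bar\otimes M_2,\tau_1\otimes\tau_2)) = \mathcal B(L^2(M_1,\tau_1))\ovt\mathcal B(L^2(M_2,\tau_2))$. The first thing I would record is the routine identity $\mathcal P_{\varphi_1\otimes\varphi_2} = \mathcal P_{\varphi_1}\otimes\mathcal P_{\varphi_2}$: by (\ref{eq:state}), for elementary tensors $a = a_1\otimes a_2$, $b = b_1\otimes b_2\in M_1\odot M_2$ and $T = T_1\otimes T_2$ one computes $\langle\mathcal P_{\varphi_1\otimes\varphi_2}(T)\hat a,\hat b\rangle = (\varphi_1\otimes\varphi_2)(b^*Ta) = \varphi_1(b_1^*T_1a_1)\varphi_2(b_2^*T_2a_2) = \langle(\mathcal P_{\varphi_1}(T_1)\otimes\mathcal P_{\varphi_2}(T_2))\hat a,\hat b\rangle$, and since both $\mathcal P_{\varphi_1\otimes\varphi_2}$ and $\mathcal P_{\varphi_1}\otimes\mathcal P_{\varphi_2}$ are normal and the $\hat a$'s are total, the two maps agree. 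Thus $\mathcal B := \mathcal B_{\varphi_1\otimes\varphi_2}$ is the operator system ${\rm Har}(\mathcal P_{\varphi_1}\otimes\mathcal P_{\varphi_2})$, and by hypothesis $\varphi_1\otimes\varphi_2$ is a normal regular strongly generating hyperstate for $M_1\bar\otimes M_2$, so Theorem~\ref{thm:boundaryrigidity} is available for it. Finally, I would read ${\rm Har}(\mathcal P_{\varphi_1})\ovt{\rm Har}(\mathcal P_{\varphi_2})$ as the Fubini product, i.e.\ the set of $T$ whose right slices $({\rm id}\otimes\psi)(T)$ all lie in ${\rm Har}(\mathcal P_{\varphi_1})$ and whose left slices $(\omega\otimes{\rm id})(T)$ all lie in ${\rm Har}(\mathcal P_{\varphi_2})$; since $\mathcal B_{\varphi_i}$ is injective (Proposition~\ref{prop:pbinjective}) it enjoys the slice map property, so this coincides with the weak$^*$-closed span of the elementary tensors $x_1\otimes x_2$ with $x_i\in{\rm Har}(\mathcal P_{\varphi_i})$.

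Next I would observe the elementary equivalences: $(\mathcal P_{\varphi_1}\otimes{\rm id})(T) = T$ holds iff every right slice of $T$ is $\mathcal P_{\varphi_1}$-harmonic, and $({\rm id}\otimes\mathcal P_{\varphi_2})(T) = T$ holds iff every left slice of $T$ is $\mathcal P_{\varphi_2}$-harmonic (using that $({\rm id}\otimes\psi)\circ(\mathcal P_{\varphi_1}\otimes{\rm id}) = \mathcal P_{\varphi_1}\circ({\rm id}\otimes\psi)$ and that slices separate points). Since $\mathcal P_{\varphi_1}\otimes\mathcal P_{\varphi_2} = (\mathcal P_{\varphi_1}\otimes{\rm id})\circ({\rm id}\otimes\mathcal P_{\varphi_2})$, any $T$ in the Fubini product is fixed by $\mathcal P_{\varphi_1}\otimes\mathcal P_{\varphi_2}$; this is the inclusion ``$\supseteq$''. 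For ``$\subseteq$'' it therefore suffices to prove that every $T\in\mathcal B = {\rm Har}(\mathcal P_{\varphi_1}\otimes\mathcal P_{\varphi_2})$ satisfies $({\rm id}\otimes\mathcal P_{\varphi_2})(T) = T$: applying $\mathcal P_{\varphi_1}\otimes{\rm id}$ then also gives $(\mathcal P_{\varphi_1}\otimes{\rm id})(T) = (\mathcal P_{\varphi_1}\otimes\mathcal P_{\varphi_2})(T) = T$, so all slices of $T$ are harmonic and $T$ lies in the Fubini product.

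To prove $({\rm id}\otimes\mathcal P_{\varphi_2})(T) = T$ for $T\in\mathcal B$, I would apply Theorem~\ref{thm:boundaryrigidity} with $M_1\bar\otimes M_2$ and $\varphi_1\otimes\varphi_2$ in place of $M$ and $\varphi$, with the weakly closed operator system $\mathcal C$ taken to be $\mathcal B$ itself, and with $\Psi := ({\rm id}\otimes\mathcal P_{\varphi_2})|_{\mathcal B}$. Three points need checking. (a) $\Psi$ maps $\mathcal B$ into $\mathcal B$: this is the step where the tensor structure is essential, and it follows from the identity $(\mathcal P_{\varphi_1}\otimes\mathcal P_{\varphi_2})\circ({\rm id}\otimes\mathcal P_{\varphi_2}) = ({\rm id}\otimes\mathcal P_{\varphi_2})\circ(\mathcal P_{\varphi_1}\otimes\mathcal P_{\varphi_2})$, which shows that ${\rm id}\otimes\mathcal P_{\varphi_2}$ preserves the fixed-point space of $\mathcal P_{\varphi_1}\otimes\mathcal P_{\varphi_2}$. (b) $\Psi$ is a normal u.c.p.\ map: ${\rm id}\otimes\mathcal P_{\varphi_2}$ is u.c.p.\ and ultraweakly continuous on $\mathcal B(L^2(M_1\bar\otimes M_2))$, hence on the ultraweakly closed subspace $\mathcal B$, and transporting it through the (normal) Poisson transform exhibits it as a normal u.c.p.\ map on the von Neumann algebra $\mathcal B_{\varphi_1\otimes\varphi_2}$. (c) $\Psi|_{M_1\bar\otimes M_2} = {\rm id}$: since $M_2$ lies in the multiplicative domain of $\mathcal P_{\varphi_2}$ we have $({\rm id}\otimes\mathcal P_{\varphi_2})(a\otimes b) = a\otimes b$ for $a\in M_1$ and $b\in M_2$, and then $\Psi$ is the identity on all of $M_1\bar\otimes M_2$ by normality. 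Theorem~\ref{thm:boundaryrigidity} now forces $\Psi = {\rm id}_{\mathcal B}$, which completes the argument. (Consistently, $\mathcal P_{\varphi_1}\otimes{\rm id}$ and ${\rm id}\otimes\mathcal P_{\varphi_2}$ restrict to mutually inverse automorphisms of $\mathcal B$, their composite in either order being $(\mathcal P_{\varphi_1}\otimes\mathcal P_{\varphi_2})|_{\mathcal B} = {\rm id}$, so each is forced to be the identity.)

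The one conceptual point I expect to be the crux is exactly the realization that $\mathcal P_{\varphi_1}\otimes{\rm id}$ and ${\rm id}\otimes\mathcal P_{\varphi_2}$ descend to normal u.c.p.\ endomorphisms of the boundary that fix $M_1\bar\otimes M_2$, so that the rigidity theorem becomes applicable; once this is seen, everything else is slice-map bookkeeping (the easy inclusion, and the identification of the Fubini product with $\ovt$ via injectivity of the boundaries). The item most in need of careful writing is (b): identifying the ultraweak topology that $\mathcal B$ carries as a subspace of $\mathcal B(L^2(M_1\bar\otimes M_2))$ with its intrinsic von Neumann algebra topology, so that Theorem~\ref{thm:boundaryrigidity} may be invoked verbatim.
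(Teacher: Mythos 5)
Your proposal is correct and follows essentially the same route as the paper: both reduce the hard inclusion to Theorem~\ref{thm:boundaryrigidity} applied to a partial Poisson transform ($\mathcal P_{\varphi_1}\otimes{\rm id}$ or ${\rm id}\otimes\mathcal P_{\varphi_2}$) restricted to ${\rm Har}(\mathcal P_{\varphi_1}\otimes\mathcal P_{\varphi_2})$, using the commutation to see it preserves that space and fixes $M_1\ovt M_2$, and then both invoke injectivity of the boundary together with Kraus's property $S_\sigma$ (the slice map property) to identify the resulting intersection with ${\rm Har}(\mathcal P_{\varphi_1})\ovt{\rm Har}(\mathcal P_{\varphi_2})$. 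The only cosmetic difference is that you apply the rigidity theorem once and deduce the second fixed-point identity from the composition, whereas the paper applies it symmetrically to both factors.
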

\begin{proof}
We clearly have ${\rm Har}(\mathcal P_{\varphi_1}) \ovt {\rm Har}(\mathcal P_{\varphi_2}) \subset {\rm Har}(\mathcal P_{\varphi_1} \otimes \mathcal P_{\varphi_2})$ so we only need to show the reverse inclusion.  Note that 
$$
(\mathcal P_{\varphi_1} \otimes {\rm id}) \circ (\mathcal P_{\varphi_1} \otimes \mathcal P_{\varphi_2}) = (\mathcal P_{\varphi_1} \otimes \mathcal P_{\varphi_2}) \circ (\mathcal P_{\varphi_1} \otimes {\rm id}),
$$ 
hence $(\mathcal P_{\varphi_1} \otimes {\rm id})_{| {\rm Har}(\mathcal P_{\varphi_1} \otimes \mathcal P_{\varphi_2})}$ gives a normal ucp map which restricts to the identity on $M_1 \ovt M_2$. By Theorem~\ref{thm:boundaryrigidity} we have that $(\mathcal P_{\varphi_1} \otimes {\rm id})_{| {\rm Har}(\mathcal P_{\varphi_1} \otimes \mathcal P_{\varphi_2})}$ is the identity map and hence $$
{\rm Har}(\mathcal P_{\varphi_1} \otimes \mathcal P_{\varphi_2}) \subset {\rm Har}(\mathcal P_{\varphi_1} \otimes {\rm id}) = {\rm Har}(\mathcal P_{\varphi_1}) \ovt \mathcal B(L^2(M_2)).
$$ 
We similarly have 
$$
{\rm Har}(\mathcal P_{\varphi_1} \otimes \mathcal P_{\varphi_2}) \subset \mathcal B(L^2(M_1)) \ovt {\rm Har}(\mathcal P_{\varphi_2}).
$$

Since ${\rm Har}(\mathcal P_{\varphi_1})$ is injective it is semidiscrete \cite{connesinjective}, and hence has property $S_\sigma$ of Kraus \cite[Theorem 1.9]{krausslicemap}. We then have 
\begin{align}
{\rm Har}(\mathcal P_{\varphi_1} \otimes \mathcal P_{\varphi_2}) 
\subset ({\rm Har}(\mathcal P_{\varphi_1}) \ovt \mathcal B(L^2(M_2))) \cap (\mathcal B(L^2(M_1)) \ovt {\rm Har}(\mathcal P_{\varphi_2})) 
\subset {\rm Har}(\mathcal P_{\varphi_1}) \ovt {\rm Har}(\mathcal P_{\varphi_2}). \nonumber 
\end{align}
\end{proof}

\begin{cor}\label{cor:pbtensor}
Suppose for each $i \in \{ 1, 2 \}$, $M_i$ is a finite von Neumann algebra with normal faithful trace $\tau_i$. Let $\varphi_i$ and $\varphi_1 \otimes \varphi_2$ be normal regular strongly generating hyperstates for $M_i$ and $M_1 \bar{\otimes} M_2$ on $\mathcal B(L^2(M_i, \tau_i))$ and $\mathcal B(L^2(M_1 \bar{\otimes} M_2, \tau_1 \otimes \tau_2))$ respectively. Then, the identity map on $M_1 \ovt M_2$ uniquely extends to a $*$-isomorphism between $\mathcal B_{\varphi_1 \otimes \varphi_2}$ and $\mathcal B_{\varphi_1} \ovt \mathcal B_{\varphi_2}$.
\end{cor}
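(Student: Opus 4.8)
The plan is to exhibit $\mathcal B_{\varphi_1}\ovt\mathcal B_{\varphi_2}$, together with the map $\mathcal P_{\varphi_1}\otimes\mathcal P_{\varphi_2}$, as a model for Izumi's boundary of $\mathcal P_{\varphi_1\otimes\varphi_2}$, and then to invoke the uniqueness of the boundary recorded after the Definition in Section~2 (via Choi's theorem); the uniqueness of the \emph{extension} will follow from Theorem~\ref{thm:boundaryrigidity}. Identifying $L^2(M_1\ovt M_2,\tau_1\otimes\tau_2)$ with $L^2(M_1,\tau_1)\otimes L^2(M_2,\tau_2)$ gives $\mathcal B(L^2(M_1\ovt M_2))=\mathcal B(L^2(M_1))\ovt\mathcal B(L^2(M_2))$, with the Jones projection $e_{M_1\ovt M_2}$ equal to $e_{M_1}\otimes e_{M_2}$, so that $\mathcal P_{\varphi_1\otimes\varphi_2}=\mathcal P_{\varphi_1}\otimes\mathcal P_{\varphi_2}$ as normal u.c.p.\ maps. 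Writing $\mathcal P_i\colon\mathcal B_{\varphi_i}\to\Har(\mathcal P_{\varphi_i})$ for the Poisson transforms, which are normal complete order isomorphisms restricting to the identity on $M_i$, I set $\Phi:=\mathcal P_1\otimes\mathcal P_2$, a normal u.c.p.\ map from $\mathcal B_{\varphi_1}\ovt\mathcal B_{\varphi_2}$ into $\mathcal B(L^2(M_1\ovt M_2))$ that is $M_1\ovt M_2$-bimodular and restricts to the identity on $M_1\ovt M_2$.

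The heart of the argument is to show that $\Phi$ is an isometry with range exactly $\Har(\mathcal P_{\varphi_1\otimes\varphi_2})$. Since each $\mathcal P_i$ is normal with range $\Har(\mathcal P_{\varphi_i})$ and $\mathcal B_{\varphi_1}\odot\mathcal B_{\varphi_2}$ is weak$^*$-dense in $\mathcal B_{\varphi_1}\ovt\mathcal B_{\varphi_2}$, normality of $\Phi$ forces its range into $\Har(\mathcal P_{\varphi_1})\ovt\Har(\mathcal P_{\varphi_2})$, which by Theorem~\ref{thm:boundarytp} coincides with $\Har(\mathcal P_{\varphi_1\otimes\varphi_2})$. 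For the reverse inclusion I would argue that $\Phi$ is a complete isometry. Each $\mathcal P_i$ is a complete order isomorphism onto the injective operator system $\Har(\mathcal P_{\varphi_i})$ (Proposition~\ref{prop:pbinjective}), and I would use the injectivity/semidiscreteness of the boundaries together with Kraus' property $S_\sigma$ — exactly as in the proof of Theorem~\ref{thm:boundarytp} — to see that the spatial tensor product $\mathcal P_1\otimes\mathcal P_2$ remains a complete order embedding, hence isometric. Granting this, the range of $\Phi$ is weak$^*$-closed: the image under $\Phi$ of the (weak$^*$-compact) unit ball of $\mathcal B_{\varphi_1}\ovt\mathcal B_{\varphi_2}$ is weak$^*$-compact and, because $\Phi$ is isometric, equals the intersection of $\Phi(\mathcal B_{\varphi_1}\ovt\mathcal B_{\varphi_2})$ with the unit ball of $\mathcal B(L^2(M_1\ovt M_2))$, so Krein--Smulian applies. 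As this weak$^*$-closed range contains $\Har(\mathcal P_{\varphi_1})\odot\Har(\mathcal P_{\varphi_2})$, it equals $\Har(\mathcal P_{\varphi_1})\ovt\Har(\mathcal P_{\varphi_2})=\Har(\mathcal P_{\varphi_1\otimes\varphi_2})$.

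With this established, $\Phi$ is a completely positive isometric surjection of the $C^*$-algebra $\mathcal B_{\varphi_1}\ovt\mathcal B_{\varphi_2}$ onto $\Har(\mathcal P_{\varphi_1\otimes\varphi_2})$ restricting to the identity on $M_1\ovt M_2$; composing with the inverse of the Poisson transform $\mathcal B_{\varphi_1\otimes\varphi_2}\to\Har(\mathcal P_{\varphi_1\otimes\varphi_2})$ produces a completely positive surjective isometry between the $C^*$-algebras $\mathcal B_{\varphi_1}\ovt\mathcal B_{\varphi_2}$ and $\mathcal B_{\varphi_1\otimes\varphi_2}$, hence a $\ast$-isomorphism by Choi's theorem, and it restricts to the identity on $M_1\ovt M_2$. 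For the uniqueness statement, if $\theta$ and $\theta'$ are two $\ast$-isomorphisms $\mathcal B_{\varphi_1\otimes\varphi_2}\to\mathcal B_{\varphi_1}\ovt\mathcal B_{\varphi_2}$ extending the identity on $M_1\ovt M_2$, then $\theta^{-1}\circ\theta'$ is a normal u.c.p.\ map of $\mathcal B_{\varphi_1\otimes\varphi_2}$ into itself which is the identity on $M_1\ovt M_2$, so Theorem~\ref{thm:boundaryrigidity} — applied with $\mathcal C=\mathcal B=\mathcal B_{\varphi_1\otimes\varphi_2}$, using that $\varphi_1\otimes\varphi_2$ is a normal regular strongly generating hyperstate — forces $\theta^{-1}\circ\theta'=\mathrm{id}$, i.e.\ $\theta=\theta'$. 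I expect the middle paragraph to be the main obstacle: showing that forming the spatial tensor product of the operator-system identifications $\mathcal B_{\varphi_i}\cong\Har(\mathcal P_{\varphi_i})$ still yields an operator-system identification (equivalently, that $\Phi$ is isometric with the expected range), which is where the injectivity of the boundaries is essential.
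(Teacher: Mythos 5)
The paper states this corollary without proof, as an immediate consequence of Theorem~\ref{thm:boundarytp}, and your argument is exactly the natural filling-in of that deduction --- transport the identification ${\rm Har}(\mathcal P_{\varphi_1\otimes\varphi_2})={\rm Har}(\mathcal P_{\varphi_1})\ovt{\rm Har}(\mathcal P_{\varphi_2})$ through the Poisson transforms via $\mathcal P_1\otimes\mathcal P_2$, invoke Choi's theorem, and obtain uniqueness of the extension from Theorem~\ref{thm:boundaryrigidity} --- so it is essentially the paper's (implicit) approach and is correct. The one substantive point you flag does go through, but the precise tool is semidiscreteness of the injective boundaries rather than property $S_\sigma$ as such: approximating ${\rm id}_{\mathcal B_{\varphi_1}}$ point-weak$^*$ by normal u.c.p.\ maps factoring through matrix algebras shows that ${\rm id}\otimes\mathcal P_2$, and hence $\mathcal P_1\otimes\mathcal P_2=(\mathcal P_1\otimes{\rm id})\circ({\rm id}\otimes\mathcal P_2)$, is completely isometric on the von Neumann tensor product, whereas property $S_\sigma$ is the slice-map statement used in Theorem~\ref{thm:boundarytp} only for the intersection formula.
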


\section{Entropy}\label{sec:entropy}
In this section we introduce noncommutative analogues of Avez's asymptotic entropy \cite{avezentropy}, and Furstenberg entropy \cite[Section 8]{furstenbergproducts}.
\subsection{Asymptotic entropy}
Let $M$ be a tracial von Neumann algebra with a faithful normal tracial state $\tau$. For a normal hyperstate $\varphi \in \mathcal S_\tau(\mathcal B(L^2(M, \tau)))$ we define the {\bf entropy of $\varphi$}, denoted by $H(\varphi)$, to be the von Neumann entropy of the corresponding density matrix $A_{\varphi}$:
\begin{align*}
H(\varphi)= -{\rm Tr}(A_{\varphi}\log(A_{\varphi})).
\end{align*}
If we have a standard form $\varphi(T) = \sum_{n} \langle T \widehat{z_n^*}, \widehat{z_n^*} \rangle$, then we may compute this explicitly as 
$$
H(\varphi) = - \sum_n \| z_n \|_2^2 \log( \| z_n \|_2^2 ).
$$

%%%%%%%%%%%%%%%%%%%%%%%%%%%%%%%%%%%%%%%%%%%%%%%%%%

\begin{thm}\label{thm:subadd}  If $\varphi$ and $\psi$ are two normal hyperstates with $\psi$ regular, then
	\begin{align*}
	H(\varphi \ast \psi) \leq H(\varphi)+ H(\psi)
	\end{align*}
\end{thm}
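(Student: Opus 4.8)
The plan is to reduce the subadditivity of von Neumann entropy under convolution of hyperstates to the standard subadditivity/data-processing properties of quantum entropy, using the standard-form description from Proposition~\ref{prop:standard form}. First I would fix standard forms $\varphi(T) = \sum_m \langle T \widehat{z_m^*}, \widehat{z_m^*}\rangle$ and $\psi(T) = \sum_n \langle T \widehat{w_n^*}, \widehat{w_n^*}\rangle$, so that $\mathcal P_\varphi(T) = \sum_m (Jz_m^*J)T(Jz_mJ)$ and $\mathcal P_\psi(T) = \sum_n (Jw_n^*J)T(Jw_nJ)$, and $1 = \sum_m z_m^*z_m = \sum_n w_n^*w_n$. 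Since $\mathcal P_{\varphi*\psi} = \mathcal P_\varphi \circ \mathcal P_\psi$, composing these gives $\mathcal P_{\varphi*\psi}(T) = \sum_{m,n} (J(w_nz_m)^*J)\,T\,(J(w_nz_m)J)$. The key point is that regularity of $\psi$ guarantees $\sum_n w_nw_n^* = 1$, which should let one check that $\{w_nz_m\}_{m,n}$ is again a partition of the identity in the required sense (i.e.\ $\sum_{m,n}(w_nz_m)^*(w_nz_m) = \sum_m z_m^*(\sum_n w_n^*w_n)z_m = \sum_m z_m^*z_m = 1$), so this is a legitimate (not necessarily orthogonal) standard-form-type expression for $\mathcal P_{\varphi*\psi}$, and $A_{\varphi*\psi} = \sum_{m,n}(w_nz_m)^*P_{\hat 1}(w_nz_m)$.

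Next I would identify the relevant density operators concretely as reductions of a pure state on a composite system. Writing $\xi_m = \widehat{z_m^*}$, $\eta_n = \widehat{w_n^*}$, one has $A_\varphi = \sum_m P_{\xi_m}$ after rescaling (more precisely $A_\varphi = \sum_m \|z_m\|_2^2\, P_{\xi_m/\|z_m\|_2}$), with eigenvalues $\|z_m\|_2^2$, hence $H(\varphi) = -\sum_m \|z_m\|_2^2 \log\|z_m\|_2^2$, and similarly for $\psi$. For $\varphi*\psi$ the eigenvalues of $A_{\varphi*\psi}$ are governed by the vectors $\widehat{(w_nz_m)^*}$; one computes $\|w_nz_m\|_2^2 = \tau(z_m^*w_n^*w_nz_m)$. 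The natural strategy is to view the joint distribution $p_{m,n} := \|w_nz_m\|_2^2$, which satisfies $\sum_{m,n}p_{m,n} = 1$, together with its marginal $\sum_n p_{m,n} = \|z_m\|_2^2$ over the first index. Then $H(\varphi) = H(\{\sum_n p_{m,n}\}_m)$ is the Shannon entropy of a marginal, $H(\varphi*\psi)$ is bounded above by the Shannon entropy $H(\{p_{m,n}\}_{m,n})$ of the joint distribution (the bound, rather than equality, arising because the $w_nz_m$ need not be mutually orthogonal, so one should apply concavity of $t\mapsto -t\log t$ / the fact that von Neumann entropy of $\sum_k Q_k$ with $Q_k$ positive of trace $p_k$ is at most $\sum_k(-p_k\log p_k) + \sum_k p_k H(Q_k/p_k)$, with the internal terms vanishing since each $P_{\widehat{(w_nz_m)^*}}$ is rank one), and by the chain rule $H(\{p_{m,n}\}) = H(\{\sum_n p_{m,n}\}_m) + \sum_m (\sum_n p_{m,n}) H(\{p_{m,n}/\sum_{n'}p_{m,n'}\}_n)$. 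So it remains to bound $\sum_m (\sum_n p_{m,n})\, H(\text{conditional}) \le H(\psi)$.

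The main obstacle is precisely this last conditional-entropy estimate: showing $\sum_m \|z_m\|_2^2\, H\big(\{\tau(z_m^*w_n^*w_nz_m)/\|z_m\|_2^2\}_n\big) \le -\sum_n \|w_n\|_2^2\log\|w_n\|_2^2$. For each fixed $m$, the conditional distribution is $n \mapsto \tau(z_m^* w_n^* w_n z_m)/\|z_m\|_2^2$, i.e.\ the "output" of the trace-preserving map associated to $\psi$ applied to the unit vector $z_m^*/\|z_m\|_2$; averaging over $m$ against the weights $\|z_m\|_2^2$ reconstitutes, via $\sum_m z_m^*(\cdot)z_m$-type identities and regularity ($\sum_n w_nw_n^*=1$), exactly the distribution $\{\|w_n\|_2^2\}_n$ as the average, while entropy is concave; thus the bound should follow from concavity of Shannon entropy together with the observation that the average of the conditional distributions over $m$ (weighted by $\|z_m\|_2^2$) equals the distribution $\{\|w_n\|_2^2\}_n$. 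Concretely: $\sum_m \|z_m\|_2^2 \cdot \frac{\tau(z_m^* w_n^* w_n z_m)}{\|z_m\|_2^2} = \sum_m \tau(z_m^* w_n^* w_n z_m) = \tau\big(w_n^* w_n \sum_m z_m z_m^*\big)$, and here one needs $\sum_m z_m z_m^* = 1$, which is regularity of $\varphi$ — but $\varphi$ is \emph{not} assumed regular. This is the real subtlety, and I expect the actual argument circumvents it: the correct pairing is likely to convolve in the order that puts the regular hyperstate $\psi$ on the inside where $\sum_n w_n w_n^* = 1$ is available, so that the relevant identity becomes $\sum_n \tau(\cdots) = \tau(\cdots \sum_n w_n w_n^*) = \tau(\cdots)$; I would therefore carefully track which of the two partitions needs the "$\sum(\cdot)(\cdot)^* = 1$" property and arrange the factorization of $\mathcal P_{\varphi*\psi}$ accordingly, using the moreover-part of Proposition~\ref{prop:standard form} to pass between orthogonal and non-orthogonal representatives. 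Once the bookkeeping is aligned so that only regularity of $\psi$ is invoked, the proof is the chain rule plus concavity as above.
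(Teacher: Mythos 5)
Your proposal follows essentially the same route as the paper's proof: decompose the density operator of the convolution as a sum of rank-one positive operators indexed by products of the standard-form elements, bound $H(\varphi\ast\psi)$ above by the Shannon entropy of the resulting joint weight distribution (the paper obtains this via operator monotonicity of $\log$ applied to $A_{\varphi\ast\psi}\geq m(k,\ell)P_{\xi_{k\ell}}$; your ``entropy of a mixture of pure states'' bound is the same estimate), and then conclude by subadditivity of Shannon entropy once both marginals are identified with the eigenvalue distributions of $A_\varphi$ and $A_\psi$. Your chain-rule-plus-concavity step is equivalent to the paper's decomposition of $\log m(i,j)$ into three terms together with concavity of $\eta(x)=-x\log x$; no new idea is needed there.

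The obstacle you flagged is real, and your diagnosis is correct. (A small mislabel first: your verification that $\{w_nz_m\}$ partitions the identity uses $\sum_n w_n^*w_n=1$, which is the hyperstate condition for $\psi$, not regularity; regularity is the other normalization $\sum_n w_nw_n^*=1$.) With the factorization you wrote, the weights are $p_{m,n}=\tau(z_m^*w_n^*w_nz_m)$, the $\varphi$-marginal $\sum_n p_{m,n}=\|z_m\|_2^2$ comes for free from the hyperstate condition on $\psi$, and the $\psi$-marginal $\sum_m p_{m,n}=\tau\bigl(w_n^*w_n\sum_m z_mz_m^*\bigr)$ requires $\sum_m z_mz_m^*=1$ --- a regularity-type condition on the \emph{other} factor, exactly as you computed. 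The resolution is the reordering you anticipated: the paper works with the density operator $\sum_{i,j}\mu_i\nu_j\,b_i^*d_j^*P_{\hat 1}d_jb_i$, in which the operators coming from $\psi$ sit innermost, adjacent to $P_{\hat 1}$; for that arrangement the troublesome marginal is governed by $\sum_j\nu_jd_j^*d_j=1$, which is precisely what regularity of $\psi$ supplies, while the other marginal follows from the hyperstate normalization of $\varphi$. Once you align the factorization so that the regular hyperstate is the one whose ``$\sum (\cdot)(\cdot)^*=1$'' identity is invoked, your argument closes, and the remaining steps are exactly the paper's. So the proposal is correct in substance, with the one unresolved bookkeeping point resolving in the way you guessed.
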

\begin{proof}
	Let $A_{\varphi}$ and $A_{\psi}$ be the corresponding density operators and $\mathcal{P}_{\varphi}$ and $\mathcal{P}_{\psi}$ be the corresponding u.c.p.\ $M$-bimodular maps. Suppose we have the standard forms 
	\begin{align*}
\varphi(T)= \sum_{i \in I} \langle T \mu_i^{1/2} \hat{a_i^*}, \mu_i^{1/2} \hat{a_i^*} \rangle
  \text{ with }\mu_i >0, \text{ }||a_i^*||_2=1, \text{ and } \tau(a_ja_i^*)=0 \text{ for all }i \neq j \in I. \\
\psi(T)= \sum_{j \in J} \langle T \nu_j \hat{c_j^*}, \nu_j \hat{c_j^*} \rangle  
 \text{ with }\nu_j >0, \text{ }||c_j^*||_2=1, \text{ and } \tau(c_kc_l^*)=0 \text{ for all }k \neq l \in J.
	\end{align*}

	 Hence $A_{\varphi}= \sum\limits_i \mu_i P_{\hat{a_i}}$ and $A_{\psi}= \sum\limits_j \nu_j P_{\hat{c_j}}$.
	
	Let $b_i=Ja_iJ$ and $d_i=Jc_iJ$ so that
	\begin{align*}
	\mathcal{P}_{\varphi}(T)=\sum\limits_i \mu_ib_iTb_i^{*} \text{ and } \mathcal{P}_{\psi}(T)=\sum\limits_j \nu_jd_j T d_j^{*}.
	\end{align*}
	Since $\psi$ is regular we have that $\sum_i \nu_i d_i^{*}d_i=\sum_i \nu_i d_id_i^{*}=1$. 
	Since $\varphi$ is a hyperstate we have that $\sum_i \mu_i b_ib_i^{*}=1$.
	Now,
	\[ 
	H(\varphi \ast \psi)=-\sum\limits_{i,j}Tr[\mu_i \nu_j b_i^{*}d_j^{*}P_{\hat{1}}d_jb_i \log(A_{\varphi \ast \psi})].
	\]
	and  
	\[
	b_i^{*}d_j^{*}P_{\hat{1}}d_jb_i=\tau(b_ib_i^{*}d_j^{*}d_j)P_{\widehat{b_i^{*}d_j^{*}}},
	\]
	so that for each $k, \ell$ we have 
	\[
	A_{\varphi \ast \psi}=\sum\limits_{i,j} \mu_i \nu_j b_i^{*}d_j^{*}P_{\hat{1}}d_jb_i \geq \mu_k \nu_\ell \tau(b_kb_k^{*}d_\ell^{*}d_\ell)P_{\widehat{b_k^{*}d_\ell^{*}}}.
	\]
	As $\log$ is operator monotone, for each $k, \ell$ we then have 
	\[
	-\log(A_{\varphi \ast \psi})=-\log(\sum\limits_{i,j}\mu_i \nu_j b_i^{*}d_j^{*}P_{\hat{1}}d_jb_i) \leq -\log((\mu_k \nu_\ell \tau(b_kb_k^{*}d_\ell^{*}d_\ell))P_{\widehat{b_k^{*}d_\ell^{*}}}).
	\] 
	Hence,
	\begin{align*}
	H(\varphi \ast \psi) 
	& \leq -\sum\limits_{i,j}Tr[\mu_i \nu_j\tau(b_ib_i^{*}d_j^{*}d_j) P_{\widehat{b_i^{*}d_j^{*}}}\log(\mu_i \nu_j\tau(b_ib_i^{*}d_j^{*}d_j) P_{\widehat{b_i^{*}d_j^{*}}})]\\
	&=  -\sum\limits_{i,j}Tr[\mu_i \nu_j\tau(b_ib_i^{*}d_j^{*}d_j)P_{\widehat{b_i^{*}d_j^{*}}}\log(\mu_i \nu_j\tau(b_ib_i^{*}d_j^{*}d_j))] \\
	           & \hskip .3in -\sum\limits_{i,j}Tr[\mu_i \nu_j\tau(b_ib_i^{*}d_j^{*}d_j) P_{\widehat{b_i^{*}d_j^{*}}}\log(P_{\widehat{b_i^{*}d_j^{*}}})]\\
	&=-\sum\limits_{i,j}\mu_i \nu_j\tau(b_ib_i^{*}d_j^{*}d_j) \log(\mu_i \nu_j\tau(b_ib_i^{*}d_j^{*}d_j)).
	\end{align*}
	
	Now define $m$ on $I \times J$ by $m(i,j)=\mu_i \nu_j\tau(b_ib_i^{*}d_j^{*}d_j)$.
	Note that 
	\[
	\sum\limits_im(i,j)=\nu_j \tau(\sum\limits_i \mu_i b_ib_i^{*}d_j^{*}d_j)=\nu_j\tau(d_j^{*}d_j)=\nu_j
	\] 
	and
	\[
	\sum\limits_jm(i,j)=\mu_i \tau(\sum\limits_i \nu_j b_ib_i^{*}d_j^{*}d_j)=\mu_i\tau(b_ib_i^{*})=\mu_i.
	\]
	To finish the proof it then suffices to show $$H(m)=- \sum\limits_{i,j}m(i,j)\log(m(i,j)) \leq H(\mu)+ H(\nu),$$
where $H(\mu)=-\sum\limits_i \mu_i \log(\mu_i)$ and $H(\nu)=-\sum\limits_i \nu_i \log(\nu_i)$. By the remark before Theorem ~\ref{thm:subadd}, a direct calculation yields $H(\mu)= H(\varphi)$ and $H(\nu)= H(\psi)$. 

\noindent Note that \begin{align*}
	H(m) &=- \sum\limits_{i,j}m(i,j)\log(m(i,j))\\
	&= - \sum\limits_{i,j} \mu_i \nu_j \tau( b_ib_i^{*}d_j^{*}d_j) \log(\mu_i\tau(b_ib_i^{*}d_j^{*}d_j)) -\sum\limits_{i,j} \mu_i \nu_j \tau( b_ib_i^{*}d_j^{*}d_j) \log(\nu_j)\\
	& = -\sum\limits_{i,j} \mu_i \nu_j \tau( b_ib_i^{*}d_j^{*}d_j) \log(\mu_i) \\
        & 	\hskip .3in -\sum\limits_{i,j} \mu_i \nu_j \tau( b_ib_i^{*}d_j^{*}d_j) \log(\nu_j) - \sum\limits_{i,j} \mu_i \nu_j \tau( b_ib_i^{*}d_j^{*}d_j) \log(\tau(b_ib_i^{*}d_j^{*}d_j)).
	\end{align*}
	In the last equality above, the first summation is $H(\mu)$, since summing over $j$ we get 
	\[
	-\sum\limits_i \mu_i \tau(b_ib_i^{*})\log(\mu_i)=-\sum\limits_i \mu_i \log(\mu_i),
	\] while the second summation is $H(\nu)$. Hence, all that remains is to show:
	\[
	\sum\limits_{i,j} \mu_i \nu_j \tau( b_ib_i^{*}d_j^{*}d_j) \log(\tau(b_ib_i^{*}d_j^{*}d_j)) \geq 0.
	\]
	
	Let $\eta(x)=-x\log(x)$ for $x \in [0,1]$. Note that $\eta$ is concave, and so $\eta(\sum\limits_{i}\alpha_ix_i) \geq \sum\limits_i \alpha_i \eta(x_i)$ whenever $\alpha_i \geq 0$ and $\sum\limits_i \alpha_i =1$.
	So,
	\begin{align*}
	-\sum\limits_{i,j} \mu_i \nu_j \tau( b_ib_i^{*}d_j^{*}d_j) \log(\tau(b_ib_i^{*}d_j^{*}d_j)) &= \sum\limits_{i,j} \mu_i \nu_j \eta(\tau( b_ib_i^{*}d_j^{*}d_j))\\
	& = \sum\limits_i \mu_i (\sum\limits_j \nu_j \eta(\tau( b_ib_i^{*}d_j^{*}d_j)))\\
	& \leq \sum\limits_i \mu_i  \eta(\sum\limits_j \nu_j \tau( b_ib_i^{*}d_j^{*}d_j))\\
	& = \sum\limits_i \mu_i \eta(\tau(b_ib_i^{*}))=0 
	\end{align*} 
\end{proof}

\begin{cor}
	If $\varphi$ is a normal regular hyperstate, then the limit $\lim\limits_{n \rightarrow \infty}\dfrac{H(\varphi^{*n})}{n}$ exists.
\end{cor}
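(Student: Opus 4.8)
The plan is to recognize this as a Fekete subadditivity statement and to invoke Theorem~\ref{thm:subadd}. The first point to settle is that the convolution powers $\varphi^{*n}$ are well-defined and stay inside the class of hyperstates to which Theorem~\ref{thm:subadd} applies. From the defining relation $\mathcal P_{\varphi * \psi} = \mathcal P_\varphi \circ \mathcal P_\psi$ of (\ref{eq:convolution}) and associativity of composition we get $\mathcal P_{\varphi^{*n}} = \mathcal P_\varphi^{\, n}$, and in particular $\varphi^{*(m+n)} = \varphi^{*m} * \varphi^{*n}$ for all $m, n \geq 1$. Since $\mathcal P_\varphi$ is normal, so is its iterate $\mathcal P_\varphi^{\, n} = \mathcal P_{\varphi^{*n}}$, whence $\varphi^{*n}$ is normal by Proposition~\ref{prop:hyperstatecorrespondence}. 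Moreover $\mathcal P_\varphi$ is $M$-bimodular, so it maps $M'$ into $M'$, and regularity of $\varphi$ says the restriction $\mathcal P_\varphi|_{M'}$ preserves the canonical trace on $M'$; a composition of trace-preserving maps is trace-preserving, so $\mathcal P_{\varphi^{*n}}|_{M'} = (\mathcal P_\varphi|_{M'})^n$ is trace-preserving, i.e. $\varphi^{*n}$ is again regular.

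With this in hand, set $a_n = H(\varphi^{*n})$. Applying Theorem~\ref{thm:subadd} with $\varphi$ there replaced by the normal hyperstate $\varphi^{*m}$ and $\psi$ there replaced by the normal regular hyperstate $\varphi^{*n}$ yields
\[
a_{m+n} = H(\varphi^{*m} * \varphi^{*n}) \leq H(\varphi^{*m}) + H(\varphi^{*n}) = a_m + a_n ,
\]
so $(a_n)_{n \geq 1}$ is a subadditive sequence. In addition, each $A_{\varphi^{*n}}$ is a positive trace-class operator with $\mathrm{Tr}(A_{\varphi^{*n}}) = \varphi^{*n}(1) = 1$, i.e.\ a density operator, so its von Neumann entropy $a_n = -\mathrm{Tr}(A_{\varphi^{*n}} \log A_{\varphi^{*n}})$ is nonnegative. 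By Fekete's subadditivity lemma, $\lim_{n \to \infty} a_n/n$ exists and equals $\inf_{n \geq 1} a_n/n$, which (by nonnegativity of the $a_n$) lies in $[0, \infty]$. This is exactly the assertion of the corollary.

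There is no genuine obstacle here: the argument is a routine application of subadditivity, and the only step that requires a moment's care is the stability of the class of normal regular hyperstates under convolution powers, which, as explained above, is immediate once one writes $\mathcal P_{\varphi^{*n}} = \mathcal P_\varphi^{\, n}$ and uses that trace-preservation on $M'$ and normality both pass to compositions.
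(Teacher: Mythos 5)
Your proof is correct and follows essentially the same route as the paper, which simply observes that $\{H(\varphi^{*n})\}$ is subadditive by Theorem~\ref{thm:subadd} and invokes Fekete's lemma. The extra care you take in checking that $\varphi^{*n}$ remains a normal regular hyperstate (via $\mathcal P_{\varphi^{*n}} = \mathcal P_\varphi^{\,n}$ and stability of trace-preservation on $M'$ under composition) is left implicit in the paper but is exactly the right justification for applying the theorem to all convolution powers.
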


\begin{proof}
	The sequence $\{H(\varphi^{*n})\}$ is subadditive by Theorem~\ref{thm:subadd} and hence the limit exists.
\end{proof}

The {\bf asymptotic entropy} $h(\varphi)$ of a normal regular hyperstate $\varphi$ is defined to be the limit 
$$
h(\varphi) = \lim\limits_{n \rightarrow \infty}\dfrac{H(\varphi^{*n})}{n}.
$$

\subsection{A Furstenberg type entropy}

Suppose $G$ is a Polish group and $\mu \in {\rm Prob}(G)$. Given a quasi-invariant action $G \overset{a}{\actson} (X, \nu)$ the corresponding Furstenberg entropy (or $\mu$-entropy) is defined \cite[Section 8]{furstenbergproducts} to be 
$$
h_\mu(a, \nu) = - \iint \log \left( \frac{d g^{-1} \nu}{d \nu} (x) \right) \, d\nu(x) d\mu(g).
$$

If we consider the measure space $(G \times X, \nu \times \mu)$, then we have a non-singular map $\pi: G \times X \to G \times X$ given by $\pi(g, x) = (g, g^{-1}x)$, whose Radon-Nikodym derivative is given by 
$$
\frac{d \pi(\mu \times \nu)}{d (\mu \times \nu)}(x, g) = \frac{dg^{-1}\nu}{d \nu}(x). 
$$
Recall that for  arbitrary positive functions $f,g \in L^1(X, \mu)$ (where $(X, \mu)$ is a standard probability space), the relative entropy of the measures $\mu_1=f d\mu$ and $\mu_2=g d\mu$, denoted by $S(\mu_1|\mu_2)$, is defined as $S(\mu_1|\mu_2)= \int_X f(\log(f)- \log (g))  d\mu$ (see \cite[Chapter 5]{petzbook}).
We may thus rewrite the $\mu$-entropy as a relative entropy
$$
h_\mu(a, \nu) 
= - \iint \log  \left( \frac{d \pi(\nu \times \mu)}{d(\nu \times \mu)}(g, x) \right) \, d(\nu \times \mu)
=S( (\nu \times \mu) | \pi(\nu \times \mu) ).
$$

Let $(M,\tau)$ be a tracial von Neumann algebra, $\varphi$ a normal hyperstate for $M$, and $\mathcal{A}$ a von Neumann algebra, such that $M \subseteq \mathcal{A}$. Let $\zeta \in \mathcal S_\tau(\mathcal{A})$ be a normal, faithful hyperstate. Let $\Delta_\zeta : L^2(\mathcal A, \zeta) \to L^2(\mathcal A, \zeta)$ be the modular operator corresponding to $\zeta$, and consider the spectral decomposition $\Delta_\zeta = \int_0^\infty \lambda \, dE(\lambda)$. We denote by $\Delta_n = \int_{1/n}^{n} \lambda d\lambda$, $n \geq 1,$ the truncations of the modular operator $\Delta$. We know that $\Delta_n$ converges to $\Delta$ in the resolvent sense.
 Throughout this section we denote the one parameter modular automorphism group associated with $\zeta$ by $\{ \sigma_t^{\zeta} \}_{t \in \mathbb R}.$ We also denote the corresponding modular conjugation operator by $J$, and let $S=J \Delta^{1/2}.$ We refer the reader to \cite[Chapters VI, VII, VIII]{Takesakibook} for details regarding Tomita-Takesaki Theory.

Since $\zeta|_{M}=\tau$, we have a natural inclusion of $L^2(M,\tau)$ in $L^2(\mathcal{A},\zeta)$. Let $e$ denote the orthogonal projection from $L^2(\mathcal{A},\zeta)$ to $L^2(M,\tau)$. The entropy of the inclusion $(M, \tau) \subset (\mathcal A, \zeta)$ with respect to $\varphi$ is defined to be 
$$
h_{\varphi}(M \subset \mathcal{A},\zeta) = - \int \log(\lambda) \, d\varphi(e E(\lambda) e).
$$
The next example shows that $h_{\varphi}(M \subset \mathcal{A},\zeta)$ can be considered as a generalization of the Furstenberg entropy.
\begin{examp}
	If $\Gamma$ is a discrete group, $\mu \in {\rm Prob}(\Gamma)$ and $\Gamma \overset{a}{\actson} (X, \nu)$ is a quasi-invariant action, then we may consider the state $\varphi$ on $\mathcal B(\ell^2 \Gamma)$ given by $\varphi(T) = \int \langle T \delta_\gamma, \delta_\gamma \rangle \, d\mu(\gamma)$, and we may consider the state $\zeta$ on $L^\infty(X, \nu) \rtimes \Gamma \subset \mathcal B(\ell^2\Gamma \ovt L^2(X, \nu))$ given by $\zeta \left(\sum_{\gamma \in \Gamma} a_\gamma u_\gamma \right) = \int a_e \, d\nu$. Note that a direct computation in this case yields $(\varphi * \zeta) \left(\sum_{\gamma \in \Gamma} a_\gamma u_\gamma \right) = \int a_e \, d (\mu*\nu)$. 
	The modular operator $\Delta_\zeta$ is then affiliated to the von Neumann algebra $\ell^\infty \Gamma \ovt L^\infty( X, \nu)$, and we may compute this directly as
	$$
	\Delta_\zeta(\gamma, x) = \frac{d\gamma^{-1} \nu}{d \nu}(x).
	$$
	We also have that the projection $e$ from $\ell^2\Gamma \ovt L^2(X, \nu) \to \ell^2 \Gamma$ is given by ${\rm id  } \otimes \int$. Thus, it follows that the measure $d\varphi(e E(\lambda) e)$ agrees with $d\alpha_*(\mu \times \nu)$, where $\alpha: \Gamma \times X \to \mathbb R_{> 0}$ is the Radon-Nikodym cocycle, $\alpha(\gamma, x) = \frac{d \gamma^{-1} \nu}{d \nu}(x)$.
	
	In this case we then have
	\begin{align}
	h_\varphi(L\Gamma \subset L^\infty(X, \nu) \rtimes \Gamma, \zeta) 
	& = -\int \log(\lambda) d \varphi(e E(\lambda) e) \nonumber \\
	&= - \iint \log \left( \frac{d \gamma^{-1} \nu}{d \nu} (x) \right) d(\nu \times \mu) 
	= h_\mu(a, \nu). \nonumber
	\end{align}
\end{examp}

\begin{lem}
	Let $\varphi \in \mathcal S_\tau(\mathcal B(L^2(M, \tau)))$ be a normal hyperstate and write $\varphi$ in a standard form $\varphi(T) = \sum_n \langle T \widehat{z_n^*}, \widehat{z_n^*} \rangle$. Suppose $\mathcal A$ is a von Neumann algebra with $M \subset \mathcal A$ and $\zeta \in \mathcal S_\tau(\mathcal A)$ is a normal hyperstate. Then if $h_\varphi(M \subset \mathcal A, \zeta) < \infty$ we have that $z_n^* 1_\zeta \in D( \log \Delta_\zeta)$ for each $n$ and 
	$$
	h_\varphi(M \subset \mathcal A, \zeta) 
	= -\sum_n \langle \log \Delta_\zeta z_n^* 1_\zeta, z_n^* 1_\zeta \rangle
	= i \lim_{t \to 0} \frac{1}{t} \sum_n (\zeta( z_n \sigma_t^\zeta(z_n^*) )-1).
	$$
\end{lem}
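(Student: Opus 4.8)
The plan is to collapse the entire statement to a single scalar spectral measure on $(0,\infty)$ and then perform three short computations with it. First I would let $\mu_n$ be the spectral measure of $\Delta_\zeta$ associated to the vector $z_n^*1_\zeta = \widehat{z_n^*}$, regarded as an element of $L^2(\mathcal A,\zeta)$ via the inclusion $L^2(M,\tau)\subset L^2(\mathcal A,\zeta)$, and put $\nu=\sum_n\mu_n$. The opening observation is that each $\widehat{z_n^*}$ lies in the range of $e$, so the compressions in $\varphi(eE(\lambda)e)$ are harmless: $\langle eE(\lambda)e\,\widehat{z_n^*},\widehat{z_n^*}\rangle = \langle E(\lambda)z_n^*1_\zeta, z_n^*1_\zeta\rangle = \mu_n((0,\lambda])$, whence $\varphi(eE(\lambda)e)=\nu((0,\lambda])$ and $h_\varphi(M\subset\mathcal A,\zeta) = -\int_0^\infty\log\lambda\,d\nu(\lambda)$. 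This reduces the lemma to identities about $\nu$ and the individual $\mu_n$.

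Next I would pin down the total mass and first moment of each $\mu_n$. Since $z_n^*\in\mathcal A$, the vector $z_n^*1_\zeta$ lies in $D(S_\zeta)=D(\Delta_\zeta^{1/2})$ with $S_\zeta(z_n^*1_\zeta)=z_n1_\zeta$, and $\Delta_\zeta=S_\zeta^*S_\zeta$ (standard Tomita--Takesaki theory, cf.\ \cite{Takesakibook}); hence $\int d\mu_n = \|z_n^*1_\zeta\|_\zeta^2 = \tau(z_n^*z_n)$ and $\int\lambda\,d\mu_n = \|\Delta_\zeta^{1/2}z_n^*1_\zeta\|_\zeta^2 = \|z_n1_\zeta\|_\zeta^2 = \tau(z_n^*z_n)$. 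Summing over $n$ and using $\sum_n z_n^*z_n=1$ from Proposition~\ref{prop:standard form} gives $\nu((0,\infty)) = \int_0^\infty\lambda\,d\nu(\lambda) = 1$, so $\nu$ is a probability measure with mean $1$. Concavity of $\log$ (Jensen) then yields $\int\log\lambda\,d\nu\le\log\int\lambda\,d\nu = 0$, so $h_\varphi\ge 0$; and since $\log_+\lambda\le\lambda$ on $(0,\infty)$ we get $\int\log_+\lambda\,d\nu\le 1<\infty$. Writing $-\log=\log_--\log_+$, it follows that $h_\varphi(M\subset\mathcal A,\zeta)<\infty$ if and only if $\int|\log\lambda|\,d\nu<\infty$; this absolute integrability is what makes the remaining steps legitimate.

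Now assume $h_\varphi(M\subset\mathcal A,\zeta)<\infty$. Then $\sum_n\int\log_-\lambda\,d\mu_n=\int\log_-\lambda\,d\nu<\infty$, and together with $\int\log_+\lambda\,d\mu_n\le\int\lambda\,d\mu_n<\infty$ this gives $\int|\log\lambda|\,d\mu_n<\infty$ for every $n$ — that is, $z_n^*1_\zeta$ lies in the natural domain of $\log\Delta_\zeta$, and $\langle\log\Delta_\zeta z_n^*1_\zeta,z_n^*1_\zeta\rangle=\int\log\lambda\,d\mu_n$ (I would remark that $z_n^*1_\zeta\in D(\Delta_\zeta^{1/2})$ already forces $\int_{\lambda\ge1}(\log\lambda)^2\,d\mu_n<\infty$, so if one insists on the operator domain $D(\log\Delta_\zeta)$ only the behaviour near $\lambda=0$ is at stake). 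Summing the absolutely convergent series $\sum_n\int\log\lambda\,d\mu_n=\int_0^\infty\log\lambda\,d\nu$ gives the middle equality. For the last equality I would use $\Delta_\zeta^{it}1_\zeta=1_\zeta$ and $\sigma_t^\zeta(x)=\Delta_\zeta^{it}x\Delta_\zeta^{-it}$ to obtain $\zeta(z_n\sigma_t^\zeta(z_n^*))=\langle\sigma_t^\zeta(z_n^*)1_\zeta,z_n^*1_\zeta\rangle=\langle\Delta_\zeta^{it}z_n^*1_\zeta,z_n^*1_\zeta\rangle=\int\lambda^{it}\,d\mu_n$, so $\sum_n\zeta(z_n\sigma_t^\zeta(z_n^*))=\int_0^\infty\lambda^{it}\,d\nu$, which equals $1$ at $t=0$; hence $\tfrac{1}{t}\bigl(\sum_n\zeta(z_n\sigma_t^\zeta(z_n^*))-1\bigr)=\int_0^\infty\tfrac{\lambda^{it}-1}{t}\,d\nu(\lambda)$. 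As $t\to0$ the integrand converges pointwise to $i\log\lambda$ and is dominated by $|\log\lambda|\in L^1(\nu)$ (using $|e^{is}-1|\le|s|$), so by dominated convergence this tends to $i\int_0^\infty\log\lambda\,d\nu$, and multiplying by $i$ gives $-\int_0^\infty\log\lambda\,d\nu=h_\varphi(M\subset\mathcal A,\zeta)$.

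The step I expect to be the main obstacle is the first one, together with the integrability bookkeeping it forces on us: one has to see that $d\varphi(eE(\lambda)e)$ really is $\sum_n$ of the spectral measures of $\Delta_\zeta$ at the vectors $\widehat{z_n^*}$ — which works precisely because $\widehat{z_n^*}\in L^2(M,\tau)={\rm ran}\, e$ — and one must first extract the a priori bound $\int\log_+\lambda\,d\nu\le\int\lambda\,d\nu=1$ (from $z_n^*1_\zeta\in D(\Delta_\zeta^{1/2})$ and $\sum_n z_n^*z_n=1$) before the hypothesis $h_\varphi<\infty$ can be turned into absolute integrability of $\log\lambda$ against $\nu$. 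That absolute integrability is exactly what legitimizes rewriting things in terms of $\langle\log\Delta_\zeta\,\cdot\,,\cdot\,\rangle$ and applying dominated convergence in $t$; the rest is routine spectral calculus.
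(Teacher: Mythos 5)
Your proof is correct and follows the same route as the paper's: both rest on the identities $h_\varphi(M\subset\mathcal A,\zeta)=-\int\log\lambda\,d\varphi(eE(\lambda)e)$ with $e\widehat{z_n^*}=\widehat{z_n^*}$, $\sigma_t^\zeta(z_n^*)1_\zeta=\Delta_\zeta^{it}z_n^*1_\zeta$, and the derivative of $\lambda\mapsto\lambda^{it}$ at $t=0$. What you do differently is to package everything into the single scalar measure $\nu=\sum_n\mu_n$ and to carry out the integrability bookkeeping --- the moment computations $\int d\nu=\int\lambda\,d\nu=1$ (from $z_n^*1_\zeta\in D(\Delta_\zeta^{1/2})$ and $\sum_n z_n^*z_n=1$), hence $\int\log_+\lambda\,d\nu\le 1$, and the domination $|\lambda^{it}-1|\le|t|\,|\log\lambda|$ --- which is exactly what legitimizes interchanging the $t\to0$ limit with the sum over $n$ and interpreting $\langle\log\Delta_\zeta\,z_n^*1_\zeta,z_n^*1_\zeta\rangle$ as a (possibly form-sense) spectral integral. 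The paper's much terser proof asserts $z_n^*1_\zeta\in D(\log\Delta_\zeta)$ directly from the core property of $\mathcal A1_\zeta$ and passes the limit inside the infinite sum without comment; membership in $D(S_\zeta)=D(\Delta_\zeta^{1/2})$ controls the spectral mass of $\log\lambda$ only at infinity, not near $0$, so your explicit use of the hypothesis $h_\varphi<\infty$ to control $\int\log_-\lambda\,d\mu_n$, together with your caveat distinguishing the quadratic-form domain from the operator domain, fills a real gap in the published argument rather than merely restating it.
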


\begin{proof}
	As $\mathcal{A}1_{\zeta}$ forms a core for $S_{\zeta}$ we get that $z_n^*1_{\zeta} \in D(\log(\Delta_{\zeta}))$. Also, we know that $\lim\limits_{t \rightarrow 0}\dfrac{\Delta_{\zeta}^{it}-1}{t}\xi= i \log(\Delta_{\zeta})\xi$, for all $\xi \in D(\Delta_{\zeta})$. So, we have that
	\begin{align*}
	h_\varphi(M \subset \mathcal A, \zeta) &= -\varphi(e\log(\Delta_{\zeta})e) 
	= -\sum_n \langle \log \Delta_\zeta z_n^* 1_\zeta, z_n^* 1_\zeta \rangle \\
	&= i \sum_n \langle  z_n \lim\limits_{t \rightarrow 0}\dfrac{\Delta_{\zeta}^{it}-1}{t} z_n^* 1_{\zeta}, 1_{\zeta} \rangle
	= i \lim_{t \to 0} \frac{1}{t} \sum_n (\zeta( z_n \sigma_t^\zeta(z_n^*) )-1).
	\end{align*}
\end{proof}

\begin{examp}
	Fix two normal hyperstates $\varphi, \zeta \in \mathcal S_\tau( \mathcal B(L^2(M, \tau)))$ such that $\varphi$ is regular, and $\zeta$ is faithful, and consider the case $\mathcal A = \mathcal B(L^2(M, \tau))$. Then the density operator $A_\zeta$ is injective with dense range and the modular operator on $L^2( \mathcal B(L^2(M, \tau)), \zeta)$ is given by $\Delta_\zeta ( T 1_\zeta) = A_\zeta T A_\zeta^{-1} 1_\zeta$, for $T \in \mathcal B(L^2(M, \tau))$ such that $T 1_\zeta \in D(\Delta_\zeta)$. In particular note that $\log (\Delta_\zeta) (T 1_\zeta) = ( {\rm Ad}(\log A_\zeta) T ) 1_\zeta$, where ${\rm Ad}( \log A_\zeta) T = (\log A_\zeta) T - T (\log A_\zeta)$.
	
	We also have that the projection $e: L^2( \mathcal B(L^2(M, \tau)), \zeta) \to L^2(M, \tau)$ is given by $e( T 1_\zeta) = \mathcal P_\zeta(T) \hat{1}$. Therefore, $e \log \Delta_\zeta e x \hat{1} = \mathcal P_\zeta( {\rm Ad}(\log A_\zeta) x ) \hat{1} = \mathcal P_\zeta( {\rm Ad}(\log A_\zeta) ) x \hat{1}$. Hence,
	\begin{align}
	h_\varphi(M \subset \mathcal B(L^2(M, \tau)), \zeta)
	&= \varphi( \mathcal P_\zeta( {\rm Ad}(\log A_\zeta) ) ) \nonumber \\
	&= {\rm Tr}( A_{\varphi * \zeta} {\rm Ad}(\log A_\zeta) ) \nonumber \\
	&= {\rm Tr}( A_{\varphi * \zeta} \log A_\zeta ) - \langle \log A_\zeta \hat{1}, \hat{1} \rangle. \nonumber
	\end{align}
	Where the last equality follows since $\varphi$ is regular.
\end{examp}

%%%%%%%%%%%%%%%%%%%%%%%%%%%%%%%%%%%%%%%%%%%%%%%

We recall the following two lemmas from works of D.Petz \cite{Petz1986}.

\begin{lem}\label{lem;Petz1} 
Let $\Delta_j$ be positive, self adjoint operators on $\mathcal{H}_j, j=1,2.$ If $T: \mathcal{H}_1 \rightarrow \mathcal{H}_2$ is a bounded operator such that:
	\begin{itemize}
		\item $T (\mathcal{D}(\Delta_1)) \subseteq \mathcal{D}(\Delta_2)$
		\item $||\Delta_2T\xi|| \leq ||T||\cdot||\Delta_1 \xi||$ ($\xi \in \mathcal{D}(\Delta_1)$),
	\end{itemize}
	then we have for each $t \in [0,1]$, and $\xi \in \mathcal{D}(\Delta_1^t)$,
	\begin{align*}
	||\Delta_2^tT\xi|| \leq ||T||\cdot||\Delta_1^t\xi||
	\end{align*}
\end{lem}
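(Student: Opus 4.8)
The plan is to run the classical Stein--Hadamard complex-interpolation (three-lines) argument; the only genuine difficulty is keeping the domains of the unbounded operators $\Delta_j$ under control, which I handle by a double spectral truncation together with two limiting steps.

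First normalize so that $\|T\| = 1$. Vectors in $\ker \Delta_1$ cause no trouble: for $\xi \in \ker \Delta_1$ the hypothesis forces $\Delta_2 T\xi = 0$, hence $T\xi \in \ker \Delta_2 = \ker \Delta_2^t$ and $\|\Delta_2^t T\xi\| = 0 = \|\Delta_1^t\xi\|$ for $t \in (0,1]$ (and $t=0$ is trivial), so by orthogonal decomposition we may assume $\Delta_1$ has trivial kernel. For $n,m \geq 1$ let $P_n = \chi_{[1/n,n]}(\Delta_1)$ and $Q_m = \chi_{[1/m,m]}(\Delta_2)$ be spectral projections; the ranges $P_n \mathcal H_1$ increase to a dense subspace. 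I first reduce the claim to proving $\|\Delta_2^t T\xi\| \leq \|\Delta_1^t\xi\|$ for $\xi \in P_n\mathcal H_1$: given $\xi \in \mathcal D(\Delta_1^t)$, one has $P_n\xi \to \xi$ and $\Delta_1^t P_n\xi = P_n\Delta_1^t\xi \to \Delta_1^t\xi$, so $\{\Delta_2^t T P_n\xi\}_n$ is norm-bounded, and passing to a weakly convergent subsequence and using that the graph of the closed operator $\Delta_2^t$ is weakly closed gives $T\xi \in \mathcal D(\Delta_2^t)$ with $\|\Delta_2^t T\xi\| \leq \|\Delta_1^t\xi\|$.

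Now fix $\xi \in P_n\mathcal H_1$ and $\eta \in \mathcal H_2$. On $P_n\mathcal H_1$ the function $w \mapsto \Delta_1^{-w}\xi$ is entire with values in $P_n\mathcal H_1 \subseteq \mathcal D(\Delta_1)$, so $T\Delta_1^{-w}\xi \in \mathcal D(\Delta_2)$ by hypothesis, and $w \mapsto Q_m\Delta_2^{w}$ is entire in operator norm; hence
\[
F(w) := \langle Q_m\, \Delta_2^{w}\, T\, \Delta_1^{-w}\xi,\ \eta \rangle
\]
is entire and bounded on the strip $0 \leq \mathrm{Re}\, w \leq 1$. On $\mathrm{Re}\, w = 0$, unitarity of $\Delta_2^{is}$ and $\|T\| = 1$ give $|F(is)| \leq \|\xi\|\,\|\eta\|$; on $\mathrm{Re}\, w = 1$, writing $\Delta_2^{1+is} = \Delta_2^{is}\Delta_2$ and putting $\zeta = \Delta_1^{-1-is}\xi \in \mathcal D(\Delta_1)$ (so $\|\Delta_1\zeta\| = \|\xi\|$), the hypothesis $\|\Delta_2 T\zeta\| \leq \|\Delta_1\zeta\|$ together with $\|Q_m\| \leq 1$ gives $|F(1+is)| \leq \|\xi\|\,\|\eta\|$. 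The three-lines lemma then yields $|F(t)| \leq \|\xi\|\,\|\eta\|$ for all $t \in [0,1]$, i.e.\ $\|Q_m\, \Delta_2^{t}\, T\, \Delta_1^{-t}\xi\| \leq \|\xi\|$.

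It remains to let $m \to \infty$ and to substitute back. Since $T\Delta_1^{-t}\xi \in \mathcal D(\Delta_2) \subseteq \mathcal D(\Delta_2^t)$ and $Q_m \uparrow 1$ strongly, $Q_m\Delta_2^t T\Delta_1^{-t}\xi = \Delta_2^t Q_m T\Delta_1^{-t}\xi \to \Delta_2^t T\Delta_1^{-t}\xi$, whence $\|\Delta_2^t T\Delta_1^{-t}\xi\| \leq \|\xi\|$ for every $\xi \in P_n\mathcal H_1$. Replacing $\xi$ by $\Delta_1^{t}\eta$ with $\eta \in P_n\mathcal H_1$ --- allowed since $\Delta_1^{\pm t}$ preserve $P_n\mathcal H_1$ --- this says $\|\Delta_2^t T\eta\| \leq \|\Delta_1^t\eta\|$ for all $\eta \in P_n\mathcal H_1$ and all $n$, and the reduction above completes the proof. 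I expect essentially all the work to lie in this domain bookkeeping: without the $P_n$- and $Q_m$-truncations the family $w \mapsto \Delta_2^{w} T \Delta_1^{-w}$ is not even a priori well defined, let alone analytic, on the open strip (that is more or less the content being proved), so the two truncations and the two limits are precisely what is needed to make Hadamard's lemma legitimately applicable; no input beyond the spectral theorem and the three-lines lemma is required.
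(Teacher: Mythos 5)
The paper itself does not prove this lemma: it is quoted from Petz \cite{Petz1986} (``We recall the following two lemmas from works of D.\ Petz''), so there is no in-paper argument to compare against. Your proof is correct and is essentially the standard Stein--Hadamard complex-interpolation argument that underlies Petz's original statement: the reduction modulo $\ker\Delta_1$, the double spectral truncation making $w \mapsto \langle \Delta_2^w Q_m T \Delta_1^{-w} P_n \xi, \eta\rangle$ entire and bounded on the strip, the boundary estimates at $\mathrm{Re}\, w = 0$ and $\mathrm{Re}\, w = 1$ (the latter using the hypothesis applied to $\zeta = \Delta_1^{-1-is}\xi \in P_n\mathcal H_1 \subseteq \mathcal D(\Delta_1)$), and the two limiting steps --- weak compactness plus weak closedness of the graph of the closed operator $\Delta_2^t$ for the $n$-limit, strong convergence of $Q_m$ for the $m$-limit --- are all sound. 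One cosmetic slip: having reduced only to $\ker\Delta_1 = \{0\}$, you assert $Q_m \uparrow 1$ strongly, which fails if $\Delta_2$ has nontrivial kernel; but the convergence you actually need, $Q_m \Delta_2^t \zeta \to \Delta_2^t \zeta$, still holds because for $t > 0$ the vector $\Delta_2^t\zeta$ lies in $(\ker\Delta_2)^\perp$ and $Q_m$ increases to the projection onto that subspace (and $t = 0$ is trivial). With that one-line correction the argument is complete and self-contained.
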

\begin{lem} \label{lem:Petz2}
	Let $\Delta$ be a positive self adjoint operator and $\xi \in \mathcal{D}(\Delta)$. Then:
	\begin{align*}
	\lim\limits_{t \rightarrow 0+}\dfrac{||\Delta^{t/2}\xi||^2-||\xi||^2}{t}
	\end{align*}
\end{lem}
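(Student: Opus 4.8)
The plan is to reduce the claim to a scalar computation via the spectral theorem and then justify the interchange of limit and integral by a monotonicity argument, since dominated convergence is unavailable: the integrand degenerates to $-\infty$ near the bottom of the spectrum. Write $\Delta = \int_0^\infty \lambda \, dE(\lambda)$ and let $\mu$ be the finite positive Borel measure $d\mu(\lambda) = d\|E(\lambda)\xi\|^2$ on $[0,\infty)$, so that $\mu([0,\infty)) = \|\xi\|^2$. Since $\xi \in \mathcal D(\Delta)$ we have $\int (1+\lambda^2)\, d\mu = \|\xi\|^2 + \|\Delta\xi\|^2 < \infty$; in particular $\lambda^t \le 1+\lambda^2$ for $t \le 2$, so $\xi \in \mathcal D(\Delta^{t/2})$ and the functional calculus gives $\|\Delta^{t/2}\xi\|^2 = \int_{(0,\infty)} \lambda^t \, d\mu(\lambda)$ for $0 < t \le 2$ (the value at $\lambda = 0$ being $0$). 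Hence
\[
\frac{\|\Delta^{t/2}\xi\|^2 - \|\xi\|^2}{t} = \int_{(0,\infty)} \frac{\lambda^t - 1}{t} \, d\mu(\lambda) - \frac{\|E(\{0\})\xi\|^2}{t},
\]
where the last term is absent when $\Delta$ is injective (as for modular operators) and otherwise merely forces divergence to $-\infty$. Everything thus comes down to controlling $g_t(\lambda) := (\lambda^t - 1)/t$ as $t \downarrow 0$.

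The key elementary point I would establish is that for each fixed $\lambda > 0$ the function $t \mapsto g_t(\lambda)$ is non-decreasing on $(0,\infty)$. Setting $a = \log\lambda$ and $h(t) = (e^{ta}-1)/t$, a direct differentiation gives $h'(t) = t^{-2}\bigl(e^{ta}(ta-1)+1\bigr)$, and the function $u \mapsto e^u(u-1)+1$ vanishes at $u=0$ with derivative $ue^u$, hence attains its minimum value $0$ there and is non-negative everywhere; so $h' \ge 0$. Consequently $g_t(\lambda) \downarrow \log\lambda$ as $t \downarrow 0$, and for $0 < t \le 1$ one has the two-sided bound $\log\lambda \le g_t(\lambda) \le g_1(\lambda) = \lambda - 1$. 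Since $\lambda - 1 \in L^1(\mu)$ by the displayed integrability, the functions $\psi_t(\lambda) := (\lambda - 1) - g_t(\lambda)$ are non-negative for $t \le 1$ and increase monotonically to $(\lambda - 1) - \log\lambda \ge 0$ as $t \downarrow 0$.

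To conclude I would apply the monotone convergence theorem to $\psi_t$, obtaining $\int \psi_t \, d\mu \to \int\bigl((\lambda-1) - \log\lambda\bigr)\, d\mu \in [0,\infty]$, and then subtract from the finite quantity $\int (\lambda - 1)\, d\mu$ to get
\[
\lim_{t \to 0^+} \frac{\|\Delta^{t/2}\xi\|^2 - \|\xi\|^2}{t} = \int_{(0,\infty)} \log\lambda \, d\|E(\lambda)\xi\|^2,
\]
an element of $[-\infty,\infty)$, equal to the quadratic form $\langle \log\Delta \, \xi, \xi\rangle$ when $\xi \in \mathcal D(\log\Delta)$ and to $-\infty$ otherwise. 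The step I expect to require the most care when written out is exactly this interchange of limit and integral: dominated convergence fails because $\log\lambda$ need not be $\mu$-integrable near $\lambda = 0$, so one must exploit the one-sided domination $g_t(\lambda) \le \lambda - 1$ together with the monotonicity of $t \mapsto g_t(\lambda)$ to make the reversed differences $\psi_t$ amenable to monotone convergence. The remaining points — well-definedness of $\Delta^{t/2}\xi$, the passage to the spectral measure, and the elementary calculus estimate on $e^u(u-1)+1$ — are routine bookkeeping.
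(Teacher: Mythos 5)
Your argument is correct. Note that the paper itself gives no proof of this lemma --- it is quoted from Petz \cite{Petz1986} without proof --- so there is nothing to compare line by line; your write-up is essentially the standard argument. The monotonicity of $t \mapsto (\lambda^t-1)/t$ is the monotonicity of difference quotients of the convex function $t \mapsto e^{t\log\lambda}$ (your direct check that $e^u(u-1)+1\ge 0$ is a fine substitute for invoking convexity), and the one-sided domination $g_t(\lambda)\le \lambda-1\in L^1(\mu)$ combined with monotone convergence applied to $\psi_t=(\lambda-1)-g_t$ is exactly the right device for handling the failure of dominated convergence near the bottom of the spectrum; the treatment of a possible atom of $\mu$ at $\lambda=0$ is also correct, and consistent with the value $-\infty$. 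One small inaccuracy in your closing aside: finiteness of the limit is equivalent to $\log\lambda\in L^1(d\|E(\lambda)\xi\|^2)$, whereas $\xi\in\mathcal D(\log\Delta)$ is the $L^2$ condition, so the limit can be finite (and then it is the quadratic form of $\log\Delta$ only in the extended, form sense) even when $\xi\notin\mathcal D(\log\Delta)$. This does not affect the lemma, which asserts only that the limit exists in $[-\infty,\infty)$ and equals $\int_0^\infty\log\lambda\,d\langle E_\lambda\xi,\xi\rangle$.
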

exists. It's finite or $- \infty$ and equals $\int\limits_{0}^{\infty}\log \lambda d\langle E_{\lambda} \xi , \xi \rangle$ where $\int\limits_{0}^{\infty}\log \lambda d E_{\lambda}$ is the spectral resolution of $\Delta$.

\begin{cor} \label{cor;eqdef1}
	$h_{\varphi}(M \subset \mathcal{A},\zeta)=-\lim\limits_{t \rightarrow 0+}\dfrac{\sum\limits_{k=1}^{\infty}||\Delta_{\zeta}^{t/2}ez_n^*\hat{1}||^2-||ez_n^*\hat{1}||^2}{t}$
\end{cor}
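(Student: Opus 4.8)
The plan is to reduce the identity to the definition $h_{\varphi}(M \subset \mathcal{A},\zeta) = -\int_0^\infty \log \lambda \, d\varphi(e E(\lambda) e)$ and then invoke the convexity/convergence argument behind Lemma~\ref{lem:Petz2}; the only real work is interchanging the sum over $n$ with the limit $t \to 0^+$.

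First I would note that every $z_n$ lies in $M$, so $\widehat{z_n^*} = z_n^* 1_\zeta$ already lies in $L^2(M, \tau) \subset L^2(\mathcal A, \zeta)$, whence $e \widehat{z_n^*} = \widehat{z_n^*}$. Writing $\varphi$ in the given standard form, the positive measure $\mu := \varphi(e E(\cdot) e)$ on $(0, \infty)$ then decomposes as $\mu = \sum_n \mu_n$ with $\mu_n := \langle E(\cdot) \widehat{z_n^*}, \widehat{z_n^*} \rangle$, so $\mu$ has total mass $\varphi(e) = \varphi(1) = 1$. Moreover $S_\zeta(z_n^* 1_\zeta) = z_n 1_\zeta$ (as $\mathcal A 1_\zeta$ is a core for $S_\zeta$) gives $\Delta_\zeta^{1/2} \widehat{z_n^*} = J(z_n 1_\zeta)$, hence $\widehat{z_n^*} \in D(\Delta_\zeta^{1/2})$ and $\|\Delta_\zeta^{1/2} \widehat{z_n^*}\|^2 = \|z_n\|_2^2$; using the partition $1 = \sum_n z_n^* z_n$ we obtain
\[
\int_0^\infty \lambda \, d\mu(\lambda) = \sum_n \|\Delta_\zeta^{1/2} \widehat{z_n^*}\|^2 = \sum_n \|z_n\|_2^2 = \tau(1) = 1 < \infty ,
\]
so $\mu$ has finite first moment.

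Next, for $0 < t \le 1$, expanding in the spectral measure of $\Delta_\zeta$ and using $e\widehat{z_n^*} = \widehat{z_n^*}$, I would write
\[
\sum_n \bigl( \|\Delta_\zeta^{t/2} e\widehat{z_n^*}\|^2 - \|e\widehat{z_n^*}\|^2 \bigr) = \sum_n \int_0^\infty (\lambda^t - 1) \, d\mu_n(\lambda) = \int_0^\infty (\lambda^t - 1) \, d\mu(\lambda),
\]
the last interchange being justified by Tonelli applied separately on $\{\lambda \ge 1\}$ (where $0 \le \lambda^t - 1 \le \lambda - 1 \in L^1(\mu)$) and on $\{\lambda < 1\}$ (where $0 \le 1 - \lambda^t \le 1$). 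Dividing by $t$ and letting $t \to 0^+$, the argument behind Lemma~\ref{lem:Petz2} applies directly to $\mu$: on $\{\lambda \ge 1\}$ the integrand $\tfrac{\lambda^t - 1}{t}$ converges to $\log \lambda \ge 0$ and is dominated by $\lambda - 1 \in L^1(\mu)$, so dominated convergence applies, while on $\{\lambda < 1\}$ the nonnegative integrand $\tfrac{1 - \lambda^t}{t}$ increases to $|\log \lambda|$, so monotone convergence applies. Hence
\[
\lim_{t \to 0^+} \frac{1}{t} \sum_n \bigl( \|\Delta_\zeta^{t/2} e\widehat{z_n^*}\|^2 - \|e\widehat{z_n^*}\|^2 \bigr) = \int_0^\infty \log \lambda \, d\mu(\lambda) = -\,h_{\varphi}(M \subset \mathcal A, \zeta)
\]
in $[-\infty, \infty)$, which is the asserted identity; if $h_\varphi(M \subset \mathcal A, \zeta) = +\infty$ both sides are $+\infty$ by the same monotone convergence.

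The step I expect to be the main obstacle is precisely this interchange of $\sum_n$ with $\lim_{t \to 0^+}$: one cannot apply Lemma~\ref{lem:Petz2} termwise and then sum, since the individual limits $\lim_{t \to 0^+} \tfrac{1}{t}(\|\Delta_\zeta^{t/2}\widehat{z_n^*}\|^2 - \|\widehat{z_n^*}\|^2)$ may equal $-\infty$ with no uniform control in $n$. The remedy is to collapse the sum into the single positive measure $\mu = \varphi(e E(\cdot) e)$ \emph{at finite $t$} — legitimate by Tonelli because $\mu$ has finite first moment — and only then send $t \to 0^+$, at which point Petz's elementary estimates apply verbatim to $\mu$.
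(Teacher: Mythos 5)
Your proof is correct and follows the route the paper intends: the corollary is stated there without proof, as an immediate consequence of Lemma~\ref{lem:Petz2} applied to the vectors $e z_n^*\hat 1 = \widehat{z_n^*}$ together with the decomposition $\varphi(eE(\cdot)e)=\sum_n\langle E(\cdot)\widehat{z_n^*},\widehat{z_n^*}\rangle$. The one point where you go beyond the paper --- collapsing the sum into the single measure $\mu=\varphi(eE(\cdot)e)$ at finite $t$ via Tonelli, using the finite first moment $\int\lambda\,d\mu=\sum_n\|z_n\|_2^2=\tau(1)=1$, and only then letting $t\to 0^+$, rather than applying Petz's lemma termwise --- is a genuine and necessary justification of the interchange of $\sum_n$ with $\lim_{t\to 0^+}$ (individual terms can indeed have limit $-\infty$), which the paper leaves implicit.
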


\begin{lem} \label{lem;nonneg}
	$h_{\varphi}(M \subset \mathcal{A}, \zeta) \geq 0$
\end{lem}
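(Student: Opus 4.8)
The plan is to read the inequality off the very definition $h_{\varphi}(M\subset\mathcal A,\zeta)=-\int_0^\infty\log\lambda\,d\varphi(eE(\lambda)e)$. Namely, I would show that $\mu:=d\varphi(eE(\lambda)e)$ is a \emph{probability} measure on $(0,\infty)$ whose first moment equals $1$, and then apply Jensen's inequality to the concave function $\log$. (A variant that runs through Corollary~\ref{cor;eqdef1} is sketched at the end; it gives the same conclusion.)

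First I would fix a standard form $\varphi(T)=\sum_n\langle T\widehat{z_n^*},\widehat{z_n^*}\rangle$ as in Proposition~\ref{prop:standard form}, so that $1=\sum_n z_n^*z_n$ and hence $\sum_n\|z_n\|_2^2=\tau(1)=1$. Since every vector $\widehat{z_n^*}=z_n^*1_\zeta$ already lies in $L^2(M,\tau)=e\,L^2(\mathcal A,\zeta)$, one has $\varphi(eE(\lambda)e)=\sum_n\|E(\lambda)\widehat{z_n^*}\|^2$, an increasing function of $\lambda$; as $\varphi$ is a normal state and $E(\lambda)\uparrow 1$ its total mass is $\varphi(1)=1$, and because $\zeta$ is faithful $\Delta_\zeta$ is injective, so $\mu$ has no mass at $0$ (nor, automatically, at $\infty$). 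The key computation is the first moment: because $\mathcal A1_\zeta$ is a core for $S=J\Delta_\zeta^{1/2}$, we have $\widehat{z_n^*}\in D(\Delta_\zeta^{1/2})$ with $\Delta_\zeta^{1/2}\widehat{z_n^*}=JS\,z_n^*1_\zeta=J(z_n1_\zeta)$, whence
\[
\int_0^\infty\lambda\,d\mu(\lambda)=\sum_n\|\Delta_\zeta^{1/2}\widehat{z_n^*}\|^2=\sum_n\|z_n1_\zeta\|^2=\sum_n\zeta(z_n^*z_n)=\sum_n\tau(z_n^*z_n)=1,
\]
using $\zeta|_{M}=\tau$. (Traciality of $\tau$ also yields $\|\Delta_\zeta^{1/2}\widehat{z_n^*}\|=\|\widehat{z_n^*}\|=\|z_n\|_2$ for every $n$.)

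Granting this, the lemma follows at once: $\mu$ is a probability measure and $-\log$ is convex, so Jensen's inequality gives $\int_0^\infty\log\lambda\,d\mu(\lambda)\le\log\bigl(\int_0^\infty\lambda\,d\mu(\lambda)\bigr)=\log 1=0$, i.e.\ $h_\varphi(M\subset\mathcal A,\zeta)\ge0$. This statement is essentially a formality once the first-moment identity is in hand; the only point I would treat carefully is that $\int\log\lambda\,d\mu$ is a well-defined element of $[-\infty,0]$, which follows from the elementary bound $\log\lambda\le\lambda-1$ (making the positive part of $\log$ $\mu$-integrable) together with the checks that $\mu$ carries no mass at $0$ or $\infty$. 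For the alternative route, by Corollary~\ref{cor;eqdef1} it suffices to show $\sum_n\|\Delta_\zeta^{t/2}\widehat{z_n^*}\|^2\le\sum_n\|\widehat{z_n^*}\|^2$ for $t\in[0,1]$; but $s\mapsto\|\Delta_\zeta^{s}\widehat{z_n^*}\|^2=\int_0^\infty\lambda^{2s}\,d\|E(\lambda)\widehat{z_n^*}\|^2$ is a convex function of $s$ which, by the identity above (cf.\ also Lemma~\ref{lem;Petz1}), takes the common value $\|z_n\|_2^2$ at $s=0$ and $s=1/2$, hence is $\le\|z_n\|_2^2=\|\widehat{z_n^*}\|^2$ on $[0,1/2]$, so each difference quotient appearing in Corollary~\ref{cor;eqdef1} is nonpositive and the limit is $\le0$.
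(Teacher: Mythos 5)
Your argument is correct, and it reaches the conclusion by a different mechanism than the paper. The paper's proof applies the \emph{operator} Jensen inequality to the u.c.p.\ map $\mathcal P_\varphi\circ\mathcal P_\zeta$ (using that $\log$ is operator concave), then uses $e\Delta_n e\le e\Delta e=e$ together with operator monotonicity of $\log$ to conclude $-\langle\log(\mathcal P_\varphi\circ\mathcal P_\zeta(\Delta_n))\hat 1,\hat 1\rangle\ge 0$. You instead push everything down to a scalar statement: the measure $\mu=d\varphi(eE(\lambda)e)$ is a probability measure with first moment $\int\lambda\,d\mu=\varphi(e\Delta_\zeta e)=1$, and scalar Jensen for the concave function $\log$ finishes the proof. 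The two proofs share the same essential input --- the normalization $e\Delta_\zeta e=e$, which you verify explicitly via $\Delta_\zeta^{1/2}z_n^*1_\zeta=Jz_n1_\zeta$ and traciality of $\zeta|_M=\tau$, and which the paper asserts directly --- but your route trades the operator-theoretic inequalities for elementary measure theory. What your version buys is a cleaner treatment of the convergence issues: the paper's limit over the truncations $\Delta_n$ is left somewhat informal, whereas your bound $\log\lambda\le\lambda-1$ makes $\int\log\lambda\,d\mu$ a well-defined element of $[-\infty,0]$ from the outset; your second route through Corollary~\ref{cor;eqdef1} (convexity of $s\mapsto\|\Delta_\zeta^{s}\widehat{z_n^*}\|^2$ with equal values at $s=0$ and $s=1/2$) is likewise self-contained and is in the spirit of Lemma~\ref{lem;Petz1}. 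What the paper's version buys is a template that survives in settings where one cannot reduce to a scalar spectral measure. Both are valid; no gaps.
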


\begin{proof} 
	Let $\mathcal P_{\zeta}(T)=eTe$ for $T \in \mathcal{A}$. Let $\Delta_n = \int_{1/n}^{n} \lambda d\lambda$, $n \geq 1,$ denote the truncations of the modular operator $\Delta$.\\
	$h_{\varphi}(M \subset \mathcal{A}, \zeta)= \lim\limits_{n \rightarrow \infty}\varphi(-e \log\Delta_n e)=-\lim\limits_{n \rightarrow \infty}\langle \mathcal{P}_{\varphi} \circ \mathcal P_{\zeta} (\log \Delta_n) \hat{1} , \hat{1} \rangle \geq \lim\limits_{n \rightarrow \infty} -\langle \log (\mathcal{P}_{\varphi} \circ \mathcal P_{\zeta}  (\Delta_n)) \hat{1} , \hat{1} \rangle $ (using the operator Jensen's inequality; recall that $\log$ is operator concave).\\
	Notice that $e\Delta_n e \leq e\Delta e =e$. Since $\mathcal P_{\varphi}(e)=e$, we get $\mathcal{P}_{\varphi} \circ \mathcal P_{\zeta} (\Delta_n) \leq e \leq 1$. As $\log$ is operator monotone, we get that $\log (\mathcal{P}_{\varphi} \circ \mathcal P_{\zeta}  (\Delta_n)) \leq \log(1)=0$. Hence we are done.
\end{proof}

\begin{thm} \label{thm:entropadd}
		Let $\varphi, \psi \in \mathcal S_\tau( \mathcal B(L^2(M, \tau)))$ be two normal hyperstates such that $\psi$ is regular, and suppose $\mathcal A$ is a von Neumann algebra with $M \subset \mathcal A$, and $\zeta \in S_\tau(\mathcal A)$ is a normal, faithful hyperstate which is $\psi$-stationary. Then
	$$
	h_{\varphi * \psi}(M \subset \mathcal A, \zeta) 
	= h_{\varphi}(M \subset \mathcal A,\zeta) + h_{\psi}(M \subset \mathcal A, \zeta).
	$$
\end{thm}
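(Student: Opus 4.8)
The statement is the operator-algebraic incarnation of the additivity of Furstenberg's $\mu$-entropy along convolutions, so the plan is to reduce both sides to expressions in the modular operator $\Delta_\zeta$ and then to exploit the hypothesis $\psi\ast\zeta=\zeta$ in the role played classically by stationarity. First I would fix standard forms $\varphi(T)=\sum_n\langle T\widehat{z_n^*},\widehat{z_n^*}\rangle$ with $\sum_n z_n^*z_n=1$, and---since $\psi$ is regular---$\psi(T)=\sum_j\langle T\widehat{w_j^*},\widehat{w_j^*}\rangle$ with $\sum_j w_j^*w_j=\sum_j w_jw_j^*=1$. As $\mathcal P_{\varphi\ast\psi}=\mathcal P_\varphi\circ\mathcal P_\psi$, a direct computation (using $JaJ\cdot JbJ=J(ab)J$) gives $\mathcal P_{\varphi\ast\psi}(T)=\sum_{n,j}\big(J(w_jz_n)^*J\big)T\big(J(w_jz_n)J\big)$, so that $\varphi\ast\psi$ is represented by the family $\{w_jz_n\}_{n,j}$, which is in general \emph{not} $\tau$-orthogonal. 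This is harmless: the formula of Corollary~\ref{cor;eqdef1} (and of the lemma preceding it) only uses $h_\varphi(M\subset\mathcal A,\zeta)=-\varphi(e\log\Delta_\zeta e)$ together with $\varphi(S)=\langle\mathcal P_\varphi(S)\widehat 1,\widehat 1\rangle$, and so applies verbatim to any representation of $\mathcal P_\varphi$. Thus
\[
h_{\varphi\ast\psi}(M\subset\mathcal A,\zeta)=-\sum_{n,j}\langle\log\Delta_\zeta\,z_n^*w_j^*1_\zeta,\;z_n^*w_j^*1_\zeta\rangle,
\]
and analogously $h_\varphi=-\sum_n\langle\log\Delta_\zeta\,z_n^*1_\zeta,z_n^*1_\zeta\rangle$ and $h_\psi=-\sum_j\langle\log\Delta_\zeta\,w_j^*1_\zeta,w_j^*1_\zeta\rangle$.

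Next I would record what stationarity gives. Let $\Phi\colon\mathcal A\to\mathcal A$ be the normal u.c.p.\ map $\Phi(T)=\sum_j w_jTw_j^*$, which is unital precisely because $\psi$ is regular. Unwinding the definition of $\psi\ast\zeta$ as after Proposition~\ref{prop:hyperstatecorrespondence}---testing $\mathcal P_{\psi\ast\zeta}=\mathcal P_\psi\circ\mathcal P_\zeta$ against vectors $a1_\zeta,c1_\zeta$ with $a,c\in M$ and using $\sum_j w_j^*w_j=1$---shows that $\zeta$ being $\psi$-stationary is equivalent to $\zeta\circ\Phi=\zeta$ on $\mathcal A$.

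The heart of the proof is a noncommutative ``cocycle/Leibniz'' identity for $\log\Delta_\zeta$, standing in for the chain rule $\frac{d(g_1g_2)^{-1}\nu}{d\nu}(x)=\frac{dg_1^{-1}\nu}{d\nu}(x)\cdot\frac{dg_2^{-1}\nu}{d\nu}(g_1^{-1}x)$ used by Furstenberg. From $\Delta_\zeta^{it}(x1_\zeta)=\sigma_t^\zeta(x)1_\zeta$ and the fact that $\sigma_t^\zeta$ is an automorphism one has $\langle\log\Delta_\zeta\,z_n^*w_j^*1_\zeta,z_n^*w_j^*1_\zeta\rangle=-i\,\frac{d}{dt}\big|_{t=0}\,\zeta\big(w_jz_n\,\sigma_t^\zeta(z_n^*)\,\sigma_t^\zeta(w_j^*)\big)$; I would expand the product $\sigma_t^\zeta(z_n^*)\sigma_t^\zeta(w_j^*)$, peel off the ``static'' factors, carry out the $n$-sum (using $\sum_n z_n^*z_n=1$) and then the $j$-sum, collapsing the latter by means of $\zeta\circ\Phi=\zeta$, and discard the second-order remainders $\sum_{n,j}\zeta\big(w_jz_n(\sigma_t^\zeta(z_n^*)-z_n^*)(\sigma_t^\zeta(w_j^*)-w_j^*)\big)$ and their analogues; the two surviving first-order pieces then reassemble into $-h_\varphi$ and $-h_\psi$, whence $h_{\varphi\ast\psi}=h_\varphi+h_\psi$. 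Since $\log\Delta_\zeta$ is unbounded and the $z_n^*,w_j^*$ do not commute with the modular flow, all of this has to be made rigorous: I would work throughout with the real powers $\Delta_\zeta^{t/2}$ and the monotone limits $h=-\lim_{t\to0^+}\frac1t\big(\|\Delta_\zeta^{t/2}(\cdot)\|^2-\|(\cdot)\|^2\big)$ of Corollary~\ref{cor;eqdef1}, bounding the cross-terms with Petz's Lemma~\ref{lem;Petz1} and Lemma~\ref{lem:Petz2}, while the non-negativity of Lemma~\ref{lem;nonneg} and the convexity of $t\mapsto\|\Delta_\zeta^{t/2}\xi\|^2$ supply the domination needed to interchange $\lim_{t\to0^+}$ with the infinite sums. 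The regularity of $\psi$ (equivalently $\sum_j w_jw_j^*=1$, so that $\Phi$ is unital and $\zeta\circ\Phi=\zeta$ may be applied to every operator that occurs) is what makes the collapse of the $j$-sums legitimate. Because these limits are monotone, the identity then holds in $[0,\infty]$ with no separate discussion of the infinite cases.

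I expect the main obstacle to be exactly this rigorous cocycle identity for the unbounded operator $\log\Delta_\zeta$---controlling the error terms uniformly in $n,j$ as $t\to0^+$ and swapping that limit with the double sum---which is why the Petz lemmas are recalled immediately before the theorem.
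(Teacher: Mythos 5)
Your overall route is the same as the paper's: represent $\varphi*\psi$ by the family $\{w_jz_n\}$ (correct, and your remark that the entropy formula is representation--independent, so non-orthogonality of this family is harmless, is also correct and implicitly used in the paper); reduce all three entropies to derivatives $-i\,\tfrac{d}{dt}\big|_{t=0}$ of $t\mapsto\zeta\big(c\,\sigma_t^\zeta(c^*)\big)$; identify $\psi$-stationarity with $\zeta\circ\Phi=\zeta$ for $\Phi(T)=\sum_jw_jTw_j^*$; and split $\zeta\big(w_jz_n\sigma_t^\zeta(z_n^*)\sigma_t^\zeta(w_j^*)\big)$ into a $\varphi$-piece, a $\psi$-piece, and negligible remainders. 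All of these ingredients match the paper's proof.

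The gap is in the central cancellation. Expanding $w_jz_n\sigma_t(z_n^*)\sigma_t(w_j^*)$ in the order you write it produces, besides the tractable term $\sum_{n,j}\zeta\big(w_jz_n(\sigma_t(z_n^*)-z_n^*)w_j^*\big)$ (which does collapse via $\zeta\circ\Phi=\zeta$ to the $h_\varphi$-piece), the first-order term $\sum_{n,j}\zeta\big(w_jz_nz_n^*(\sigma_t(w_j^*)-w_j^*)\big)$, and your stated tools do not handle it. Collapsing its $n$-sum requires $\sum_nz_nz_n^*=1$, i.e.\ regularity of $\varphi$, which the theorem does not assume ($\sum_nz_n^*z_n=1$ is the hyperstate condition and sits in the wrong order here); stationarity cannot be applied because $\sigma_t(w_j^*)\neq w_j^*$ breaks the $w_j(\cdot)w_j^*$ pattern; and $\zeta$ is not tracial on $\mathcal A$, so you cannot cyclically permute. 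The paper's proof avoids this term altogether by first \emph{reordering}: setting $y_t=z_n\sigma_t(z_n^*)$, it replaces $\zeta(w_jy_t\sigma_t(w_j^*))$ by $\zeta(w_j\sigma_t(w_j^*)y_t)$ via the decomposition $y_t\sigma_t(w_j^*)-\sigma_t(w_j^*)y_t=y_t(\sigma_t(w_j^*)-w_j^*)+[y_t,w_j^*]+(w_j^*-\sigma_t(w_j^*))y_t$, where the middle term vanishes after summation precisely by stationarity combined with $\sum_jw_jw_j^*=1$ (regularity of $\psi$), and the outer terms cancel to first order. Only after this reordering, with the product in the form $w_j\sigma_t(w_j^*)\cdot z_n\sigma_t(z_n^*)$, does the expansion into $h_\varphi+h_\psi$ plus a vanishing cross term go through. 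You correctly anticipate that the error control is the main obstacle, but the reordering device that makes it work is missing, so this step needs to be supplied.
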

\begin{proof}
	Suppose we have the standard forms 
	\begin{align*}
	\varphi(T)= \sum_{i \in I} \langle T \mu_i^{1/2} \hat{a_i^*}, \mu_i^{1/2} \hat{a_i^*} \rangle
	\text{ with }\mu_i >0, \text{ }||a_i^*||_2=1, \text{ and } \tau(a_ja_i^*)=0 \text{ for all }i \neq j \in I. \\
 \psi(T)= \sum_{j \in J} \langle T \nu_j \hat{b_j^*}, \nu_j \hat{b_j^*} \rangle  
	\text{ with }\nu_j >0, \text{ }||b_j^*||_2=1, \text{ and } \tau(b_kb_l^*)=0 \text{ for all }k \neq l \in J.
	\end{align*}
	Let $\mathcal{P}_{\varphi}$ and $\mathcal{P}_{\psi}$ be the corresponding u.c.p.\ maps so that $\mathcal{P}_{\varphi}(T)= \sum_k \mu_k Ja_k^*J T Ja_kJ$ and  $\mathcal{P}_{\psi}(T)= \sum_l \nu_l Jb_l^*J T Jb_lJ$.
	 We shall denote the projection from $L^2(\mathcal A, \zeta)$ to $L^2(M,\tau)$ by $e$ and $\Delta_{\zeta}$ by $\Delta$. We also denote the one parameter modular automorphism group corresponding to $\zeta$ by $\sigma_t$.
	  We then have
	\begin{align*}
	h_{\varphi}(M \subset \mathcal{A}, \zeta) &= i \lim\limits_{t \rightarrow 0} \varphi(\dfrac{e\Delta^{it}e-1}{t})= i \lim\limits_{t \rightarrow 0} \dfrac{1}{t}\varphi(e\Delta^{it}e-1) \\
	&= i \lim\limits_{t \rightarrow 0} \dfrac{1}{t}(\sum_k \mu_k \langle (\Delta^{it}-1 )a_k^* 1_\zeta, a_k^* 1_{\zeta} \rangle ) 
	\end{align*}
	Similarly,
	\begin{align*}
	h_{\psi}(M \subset \mathcal A, \zeta)=i \lim\limits_{t \rightarrow 0} \dfrac{1}{t}(\sum_l \nu_l \langle (\Delta^{it}-1 )b_l^* 1_\zeta, b_l^* 1_{\zeta} \rangle )  
	\end{align*}
	and,
	\begin{align*}
	h_{\varphi * \psi}(M \subset \mathcal A, \zeta) &= i \lim\limits_{t \rightarrow 0} \dfrac{1}{t}(\sum_{k,l} \mu_k \nu_l \langle (\Delta^{it}-1 )a_k^*b_l^* 1_\zeta, a_k^* b_l^*1_{\zeta} \rangle ) \\
	&= i \lim\limits_{t \rightarrow 0} \dfrac{1}{t}(\sum_{k,l} \mu_k \nu_l \langle (b_la_k\sigma_t(a_k^*b_l^*) 1_\zeta, 1_{\zeta} \rangle -1) 
	\end{align*}
	We shall now show: $ \lim\limits_{t \rightarrow 0} \dfrac{1}{t}(\sum_{k,l} \mu_k \nu_l \langle b_la_k\sigma_t(a_k^*b_l^*) 1_\zeta, 1_{\zeta} \rangle -\sum_{k,l} \mu_k \nu_l \langle b_l\sigma_t(b_l^*)\sigma_t(a_k^*) 1_\zeta, 1_{\zeta} \rangle ) =0.$ Let $y_t=a_k\sigma_t(a_k^*)$. Note that $y_t \rightarrow a_ka_k^*$ as $t \rightarrow 0$, in SOT.
	We have:
	\begin{align*}
	y_t\sigma_t(b_l^*)-\sigma_t(b_l^*)y_t &= y_t\sigma_t(b_l^*)-y_tb_l^*+y_t b_l^*-\sigma_t(b_l^*)y_t \\
	&= y_t (\sigma_t(b_l^*)- b_l^*)+ (y_t b_l^*-b_l^*y_t)+(b_l^*-\sigma_t(b_l^*))y_t
	\end{align*}
	Now,
	\begin{align*}
	\dfrac{1}{t}(\sum_{k,l} \mu_k \nu_l \langle (y_t b_l^*-b_l^*y_t) 1_{\zeta}, b_l^* 1_{\zeta} \rangle &=  \dfrac{1}{t}(\sum_{k,l} \mu_k \nu_l \langle b_ly_t b_l^* 1_{\zeta}, 1_{\zeta} \rangle - \dfrac{1}{t}(\sum_{k,l} \mu_k \nu_l \langle y_t 1_{\zeta}, b_lb_l^* 1_{\zeta} \rangle \\
	&= \dfrac{1}{t} \sum_k \mu_k \langle (\sum_l \nu_l b_l y_t b_l^*) 1_{\zeta}, 1_{\zeta} \rangle - \dfrac{1}{t} \sum_k \mu_k \langle y_t 1_{\zeta}, 1_{\zeta} \rangle\\
	&= \dfrac{1}{t} \langle y_t 1_{\zeta}, 1_{\zeta} \rangle- \dfrac{1}{t} \langle y_t 1_{\zeta}, 1_{\zeta} \rangle =0,
	\end{align*}
	where the second to last equality holds by $\psi$-stationarity of $\zeta$.\\
	Also, $ \lim\limits_{t \rightarrow 0} \dfrac{1}{t} (y_t (\sigma_t(b_l^*)-b_l^*))$ exists, and hence 
	\[ 
	\lim\limits_{t \rightarrow 0} \dfrac{1}{t}(\sum_{k,l} \mu_k \nu_l \langle b_la_k\sigma_t(a_k^*b_l^*) 1_\zeta, 1_{\zeta} \rangle -\sum_{k,l} \mu_k \nu_l \langle b_l\sigma_t(b_l^*)\sigma_t(a_k^*) 1_\zeta, 1_{\zeta} \rangle ) =0.
	\]
	So, we get that
	\begin{align*}
	h_{\varphi * \psi}(M \subset \mathcal A , \zeta )&= i \lim\limits_{t \rightarrow 0} \dfrac{1}{t}(\sum_{k,l} \mu_k \nu_l \langle (b_l\sigma_t(b_l^*)a_k\sigma_t(a_k^*)-1) 1_\zeta, 1_{\zeta} \rangle \\
	&=  i \lim\limits_{t \rightarrow 0} \dfrac{1}{t}(\sum_{k,l} \mu_k \nu_l[ \langle( b_l\sigma_t(b_l^*)-1) 1_{\zeta}, 1_{\zeta} \rangle \\
	&+ \langle (a_k\sigma_t(a_k^*)-1) 1_{\zeta}, 1_{\zeta} \rangle \\
	&+ \langle (a_k\sigma_t(a_k^*)-1) 1_{\zeta}, (b_l \sigma_t(b_l^*)-1)^* 1_{\zeta} \rangle ]
	\end{align*}
	The first term equals $h_{\varphi}(M \subset \mathcal A, \zeta)$, while second term equals $h_{\psi}(M \subset \mathcal A, \zeta)$, and the third term equals zero, as $\lim\limits_{t \rightarrow 0} \dfrac{1}{t} (a_k\sigma_t(a_k^*)-1) 1_{\zeta} $ exists, while $\lim\limits_{t \rightarrow 0} \sum_l \nu_l (b_l \sigma_t(b_l^*)-1)^* 1_{\zeta} =0.$
\end{proof}
\begin{cor} \label{cor:additivity under conv}
	Let $\varphi \in \mathcal S_\tau( \mathcal B(L^2(M, \tau)))$ be a regular normal hyperstate and suppose $\mathcal A$ is a von Neumann algebra with $M \subset \mathcal A$, and $\zeta \in S_\tau(\mathcal A)$ is a faithful $\varphi$-stationary hyperstate, then for $n \geq 1$ we have
	$$
	h_{\varphi^{*n}}(M \subset \mathcal A, \zeta) = n h_{\varphi}(M \subset \mathcal A, \zeta).
	$$
\end{cor}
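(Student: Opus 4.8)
The plan is to prove this by induction on $n$, with Theorem~\ref{thm:entropadd} carrying all of the analytic content in the inductive step. The base case $n = 1$ is trivial. For the inductive step I would write $\varphi^{*(n+1)} = \varphi * \varphi^{*n}$ — convolution is associative since $\mathcal P_{\varphi * \psi} = \mathcal P_\varphi \circ \mathcal P_\psi$ and composition of maps is associative — and then apply Theorem~\ref{thm:entropadd} with the regular hyperstate ``$\psi$'' there taken to be $\varphi^{*n}$. This yields
\[
h_{\varphi^{*(n+1)}}(M \subset \mathcal A, \zeta) = h_\varphi(M \subset \mathcal A, \zeta) + h_{\varphi^{*n}}(M \subset \mathcal A, \zeta),
\]
and combining with the inductive hypothesis $h_{\varphi^{*n}}(M \subset \mathcal A, \zeta) = n\, h_\varphi(M \subset \mathcal A, \zeta)$ completes the argument.

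Before invoking Theorem~\ref{thm:entropadd} I would check that its hypotheses are stable under passing from $\varphi$ to $\varphi^{*n}$, i.e.\ that $\varphi^{*n}$ is a normal regular hyperstate and that $\zeta$ is $\varphi^{*n}$-stationary. Normality is immediate since $\mathcal P_{\varphi^{*n}} = \mathcal P_\varphi^n$ is a composition of normal maps. Regularity holds because $\mathcal P_\varphi$ restricted to $M'$ preserves the canonical trace (this is the definition of regularity of $\varphi$), hence so does its $n$-fold composition $\mathcal P_{\varphi^{*n}}|_{M'}$. Finally, $\zeta$ being $\varphi^{*n}$-stationary follows from its being $\varphi$-stationary: from $\mathcal P_\varphi \circ \mathcal P_\zeta = \mathcal P_{\varphi * \zeta} = \mathcal P_\zeta$, iterating gives $\mathcal P_{\varphi^{*n} * \zeta} = \mathcal P_\varphi^n \circ \mathcal P_\zeta = \mathcal P_\zeta$, so $\varphi^{*n} * \zeta = \zeta$.

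I do not expect a genuine obstacle here: the substantive work — the delicate $t \to 0$ limit manipulations with the modular operator and the use of $\psi$-stationarity to cancel cross terms — is already done inside the proof of Theorem~\ref{thm:entropadd}. The only point needing a moment's care is the stability of the hypotheses just recorded, and that is automatic precisely because regularity and stationarity are phrased in terms of the Poisson transforms together with operations (restriction to $M'$, composition with $\mathcal P_\zeta$) that commute with taking powers. One should also read the asserted identity in $[0, +\infty]$, so that it remains valid in the degenerate case $h_\varphi(M \subset \mathcal A, \zeta) = +\infty$.
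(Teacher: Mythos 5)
Your proposal is correct and is exactly the argument the paper intends: the corollary is stated without proof as an immediate consequence of Theorem~\ref{thm:entropadd}, obtained by induction on $n$, and your verification that $\varphi^{*n}$ remains normal and regular and that $\zeta$ remains $\varphi^{*n}$-stationary is exactly the bookkeeping needed. (One could even avoid those checks by instead decomposing $\varphi^{*(n+1)} = \varphi^{*n} * \varphi$, so that the regular, stationarity-bearing factor $\psi = \varphi$ is the one given in the hypotheses, but your route is equally valid.)
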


\begin{lem} \label{lem;enineq1}
	$h_{\varphi}(M \subset \mathcal{A}, \zeta) \leq H(\varphi).$ 
\end{lem}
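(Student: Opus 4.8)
The plan is to reduce to the spectral–measure description of $h_\varphi$ from Corollary~\ref{cor;eqdef1}, extract from $\varphi$-stationarity of $\zeta$ a single scalar inequality, and then finish with a one–line Jensen estimate. (We use here that $\zeta$ is $\varphi$-stationary, as in Theorem~\ref{thm:entropadd}; without such a hypothesis the inequality fails, already for an inclusion $L\Gamma\subset L^\infty(X,\nu)\rtimes\Gamma$ with $\nu$ not $\mu$-stationary.)

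First, fix a standard form $\varphi(T)=\sum_n\langle T\widehat{z_n^*},\widehat{z_n^*}\rangle$ and set $t_n=\|z_n\|_2^2=\|\widehat{z_n^*}\|^2$, so $\sum_n t_n=\tau(\sum_n z_n^*z_n)=1$ and, by the computation preceding Theorem~\ref{thm:subadd}, $H(\varphi)=-\sum_n t_n\log t_n$. Put $\xi_n=e z_n^*\hat 1=z_n^*1_\zeta$, a vector with $\|\xi_n\|^2=\zeta(z_nz_n^*)=\tau(z_nz_n^*)=t_n$, and let $\mu_n(\cdot)=\langle E(\cdot)\xi_n,\xi_n\rangle$ be the spectral measure of $\Delta_\zeta$ at $\xi_n$. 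Then by Corollary~\ref{cor;eqdef1} and Lemma~\ref{lem:Petz2},
\[
h_\varphi(M\subset\mathcal A,\zeta)=-\sum_n\int_0^\infty\log\lambda\,d\mu_n(\lambda).
\]
The crux is to show $\int_0^\infty\lambda^{-1}\,d\mu_n(\lambda)\le 1$ for each $n$. Now $\varphi$-stationarity means $\zeta(a)=\sum_m\zeta(z_m a z_m^*)=\sum_m\langle a\xi_m,\xi_m\rangle$ for all $a\in\mathcal A$, so in particular $\omega_{\xi_n}\le\zeta$ as normal positive functionals on $\mathcal A$. Since $1_\zeta$ is cyclic and separating for $\mathcal A$, the map $a\,1_\zeta\mapsto a\,\xi_n$ is well defined and a contraction, hence extends to $C_n'\in\mathcal A'$ with $\|C_n'\|\le 1$ and $C_n'1_\zeta=\xi_n$. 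Consequently $\xi_n\in\mathcal A'1_\zeta\subset D(F_\zeta)=D(\Delta_\zeta^{-1/2})$, where $F_\zeta=J\Delta_\zeta^{-1/2}$ is the closure of $x'1_\zeta\mapsto (x')^*1_\zeta$ on $\mathcal A'1_\zeta$, and $\Delta_\zeta^{-1/2}\xi_n=J F_\zeta\xi_n=J(C_n')^*1_\zeta$, whence
\[
\int_0^\infty\lambda^{-1}\,d\mu_n(\lambda)=\|\Delta_\zeta^{-1/2}\xi_n\|^2=\|(C_n')^*1_\zeta\|^2=\langle C_n'(C_n')^*1_\zeta,1_\zeta\rangle\le\|C_n'\|^2\le 1.
\]

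Finally, for each $n$ with $t_n>0$ apply Jensen's inequality (for the concave function $\log$) to the probability measure $t_n^{-1}\mu_n$: since $\int\lambda^{-1}\,d(t_n^{-1}\mu_n)=t_n^{-1}\int\lambda^{-1}\,d\mu_n\le t_n^{-1}$,
\[
\int\log\lambda\,d\mu_n=-t_n\int\log(\lambda^{-1})\,d(t_n^{-1}\mu_n)\ \ge\ -t_n\log\Bigl(\int\lambda^{-1}\,d(t_n^{-1}\mu_n)\Bigr)\ \ge\ -t_n\log(t_n^{-1})=t_n\log t_n .
\]
Summing over $n$ gives $h_\varphi(M\subset\mathcal A,\zeta)=-\sum_n\int\log\lambda\,d\mu_n\le-\sum_n t_n\log t_n=H(\varphi)$. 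The only genuine content is the bound $\int\lambda^{-1}d\mu_n\le 1$; everything else is bookkeeping. I expect the point requiring the most care to be the identification $\Delta_\zeta^{-1/2}\xi_n=J(C_n')^*1_\zeta$ — that the contraction $C_n'$ extracted from $\omega_{\xi_n}\le\zeta$ really places $\xi_n$ in the domain of $F_\zeta$ — together with making sure the spectral–measure formula for $h_\varphi$ from Corollary~\ref{cor;eqdef1} is available without knowing beforehand that $h_\varphi<\infty$.
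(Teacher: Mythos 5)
Your proof is correct, and it reaches the conclusion by a genuinely different route from the paper's. The paper introduces the contraction $T(a1_\zeta)=\mathcal P_\zeta(a)\hat 1$, proves the single operator inequality $\|A_\varphi^{1/2}Ta1_\zeta\|\le\|\Delta_\zeta^{1/2}a1_\zeta\|$ (via Kadison--Schwarz and $\varphi\circ\mathcal P_\zeta=\zeta$), upgrades it to all powers $t\in[0,1]$ with Petz's interpolation lemma (Lemma~\ref{lem;Petz1}), and then differentiates at $t=0^+$ using Lemma~\ref{lem:Petz2} to compare the two entropies. You instead isolate the scalar bound $\int\lambda^{-1}\,d\mu_n\le 1$: stationarity gives $\zeta=\sum_m\omega_{\xi_m}$, hence $\omega_{\xi_n}\le\zeta$, hence $\xi_n=C_n'1_\zeta$ for a contraction $C_n'\in\mathcal A'$, hence $\|\Delta_\zeta^{-1/2}\xi_n\|=\|(C_n')^{*}1_\zeta\|\le1$; ordinary (scalar) Jensen then finishes. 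Both arguments invoke $\varphi$-stationarity of $\zeta$ at exactly one point, and you are right that this hypothesis, implicit in the lemma's ``continue the notation from the proof of Theorem~\ref{thm:entropadd}'', is genuinely needed. Your route trades the operator-interpolation machinery for a Tomita--Takesaki domain argument ($\mathcal A'1_\zeta\subset D(\Delta_\zeta^{-1/2})$), which is arguably more elementary and makes the mechanism transparent: each spectral measure $\mu_n$ has $(-1)$-st moment at most $1$ while having total mass $t_n$. Your closing worry about Corollary~\ref{cor;eqdef1} is easily dispatched: the identity $h_\varphi=-\sum_n\int\log\lambda\,d\mu_n$ follows directly from the definition of $h_\varphi$ by splitting $\log\lambda$ into positive and negative parts, and everything is well defined in $[0,+\infty]$ because $\int\lambda\,d\bigl(\sum_n\mu_n\bigr)=\sum_n\|\Delta_\zeta^{1/2}z_n^*1_\zeta\|^2=\sum_n\zeta(z_n^*z_n)=1$ controls the contribution from $\lambda>1$.
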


\begin{proof}
We continue the notation from the proof of Theorem~\ref{thm:entropadd}, so that $\mathcal P_{\varphi}(T)= \sum_k \mu_k b_kTb_k^*$. Let $a_k=Jb_kJ \in M$. 
	It follows from Lemma \ref{lem:Petz2} that
	\begin{align*}
	H(\varphi)=-\lim\limits_{t \rightarrow 0+}\dfrac{\sum\limits_{k=1}^{\infty}\mu_k||A_{\varphi}^{t/2}a_k^*\hat{1}||^2-||a_k^*\hat{1}||^2}{t}.
	\end{align*}
	So by Corollary \ref{cor;eqdef1} it's enough to show that
	\begin{align*}
	\lim\limits_{t \rightarrow 0+}\dfrac{\sum\limits_{k=1}^{\infty}\mu_k||A_{\varphi}^{t/2}a_k^*\hat{1}||^2-||a_k^*\hat{1}||^2}{t} \leq \lim\limits_{t \rightarrow 0+}\dfrac{\sum\limits_{k=1}^{\infty}\mu_k||\Delta_{\varphi}^{t/2}ea_k^*\hat{1}||^2-||ea_k^*\hat{1}||^2}{t}.
	\end{align*}	
So, it's enough to show that 
	\begin{align*}
	||A_{\varphi}^{t/2}a_k\hat{1}||^2 \leq ||\Delta_{\zeta}^{t/2}a_k1_{\zeta}||^2
	\end{align*}
	Define $T: L^2(\mathcal{A},\zeta) \rightarrow L^2(M,\tau)$ by $T(a1_{\zeta})=\mathcal P_{\zeta}(a)\hat{1}$. Then $||T|| =1$, as $||T(1_{\zeta})||=1$ and $||\mathcal P_{\zeta}|| \leq 1$.
	$T$ takes $\mathcal{D}(\Delta_{\zeta})$ into $\mathcal{D}(A_{\varphi})=L^2(M,\tau)$. We now denote $\Delta_{\zeta}$ by $\Delta$.
	By Lemma \ref{lem;Petz1} it's enough to show:
	\begin{align*}
	||A_{\varphi}^{1/2}T \xi|| \leq ||\Delta^{1/2} \xi|| \text{for all } \xi \in \mathcal{D}(\Delta).
	\end{align*}
	In fact it's enough to show the above for all vectors in a core for $\mathcal{D}(\Delta)$. Recall that $\mathcal{A}1_{\zeta}$ forms a core for $\mathcal{D}(\Delta)$. So, we only need to show 
	\begin{align*}
	||A_{\varphi}^{1/2}Ta1_{\zeta}|| \leq ||\Delta^{1/2}a1_{\zeta}|| \text { for all } a \in \mathcal A.
	\end{align*}
  To this end, let $a \in \mathcal A$. Recall that $S=J \Delta^{1/2}$, so that $\Delta^{1/2}=JS$. We then have
	\begin{align*}
	||\Delta^{1/2}a1_{\zeta}||^2 &= \langle \Delta^{1/2} a1_{\zeta}, \Delta^{1/2} a1_{\zeta} \rangle = \langle JS a 1_{\zeta}, JS a 1_{\zeta} \rangle \\
	&= \langle J a^* 1_{\zeta}, J a^* 1_{\zeta} \rangle = \langle a^* 1_{\zeta},a^* 1_{\zeta} \rangle = \zeta(aa^*) \\
	&= \langle \mathcal P_{\zeta}(aa^*) \hat{1}, \hat{1} \rangle
	\end{align*}
	We also have $\mathcal P_{\varphi} \circ \mathcal P_{\zeta} = \mathcal P_{\zeta} \implies \varphi \circ \mathcal P_{\zeta} = \zeta$.
	Now:
	\begin{align*}
	||A_{\varphi}^{1/2}Ta1_{\zeta}||^2 & = \langle A_{\varphi}^{1/2} \mathcal P_{\zeta}(a) \hat{1}, A_{\varphi}^{1/2} \mathcal P_{\zeta}(a) \hat{1} \rangle = \langle A_{\varphi} \mathcal P_{\zeta}(a) \hat{1}, \mathcal P_{\zeta}(a) \hat{1} \rangle \\
	&= \langle \mathcal P_{\zeta}(a)^* A_{\varphi} \mathcal P_{\zeta}(a) \hat{1}, \hat{1} \rangle \leq Tr(\mathcal P_{\zeta}(a)^*A_{\varphi}\mathcal P_{\zeta}(a))\\
	&= Tr(A_{\varphi}\mathcal P_{\zeta}(a)\mathcal P_{\zeta}(a^*)) \leq Tr(A_{\varphi}\mathcal P_{\zeta}(aa^*))\\
	&= \langle (\varphi \circ \mathcal P_{\zeta})(aa^*) \hat{1},\hat{1} \rangle = \langle \mathcal P_{\zeta}(aa^*) \hat{1}, \hat{1} \rangle \\
	&= \zeta(aa^*)=|| \Delta ^{1/2} a 1_{\zeta}||^2.
	\end{align*}
	Hence we are done.
\end{proof}
\begin{cor}\label{cor;enineq2}
	$h_{\varphi}(M \subset \mathcal{A}, \zeta) \leq h(\varphi)$
\end{cor}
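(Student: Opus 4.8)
The plan is to upgrade the single-step bound $h_\varphi(M\subset\mathcal A,\zeta)\le H(\varphi)$ of Lemma~\ref{lem;enineq1} to all convolution powers of $\varphi$ and then extract the asymptotic rate. Throughout I would work under the standing assumptions that make both Lemma~\ref{lem;enineq1} and Corollary~\ref{cor:additivity under conv} applicable, namely that $\varphi$ is a normal regular hyperstate and $\zeta$ is a faithful $\varphi$-stationary hyperstate on a von Neumann algebra $\mathcal A\supset M$.

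First I would check that these hypotheses propagate to each $\varphi^{*n}$. Regularity is preserved because $\mathcal P_{\varphi^{*n}}=\mathcal P_\varphi^{\,n}$ is a composition of maps each of which restricts to a trace-preserving map on $M'$; normality is immediate; and $\zeta$ is $\varphi^{*n}$-stationary since $\mathcal P_{\varphi^{*n}}\circ\mathcal P_\zeta=\mathcal P_\varphi^{\,n}\circ\mathcal P_\zeta=\mathcal P_\zeta$ by induction on $n$ using $\mathcal P_\varphi\circ\mathcal P_\zeta=\mathcal P_\zeta$. Hence Lemma~\ref{lem;enineq1} applies with $\varphi$ replaced by $\varphi^{*n}$, giving $h_{\varphi^{*n}}(M\subset\mathcal A,\zeta)\le H(\varphi^{*n})$, while Corollary~\ref{cor:additivity under conv} gives $h_{\varphi^{*n}}(M\subset\mathcal A,\zeta)=n\,h_\varphi(M\subset\mathcal A,\zeta)$.

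Combining these two facts yields, for every $n\ge 1$,
\[
h_\varphi(M\subset\mathcal A,\zeta)\;\le\;\frac{H(\varphi^{*n})}{n}.
\]
Letting $n\to\infty$ and using that $H(\varphi^{*n})/n$ converges to $h(\varphi)$ — the limit exists by subadditivity of the sequence $\{H(\varphi^{*n})\}$, which is precisely Theorem~\ref{thm:subadd} together with the corollary following it — gives $h_\varphi(M\subset\mathcal A,\zeta)\le h(\varphi)$, as claimed. There is no genuine obstacle here: the only point requiring care is the bookkeeping in the second paragraph confirming that the hypotheses of Lemma~\ref{lem;enineq1} and Corollary~\ref{cor:additivity under conv} survive under convolution, after which the conclusion is a one-line limit argument.
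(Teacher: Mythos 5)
Your proposal is correct and follows exactly the paper's own argument: apply Lemma~\ref{lem;enineq1} to $\varphi^{*n}$, use Corollary~\ref{cor:additivity under conv} to get $h_{\varphi^{*n}}(M\subset\mathcal A,\zeta)=n\,h_\varphi(M\subset\mathcal A,\zeta)$, divide by $n$, and let $n\to\infty$. The only difference is that you make explicit the (routine but worthwhile) verification that regularity and $\varphi^{*n}$-stationarity persist under convolution powers, which the paper leaves implicit.
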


\begin{proof}
	By Lemma \ref{lem;enineq1}, we have that $h_{\varphi^{*n}}(M \subset \mathcal{A}, \zeta) \leq H(\varphi^{*n})$. By Corollary \ref{cor:additivity under conv} we have that  $h_{\varphi^{*n}}(M \subset \mathcal{A}, \zeta) = n   h_{\varphi}(M \subset \mathcal{A}, \zeta) $. So we get,
	$$h_{\varphi}(M \subset \mathcal{A}, \zeta) \leq \dfrac{H(\varphi^{*n})}{n} \rightarrow h(\varphi).$$
\end{proof}

\begin{lem} \label{lem;entropyeq}
	$h_{\varphi}(M \subset \mathcal A, \zeta)=0$ if and only if there exists a normal $\zeta$ preserving  conditional expectation from $\mathcal A$ to $M$.
\end{lem}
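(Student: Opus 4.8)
The plan is to match the vanishing of $h_\varphi(M\subset\mathcal A,\zeta)$ with the statement that the modular automorphism group $\{\sigma^\zeta_t\}$ of $\zeta$ fixes $M$ pointwise, and then to invoke Takesaki's theorem on the existence of normal conditional expectations. Concretely: since $\zeta_{|M}=\tau$ is a trace, the modular automorphism group of $\zeta_{|M}$ is trivial, so by Takesaki's theorem~\cite{Takesakibook} a normal $\zeta$-preserving conditional expectation $\mathcal A\to M$ exists if and only if $\sigma^\zeta_t(M)=M$ for all $t$, in which case $\sigma^\zeta_t(x)=x$ for all $x\in M$; and because $1_\zeta$ is separating for $\mathcal A$ with $\Delta^{it}_\zeta 1_\zeta=1_\zeta$, this is equivalent to $\Delta^{it}_\zeta$ acting as the identity on $L^2(M,\tau)\subset L^2(\mathcal A,\zeta)$ for every $t$. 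So it suffices to prove that $h_\varphi(M\subset\mathcal A,\zeta)=0$ if and only if $\Delta^{it}_\zeta$ fixes $L^2(M,\tau)$ pointwise.

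The ``if'' direction is immediate: if $\Delta^{it}_\zeta$ fixes $L^2(M,\tau)$ pointwise then, writing $\Delta_\zeta=\int_0^\infty\lambda\,dE(\lambda)$, every vector of $L^2(M,\tau)$ lies in the range of the spectral projection $E(\{1\})$, so the measure $d\varphi(eE(\lambda)e)$ is the unit point mass at $\lambda=1$ and $h_\varphi=-\int\log\lambda\,d\varphi(eE(\lambda)e)=0$.

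For the ``only if'' direction I would fix a standard form $\varphi(T)=\sum_n\langle T\widehat{z_n^*},\widehat{z_n^*}\rangle$ with $z_n\in M$ and $\sum_n z_n^*z_n=1$, and write $\mu_n$ for the spectral measure of $z_n^*1_\zeta$ relative to $\Delta_\zeta$. Since $h_\varphi=0$ is finite, the formula for $h_\varphi$ in terms of the standard form of $\varphi$ (Corollary~\ref{cor;eqdef1} together with Lemma~\ref{lem:Petz2}) gives $z_n^*1_\zeta\in D(\log\Delta_\zeta)$ and
\[
h_\varphi=-\sum_n\langle\log\Delta_\zeta\,z_n^*1_\zeta,\;z_n^*1_\zeta\rangle=-\sum_n\int_0^\infty\log\lambda\,d\mu_n(\lambda).
\]
The key point is that each summand is separately nonnegative. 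The total mass of $\mu_n$ is $\|z_n^*1_\zeta\|^2=\zeta(z_nz_n^*)=\tau(z_nz_n^*)$, whereas, using $\Delta^{1/2}_\zeta=J_\zeta S_\zeta$ and $S_\zeta(z_n^*1_\zeta)=z_n1_\zeta$,
\[
\int_0^\infty\lambda\,d\mu_n(\lambda)=\|\Delta^{1/2}_\zeta z_n^*1_\zeta\|^2=\|J_\zeta z_n1_\zeta\|^2=\|z_n1_\zeta\|^2=\tau(z_n^*z_n),
\]
and $\tau(z_nz_n^*)=\tau(z_n^*z_n)$ since $\tau$ is a trace. Thus $\int\lambda\,d\mu_n$ equals the total mass of $\mu_n$, so (the case $z_n=0$ being trivial) after rescaling $\mu_n$ to a probability measure Jensen's inequality for the concave function $\log$ gives $\int\log\lambda\,d\mu_n\le\tau(z_n^*z_n)\log 1=0$, i.e.\ each summand $-\int\log\lambda\,d\mu_n$ is nonnegative. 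Since the sum of these nonnegative numbers is $h_\varphi=0$, every one of them vanishes; the equality case of Jensen for the strictly concave logarithm then forces each $\mu_n$ to be supported at a single point, which must be $\lambda=1$ because $\int\lambda\,d\mu_n$ agrees with the total mass of $\mu_n$. Hence $\Delta^{it}_\zeta z_n^*1_\zeta=z_n^*1_\zeta$ for all $t$, so $\sigma^\zeta_t(z_n^*)=z_n^*$ and every $z_n$ lies in the centralizer $\mathcal A^\zeta$; as $\varphi$ is generating the $z_n$ generate $M$, so $M\subseteq\mathcal A^\zeta$ and $\sigma^\zeta_t(M)=M$ for all $t$, and Takesaki's theorem supplies the required conditional expectation.

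The main obstacle is precisely this equality analysis: extracting from the single scalar equation $h_\varphi=0$ the pointwise invariance of each $z_n^*1_\zeta$ under the modular flow. It rests on first showing that each term $-\int\log\lambda\,d\mu_n$ is individually nonnegative — which is where the trace identity $\tau(z_nz_n^*)=\tau(z_n^*z_n)$ enters, forcing $\int\lambda\,d\mu_n$ to coincide with the total mass of $\mu_n$ — and then on the strict-concavity equality case of Jensen's inequality. A secondary subtlety is that one genuinely needs the $z_n$ to generate $M$, i.e.\ that $\varphi$ is generating (a simple tensor-product example shows the statement is false for an arbitrary normal hyperstate); without this the argument only yields a $\zeta$-preserving conditional expectation onto the von Neumann subalgebra generated by the $z_n$.
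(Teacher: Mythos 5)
Your proof is correct, and its heart --- the converse direction --- goes by a genuinely different route than the paper's. The paper writes $\log$ via the integral representation $\log x=\int_0^\infty\bigl[(1+t)^{-1}-(x+t)^{-1}\bigr]\,dt$, derives the operator inequality $(1+t)^{-1}\le e(\Delta+t)^{-1}e$ from $e\Delta e=e$ (which is where traciality of $\tau$ enters there, via $\Delta^{1/2}x1_\zeta=Jx^*1_\zeta$), concludes that each resolvent term annihilates $a_k^*\hat{1}$, and then needs an extra differentiation-in-$t$ plus norm computation to remove the compression $e$ and get $(\Delta+t)^{-1}a_k^*\hat{1}=(1+t)^{-1}a_k^*\hat{1}$, hence $\Delta^{it}a_k^*1_\zeta=a_k^*1_\zeta$. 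You instead work with the scalar spectral measures $\mu_n$ of the vectors $z_n^*1_\zeta$, feed in the same traciality via $\int\lambda\,d\mu_n=\|\Delta^{1/2}z_n^*1_\zeta\|^2=\tau(z_n^*z_n)=\tau(z_nz_n^*)=\mu_n(\mathbb R_{>0})$, and obtain both the termwise nonnegativity and the rigidity from scalar Jensen and its equality case for the strictly concave $\log$. This buys a real simplification: the equality case of Jensen pins down the spectral measure of the \emph{uncompressed} vector directly, so the paper's step of removing $e$ becomes unnecessary; the only price is the summability bookkeeping (each $\int\log^+\lambda\,d\mu_n\le\int\lambda\,d\mu_n<\infty$, so each $\int\log\lambda\,d\mu_n$ is well defined in $[-\infty,0]$ and the total entropy splits as the sum of the nonnegative summands), which your argument handles. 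Your closing caveats are also on target: the paper's own proof invokes ``as $\varphi$ is generating'' at exactly the same point even though that hypothesis is not displayed in the statement, and both proofs conclude by the same appeal to Takesaki's theorem.
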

\begin{proof}
Let  $\varphi$ be a standard form $\varphi(T) = \sum_k \langle T \widehat{a_k^*}, \widehat{a_k^*} \rangle$.	Let $\mathcal E: \mathcal A \rightarrow M$ be a normal $\zeta$ preserving conditional expectation. Then, we know that $\sigma^{\zeta}_t(m)=m$ for all $m \in M$, where $\sigma_t^{\zeta}$ denotes the modular automorphism group corresponding to $\zeta$. Hence,
	\begin{align*}
	h_{\varphi}(M \subset \mathcal A, \zeta)&= i \lim\limits_{t \rightarrow 0} \dfrac{1}{t} \sum_k \langle (\Delta^{it}-1) a_k^* 1_{\zeta}, a_k^* 1_{\zeta} \rangle \\
	&= i \lim\limits_{t \rightarrow 0} \dfrac{1}{t} \sum_k \langle \sigma_t(a_k^*) 1_{\zeta}, a_k^* 1_{\zeta} \rangle -1 =0.
	\end{align*}
	Conversely, suppose $h_{\varphi}(M \subset \mathcal A, \zeta)=0$. This part of the proof is motivated by the proof of Lemma 9.2 in \cite{petzbook}.
	Let $\Delta_{\zeta}=\Delta$ and let $\Delta = \int_{0}^{\infty} \lambda d\lambda$ be it's spectral resolution. Let $\Delta_n = \int_{1/n}^{n} \lambda d\lambda$, $n \geq 1$  be the truncations. We know that $\Delta_n$ converges to $\Delta$ in the resolvent sense. As usual, we denote by $e$ the projection from $L^2(\mathcal A, \zeta)$ to $L^2(M, \tau)$. We have that $e=e\Delta e \geq e \Delta_n e$ for all $n$. So, $(1+t)^{-1} \leq (e\Delta_n e+ t)^{-1} \leq e(\Delta_n +t)^{-1}e$ for all $n$ and for all $t>0$. Taking limits as $n \rightarrow \infty$, we get $(1+t)^{-1} \leq e(\Delta+t)^{-1}e$. Now we shall use the following integral representation of $\log$:
	$$
	\log(x)=\int_{0}^{\infty}[ (1+t)^{-1}-(x+t)^{-1}] dt
	$$
	So that
	$$h_{\varphi}(M \subset \mathcal A, \zeta)=-\int_{0}^{\infty}\sum_k \langle e[(1+t)^{-1}-(\Delta +t)^{-1}] e a_k^*\hat{1}, a_k^* \hat{1} \rangle dt.$$
	From $h_{\varphi}(M \subset \mathcal A, \zeta)=0$ and the above discussion, we deduce that
	\begin{align*}
&\langle e((1+t)^{-1}-(\Delta+t)^{-1})ea_k^* \hat 1, a_k^* \hat 1 \rangle =0 \\
&\Rightarrow e((1+t)^{-1}-(\Delta+t)^{-1})e a_k^* \hat 1=0 \\
&\Rightarrow (1+t)^{-1} a_k^* \hat 1= e(\Delta+t)^{-1} e a_k^* \hat 1
	\end{align*}for almost all $t>0$, and hence by continuity, for all $t>0$. We now show that the last relation also holds without the compression $e$. To this end, note that by differentiating the equation $(1+t)^{-1} a_k^* \hat 1= e(\Delta+t)^{-1}a_k^* \hat 1$ with respect to $t$, we get $(1+t)^{-2}a_k^* \hat 1= e(\Delta+t)^{-2}ea_k^* \hat 1$, for all $t>0$. Therefore, by the following norm calculation in $L^2(\mathcal A, \zeta)$ we have
\begin{align*}
	|| e(\Delta+t)^{-1}ea_k^* \hat 1||^2_2 &= ||(1+t)^{-1} a_k^* \hat 1||_2^2= \langle (1+t)^{-2}a_k^* \hat 1, a_k^* \hat 1 \rangle \\
&	= \langle e(\Delta+t)^{-2}ea_k^* \hat 1, a_k^*\hat 1 \rangle = \langle (\Delta+t)^{-2}a_k^* \hat 1, a_k^*\hat 1 \rangle =||(\Delta+t)^{-1}a_k^* \hat 1||^2_2.
\end{align*}
So we get that $(1+t)^{-1} a_k^* \hat 1= (\Delta+t)^{-1}  a_k^* \hat 1$ for all $t>0$.	This implies that $\Delta^{it}a_{k}^*1_{\zeta}=a_k^*1_{\zeta}$, which implies that $\sigma^{\zeta}_t(a_k^*)=a_k^*$ and hence $\sigma^{\zeta}_t(m)=m$ for all $m \in M$, as $\varphi$ is generating. Hence there exists a $\zeta$ preserving conditional expectation from $\mathcal A$ to $M$, which is normal, as $\zeta$ is normal.
\end{proof}
\begin{cor} \label{cor:zero entropy1}
	$\Har(\mathcal{B}(L^2(M, \tau)),\mathcal P_{\varphi})=M $ if and only if  $h_{\varphi}(M \subset \mathcal{B}_{\varphi}, \zeta)=0 $, where $\mathcal{B}_{\varphi}$ denotes the Poisson boundary with respect to $\varphi$.
\end{cor}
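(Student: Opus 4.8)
\emph{Proof proposal.} The statement is exactly the special case $\mathcal A = \mathcal B_\varphi$, with $\zeta$ the stationary state, of the circle of ideas built up in Lemma~\ref{lem;entropyeq} and Theorem~\ref{thm:boundaryrigidity}; the plan is to assemble those two results. Throughout I assume the standing hypotheses that make the objects meaningful, namely that $\varphi$ is a normal, regular, strongly generating hyperstate, so that $\zeta = \varphi \circ \mathcal P$ is a normal faithful state on $\mathcal B_\varphi$ with $\zeta_{|M} = \tau$ by Proposition~\ref{prop:faithfulstate}, and so that $\varphi$ is in particular generating.

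For the forward implication, suppose $\Har(\mathcal P_\varphi) = M$. Then the Poisson transform $\mathcal P : \mathcal B_\varphi \to \Har(\mathcal P_\varphi) = M$ is a completely positive isometric surjection restricting to the identity on $M$, and by Choi's theorem (as already noted in the discussion following the definition of $\mathcal B_\varphi$) it is a $*$-isomorphism; hence $\mathcal B_\varphi = M$. In that case the identity map $\mathrm{id}_M$ is a normal, $\zeta$-preserving conditional expectation of $\mathcal B_\varphi$ onto $M$, so Lemma~\ref{lem;entropyeq} gives $h_\varphi(M \subset \mathcal B_\varphi, \zeta) = 0$. (Equivalently, one can observe directly that $\zeta = \tau$, so its modular operator $\Delta_\zeta$ is trivial and $h_\varphi = -\int \log \lambda \, d\varphi(eE(\lambda)e) = 0$.)

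For the converse, suppose $h_\varphi(M \subset \mathcal B_\varphi, \zeta) = 0$. By Lemma~\ref{lem;entropyeq} there is a normal, $\zeta$-preserving conditional expectation $E : \mathcal B_\varphi \to M$. Since $M$ lies in the multiplicative domain of $E$ and $E$ fixes $M$ pointwise, $E$ is a normal u.c.p.\ map with $E_{|M} = \mathrm{id}$, so Theorem~\ref{thm:boundaryrigidity} applied with $\mathcal C = \mathcal B_\varphi$ forces $E = \mathrm{id}$. Therefore $\mathcal B_\varphi = M$, i.e.\ $\Har(\mathcal B(L^2(M,\tau)), \mathcal P_\varphi) = M$.

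There is no genuinely hard step here: the entire content is carried by Lemma~\ref{lem;entropyeq} (zero entropy $\Leftrightarrow$ existence of a $\zeta$-preserving conditional expectation) and by Theorem~\ref{thm:boundaryrigidity} (rigidity of $M$ inside $\mathcal B_\varphi$). The only point requiring care is bookkeeping of hypotheses — that $\varphi$ being normal, regular and strongly generating is precisely what guarantees $\zeta$ is faithful (needed for the entropy to behave well and for Lemma~\ref{lem;entropyeq}), that $\varphi$ is generating (used in the proof of Lemma~\ref{lem;entropyeq}), and that Theorem~\ref{thm:boundaryrigidity} is available.
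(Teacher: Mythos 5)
Your proposal is correct and follows essentially the same route as the paper: the converse direction is verbatim the paper's argument (Lemma~\ref{lem;entropyeq} produces a normal $\zeta$-preserving conditional expectation, which Theorem~\ref{thm:boundaryrigidity} forces to be the identity), and your parenthetical remark for the forward direction --- that $\Har(\mathcal P_\varphi)=M$ gives $\zeta=\tau$, hence $\Delta_\zeta = I$ and the entropy vanishes --- is exactly what the paper does. Your bookkeeping of the standing hypotheses on $\varphi$ is a welcome addition but does not change the argument.
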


\begin{proof}
	If $h_{\varphi}(M \subset \mathcal{B_{\varphi}}, \zeta)=0 $, then by Lemma \ref{lem;entropyeq} there exists a normal conditional expectation $\mathcal{E}: \mathcal{B_{\varphi}} \rightarrow M$. By Theorem~\ref{thm:boundaryrigidity}, $\mathcal E= \rm {id}$, which implies that $\mathcal{B_{\varphi}}=M$, and hence 
	$$\Har(\mathcal P_{\varphi})= \mathcal{P}(\mathcal{B}_{\varphi})=\mathcal{P}(M)=M.$$
	Conversely, if $\Har(\mathcal{B}(L^2M, \tau),\mathcal P_{\varphi})=M$, then $\Delta_{\zeta}=I$ and hence $h_{\varphi}(M \subset \mathcal B_{\varphi}, \zeta)=0$ 
\end{proof}

\begin{cor} \label{cor:zero entropy2}
	$\Har(\mathcal{B}(L^2(M, \tau)),\mathcal P_{\varphi})=M $ if $h(\varphi)=0$.
\end{cor}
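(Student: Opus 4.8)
The plan is to chain together the comparison inequalities for the Furstenberg-type entropy that were established above, using the stationary state on the Poisson boundary as the reference state. Concretely, I would take $\mathcal A = \mathcal B_\varphi$ and let $\zeta = \varphi \circ \mathcal P$ be the stationary state; by Proposition~\ref{prop:faithfulstate} this is a normal faithful hyperstate on $\mathcal B_\varphi$ with $\zeta_{|M} = \tau$, and $\zeta$ is $\varphi$-stationary since $\varphi * \zeta = \zeta$. Thus the hypotheses needed to invoke the entropy estimates for the inclusion $M \subset \mathcal B_\varphi$ are in place.

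The first key step is to apply Corollary~\ref{cor;enineq2}, which gives
\[
h_\varphi(M \subset \mathcal B_\varphi, \zeta) \leq h(\varphi) = 0.
\]
The second key step is Lemma~\ref{lem;nonneg}, which asserts that $h_\varphi(M \subset \mathcal B_\varphi, \zeta) \geq 0$. Combining these two bounds forces $h_\varphi(M \subset \mathcal B_\varphi, \zeta) = 0$.

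The final step is to feed this vanishing into Corollary~\ref{cor:zero entropy1}, which states that $\Har(\mathcal B(L^2(M,\tau)), \mathcal P_\varphi) = M$ if and only if $h_\varphi(M \subset \mathcal B_\varphi, \zeta) = 0$. Hence $\Har(\mathcal B(L^2(M,\tau)), \mathcal P_\varphi) = M$, as desired.

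I do not expect any genuine obstacle here: the argument is a direct concatenation of the three cited results, and the only point requiring a moment's care is checking that the stationary state $\zeta$ on $\mathcal B_\varphi$ is faithful and $\varphi$-stationary so that Corollary~\ref{cor;enineq2} and Corollary~\ref{cor:zero entropy1} genuinely apply — and this is exactly the content of Proposition~\ref{prop:faithfulstate} together with the identity $\mathcal P_\eta = \mathcal P$ noted for the stationary state.
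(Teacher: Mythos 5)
Your argument is correct and is essentially identical to the paper's own proof: the paper also sandwiches $0 \leq h_{\varphi}(M \subset \mathcal{B}_{\varphi}, \zeta) \leq h(\varphi) = 0$ using Lemma~\ref{lem;nonneg} and Corollary~\ref{cor;enineq2}, and then invokes Corollary~\ref{cor:zero entropy1}. You have merely spelled out the verification that the stationary state satisfies the required hypotheses, which the paper leaves implicit.
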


\begin{proof}
	Since $0 \leq h_{\varphi}(M \subset \mathcal{B}_{\varphi}, \zeta) \leq h(\varphi)$, this result follows from Corollary \ref{cor:zero entropy1}.
\end{proof}
%%%%%%%%%%%%%%%%%%%%%%%%%%%%%%%%%%%%%%%%%%%%%%%

\section{An entropy gap for property (T) factors}

If $(M, \tau)$ is a tracial von Neumann algebra, then a Hilbert $M$-bimodule consists of a Hilbert space $\mathcal H$, together with commuting normal representations $L: M \to \mathcal B(\mathcal H)$, $R: M^{\rm op} \to \mathcal B(\mathcal H)$. We will sometimes simplify notation by writing $x \xi y$ for the vector $L(x)R(y^{\rm op})\xi$. A vector $\xi \in \mathcal H$ is left (resp.\ right) tracial if $\langle x \xi, \xi \rangle = \tau(x)$ (resp.\ $\langle \xi x, \xi \rangle = \tau(x)$) for all $x \in M$. A vector is bi-tracial if it is both left and right tracial. 
A vector $\xi \in \mathcal H$ is central if $x \xi = \xi x$ for all $x \in M$. Note that if $\xi$ is a unit central vector, then $x \mapsto \langle x \xi, \xi \rangle$ gives a normal trace on $M$.

The von Neumann algebra $M$ has property (T) if for any sequence of Hilbert bimodules $\HH_n$, and $\xi_n \in \HH_n$ bi-tracial vectors, such that $\| x \xi_n - \xi_n x \| \to 0$ for all $x \in M$, then we have $\| \xi_n - P_0(\xi_n) \| \to 0$, where $P_0$ is the projection onto the space of central vectors. This is independent of the normal faithful trace $\tau$ \cite[Proposition 4.1]{popabettinumbers}. Property (T) was first introduced in the factor case by Connes and Jones \cite{connesjonesT} where they showed that for an ICC group $\Gamma$, the group von Neumann algebra $L\Gamma$ has property (T) if and only if $\Gamma$ has Kazhdan's property (T) \cite{kazhdanT}. Their proof works equally well in the general case when $\Gamma$ is not necessarily ICC.

We now suppose that $M$ is finitely generated as a von Neumann algebra. Take $\{ a_k \}_{k=1}^n \subset M$ a finite generating set such that $\sum_{k = 1}^n a_k^* a_k = \sum_{k = 1}^n a_ka_k^* = 1$, and let $\mathcal B(L^2(M, \tau)) \ni T \mapsto \varphi(T) = \sum_{k = 1}^n  \langle T \widehat{a_k^*}, \widehat{a_k^*} \rangle$ denote the associated normal regular hyperstate. For a fixed Hilbert bimodule $\mathcal H$ we define $\nabla_L, \nabla_R: \mathcal H \rightarrow \mathcal H^{\oplus n}$ by 
\begin{align*}
\nabla_L(\xi)=\oplus a_k\xi
\end{align*}
\begin{align*}
\nabla_R(\xi)=\oplus \xi a_k.
\end{align*}
Note that we have 
\[
\| \nabla_L(\xi) \|^2 
= \sum_{k = 1}^n \| a_k \xi \|^2 
= \left\langle \sum_{k = 1}^n a_k^*a_k \xi, \xi \right\rangle 
= \| \xi \|^2,
\] 
and we similarly have
\[
\| \nabla_R(\xi) \|^2
= \left\langle \sum_{k = 1}^n \xi a_k a_k^*, \xi \right\rangle 
= \| \xi \|^2.
\]
Thus $\nabla_L$ and $\nabla_R$ are both isometries. We let $T$ denote the operator given by $T \xi = \sum_{k = 1}^n a_k^* \xi a_k$. Note that $T= \nabla_L^* \nabla_R$ and hence $T$ is a contraction.

Suppose now that $M \subset \mathcal A$ is an inclusion of von Neumann algebras and $\zeta \in \mathcal A_*$ is a faithful normal hyperstate. We may then consider the Hilbert space $L^2(\mathcal A, \zeta)$ which is naturally a Hilbert $M$-bimodule where the left action is given by left multiplication $L(x) \hat{a} = \widehat{xa}$, and the right action is given by $R(x^{op}) = JL(x^*)J$. In this case the vector $\hat{1}$ is clearly left tracial, and we also have $J x^* J \hat{1} = \Delta^{1/2} x \hat{1}$ from which it follows that $\hat{1}$ is also right tracial. If $\xi_0 \in L^2(\mathcal A, \zeta)$ is a unit $M$-central vector, then $\tau_0(x) = \langle x \xi_0, \xi_0 \rangle$ defines a normal trace on $M$. We let $s \in \mathcal Z(M)$ denote the support of $\tau_0$.

\begin{lem}\label{lem:entropyineq}
Let $(M, \tau)$, $\varphi$, and $(\mathcal A, \zeta)$ be as given above, then 
\[
h_{\varphi}(M \subset \mathcal{A},\zeta) \geq -2\log \langle T 1_{\zeta}, 1_{\zeta} \rangle.
\]
\end{lem}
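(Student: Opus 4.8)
The plan is to package the relevant quantities into a single function of one real variable and exploit its log‑convexity. Write $\Delta = \Delta_\zeta$, let $J$ be the modular conjugation, and for each $k$ let $\mu_k$ be the spectral measure of $\Delta$ associated with the vector $a_k^*1_\zeta$. Since $\mathcal A 1_\zeta$ is a core for $S = J\Delta^{1/2}$, each $a_k^*1_\zeta$ lies in $D(\Delta^{1/2})$, so $\int_0^\infty\lambda^t\,d\mu_k(\lambda)<\infty$ for all $t\in[0,1]$ (because $\lambda^t\le 1+\lambda$ there). Define
\[
F(t) \;=\; \sum_{k=1}^n \int_0^\infty \lambda^t \, d\mu_k(\lambda) \;=\; \sum_{k=1}^n \langle \Delta^t a_k^* 1_\zeta, a_k^* 1_\zeta\rangle,\qquad t\in[0,1].
\]
Since $e$ fixes each $a_k^*1_\zeta\in L^2(M,\tau)$ and $\varphi(S)=\sum_k\langle S\widehat{a_k^*},\widehat{a_k^*}\rangle$, unwinding the definition gives $d\varphi(eE(\lambda)e)=\sum_k d\mu_k(\lambda)$, hence $h_\varphi(M\subset\mathcal A,\zeta)=-\sum_k\int\log\lambda\,d\mu_k$. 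If some term is $-\infty$ then $h_\varphi=+\infty$ (the positive part of each $\int\log\lambda\,d\mu_k$ is dominated by $\int\lambda\,d\mu_k=\|\Delta^{1/2}a_k^*1_\zeta\|^2<\infty$) and the inequality is trivial, so I assume $h_\varphi<\infty$. Then $\frac{F(t)-F(0)}{t}=\sum_k\int\frac{\lambda^t-1}{t}\,d\mu_k$ decreases as $t\downarrow 0$ (since $s\mapsto\frac{\lambda^s-1}{s}$ is increasing), so by monotone convergence $F$ has a right derivative at $0$ with $F'(0^+)=\sum_k\int\log\lambda\,d\mu_k=-h_\varphi$ (this is the computation behind Lemma~\ref{lem:Petz2} and Corollary~\ref{cor;eqdef1}).

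Next I would compute three values of $F$ via Tomita--Takesaki theory. Because $S$ is the closure of $x1_\zeta\mapsto x^*1_\zeta$, we have $S(a_k^*1_\zeta)=a_k1_\zeta$, hence $\Delta^{1/2}(a_k^*1_\zeta)=J(a_k1_\zeta)$ and so $\|\Delta^{1/2}a_k^*1_\zeta\|^2=\|a_k1_\zeta\|^2=\tau(a_k^*a_k)$; summing, $F(1)=\sum_k\tau(a_k^*a_k)=\tau(1)=1$. Likewise $F(0)=\sum_k\|a_k^*1_\zeta\|^2=\sum_k\tau(a_ka_k^*)=1$. For $F(1/2)$, combine $\Delta^{1/2}(a_k^*1_\zeta)=J(a_k1_\zeta)$ with the antiunitarity of $J$ and the identity $Ja_k^*1_\zeta=Ja_k^*J1_\zeta=1_\zeta a_k$ (which also appears in the discussion preceding this lemma) to obtain
\[
\langle\Delta^{1/2}a_k^*1_\zeta,\, a_k^*1_\zeta\rangle \;=\; \langle J(a_k1_\zeta),\, a_k^*1_\zeta\rangle \;=\; \langle J(a_k^*1_\zeta),\, a_k1_\zeta\rangle \;=\; \langle 1_\zeta a_k,\, a_k1_\zeta\rangle .
\]
Since $T1_\zeta=\sum_k a_k^*(1_\zeta a_k)$, summing over $k$ yields $F(1/2)=\sum_k\langle 1_\zeta a_k,a_k1_\zeta\rangle=\langle T1_\zeta,1_\zeta\rangle$; in particular $\langle T1_\zeta,1_\zeta\rangle>0$ because $1_\zeta$ is separating and not all $a_k$ vanish.

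Finally I would invoke log‑convexity. For each $k$, $t\mapsto\int\lambda^t\,d\mu_k$ is continuous on $[0,1]$ (dominated convergence) and midpoint log‑convex by Cauchy--Schwarz, since $\int\lambda^{(s+t)/2}d\mu_k=\int\lambda^{s/2}\lambda^{t/2}d\mu_k\le(\int\lambda^s d\mu_k)^{1/2}(\int\lambda^t d\mu_k)^{1/2}$, hence log‑convex on $[0,1]$; and a finite sum of log‑convex functions is again log‑convex (one more application of Cauchy--Schwarz), so $F$ is log‑convex on $[0,1]$. Thus $\log F$ is convex, and its right derivative at $0$ is at most the slope of the chord to $t=1/2$:
\[
-h_\varphi \;=\; F'(0^+) \;=\; (\log F)'(0^+) \;\le\; \frac{\log F(1/2)-\log F(0)}{1/2} \;=\; 2\log\langle T1_\zeta,1_\zeta\rangle,
\]
using $F(0)=1$. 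Rearranging gives $h_\varphi(M\subset\mathcal A,\zeta)\ge -2\log\langle T1_\zeta,1_\zeta\rangle$.

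I expect the main subtlety to be the Tomita--Takesaki bookkeeping needed to make the identification $F(1/2)=\langle T1_\zeta,1_\zeta\rangle$ rigorous — keeping straight the antilinearity of $J$, the identities $S(a_k^*1_\zeta)=a_k1_\zeta$ and $Ja_k^*J1_\zeta=1_\zeta a_k$, and the relevant domains — together with the (routine but error‑prone) passage from the defining spectral integral for $h_\varphi$ to the assertion $F'(0^+)=-h_\varphi$; once these are in place the log‑convexity step is entirely standard.
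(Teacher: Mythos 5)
Your argument is correct, and it takes a genuinely different route from the paper's. The paper normalizes $a_k=\mu_k^{1/2}b_k$ with $\mu_k=\tau(a_k^*a_k)$ and then applies Jensen twice: once the scalar Jensen inequality for the concave function $\log$ against the probability weights $\mu_k$, and once the spectral (operator) Jensen inequality $\log\langle \Delta_m^{1/2}b_k\hat 1,b_k\hat 1\rangle\geq\langle\log(\Delta_m^{1/2})b_k\hat 1,b_k\hat 1\rangle$ for each unit vector $b_k\hat 1$, working throughout with the truncations $\Delta_m$ and passing to the limit. You instead package everything into the single function $F(t)=\sum_k\langle\Delta^t a_k^*1_\zeta,a_k^*1_\zeta\rangle$ and run the standard interpolation argument (the same mechanism behind ``relative entropy dominates $-2\log$ of fidelity''): $F(0)=1$ from $\sum_k a_ka_k^*=1$, $F(1/2)=\langle T1_\zeta,1_\zeta\rangle$ from the Tomita--Takesaki identities, $F'(0^+)=-h_\varphi$ which is exactly the paper's Corollary~\ref{cor;eqdef1}, and then the chord--derivative inequality for the convex function $\log F$. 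Your identifications of $F(0)$, $F(1)$, $F(1/2)$ and $F'(0^+)$ all check out (in particular $\Delta^{1/2}a_k^*1_\zeta=J(a_k1_\zeta)$ and $Ja_k^*J1_\zeta=1_\zeta a_k$ are used correctly, and the separate treatment of the case $h_\varphi=+\infty$ together with the observation $\langle T1_\zeta,1_\zeta\rangle>0$ closes the degenerate cases). What your version buys is that it avoids both the truncations $\Delta_m$ and the operator Jensen inequality, replacing them with two applications of Cauchy--Schwarz and one-variable convexity, and it makes visible exactly where the two normalizations $\sum_k a_ka_k^*=1$ and $\sum_k a_k^*a_k=1$ enter (namely $F(0)=1$ and $a_k^*1_\zeta\in D(\Delta^{1/2})$ with $F(1)=1$); the paper's version is shorter on the page given the lemmas it has already set up. One cosmetic remark: the value $F(1)=1$ is computed but never used beyond guaranteeing finiteness of $F$ on $[0,1]$ --- only $F(0)$, $F(1/2)$ and $F'(0^+)$ enter the final inequality --- so you could trim that step or note explicitly that only $F(1)<\infty$ is needed.
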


\begin{proof}
Let $\Delta = \int_{0}^{\infty} \lambda d\lambda$ be the spectral resolution of the modular operator and let $\Delta_m = \int_{1/m}^{m} \lambda d\lambda$, $m \geq 1$  be the truncations. Let $\mu_k=\tau(a_k^*a_k)$ and $b_k=\mu_k^{-1/2}a_k,$ for $k=1,2,\cdots, n.$ Note that $\sum_{k=1}^n \mu_k=1$.	Also note that $L_{a_k^*}R_{a_k}1_{\zeta}=a_k^*\Delta^{1/2}a_k1_{\zeta}$.
	Now,
\begin{align*}
-2\log \langle T1_{\zeta},1_{\zeta} \rangle 
&= -2\log (\sum_{k = 1}^n  \langle a_k^* \Delta^{1/2} a_k \hat{1}, \hat{1} \rangle) =-2\lim\limits_{m \rightarrow \infty}\log (\sum_{k = 1}^n \mu_k \langle b_k^* \Delta_m^{1/2} b_k \hat{1}, \hat{1} \rangle)\\
& \leq -2 \lim\limits_{m \rightarrow \infty}\sum_{k = 1}^n  \mu_k \log(\langle b_k^* \Delta_m^{1/2} b_k \hat{1}, \hat{1} \rangle) \leq -2 \lim\limits_{m \rightarrow \infty}\sum_{k = 1}^n  \mu_k \langle b_k^* \log(\Delta_m^{1/2}) b_k \hat{1}, \hat{1} \rangle \\
&= - \lim\limits_{m \rightarrow \infty}\sum_{k = 1}^n  \langle a_k^* \log(\Delta_m) a_k \hat{1}, \hat{1} \rangle 
= h_{\varphi}(M \subset \mathcal{A},\zeta),
\end{align*}
	where the second inequality follows from Jensen's operator inequality.
\end{proof}

\begin{thm}\label{thm:Tentropygap}
Let $M$ be a II$_1$ factor generated as a von Neumann algebra by $\{ a_k \}_{k = 1}^n$ such that $\sum_{k = 1}^n a_k^* a_k = \sum_{k = 1}^n a_k a_k^* = 1$. Let 
\[
\mathcal B(L^2(M, \tau)) \ni T \mapsto \varphi(T) = \sum_{k = 1}^n  \langle T \widehat{a_k^*}, \widehat{a_k^*} \rangle
\] 
denote the associated normal regular hyperstate. If $M$ has property (T), then there exists $c > 0$ such that if $M \subset \mathcal A$ is any irreducible inclusion having no normal conditional expectation from $\mathcal A$ to $M$, and if $\zeta \in \mathcal A_*$ is any faithful normal hyperstate, then $h_\varphi(M \subset \mathcal A, \zeta) \geq c$.
\end{thm}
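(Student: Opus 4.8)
The plan is to argue by contradiction and to reduce the theorem to the statement that a nonzero $M$-central vector in a suitable standard form produces a normal conditional expectation onto $M$. So suppose no such $c$ exists. Then there are irreducible inclusions $M\subset\mathcal A_m$, none of which admits a normal conditional expectation onto $M$, and faithful normal hyperstates $\zeta_m\in(\mathcal A_m)_*$, with $h_\varphi(M\subset\mathcal A_m,\zeta_m)\to0$. View $\mathcal H_m:=L^2(\mathcal A_m,\zeta_m)$ as the Hilbert $M$-bimodule described above, with $\xi_m:=1_{\zeta_m}$ bi-tracial, and let $T_m\xi=\sum_k a_k^*\xi a_k=\nabla_L^*\nabla_R\xi$ on $\mathcal H_m$. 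Since $\xi_m a_k=J_{\zeta_m}a_k^*\xi_m=\Delta_{\zeta_m}^{1/2}a_k\xi_m$, we have $\langle T_m\xi_m,\xi_m\rangle=\sum_k\langle\Delta_{\zeta_m}^{1/2}a_k\xi_m,a_k\xi_m\rangle\in[0,1]$, so Lemma~\ref{lem:entropyineq} gives
\[
0\le -2\log\langle T_m\xi_m,\xi_m\rangle\le h_\varphi(M\subset\mathcal A_m,\zeta_m)\longrightarrow0 ,
\]
whence $\langle T_m\xi_m,\xi_m\rangle\to1$; as $\nabla_L,\nabla_R$ are isometries this yields $\sum_k\|a_k\xi_m-\xi_m a_k\|^2=2(1-\langle T_m\xi_m,\xi_m\rangle)\to0$. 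First I would record that $\|a_k\xi_m-\xi_m a_k\|\to0$ for each $k$, that applying the anti-unitary $J_{\zeta_m}$ (which fixes $\xi_m$) gives $\|a_k^*\xi_m-\xi_m a_k^*\|\to0$ as well, and that, by multiplicativity of the commutation defect together with a $\|\cdot\|_2$-approximation which is \emph{uniform in $m$} — using $\|(x_j-x)\xi_m\|=\|x_j-x\|_2=\|\xi_m(x_j-x)\|$ for $x_j,x\in M$, valid since $\tau$ and $\xi_m$ are tracial — one upgrades this to $\|x\xi_m-\xi_m x\|\to0$ for \emph{every} $x\in M$. This uniformity is exactly what lets property (T) be invoked.

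Applying property (T) to the bimodules $\mathcal H_m$ and the bi-tracial vectors $\xi_m$ now gives $\|\xi_m-\eta_m\|\to0$, where $\eta_m$ is the orthogonal projection of $\xi_m$ onto the space of $M$-central vectors of $\mathcal H_m$. Hence $\|\eta_m\|\to1$, so for all large $m$ the vector $\eta_m$ is a \emph{nonzero} $M$-central vector in $L^2(\mathcal A_m,\zeta_m)$. Fix such an $m$, write $(\mathcal A,\zeta)=(\mathcal A_m,\zeta_m)$, and rescale to a unit $M$-central vector $\eta$.

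The hard part is the remaining step: manufacturing a normal conditional expectation $\mathcal A\to M$ out of $\eta$, which contradicts the choice of $\mathcal A$. This is the operator-algebraic incarnation of the place in Nevo's argument where ergodicity promotes an invariant $L^2$-vector to an invariant measure. Since $M$ is a factor and $\eta$ is $M$-central and unit, $x\mapsto\langle x\eta,\eta\rangle$ is a normal trace on $M$, hence equals $\tau$; therefore $V\colon L^2(M,\tau)\to L^2(\mathcal A,\zeta)$, $V\hat x=x\eta=\eta x$, is a well-defined $M$-bimodular isometry. I would set $\Phi(a)=V^*aV$ for $a\in\mathcal A$. Then $\Phi$ is normal, unital, completely positive, and $\Phi|_M=\mathrm{id}$ because $xV=Vx$; in particular $M$ lies in the multiplicative domain of $\Phi$. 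Finally $\Phi(\mathcal A)\subseteq M$: bimodularity of $V$ intertwines the right $M$-actions, and since the right $M$-action on $L^2(\mathcal A,\zeta)$ is implemented by $J_\zeta M J_\zeta\subseteq J_\zeta\mathcal A J_\zeta=\mathcal A'$, which commutes with all of $\mathcal A$, the computation
\[
\Phi(a)\,\rho(y)=V^*a\,R(y^{\rm op})\,V=V^*R(y^{\rm op})\,a\,V=\rho(y)\,\Phi(a)
\]
(where $\rho$ denotes right multiplication on $L^2(M,\tau)$ and $R(y^{\rm op})=J_\zeta y^* J_\zeta$) shows $\Phi(a)$ commutes with $\rho(M)$, hence $\Phi(a)\in\rho(M)'=M$. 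Thus $\Phi$ is a normal conditional expectation from $\mathcal A$ onto $M$, the desired contradiction. I expect the construction of $\Phi$ and the verification that its range lies in $M$ to be the only genuinely delicate point; the irreducibility hypothesis enters only indirectly (it forces $\mathcal A_m$ to be a factor), while the contradiction itself rests on the assumed absence of a conditional expectation. Once this is in place, unwinding the first paragraph is routine, so long as the bootstrap to all of $M$ is made uniform in $m$.
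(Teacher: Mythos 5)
Your proposal is correct and follows essentially the same route as the paper's proof: contradiction, Lemma~\ref{lem:entropyineq} to get $\langle T 1_{\zeta_m}, 1_{\zeta_m}\rangle \to 1$ and hence almost-centrality of $1_{\zeta_m}$, property (T) to produce a unit $M$-central vector, and then a normal conditional expectation from the resulting hypertrace. The only difference is that you spell out two steps the paper leaves implicit — the uniform-in-$m$ upgrade from the generators $a_k$ to all of $M$, and the explicit construction $\Phi(a)=V^*aV$ turning the central vector into a normal conditional expectation — both of which are carried out correctly.
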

\begin{proof}
Suppose $M$ has property (T) and there is a sequence of irreducible inclusions $M \subset \mathcal A_m$, and normal faithful hyperstates $\zeta_m \in \mathcal A_m$, such that $h_\varphi(M \subset \mathcal A_m, \zeta_m) \to 0$. Then by Lemma~\ref{lem:entropyineq}
we have that $\langle T 1_{\zeta_m}, 1_{\zeta_m} \rangle \to 1$, and hence $\sum_{k = 1}^n \| a_k 1_{\zeta_m} - 1_{\zeta_m} a_k \|_2^2 = 2 - 2 \langle T 1_{\zeta_m}, 1_{\zeta_m} \rangle \to 0$. Since $M$ has property (T) it then follows that for $m$ large enough there exists a unit $M$-central vector $\xi \in L^2(\mathcal A_m, \zeta_m)$. If we let $\tilde \zeta$ denote the state on $\mathcal A_m$ given by $\tilde \zeta(a) = \langle a \xi, \xi \rangle$, then as $\xi$ is $M$-central we have that $\tilde \zeta$ gives an $M$-hypertrace on $\mathcal A_m$. Thus, there exists a corresponding normal conditional expectation form $\mathcal A_m$ to $M$, for all $m$ large enough. 
\end{proof}

%%%%%%%%%%%%%%%%%%%%%%%%%%%%%%%%%%%%%%%%%%%%%%%%%%

\section*{Acknowledgments} SD is immensely grateful to Darren Creutz for explaining the theory of Poisson boundaries of groups to him, and for many useful remarks and stimulating conversations about earlier drafts of this paper. SD would like to gratefully acknowledge many helpful conversations with Vaughan Jones, Ionut Chifan, Palle Jorgensen, and Paul Muhly regarding this paper. SD would also like to thank Ben Hayes and Krishnendu Khan for various discussions in and around the contents of this paper.  The authors would like to thank Sorin Popa for useful comments regarding this paper. The authors would like to thank the anonymous referee for numerous valuable comments and suggestions that greatly improved the exposition of the paper. 

%%%%%%%%%%%%%%%%%%%%%%%%%%%%%%%%%%%%%%%%%%%%%%%%%%%%%%%%%%%%%%%%%%%%%%%%%%%%%%%%%%%%%%%%%%%%%%%%
\appendix
\section{Minimal dilations and boundaries of u.c.p.\ maps}\label{sec:appendix}
%\section{Appendix: Minimal dilations and boundaries of u.c.p.\ maps}\label{sec:appendix}

We include in this appendix a proof of Izumi's result from \cite{izumi2} that for a von Neumann algebra (or even an arbitrary $C^*$-algebra) $A$, and a u.c.p.\ map $\phi: A \to A$, the operator space ${\rm Har}(A, \phi)$ has a $C^*$-algebraic structure. We take the approach in \cite{izumi3} where ${\rm Har}(A, \phi)$ is shown to be completely isometric to the $*$-algebra of fixed points associated to a $*$-endomorphism which dilates the u.c.p.\ map.  There are several proofs of the existence of such a dilation, the first proof is by Bhat in \cite{bhatdilation} in the setting of completely positive semigroups, building on work from \cite{bhatindex}, \cite{bhatparthasarathy1}, and \cite{bhatparthasarathy2}, and then later proofs were given in \cite{bhatskeide}, \cite{muhlysolel}, and Chapter 8 of \cite{arvesonsemigroups}. Our reason for including an additional proof is that it is perhaps more elementary than previous proofs, being based on a simple idea of iterating the Stinespring dilation \cite{stinespring}.

\begin{lem}\label{lem:alggen}
If $\HH$ and $\KK$ are Hilbert spaces, and $V: \HH \to \KK$ is a partial isometry, then for $A \subset \BB(\HH)$, $B \subset \BB(\KK)$, we have that $V^*$ $*$-${\rm alg}(VBV^*, A) V = *$-${\rm alg} (B, V^*AV)$.
\end{lem}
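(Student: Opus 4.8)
The plan is to reduce the statement to the elementary algebra of the partial isometry $V$. Write $p = V^*V$ and $q = VV^*$ for the initial and final projections, so that $Vp = qV = V$, $pV^* = V^*q = V^*$, $VV^*V = V$, and $V^*VV^* = V^*$. The one computation that drives everything is that conjugation by $V$ is multiplicative up to inserting the projection $VV^*$: for operators $x,y$ one has $(V^*xV)(V^*yV) = V^*\bigl(x\,(VV^*)\,y\bigr)V$, and symmetrically $(VxV^*)(VyV^*) = V\bigl(x\,(V^*V)\,y\bigr)V^*$. Because a factor of the form $VzV^*$ absorbs $VV^*$ on either side (as $qV = V$ and $V^*q = V^*$), these inserted projections disappear whenever they meet a factor coming from $VBV^*$. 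Conceptually, ${\rm Ad}(V^*)$ and ${\rm Ad}(V)$ restrict to mutually inverse $*$-isomorphisms between the two corners cut down by $p$ and by $q$, and this is the structural reason the two generated $*$-algebras correspond; the body of the proof just makes this explicit at the level of words.

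With this in hand I would prove the two inclusions separately. For ``$\supseteq$'': every generator of $*$-${\rm alg}(B,V^*AV)$ already lies on the left-hand side, since $V^*(VbV^*)V$ recovers $b\in B$ and $V^*aV$ is visibly of the right form; and $V^*\,*$-${\rm alg}(VBV^*,A)\,V$ is itself a $*$-algebra — closure under products is the multiplicativity identity above, the inserted projection $VV^* = V\cdot 1\cdot V^*$ lying in $VBV^*$ — so it contains the $*$-algebra those generators generate. For ``$\subseteq$'': I would first bring a generic element of $*$-${\rm alg}(VBV^*,A)$ into the normal form $a_0\,(Vb_1V^*)\,a_1\,(Vb_2V^*)\cdots(Vb_kV^*)\,a_k$, in which consecutive $A$-letters have been amalgamated into a single $a_i$ in the $*$-algebra generated by $A$ and consecutive $VBV^*$-letters amalgamated, via $(VbV^*)(Vb'V^*) = V\bigl(b\,(V^*V)\,b'\bigr)V^*$, into a single $Vb_iV^*$. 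Conjugating such a word by $V$ is then pure reassociation: $V^*\bigl(a_0(Vb_1V^*)a_1\cdots(Vb_kV^*)a_k\bigr)V = (V^*a_0V)\,b_1\,(V^*a_1V)\,b_2\cdots b_k\,(V^*a_kV)$, which is manifestly a word in $B$ and $V^*AV$, hence lies in $*$-${\rm alg}(B,V^*AV)$.

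The step I expect to take the most care is the normalisation used for ``$\subseteq$'': one has to verify that products of $VBV^*$-type blocks and products of $A$-type blocks really do collapse back to $VBV^*$-type and $A$-type pieces — this is where the projections $p, q$, and with them the ambient units, enter — and that the compression $V^*(\cdot)V$ does not discard components supported off the range of $V$. Both points are controlled by the observation of the first paragraph: ${\rm Ad}(V^*)$ is a genuine $*$-homomorphism on the corner in which $*$-${\rm alg}(VBV^*,A)$ sits, hence carries the $*$-algebra generated by a set onto the $*$-algebra generated by its image, and the images of $VBV^*$ and of $A$ under ${\rm Ad}(V^*)$ are exactly $B$ and $V^*AV$. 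Everything else is the routine partial-isometry bookkeeping collected at the outset.
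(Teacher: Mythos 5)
Your argument never invokes the one hypothesis the paper's proof is built on, namely $V^*V=1$, and several of your steps are false without it. The paper's one-line proof begins ``Using the fact that $V^*V=1$\,\dots'': the maps $V_n$ to which the lemma is applied in the dilation construction are isometries, so $p=V^*V$ is the identity while $q=VV^*$ is generically a proper projection. You instead keep both $p$ and $q$ as possibly proper projections and claim the statement anyway. But $V^*(VbV^*)V=(V^*V)b(V^*V)=pbp$, which does \emph{not} recover $b$; indeed the entire left-hand side is contained in the corner $p\,\BB(\HH)\,p$, so the inclusion $B\subset V^*\,*$-${\rm alg}(VBV^*,A)\,V$ already fails for a genuine partial isometry (take $b$ supported on $p^\perp$). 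Likewise your amalgamation $(VbV^*)(Vb'V^*)=V(bpb')V^*$ produces $bpb'$, which need not lie in $*$-${\rm alg}(B)$, and your claim that the inserted projection $VV^*=V\cdot 1\cdot V^*$ lies in $VBV^*$ silently assumes $1\in B$. All of these steps are repaired by, and only by, assuming $V^*V=1$, which is what the paper does.

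Second, even granting $V^*V=1$, the patch you offer for the remaining projection is incorrect. You assert that ${\rm Ad}(V^*)$ is a $*$-homomorphism ``on the corner in which $*$-${\rm alg}(VBV^*,A)$ sits.'' The relevant corner is $q\BB(\KK)q$ with $q=VV^*$, and ${\rm Ad}(V^*)$ is indeed multiplicative there; but $A$ is an arbitrary subset of $\BB(\KK)$ and need not lie in $q\BB(\KK)q$ (in the paper's application $A=\pi_n(A_{n-1})$ contains $1_{\KK}\neq q$), so the generated algebra does not sit in that corner and ${\rm Ad}(V^*)$ is not multiplicative on it. Concretely, $(V^*\alpha_1V)(V^*\alpha_2V)=V^*\alpha_1 q\,\alpha_2 V\neq V^*\alpha_1\alpha_2V$ for $\alpha_1,\alpha_2\in A$, so after you amalgamate consecutive $A$-letters into a single $a_i\in{*}$-${\rm alg}(A)$, the factor $V^*a_iV$ in your telescoped word is \emph{not} ``manifestly'' in $*$-${\rm alg}(B,V^*AV)$. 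This is not a removable technicality: for the isometric inclusion $V:\mathbb C^2\hookrightarrow\mathbb C^3$ with $B=\{1\}$ and $A=\{e_{13}+e_{31}\}$, the left side contains $V^*(e_{13}+e_{31})^2V=e_{11}$ while the right side is $\mathbb C 1$. The identity holds, and is used in the paper, when $A$ is itself closed under products (there $A$ is the image of a $*$-algebra under a representation), so that $\alpha_1\alpha_2\in A$ and $V^*\alpha_1\alpha_2V\in V^*AV$. Once you make both $V^*V=1$ and this multiplicative closure explicit, the telescoping computation you describe is exactly the paper's induction on alternating words; without them, the proof has genuine holes at precisely the points where those hypotheses act.
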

\begin{proof}
Using the fact that $V^*V = 1$, this follows easily by induction on the length of alternating products for monomials in $VBV^*$, and $A$.
\end{proof}

If $A_0 \subset \BB(\HH_0)$ is a $C^*$-algebra, and $\phi: A_0 \to A_0$ is a unital completely positive map, then one can iterate Stinespring's dilation as follows:

\begin{lem} \label{lem:bhatdil}
Suppose $A_0 \subset \BB(\HH_0)$ is a unital $C^*$-algebra, and $\phi_0: A_0 \to A_0$ is a unital completely positive map. Then there exists a sequence whose entries consist of:
\begin{enumerate}[$($1$)$]
	
\itemsep-.05em 
\item a Hilbert space $\HH_n$;
\item an isometry $V_n: \HH_{n - 1} \to \HH_n$;
\item a unital $C^*$-algebra $A_n \subset \BB(\HH_n)$;
\item a unital representation $\pi_n: A_{n - 1} \to \BB(\HH_n)$, such that $\pi_n(A_{n - 1})$, and $V_n A_{n - 1} V_n^*$ generate $A_n$;
\item a unital completely positive map $\phi_n: A_n \to A_n$;
\end{enumerate}
such that the following relationships are satisfied for each $n \in \N$, $x \in A_{n - 1}$:
\begin{alignLetter} 
&  V_n^* \pi_n(x) V_n = \phi_{n - 1}(x); \label{item:stineA}  \\
&  V_n^* A_n V_n = A_{n - 1}; \label{item:stineB}  \\
&  \phi_n(\pi_n(x)) = \pi_n(\phi_{n - 1}(x)); \label{item:stineC} \\
&  \pi_{n + 1}(V_{n}xV_{n}^*) = V_{n+ 1} \pi_{n}(x) V_{n + 1}^*. \label{item:stineD} 
\end{alignLetter}

Moreover, for each $n \in \N$ we have that the central support of $V_nV_n^*$ in $A_n''$ is $1$. Also, if $A_0$ is a von Neumann algebra and $\phi_0$ is normal, then $A_n$ will also be a von Neumann algebra and $\pi_n$ and $\phi_n$ will be normal for each $n \in \N$.
\end{lem}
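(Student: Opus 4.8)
The plan is to construct the whole sequence by a single induction on $n$, feeding the minimal Stinespring dilation of the map produced at stage $n-1$ into stage $n$; the one genuine choice is to \emph{define} the stage-$n$ map by $\phi_n(T)=\pi_n(V_n^*\,T\,V_n)$. Assume inductively that $\HH_{n-1}$, $A_{n-1}\subset\BB(\HH_{n-1})$ and a u.c.p.\ map $\phi_{n-1}\colon A_{n-1}\to A_{n-1}$ are in hand (for $n=1$ these are the given $\HH_0$, $A_0$, $\phi_0$). I apply Stinespring's theorem to $\phi_{n-1}$ and pass to the \emph{minimal} dilation, obtaining a Hilbert space $\HH_n$, an isometry $V_n\colon\HH_{n-1}\to\HH_n$ and a unital $*$-representation $\pi_n\colon A_{n-1}\to\BB(\HH_n)$ with $V_n^*\pi_n(x)V_n=\phi_{n-1}(x)$ for all $x$ — this is $(\ref{item:stineA})$ — and with $\pi_n(A_{n-1})V_n\HH_{n-1}$ dense in $\HH_n$. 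Then I take $A_n\subset\BB(\HH_n)$ to be the $C^*$-algebra generated by $\pi_n(A_{n-1})$ and $V_nA_{n-1}V_n^*$, which settles $(3)$ and $(4)$; note that $V_nV_n^*=V_n1V_n^*$ already lies in $A_n$.

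For $(\ref{item:stineB})$ I invoke Lemma~\ref{lem:alggen} with $V=V_n$, $B=A_{n-1}$ and $A=\pi_n(A_{n-1})$: since $V_n^*V_n=1$ and $V_n^*\pi_n(A_{n-1})V_n=\phi_{n-1}(A_{n-1})\subseteq A_{n-1}$, compression by $V_n$ carries the $*$-algebra generated by $\pi_n(A_{n-1})$ and $V_nA_{n-1}V_n^*$ exactly onto the $*$-algebra generated by $A_{n-1}$, i.e.\ onto $A_{n-1}$; passing to norm closures (compression by $V_n$ is norm continuous) and observing that each $V_nxV_n^*\in A_n$ compresses back to $x$ gives $V_n^*A_nV_n=A_{n-1}$. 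Consequently $T\mapsto\pi_n(V_n^*TV_n)$ is a well-defined u.c.p.\ map from $A_n$ into $\pi_n(A_{n-1})\subseteq A_n$ — a compression composed with a unital $*$-homomorphism — so I put $\phi_n:=\pi_n(V_n^*(\,\cdot\,)V_n)$, which is $(5)$. Relation $(\ref{item:stineC})$ is then immediate from $(\ref{item:stineA})$, since $\phi_n(\pi_n(x))=\pi_n(V_n^*\pi_n(x)V_n)=\pi_n(\phi_{n-1}(x))$.

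To get $(\ref{item:stineD})$ I run the construction one step further to obtain $V_{n+1}$ and $\pi_{n+1}$, for which $V_{n+1}^*\pi_{n+1}(T)V_{n+1}=\phi_n(T)=\pi_n(V_n^*TV_n)$ and $\pi_{n+1}(A_n)V_{n+1}\HH_n$ is dense in $\HH_{n+1}$. The verification is a short matrix-coefficient computation: for $x\in A_{n-1}$, $a,b\in A_n$ and $\xi,\eta\in\HH_n$, using only that $\pi_n,\pi_{n+1}$ are $*$-homomorphisms and $V_n^*V_n=1$, both $\langle\pi_{n+1}(V_nxV_n^*)\pi_{n+1}(a)V_{n+1}\xi,\ \pi_{n+1}(b)V_{n+1}\eta\rangle$ and $\langle V_{n+1}\pi_n(x)V_{n+1}^*\pi_{n+1}(a)V_{n+1}\xi,\ \pi_{n+1}(b)V_{n+1}\eta\rangle$ reduce to $\langle\pi_n(V_n^*b^*V_n\,x\,V_n^*aV_n)\xi,\ \eta\rangle$, and since the vectors $\pi_{n+1}(a)V_{n+1}\xi$ are total in $\HH_{n+1}$ the two operators agree. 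The central-support claim is then cheap: $V_nV_n^*\HH_n=V_n\HH_{n-1}$, so $\overline{A_n''\,V_nV_n^*\HH_n}\supseteq\overline{\pi_n(A_{n-1})V_n\HH_{n-1}}=\HH_n$ by minimality, which is exactly the statement that the central support of $V_nV_n^*$ in $A_n''$ equals $1$. In the normal case I do the same but, at each stage, use that the minimal Stinespring dilation of a normal u.c.p.\ map is normal, replace ``the $C^*$-algebra generated'' by ``the von Neumann algebra generated'', note that $\phi_n=\pi_n(V_n^*(\,\cdot\,)V_n)$ is then normal, and recover $(\ref{item:stineB})$ from the same Lemma~\ref{lem:alggen} computation together with Kaplansky's density theorem — every $T\in A_n$ is an ultraweak limit of a bounded net in the generated $*$-algebra, whose image under the ultraweakly continuous map $V_n^*(\,\cdot\,)V_n$ lands in the ultraweakly closed algebra $A_{n-1}$.

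I expect the main obstacle to be organizational rather than conceptual: threading the indices through the simultaneous induction, and, above all, making the definition of $\phi_n$ so that it takes values in $A_n$ — which is precisely why $(\ref{item:stineB})$, hence Lemma~\ref{lem:alggen}, must be set up first — while still intertwining with $\pi_n$ as required by $(\ref{item:stineC})$. Once $\phi_n=\pi_n(V_n^*(\,\cdot\,)V_n)$ is fixed, $(\ref{item:stineD})$ is the only computation with any content, and even that is merely an unwinding of the two Stinespring identities tested against a total set of vectors.
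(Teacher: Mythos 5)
Your proposal is correct and follows essentially the same route as the paper's proof: iterate the (minimal) Stinespring dilation, let $A_n$ be generated by $\pi_n(A_{n-1})$ and $V_nA_{n-1}V_n^*$, obtain (\ref{item:stineB}) from Lemma~\ref{lem:alggen} together with $V_n^*\pi_n(A_{n-1})V_n=\phi_{n-1}(A_{n-1})\subseteq A_{n-1}$, define $\phi_n=\pi_n(V_n^*(\,\cdot\,)V_n)$, and verify (\ref{item:stineD}) by a matrix-coefficient computation against the total set $\pi_{n+1}(A_n)V_{n+1}\HH_n$. The only differences are cosmetic --- you invoke the abstract minimal dilation where the paper writes out the completion of $A_{n-1}\otimes\HH_{n-1}$, and you check (\ref{item:stineD}) directly on a total set rather than via the paper's intermediate facts $\pi_{n+1}(V_nV_n^*)\leq V_{n+1}V_{n+1}^*$ and $V_{n+1}\pi_n(x)=\pi_{n+1}(V_nxV_n^*)V_{n+1}$ --- and your Kaplansky-density remark in the von Neumann case is, if anything, slightly more careful than the paper's.
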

\begin{proof}
We will first construct the objects and show the relationships (\ref{item:stineA}), (\ref{item:stineB}), and (\ref{item:stineC}) by induction, with the base case being vacuous, and we will then show that (\ref{item:stineD}) also holds for all $n \in \N$. So suppose $n \in \N$ and that (\ref{item:stineA}), (\ref{item:stineB}), and (\ref{item:stineC}) hold for all $m < n$, (we leave $V_0$ undefined).

From the proof of Stinespring's Dilation Theorem we may construct a Hilbert space $\HH_n$ by separating and completing the vector space $A_{n - 1} \otimes \HH_{n - 1}$ with respect to the non-negative definite sesquilinear form satisfying
\[
\langle a \otimes \xi, b \otimes \eta \rangle = \langle \phi_{n -1} (b^* a) \xi, \eta \rangle,
\]
for all $a, b \in A_{n - 1}$, $\xi, \eta \in \HH_{n - 1}$.

We also obtain a partial isometry $V_n: \HH_{n - 1} \to \HH_n$ from the formula
\[
V_n(\xi) = 1 \otimes \xi,
\]
for $\xi \in \HH_{n - 1}$.

We obtain a representation $\pi_n: A_{n - 1} \to \BB(\HH_n)$ (which is normal when $A_0$ is a von Neumann algebra and $\phi_0$ is normal) from the formula
\[
\pi_n(x) (a \otimes \xi) = (xa) \otimes \xi,
\]
for $x, a \in A_{n - 1}$, $\xi \in \HH_{n - 1}$. And recall the fundamental relationship $V_n^* \pi_n(x) V_n = \phi_{n - 1}(x)$ for all $x \in A_{n - 1}$, which establishes (\ref{item:stineA}).

If we let $A_n$ be the $C^*$-algebra generated by $\pi_n(A_{n - 1})$ and $V_n A_{n - 1} V_n^*$, then $\pi_n : A_{n - 1} \to A_n$, and from Lemma~\ref{lem:alggen} we have that $V_n^* A_n V_n$ is generated by $V_n^* \pi_n(A_{n - 1}) V_n$ and $A_{n - 1}$. However, $V_n^* \pi_n (A_{n - 1}) V_n = \phi_{n - 1}(A_{n - 1}) \subset A_{n - 1}$, hence $V_n^* A_n V_n = A_{n - 1}$, establishing (\ref{item:stineB}). Also, when $A_0$ is a von Neumann algebra and $\pi_n$ is normal it then follows easily that $A_n$ is then also a von Neumann algebra.

Also note that $\pi_n(A_{n - 1}) V_nV_n^* \HH_n$ is dense in $\HH_n$, and so since $\pi_n(A_{n - 1}) \subset A_n$ we have that the central support of $V_nV_n^*$ in $A_n''$ is $1$.

We then define $\phi_n: A_n \to A_n$ by $\phi_n(x) = \pi_n(V_n^* x V_n)$, for $x \in A_n$. This is well defined since $V_n^* A_n V_n = A_{n - 1}$, unital, and completely positive. Note that for $x \in A_{n - 1}$ we have $\phi_n(\pi_n(x)) = \pi_n(V_n^* \pi_n(x) V_n) = \pi_n(\phi_{n - 1}(x))$, establishing (\ref{item:stineC}).

Having established (\ref{item:stineA}), (\ref{item:stineB}), and (\ref{item:stineC}) for all $n \in \N$, we now show that (\ref{item:stineD}) holds as well. 
For this, notice first that for $a, b \in A_{n}$, $x \in A_{n - 1}$, and $\xi, \eta \in \HH_n$ we have 
\begin{align}
\langle \pi_{n + 1}(V_n x V_n^* ) (a \otimes \xi), b \otimes \eta \rangle 
& = \langle V_n x V_n^*a \otimes \xi, b \otimes \eta \rangle \nonumber \\
& = \langle \phi_n(b^* V_n x V_n^*a) \xi, \eta \rangle \nonumber \\
& = \langle \pi_n(V_n^* b^* V_n x V_n^* a V_n) \xi, \eta \rangle \nonumber \\
& = \langle 1 \otimes \pi_n(x V_n^* a V_n) \xi, b \otimes \eta \rangle. \nonumber
\end{align}

Setting $x = 1$ and using that $V_{n + 1}^*( 1 \otimes \zeta) = \zeta$ for each $\zeta \in \HH_n$, we see that 
\begin{align}
(V_{n+1}V_{n+1}^*)\pi_{n + 1}(V_nV_n^*) ( a \otimes \xi) 
&= (V_{n+1}V_{n+1}^*)( 1 \otimes \pi_n(V_n^*aV_n)\xi ) \nonumber \\
&= 1 \otimes \pi_n(V_n^*aV_n)\xi \nonumber \\
&= \pi_{n + 1}(V_nV_n^*) ( a \otimes \xi), \nonumber
\end{align}
and hence $\pi_{n + 1}(V_nV_n^*) \leq V_{n + 1} V_{n + 1}^*$. If instead we set $a = 1$, then we have
\begin{align}
V_{n + 1} \pi_n(x) \xi = 1 \otimes \pi_n(x) \xi = \pi_{n + 1}(V_n x V_n^* ) V_{n + 1}  \xi, \nonumber
\end{align}
and so $V_{n + 1} \pi_n(x) = \pi_{n + 1}(V_n x V_n^* ) V_{n + 1}$. Multiplying on the right by $V_{n + 1}^*$ and using that $\pi_n(V_nV_n^*) \leq V_{n + 1} V_{n + 1}^*$ then gives $V_{n + 1} \pi_n(x) V_{n + 1}^* = \pi_{n + 1}(V_n x V_n^*)$.
\end{proof}

\begin{thm}[Bhat \cite{bhatdilation}]\index{Bhat's Dilation Theorem}
Let $A_0 \subset \BB(\HH_0)$ be a unital $C^*$-algebra, and $\phi_0: A_0 \to A_0$ a unital completely positive map. Then there exists 
\begin{enumerate}[$($1$)$]
\itemsep-.05em
\item a Hilbert space $\KK$;
\item an isometry $W: \HH_0 \to \KK$;
\item a $C^*$-algebra $B \subset \BB(\KK)$; 
\item a unital $*$-endomorphism $\alpha: B \to B$;
\end{enumerate}
such that $W^* B W = A_0$, and for all $x \in A_0$ we have
$$
\phi_0^k(x) = W^* \alpha^k(W x W^*) W.
$$

Moreover, we have that the central support of $WW^*$ in $B''$ is $1$, $\alpha^{k}(WW^*) \leq \alpha^{k+1}(WW^*)$, and for $y \in \BB(\KK)$ we have $y \in B$ if and only if $\alpha^k(WW^*) y \alpha^k(WW^*) \in \alpha^k(WA_0W^*)$ for all $k \geq 0$. Also, if $A_0$ is a von Neumann algebra and $\phi_0$ is normal, then $B$ will also be a von Neumann algebra, and $\alpha$ will also be normal.
\end{thm}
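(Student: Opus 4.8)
## Proof proposal for Bhat's Dilation Theorem

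The plan is to construct the objects $\KK$, $W$, $B$, $\alpha$ as inductive limits of the tower produced by Lemma~\ref{lem:bhatdil}, and then verify the stated relations by pushing the finite-stage identities $(\ref{item:stineA})$--$(\ref{item:stineD})$ through the limit. First I would use the isometries $V_n : \HH_{n-1} \to \HH_n$ to form the inductive limit Hilbert space $\KK = \varinjlim (\HH_n, V_n)$; concretely, identify $\HH_n$ with its image in $\HH_{n+1}$ under $V_{n+1}$ and let $\KK$ be the completion of the increasing union. Write $U_n : \HH_n \to \KK$ for the canonical isometries (so $U_n = U_{n+1} V_{n+1}$), and set $W = U_0 : \HH_0 \to \KK$. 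The projections $p_n := U_n U_n^* \in \BB(\KK)$ are then increasing, since $U_n U_n^* = U_{n+1} V_{n+1} V_{n+1}^* U_{n+1}^* \le U_{n+1} U_{n+1}^*$, and $p_0 = WW^*$; moreover $\alpha^k(WW^*)$ should turn out to be $p_k$ once $\alpha$ is defined, which gives the claimed monotonicity $\alpha^k(WW^*) \le \alpha^{k+1}(WW^*)$.

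Next I would define the endomorphism. Relation $(\ref{item:stineD})$, $\pi_{n+1}(V_n x V_n^*) = V_{n+1} \pi_n(x) V_{n+1}^*$, is exactly the compatibility needed to see that the maps $x \mapsto U_{n+1} \pi_{n+1}(U_{n+1}^* \,\cdot\, U_{n+1}) \cdots$ — more precisely, I would define $\alpha$ on $\bigcup_n U_n A_n U_n^*$ by declaring, for $x \in A_n$,
\[
\alpha(U_n x U_n^*) = U_{n+1} \pi_{n+1}(x) U_{n+1}^*,
\]
and check using $(\ref{item:stineD})$ that this is well defined on the increasing union (the two descriptions of an element of $U_n A_n U_n^* \subset U_{n+1} A_{n+1} U_{n+1}^*$ agree). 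One takes $B$ to be the norm closure (weak closure in the von Neumann case) of $\bigcup_n U_n A_n U_n^*$; note $U_n A_n U_n^* \subset U_{n+1} A_{n+1} U_{n+1}^*$ because $A_n = V_{n+1}^* A_{n+1} V_{n+1}$ from $(\ref{item:stineB})$, so $U_n A_n U_n^* = U_{n+1} V_{n+1} V_{n+1}^* A_{n+1} V_{n+1} V_{n+1}^* U_{n+1}^* = p_{n+1}(U_{n+1} A_{n+1} U_{n+1}^*) p_{n+1} \subset U_{n+1}A_{n+1}U_{n+1}^*$ after a small argument. Since $\pi_{n+1}$ is a unital $*$-homomorphism and $\pi_{n+1}(A_n)$ together with $V_{n+1} A_n V_{n+1}^*$ generates $A_{n+1}$, one checks $\alpha$ is a unital $*$-endomorphism of each $U_{n+1}A_{n+1}U_{n+1}^*$ into $U_{n+2}A_{n+2}U_{n+2}^*$, hence extends to a unital $*$-endomorphism of $B$. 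Normality in the von Neumann case follows from normality of each $\pi_n$ together with the fact that an increasing union of von Neumann algebras acting compatibly has weak closure on which the maps remain normal.

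Then I would verify the dilation identity $\phi_0^k(x) = W^* \alpha^k(WxW^*) W$ for $x \in A_0$. By induction on $k$: unravelling the definition, $\alpha^k(WxW^*) = \alpha^k(U_0 x U_0^*) = U_k \pi_k(\pi_{k-1}(\cdots \pi_1(x))) U_k^*$, and then $W^* \alpha^k(WxW^*) W = U_0^* U_k (\pi_k \circ \cdots \circ \pi_1)(x) U_k^* U_0 = V_1^* \cdots V_k^* (\pi_k \circ \cdots \circ \pi_1)(x) V_k \cdots V_1$; repeatedly applying $(\ref{item:stineA})$ (which says $V_n^* \pi_n(y) V_n = \phi_{n-1}(y)$) together with $(\ref{item:stineC})$ ($\phi_n \circ \pi_n = \pi_n \circ \phi_{n-1}$) collapses this to $\phi_0^k(x)$. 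The identity $W^* B W = A_0$ follows from $(\ref{item:stineB})$ in the limit: $W^* (U_n A_n U_n^*) W = V_1^* \cdots V_n^* A_n V_n \cdots V_1 = A_0$ by iterating $(\ref{item:stineB})$, and one passes to the closure using that $W^*(\cdot)W$ is norm-continuous (ultraweakly continuous in the normal case). For the central support statement: each $p_n = U_n U_n^*$ has central support $1$ in $A_n'' $ by the last assertion of Lemma~\ref{lem:bhatdil}, and since $WW^* = p_0 \le p_n$ and $p_n \to 1$ strongly, the central support of $WW^*$ in $B''$ is $1$ as well. Finally, for the characterization of membership in $B$: the inclusion ``$\Rightarrow$'' is clear since $\alpha^k(WW^*) = p_k$ and $p_k (U_m A_m U_m^*) p_k \subset U_k A_k U_k^* = \alpha^k(WA_0W^*)$ for $m \le k$ by $(\ref{item:stineB})$ and the definition of $\alpha$; for ``$\Leftarrow$'', if $y \in \BB(\KK)$ satisfies $p_k y p_k \in \alpha^k(WA_0W^*)$ for all $k$, then $p_k y p_k \to y$ strongly (as $p_k \to 1$) and each $p_k y p_k \in B$, so $y \in B$ in the von Neumann case; in the $C^*$ case one argues with the norm-density directly, which needs $\|p_k y p_k - y\| \to 0$, and this requires a slightly more careful argument.

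\medskip

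\emph{Main obstacle.} The genuinely delicate point is the last one in the $C^*$-algebra (non-von-Neumann) setting: identifying $B$ as \emph{exactly} $\{ y : \alpha^k(WW^*) y \alpha^k(WW^*) \in \alpha^k(WA_0W^*) \text{ for all } k\}$, since $p_k \to 1$ only strongly, not in norm, so one cannot immediately conclude $p_k y p_k \to y$ in norm. I expect this requires exploiting that $\alpha$ is an endomorphism (not just a u.c.p.\ map) — specifically, that $\alpha^k(WW^*)$ is an increasing sequence of projections with supremum $1$ and that $\alpha(B) \subset p_1 B p_1$ in a way that forces compatible elements to already lie in the norm-closed union — rather than a soft limiting argument. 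The rest of the proof is bookkeeping: keeping track of the identifications $\HH_n \hookrightarrow \KK$ and mechanically transporting $(\ref{item:stineA})$--$(\ref{item:stineD})$ along them.
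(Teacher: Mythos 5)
Your overall strategy --- form the inductive limit $\KK = \varinjlim(\HH_n, V_n)$ of the tower from Lemma~\ref{lem:bhatdil}, transport $\pi_{n+1}$ to an endomorphism via (\ref{item:stineD}), and collapse $V_1^*\cdots V_k^*(\pi_k\circ\cdots\circ\pi_1)(x)V_k\cdots V_1$ to $\phi_0^k(x)$ using (\ref{item:stineA}) and (\ref{item:stineC}) --- is exactly the paper's, and those parts of your argument are correct. The genuine gap is the one you yourself flag, and it is not a removable technicality: you define $B$ as the \emph{norm closure} of $\bigcup_n U_nA_nU_n^*$, and then need to show that this coincides with $\{y \in \BB(\KK) : \alpha^k(WW^*)\,y\,\alpha^k(WW^*) \in \alpha^k(WA_0W^*) \text{ for all } k\}$. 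In the $C^*$ (non--von Neumann) setting these two sets are genuinely different in general: the corner conditions are stable under strong limits of bounded nets, so the right-hand set is typically strictly larger than the norm closure of the union (already for an increasing sequence of finite-rank corners in $\BB(\ell^2)$ the analogous two sets differ). So no ``slightly more careful argument'' will show $\|p_kyp_k - y\|\to 0$; with your definition of $B$ the ``if'' direction of the characterization is simply false, and that characterization is not decorative --- it is precisely the coinvariance of $WW^*$ that Theorem~\ref{thm:izumipb} consumes.

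The paper resolves this by turning the problem around: it \emph{defines} $B = \{x \in \BB(\KK) \mid W_n^* x W_n \in A_n,\ n\ge 0\}$, so that the membership criterion is built into the construction and only $W_n^* B W_n = A_n$ (which follows from (\ref{item:stineB}), as in your computation) needs checking. Two adjustments to your write-up then follow. First, $\alpha$ must be defined on all of this larger $B$, not merely extended by norm continuity from the union; the paper does this by the strong-operator limit $\alpha(x) = \lim_n W_{n+1}\pi_{n+1}(W_n^* x W_n)W_{n+1}^*$, which by (\ref{item:stineD}) stabilizes to your formula on each $U_nA_nU_n^*$. Second, your verification of the ``only if'' direction quietly uses $\alpha^k(WA_0W^*) = U_kA_kU_k^*$, which does not hold: $\alpha^k(WA_0W^*) = U_k(\pi_k\circ\cdots\circ\pi_1)(A_0)U_k^*$, and $(\pi_k\circ\cdots\circ\pi_1)(A_0)$ is in general a proper subalgebra of $A_k$ (the latter also contains $V_k A_{k-1}V_k^*$). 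So the compression condition you should be matching against is $W_k^* y W_k \in A_k$, i.e.\ $p_k y p_k \in U_kA_kU_k^*$, which is again exactly what the paper's definition of $B$ encodes.
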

\begin{proof}
Using the notation from Lemma~\ref{lem:bhatdil}, we may define a Hilbert space $\KK$ as the directed limit of the Hilbert spaces $\HH_n$ with respect to the inclusions $V_{n + 1} : \HH_n \to \HH_{n + 1}$. We denote by $W_n: \HH_n \to \KK$ the associated sequence of isometries satisfying $W_{n + 1}^* W_n = V_{n + 1}$, for $n \in \N$, and we set $P_n = W_nW_n^*$, an increasing sequence of projections.

From (\ref{item:stineB}) we have that $P_{n - 1} W_n A_n W_n^* P_{n - 1} = W_{n - 1} A_{n - 1} W_{n - 1}^*$, and hence if we define the $C^*$-algebra $B = \{ x \in \BB(\KK) \mid W_n^* x W_n \in A_n, n \geq 0 \}$, then we have $W_n^* B W_n = A_n$, for all $n \geq 0$. Also, if $A_0$ is a von Neumann algebra, then so is $A_n$ for each $n \in \N$ and from this it follows easily that $B$ is also a von Neumann algebra.

We define the unital $*$-endomorphism $\alpha : B \to B$ (which is normal when $A_0$ is a von Neumann algebra and $\phi_0$ is normal) by the formula 
$$
\alpha(x) = \lim_{n \to \infty} W_{n + 1}\pi_{n + 1}(W_n^* x W_n) W_{n + 1},
$$ 
where the limit is taken in the strong operator topology. Note that $\alpha(P_n) = P_{n + 1} \geq P_n$. From (\ref{item:stineD}) we see that in general, the strong operator topology limit exists in $B$, and that for $x \in A_n \cong P_nA_\infty P_n$ the limit stabilizes as $\alpha( W_n x W_n^* ) =  W_{n + 1}\pi_{n + 1}( x ) W_{n + 1}^*$.

From (\ref{item:stineA}) we see that for $n \geq 0$, and $x \in A_n$ we have 
\begin{align}
P_n \alpha ( W_n x W_n^* ) P_n 
&= W_n W_n^*W_{n + 1}\pi_{n + 1}(x) W_{n + 1}^*W_n W_n^* \nonumber \\
&= W_n V_{n + 1}^* \pi_{n + 1}(x) V_{n + 1} W_n^* \nonumber \\
&= W_n \phi_n( x ) W_n^*. \nonumber
\end{align}
By induction we then see that also for $k > 1$, and $x \in A_0$ we have 
\begin{align}
P_0 \alpha^k ( W_0 x W_0^*  ) P_0
&= P_0 \alpha^{k - 1}( P_0 \alpha( W_0 x W_0^* ) P_0 ) P_0 \nonumber \\
&= P_0 \alpha^{k - 1} (W_0 \phi_0(x) W_0^* )P_0 \nonumber \\
&= W_0 \phi_0^k(x) W_0^*. \nonumber
\end{align}

By the previous Lemma we have that the central support of $P_n$ in $W_n A_n'' W_n^*$ is $P_{n + 1}$. Hence it follows that the central support of $P_0$ in $B$ is $1$. 
\end{proof}

\subsection{Poisson boundaries of u.c.p.\ maps}

If $A \subset \BB(\HH)$ is a unital $C^*$-algebra, and $\phi: A \to A$ a unital completely positive map, then a projection $p \in A$ is said to be coinvariant, if $\{ \phi^n(p) \}_n$ defines an increasing sequence of projections which strongly converge to $1$ in $\BB(\HH)$, and such that for $y \in \BB(\HH)$ we have $y \in A$ if and only if $\phi^n(p) y \phi^n(p) \in A$ for all $n \geq 0$. Note that for $n \geq 0$, $\phi^n(p)$ is in the multiplicative domain for $\phi$, and is again coinvariant. We define $\phi_p: pAp \to pAp$ to be the map $\phi_p(x) = p \phi(x) p$, then $\phi_p$ is normal unital completely positive. Moreover, we have that $\phi_p^k(x) = p \phi^k(x) p$ for all $x \in pAp$, which can be seen by induction from 
$$
p \phi^k(x) p = p \phi^{k - 1}(p) \phi^k(x) \phi^{k - 1}(p) p = p \phi^{k - 1}( \phi_p(x) ) p.
$$

\begin{thm}[Prunaru \cite{prunaru}]\label{thm:izumipb}
Let $A \subset \BB(\HH)$ be a unital $C^*$-algebra, $\phi: A \to A$ a unital completely positive map, and $p \in A$ a coinvariant projection. Then the map $\mathcal P: \Har(A, \phi) \to \Har(pAp, \phi_p)$ given by $\mathcal P(x) = pxp$ defines a completely positive isometric surjection, between $\Har(A, \phi)$ and $\Har(pAp, \phi_p)$.

Moreover, if $A$ is a von Neumann algebra and $\phi$ is normal, then $\mathcal P$ is also normal.
\end{thm}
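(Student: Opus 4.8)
The plan is to check, in order, that $\mathcal P(x)=pxp$ is completely positive, isometric, and surjective; the last is the only point with real content, and its resolution in the purely $C^*$‑algebraic generality is the main obstacle.

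\emph{Well-definedness and complete positivity.} First observe that $p$ lies in the multiplicative domain of $\phi$: since $\{\phi^n(p)\}$ is increasing we have $\phi(p)\ge p$, hence $p\phi(p)=p$, and combining this with Kadison's inequality $\phi(p)=\phi(p^2)\ge\phi(p)^2=\phi(p)$ gives $\phi(p^2)=\phi(p)^2$ (and likewise for $pp^*$), so $p$ is in the multiplicative domain. The same applies to each $\phi^k(p)$, so by iteration $p$ lies in the multiplicative domain of every $\phi^n$ and therefore $\phi^n(pzp)=\phi^n(p)\,\phi^n(z)\,\phi^n(p)$ for all $z\in A$. In particular, for $x\in\Har(A,\phi)$ we get $\phi_p(pxp)=p\phi(pxp)p=p\phi(x)p=pxp$, so $\mathcal P$ takes values in $\Har(pAp,\phi_p)$; being a compression it is unital and completely positive.

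\emph{Isometry.} For $x\in\Har(A,\phi)$ we have $\phi^n(\mathcal P(x))=\phi^n(p)\,\phi^n(x)\,\phi^n(p)=\phi^n(p)\,x\,\phi^n(p)$, which converges strongly to $x$ because $\phi^n(p)\nearrow 1$ strongly and $x$ is bounded. Since each $\phi^n$ is a contraction and the operator norm is strongly lower semicontinuous,
\[
\|x\|\le\liminf_n\|\phi^n(\mathcal P(x))\|\le\|\mathcal P(x)\|\le\|x\|,
\]
so $\mathcal P$ is isometric. Running the same argument over $M_k(A)$ with $\phi\otimes\mathrm{id}_{M_k}$ and the (again coinvariant) projection $p\otimes 1_k$ shows $\mathcal P$ is moreover completely isometric.

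\emph{Surjectivity — the crux.} Fix $y\in\Har(pAp,\phi_p)$, regarded inside $A$. The identity to exploit is, for $k\le n$,
\[
\phi^k(p)\,\phi^n(y)\,\phi^k(p)=\phi^k\big(p\,\phi^{n-k}(y)\,p\big)=\phi^k\big(\phi_p^{\,n-k}(y)\big)=\phi^k(y),
\]
using that $p$ is in the multiplicative domain of $\phi^k$ and that $y$ is $\phi_p$-harmonic. Form the Ces\`aro means $z_N=\tfrac1N\sum_{n=1}^N\phi^n(y)\in A$, bounded by $\|y\|$, and let $x\in\BB(\HH)$ be a weak$^*$ cluster point. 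From the identity, $\phi^k(p)\,z_N\,\phi^k(p)\to\phi^k(y)$ in norm as $N\to\infty$ (only the finitely many terms with $n<k$ are exceptional), hence $\phi^k(p)\,x\,\phi^k(p)=\phi^k(y)\in A$ for every $k$; by coinvariance of $p$ this forces $x\in A$, and taking $k=0$ gives $\mathcal P(x)=pxp=y$. It remains to see $x$ is harmonic. When $A$ is a von Neumann algebra and $\phi$ is normal this is immediate: $\phi(x)-x$ is a weak$^*$ cluster point of $\phi(z_N)-z_N=\tfrac1N(\phi^{N+1}(y)-\phi(y))$, which is norm null, so $\phi(x)=x$; since in addition $x\mapsto pxp$ is weak$^*$ continuous this also yields the final assertion that $\mathcal P$ is normal. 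The main obstacle is the general $C^*$‑case, where $\phi$ need not be weak$^*$ continuous and no such cluster‑point element is at hand. Here one instead passes to the minimal dilation $(B,\alpha,W\colon\HH\to\KK)$ of $(A,\phi)$ supplied by Bhat's theorem above: $WW^*$ is a coinvariant projection for the $*$-endomorphism $\alpha$ and $\alpha_{WW^*}$ is conjugate to $\phi$, so $\Har(B,\alpha)=B^\alpha$ is visibly a $C^*$-algebra, and the lifting is carried out inside $B$, where the increasing tower of subspaces underlying the dilation and the $*$-homomorphism property of $\alpha$ make the required limits converge without any normality hypothesis; compressing back by $W$ (and noting that $p$ corresponds to a projection below $WW^*$) then produces the harmonic lift of $y$ in $\Har(A,\phi)$ and identifies this lift as $\mathcal P^{-1}(y)$. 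Thus the conceptual difficulty is entirely concentrated in upgrading the surjectivity from normal maps to arbitrary completely positive maps, which is exactly the role of the dilation.
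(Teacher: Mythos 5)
Your treatment of well-definedness, complete positivity and the isometry is correct and matches the paper's, and your Ces\`aro-mean construction of a lift $x\in A$ satisfying $\phi^k(p)x\phi^k(p)=\phi^k(y)$ for all $k$ (hence $x\in A$ by coinvariance and $pxp=y$) is a perfectly good variant of the paper's argument, which instead observes that $\phi^n(y)$ itself converges in the weak operator topology to such an $x$. The genuine gap is your claim that harmonicity of $x$ can only be obtained when $\phi$ is normal, together with the ensuing deferral of the general $C^*$-case to Bhat's dilation. That deferral is not a proof: you assert that the increasing tower of subspaces makes ``the required limits converge'' and that compressing by $W$ ``produces the harmonic lift,'' but lifting a harmonic element of a corner to a harmonic element of the ambient algebra is exactly the problem you set out to solve --- indeed, in the paper it is this very theorem that gets \emph{applied} to the dilation, with $p=WW^*$, to construct the boundary --- so as written the reduction is unsubstantiated and circular in spirit.

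The point you missed is that normality is never needed, because the identities $\phi^k(p)x\phi^k(p)=\phi^k(y)$ are \emph{exact}, not limits, so one never has to interchange $\phi$ with a weak$^*$ limit. Since each $\phi^k(p)$ lies in the multiplicative domain of $\phi$ and $\phi^k(p)\nearrow 1$ strongly, one has, in the strong operator topology,
\[
x=\lim_{k\to\infty}\phi^k(p)\,x\,\phi^k(p)=\lim_{k\to\infty}\phi^k(y),
\]
while
\[
\phi(x)=\lim_{k\to\infty}\phi^{k+1}(p)\,\phi(x)\,\phi^{k+1}(p)=\lim_{k\to\infty}\phi\bigl(\phi^{k}(p)\,x\,\phi^{k}(p)\bigr)=\lim_{k\to\infty}\phi^{k+1}(y).
\]
The two right-hand sides have the same limit, so $\phi(x)=x$ with no continuity hypothesis on $\phi$ whatsoever. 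With this substitution your argument closes in full generality and the excursion through the dilation can be deleted.
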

 \begin{proof}
First note that $\mathcal P$ is well-defined since if $x \in \Har(A, \phi)$ we have 
\[
\phi_p(pxp) = p \phi(p) x \phi(p) p = p x p.
\] 
Clearly $\mathcal P$ is completely positive (and normal in the case when $A$ is a von Neumann algebra and $\phi$ is normal). 

To see that it is surjective, if $x \in \Har(pAp, \phi_p)$, then consider the sequence $\phi^n(x)$. For each $m, n \geq 0$, we have 
$$
\phi^m(p) \phi^{m + n}(x) \phi^m(p) = \phi^{m} ( p\phi^n(x) p ) = \phi^m( \phi_p^n(x)) = \phi^m(x).
$$
It follows that $\{ \phi^n(x) \}_n$ is eventually constant for any $\xi$ in the range of $\phi^m(p)$ for any $m$. Since $\{ \phi^n(x) \}_n$ is uniformly bounded and $\{\phi^n(x) \xi \}_n$ converges for a dense subset of $\xi \in \HH$ we then have that $\{ \phi^n(x) \}_n$
converges in the strong operator topology to an element $y \in \BB(\HH)$ 
such that $\phi^m(p) y \phi^m(p) = \phi^m(x)$ for each $m \geq 0$. Consequently we have $y \in A$.

In particular, for $m = 0$ we have $p y p = x$. To see that $y \in \Har(A, \phi)$ we use that for all $z \in A$ we have the strong operator topology limit 
$$
\lim_{n \to \infty} \phi( \phi^n(p) z \phi^n(p) ) = \lim_{n \to \infty}\phi^{n + 1}(p) \phi(z) \phi^{n + 1}(p) = \phi(z),
$$ 
and hence 
\[
\phi(y) = \lim_{m \to \infty} \phi(\phi^m(p) y \phi^m(p)) = \lim_{m \to \infty} \phi^{m + 1}(x) = y.
\]
Thus $\mathcal P$ is surjective, and since $\phi^n(p)$ converges strongly to $1$, and each $\phi^n(p)$ is in the multiplicative domain of $\phi$, it follows that if $x \in \Har(A, \phi)$, then $\phi^n(pxp)$ converges strongly to $x$ and hence
$$
\| x \| = \lim_{n \to \infty} \| \phi^n(pxp) \| \leq \| pxp \| \leq \| x \|.
$$
Thus, $\mathcal P$ is also isometric.
\end{proof}
 
 \begin{cor}[Izumi \cite{izumi2}]\label{cor:poissonb}
 Let $A$ be a unital $C^*$-algebra, and $\phi: A \to A$ a unital completely positive map. Then there exists a $C^*$-algebra $B$ and a completely positive isometric surjection $\mathcal P: B \to \Har(A, \phi)$. 
 
Moreover $B$ and $\mathcal P$ are unique in the sense that if $\tilde B$ is another $C^*$-algebra, and $\mathcal P_0: \tilde B \to \Har(A, \phi)$ is a completely positive isometric surjection, then $\mathcal P^{-1} \circ \mathcal P_0$ is an isomorphism.

Also, if $A$ is a von Neumann algebra and $\phi$ is normal, then $B$ is also a von Neumann algebra and $\mathcal P$ is normal.
 \end{cor}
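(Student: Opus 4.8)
The plan is to deduce the statement from Bhat's dilation theorem above together with Prunaru's Theorem~\ref{thm:izumipb}. First I would fix a normal faithful representation $A \subset \BB(\HH_0)$ (for instance the standard form, when $A$ is a von Neumann algebra). Applying the dilation theorem above with $A_0 = A$ and $\phi_0 = \phi$ yields a Hilbert space $\KK$, an isometry $W\colon \HH_0 \to \KK$, a $C^*$-algebra $B \subset \BB(\KK)$ (a von Neumann algebra when $A$ is and $\phi$ is normal), and a unital $\ast$-endomorphism $\alpha\colon B \to B$ (normal in that case), satisfying $W^* B W = A$ and $\phi^k(x) = W^*\alpha^k(WxW^*)W$ for all $x \in A$ and $k \ge 0$.

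The first real step is to verify that $p := WW^* \in B$ is a coinvariant projection for $\alpha$ in the sense defined just before Theorem~\ref{thm:izumipb}. By the dilation theorem, $\{\alpha^n(p)\}_n$ is an increasing sequence of projections, and it converges strongly to $1$ because $\KK$ is built as the directed limit of the spaces $\HH_n$ with $\alpha^n(p)$ the orthogonal projection onto $\HH_n$. The membership criterion for $B$ supplied by the dilation theorem reads: $y \in B$ iff $\alpha^k(p)\,y\,\alpha^k(p) \in \alpha^k(WAW^*)$ for all $k \ge 0$. Granting this, if $y \in B$ then trivially $\alpha^n(p)\,y\,\alpha^n(p) \in B$ for all $n$; conversely, if $\alpha^n(p)\,y\,\alpha^n(p) \in B$ for all $n$, then for each $k$ the element $z := \alpha^k(p)\,y\,\alpha^k(p)$ lies in $B$, so applying the criterion to $z$ at index $k$ and using that $\alpha^k(p)$ is a projection with $\alpha^k(p)\,z\,\alpha^k(p) = z$ gives $z \in \alpha^k(WAW^*)$; as this holds for all $k$, the criterion yields $y \in B$. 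Thus $p$ is coinvariant, and $\alpha_p(x) := p\,\alpha(x)\,p$ is a unital completely positive (normal, in the von Neumann case) map on $pBp$.

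Now I would assemble the boundary. The spatial map $j\colon pBp \to A$, $j(y) = W^* y W$, is a $\ast$-isomorphism with inverse $x \mapsto WxW^*$ (since $pBp = WW^*\,B\,WW^* = W(W^*BW)W^* = WAW^*$ and $W^*W = 1$), and $j \circ \alpha_p = \phi \circ j$ because $W^*\alpha(WxW^*)W = \phi(x)$; hence $j$ restricts to an isometric $\ast$-isomorphism $\Har(pBp, \alpha_p) \cong \Har(A, \phi)$. By Theorem~\ref{thm:izumipb} applied to $(B, \alpha, p)$, the map $\mathcal P'(x) = pxp$ is a completely positive isometric (and, in the von Neumann case, normal) surjection $\Har(B, \alpha) \to \Har(pBp, \alpha_p)$. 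Setting $\mathbf B := \Har(B, \alpha)$ and $\mathcal P := j \circ \mathcal P'$ (explicitly, $\mathcal P(x) = W^* x W$), we obtain a completely positive isometric surjection $\mathcal P\colon \mathbf B \to \Har(A, \phi)$. The essential point is that $\mathbf B$ is a $C^*$-algebra: being the fixed-point set of the unital $\ast$-endomorphism $\alpha$, it is a norm-closed $\ast$-subalgebra of $B$, and when $B$ is a von Neumann algebra and $\alpha$ is normal it is weakly closed, hence itself a von Neumann algebra, with $\mathcal P$ normal. This proves existence.

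For uniqueness, let $\mathcal P_0\colon \tilde B \to \Har(A, \phi)$ be any completely positive isometric surjection from a $C^*$-algebra $\tilde B$. Transporting the $C^*$-structure of $\mathbf B$ along the isometric bijection $\mathcal P$ endows $\Har(A, \phi)$ with a $C^*$-algebra structure for which $\mathcal P$ is a $\ast$-isomorphism; in particular $\mathcal P^{-1}$ is completely positive, so $\mathcal P^{-1} \circ \mathcal P_0\colon \tilde B \to \mathbf B$ is a completely positive surjective isometry between $C^*$-algebras, hence a $\ast$-isomorphism by Choi's theorem \cite{choidissertation}. I expect the main obstacle to be the second paragraph---reconciling the concrete description of $B$ coming out of the iterated Stinespring construction with the abstract definition of a coinvariant projection, and checking the strong convergence $\alpha^n(p) \nearrow 1$---since once $p$ is known to be coinvariant, Theorem~\ref{thm:izumipb} together with the $\ast$-endomorphism structure of $\alpha$ supplies the rest, and the von Neumann case requires only that one track normality through the construction.
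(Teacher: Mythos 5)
Your proposal is correct and follows exactly the paper's (very terse) proof: represent $A$ faithfully, apply Prunaru's Theorem~\ref{thm:izumipb} to Bhat's dilation with $p = WW^*$ as the coinvariant projection, take $\Har(B,\alpha)$ (a $C^*$-algebra, being the fixed points of a unital $\ast$-endomorphism) as the boundary, and deduce uniqueness from Choi's theorem that a completely positive surjective isometry between $C^*$-algebras is a $\ast$-isomorphism. The only difference is that you supply the details the paper leaves implicit, in particular the verification that $WW^*$ is coinvariant, and these checks are all correct.
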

 \begin{proof}
Note that we may assume $A \subset \BB(\HH)$. Existence then follows by applying the previous theorem to Bhat's dilation. Uniqueness follows from \cite{choi}
 \end{proof}

\begin{cor}[Choi-Effros \cite{choieffros}]\label{cor:choieffros}
Let $A$ be a unital $C^*$-algebra and $F \subset A$ an operator system. If $E: A \to F$ is a completely positive map such that $E_{|F} = {\rm id}$, then $F$ has a unique $C^*$-algebraic structure which is given by $x \cdot y = E(xy)$. Moreover, if $A$ is a von Neumann algebra and $F$ is weakly closed, then this gives a von Neumann algebraic structure on $F$.
\end{cor}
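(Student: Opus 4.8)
The plan is to deduce everything from Corollary~\ref{cor:poissonb} together with a multiplicative‐domain argument, rather than to reprove Choi--Effros directly. First I would regard $E$ as a unital completely positive map $A \to A$ by composing with the inclusion $\iota : F \hookrightarrow A$ (it is unital because $1 \in F$ and $E_{|F} = {\rm id}$). Since $E_{|F} = {\rm id}$, its fixed‐point operator system is exactly $F$: one has ${\rm Har}(A, \iota E) = \{ a \in A \mid \iota E(a) = a \} = \iota(F) = F$, which is norm closed, being $\ker({\rm id}_A - \iota E)$. Corollary~\ref{cor:poissonb} then furnishes a $C^*$-algebra $B$ and a completely positive isometric surjection $\mathcal P : B \to F$; moreover, reading off its construction from the proof of Theorem~\ref{thm:izumipb}, the inverse $\mathcal P^{-1} : F \to B$ is a pointwise strong‐operator limit of iterates of the unital $*$-endomorphism dilating $E$, hence is completely positive, so $\mathcal P$ is a unital complete order isomorphism. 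Transporting the product of $B$ through $\mathcal P$ equips $F$ with a $C^*$-structure $\star$, defined by $\mathcal P(b) \star \mathcal P(c) = \mathcal P(bc)$, whose involution, norm, and matrix order agree with those inherited from $A$.

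The heart of the matter is to identify $\star$ with the product $x \cdot y = E(xy)$, and here I would use a multiplicative‐domain sandwich. Setting $\sigma = \iota \circ \mathcal P : B \to A$ and $\Psi = \mathcal P^{-1} \circ E : A \to B$, both are unital completely positive and $\Psi \circ \sigma = \mathcal P^{-1} \circ E_{|F} \circ \mathcal P = {\rm id}_B$. For $b \in B$, Kadison's inequality for $\Psi$ and then, after applying $\Psi$, for $\sigma$, gives
\[
b^* b = \Psi(\sigma(b))^* \Psi(\sigma(b)) \leq \Psi(\sigma(b)^* \sigma(b)) \leq \Psi(\sigma(b^* b)) = b^* b ,
\]
so $\Psi(\sigma(b)^*\sigma(b)) = \Psi(\sigma(b))^*\Psi(\sigma(b))$, and symmetrically for $\sigma(b)\sigma(b)^*$; hence each $\sigma(b)$ lies in the multiplicative domain of $\Psi$. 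Therefore $\Psi(\sigma(b_1)\sigma(b_2)) = \Psi(\sigma(b_1)) \Psi(\sigma(b_2)) = b_1 b_2$ for all $b_1, b_2 \in B$, which unwinds to $E(\mathcal P(b_1) \mathcal P(b_2)) = \mathcal P(b_1 b_2) = \mathcal P(b_1) \star \mathcal P(b_2)$. Since $\mathcal P$ is onto, $x \cdot y = x \star y$ for all $x, y \in F$, so $(F, \cdot)$ is the transported $C^*$-algebra.

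For uniqueness, if $\cdot'$ is any $C^*$-product on $F$ compatible with the operator system structure inherited from $A$ (same involution, norm, and matrix order), then the identity map is a completely positive isometric surjection $(F, \cdot') \to F = {\rm Har}(A, \iota E)$, so the uniqueness clause of Corollary~\ref{cor:poissonb} makes $\mathcal P$ a $*$-isomorphism of $B$ onto $(F, \cdot')$ as well as onto $(F, \cdot)$; comparing the two and using surjectivity gives $\cdot' = \cdot$. For the moreover statement: if $A$ is a von Neumann algebra and $F$ is $\sigma$-weakly closed, then $F = (F_\perp)^\perp$ is canonically the dual of $A_* / F_\perp$, so $(F, \cdot)$ is a $C^*$-algebra whose norm --- equal to the ambient one --- exhibits it isometrically as a dual Banach space; by Sakai's theorem $(F, \cdot)$ is then a $W^*$-algebra, and by uniqueness of preduals its $\sigma$-weak topology is the one inherited from $A$.

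I expect the only real obstacle to be buried in the first paragraph, namely the assertion that $\mathcal P^{-1}$ is completely positive: this forces one to use the concrete realization of the Poisson transform coming from the minimal dilation, where $\mathcal P^{-1}$ is literally a limit of $*$-endomorphisms, rather than any purely abstract property of $\mathcal P$. Once that is granted, the multiplicative‐domain sandwich and the appeal to uniqueness in Corollary~\ref{cor:poissonb} are routine.
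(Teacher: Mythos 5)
Your proof is correct, and it shares the paper's skeleton (view $E$ as a u.c.p.\ map on $A$, note ${\rm Har}(A,E)=F$, invoke Corollary~\ref{cor:poissonb}, finish the von Neumann case with Sakai), but the step that actually carries the content of the corollary --- identifying the transported product with $x\cdot y=E(xy)$ --- is done by a genuinely different argument. The paper simply observes that $E^n=E$ and reads the product off the construction in Theorem~\ref{thm:izumipb}: there the boundary product is $x\cdot y=\lim_n\phi^n(xy)$ (the strong limit of $W^*\alpha^n(WxyW^*)W$), which for idempotent $\phi=E$ collapses to $E(xy)$ in one line. You instead prove that every $\sigma(b)$ lies in the multiplicative domain of $\Psi=\mathcal P^{-1}\circ E$ via the Kadison--Schwarz sandwich around $\Psi\circ\sigma={\rm id}_B$, and deduce $E(\mathcal P(b_1)\mathcal P(b_2))=\mathcal P(b_1b_2)$. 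Both routes end up opening the dilation construction: the paper to extract the explicit product formula, you to justify that $\mathcal P^{-1}$ is completely positive (your caution here is warranted --- abstractly one only gets that $\mathcal P^{-1}$ is unital, $*$-preserving and positive from the metric characterization of positivity, which is not enough for the Schwarz inequality you need for $\Psi$; the concrete realization $\mathcal P^{-1}(x)=\lim_n\alpha^n(WxW^*)$ as an SOT limit of completely positive maps settles it). What your version buys is a structural explanation --- once $\mathcal P$ is a unital complete order isomorphism, the product is forced to be $E(xy)$ by multiplicative-domain considerations alone --- whereas the paper's version is shorter and exploits idempotency of $E$ directly. Your uniqueness argument via the uniqueness clause of Corollary~\ref{cor:poissonb} and your treatment of the weakly closed case ($F\cong(A_*/F_\perp)^*$ plus Sakai) match the paper's.
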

\begin{proof}
Note that $F \subseteq \Har(A,E)$, as $E_{|F} = {\rm id}$. Since the range of $E$ is contained in $F$, we get $\Har(A, E) = F$.

When $A$ is a $C^*$-algebra this follows from Corollary~\ref{cor:poissonb} since $\Har(A, E) = F$. Also note that since $E^n = E$ it follows from the proof of Theorem~\ref{thm:izumipb} that the product structure coming from the Poisson boundary is given by $x \cdot y = E(xy)$.

If $A$ is a von Neumann algebra and $F$ is weakly closed, then $F$ has a predual $F_\perp = \{ \varphi \in A_* \mid \varphi(x) = 0, \mbox{ for all } x \in F \}$ and hence $A$ is isomorphic to a von Neumann algebra by Sakai's theorem. 
\end{proof}

\begin{prop}
Let $A$ be an abelian $C^*$-algebra and $\phi: A \to A$ a normal unital completely positive map. Then the Poisson boundary of $\phi$ is also abelian.
\end{prop}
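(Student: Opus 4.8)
The plan is to realize the Poisson boundary $\mathcal B_\phi$ as the fixed-point space $\Har(A,\phi)$ equipped with its Choi--Effros product, and then simply to observe that commutativity of $A$ forces that product to be commutative. Since $\phi$ is normal we may take $A$ to be a von Neumann algebra, and then $\Har(A,\phi) = \{x \in A \mid \phi(x)=x\}$ is weakly closed, being the fixed-point set of the normal (hence ultraweakly continuous) map $\phi$.

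First I would construct a u.c.p.\ idempotent $E \colon A \to \Har(A,\phi)$, exactly as in the proof of Proposition~\ref{prop:pbinjective}. Take a cluster point $E$, in the topology of pointwise ultraweak convergence, of the net of averages $\bigl\{ \frac1N \sum_{n=1}^N \phi^n \bigr\}_{N \in \N}$; this net has a cluster point because for each $a \in A$ the elements $\frac1N\sum_{n=1}^N \phi^n(a)$ lie in the ultraweakly compact ball of radius $\|a\|$. Then $E$ is u.c.p.\ (unitality and complete positivity are preserved under pointwise ultraweak limits), it maps into $\Har(A,\phi)$ since $\phi\bigl(\frac1N\sum_{n=1}^N\phi^n(a)\bigr) - \frac1N\sum_{n=1}^N\phi^n(a) = \frac1N\bigl(\phi^{N+1}(a)-\phi(a)\bigr) \to 0$ in norm, and it restricts to the identity on $\Har(A,\phi)$ because $\phi^n(x)=x$ there for all $n$.

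Next I would invoke Corollary~\ref{cor:choieffros}: since $\Har(A,\phi)$ is a weakly closed operator system which is the range of the u.c.p.\ idempotent $E$, it is a von Neumann algebra under the product $x \cdot y = E(xy)$, and by the uniqueness statement of Corollary~\ref{cor:poissonb} this von Neumann algebra is $*$-isomorphic to $\mathcal B_\phi$. The conclusion is then immediate: because $A$ is abelian, $xy=yx$ in $A$ for all $x,y \in \Har(A,\phi)\subset A$, hence $x\cdot y = E(xy) = E(yx) = y\cdot x$, so $\mathcal B_\phi$ is abelian.

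I do not expect a real obstacle: the only step requiring a little care is the construction of $E$ together with its two defining properties (landing in $\Har(A,\phi)$ and restricting to the identity there), which is routine, plus correctly citing the appendix. The one situation that would need genuinely more work is a non-von-Neumann abelian $C^*$-algebra $A$ (so that ``normal'' must be reread), where the averages need not converge; one would then pass through Bhat's dilation $(B,\alpha)$ with $\mathcal B_\phi \cong \Har(B,\alpha)$, and the subtlety there is that $B$ itself need not be abelian, so commutativity of $\Har(B,\alpha)$ would have to be extracted differently --- for instance by first applying the argument above to the normal extension $\phi^{**}$ on $A^{**}$.
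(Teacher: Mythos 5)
Your proof is correct, but it takes a genuinely different route from the paper's. You realize the boundary concretely as $\Har(A,\phi)$ with the Choi--Effros product $x\cdot y=E(xy)$ coming from a Ces\`aro-average idempotent $E$, and then read off commutativity from $E(xy)=E(yx)$; every step checks out in the von Neumann algebra setting (the telescoping identity, the need for normality of $\phi$ to pass the fixed-point property through the ultraweak cluster point, and the appeal to Corollaries~\ref{cor:choieffros} and~\ref{cor:poissonb} are all handled properly). The paper instead argues abstractly: if $\psi\colon C\to B$ is any positive map into the boundary $B$, then $\mathcal P\circ\psi$ maps into the abelian algebra $A$ and is therefore automatically completely positive, whence $\psi=\mathcal P^{-1}\circ(\mathcal P\circ\psi)$ is completely positive; a $C^*$-algebra into which every positive map is completely positive must be abelian. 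The trade-off is real: your argument buys an explicit formula for the product on the boundary, but it genuinely needs the averages to cluster inside $\Har(A,\phi)$, which is exactly the obstruction you flag at the end for a non-von-Neumann $A$ (and your proposed detour through $A^{**}$ would still require relating the Choi--Effros product on $\Har(A^{**},\phi^{**})$ back to the boundary of $\phi$ itself, which is not immediate). The paper's argument uses only the existence of the Poisson transform as a complete order isomorphism, so it needs no normality, no weak closure, and no averaging, and applies verbatim to arbitrary $C^*$-algebras.
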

\begin{proof}
Let $B$ be the Poisson boundary of $\phi$, and let $\mathcal P: B \to \Har(A, \phi)$ be the Poisson transform. If $C$ is a $C^*$-algebra and $\psi: C \to B$ is a positive map, then $\mathcal P \circ \psi : C \to \Har(A, \phi) \subset A$ is positive, and since $A$ is abelian it is then completely positive.
Hence, $\psi$ is also completely positive. Since every positive map from a $C^*$-algebra to $B$ is completely positive it then follows that $B$ is abelian.
\end{proof}

\begin{examp}\label{examp:groupboundary}
Let $\Gamma$ be a discrete group and $\mu \in \Prob(\Gamma)$ a probability measure on $\Gamma$ such that the support of $\mu$ generates $\Gamma$. Then on $\ell^\infty \Gamma$ we may consider the normal unital (completely) positive map $\phi_\mu$ given by $\phi_\mu(f) = \mu * f$, where $\mu * f$ is the convolution $(\mu * f) (x ) = \int f(g^{-1} x) \, d\mu(g)$. Then $\Har(\mu) = \Har(\ell^\infty\Gamma, \phi_\mu)$ has a unique von Neumann algebraic structure which is abelian by the previous proposition. Notice that $\Gamma$ acts on $\Har(\mu)$ by right translation, and since this action preserves positivity it follows from \cite{choi} that $\Gamma$ preserves the multiplication structure as well.  

Since the support of $\mu$ generates $\Gamma$, for a non-negative function $f \in \Har(\mu)_+$, we have $f(e) = 0$ if and only if $f = 0$. Thus we obtain a natural normal faithful state $\varphi$ on $\Har(\mu)$ which is given by $\varphi(f) = f(e)$. 

Since $\varphi$ is $\Gamma$-equivariant, this extends to a normal u.c.p.\ map $\tilde \varphi : \ell^\infty \Gamma \rtimes \Gamma \to \ell^\infty \Gamma \rtimes \Gamma$ such that $\tilde \varphi_{L\Gamma} = {\rm id}$. Note that $\ell^\infty \Gamma \rtimes \Gamma \cong \mathcal B(\ell^2 \Gamma)$. It is an easy exercise to see that the Poisson boundary of $\tilde \varphi$ is nothing but the crossed product $\Har(\mu) \rtimes \Gamma$.
\end{examp}

\def\cprime{$'$}
\providecommand{\bysame}{\leavevmode\hbox to3em{\hrulefill}\thinspace}
\providecommand{\MR}{\relax\ifhmode\unskip\space\fi MR }
\providecommand{\MRhref}[2]{%
  \href{http://www.ams.org/mathscinet-getitem?mr=#1}{#2}
}
\providecommand{\href}[2]{#2}

\end{document}